\newcommand{\ox}{\otimes}
\newcommand{\X}{\ensuremath{\mathbb X}\xspace}
\newcommand{\N}{{\ensuremath{\mathbb N}}\xspace}
\newtheorem{observation}{Remark}[section]
\newtheorem{lemma}[observation]{Lemma}  %%share counter with remark
\newtheorem{theorem}[observation]{Theorem}
\newtheorem{definition}[observation]{Definition}
\newtheorem{proposition}[observation]{Proposition} 
\newtheorem{corollary}[observation]{Corollary}
\newcommand{\wand}{\ensuremath{
  \mathrel{\vbox{\offinterlineskip\ialign{
    \hfil##\hfil\cr
    $\star$\cr
    \noalign{\kern-1ex}
    $\vert$\cr
}}}}}
\newdimen\w@dth
\def\setw@dth#1#2{\setbox\z@\hbox{\scriptsize $#1$}\w@dth=\wd\z@
\setbox\@ne\hbox{\scriptsize $#2$}\ifnum\w@dth<\wd\@ne \w@dth=\wd\@ne \fi
\advance\w@dth by 1.2em}
\def\t@^#1_#2{\allowbreak\def\n@one{#1}\def\n@two{#2}\mathrel
{\setw@dth{#1}{#2}
\mathop{\hbox to \w@dth{\rightarrowfill}}\limits
\ifx\n@one\empty\else ^{\box\z@}\fi
\ifx\n@two\empty\else _{\box\@ne}\fi}}
\def\t@@^#1{\@ifnextchar_ {\t@^{#1}}{\t@^{#1}_{}}}
\def\t@left^#1_#2{\def\n@one{#1}\def\n@two{#2}\mathrel{\setw@dth{#1}{#2}
\mathop{\hbox to \w@dth{\leftarrowfill}}\limits
\ifx\n@one\empty\else ^{\box\z@}\fi
\ifx\n@two\empty\else _{\box\@ne}\fi}}
\def\t@@left^#1{\@ifnextchar_ {\t@left^{#1}}{\t@left^{#1}_{}}}
\def\two@^#1_#2{\def\n@one{#1}\def\n@two{#2}\mathrel{\setw@dth{#1}{#2}
\mathop{\vcenter{\hbox to \w@dth{\rightarrowfill}\kern-1.7ex
                 \hbox to \w@dth{\rightarrowfill}}%
       }\limits
\ifx\n@one\empty\else ^{\box\z@}\fi
\ifx\n@two\empty\else _{\box\@ne}\fi}}
\def\tw@@^#1{\@ifnextchar_ {\two@^{#1}}{\two@^{#1}_{}}}
\def\tofr@^#1_#2{\def\n@one{#1}\def\n@two{#2}\mathrel{\setw@dth{#1}{#2}
\mathop{\vcenter{\hbox to \w@dth{\rightarrowfill}\kern-1.7ex
                 \hbox to \w@dth{\leftarrowfill}}%
       }\limits
\ifx\n@one\empty\else ^{\box\z@}\fi
\ifx\n@two\empty\else _{\box\@ne}\fi}}
\def\t@fr@^#1{\@ifnextchar_ {\tofr@^{#1}}{\tofr@^{#1}_{}}}
\newdimen\W@dth
\def\setW@dth#1#2{\setbox\z@\hbox{$#1$}\W@dth=\wd\z@
\setbox\@ne\hbox{$#2$}\ifnum\W@dth<\wd\@ne \W@dth=\wd\@ne \fi
\advance\W@dth by 1.2em}
\def\T@^#1_#2{\allowbreak\def\N@one{#1}\def\N@two{#2}\mathrel
{\setW@dth{#1}{#2}
\mathop{\hbox to \W@dth{\rightarrowfill}}\limits
\ifx\N@one\empty\else ^{\box\z@}\fi
\ifx\N@two\empty\else _{\box\@ne}\fi}}
\def\T@@^#1{\@ifnextchar_ {\T@^{#1}}{\T@^{#1}_{}}}
\def\T@left^#1_#2{\def\N@one{#1}\def\N@two{#2}\mathrel{\setW@dth{#1}{#2}
\mathop{\hbox to \W@dth{\leftarrowfill}}\limits
\ifx\N@one\empty\else ^{\box\z@}\fi
\ifx\N@two\empty\else _{\box\@ne}\fi}}
\def\T@@left^#1{\@ifnextchar_ {\T@left^{#1}}{\T@left^{#1}_{}}}
\def\Tofr@^#1_#2{\def\N@one{#1}\def\N@two{#2}\mathrel{\setW@dth{#1}{#2}
\mathop{\vcenter{\hbox to \W@dth{\rightarrowfill}\kern-1.7ex
                 \hbox to \W@dth{\leftarrowfill}}%
       }\limits
\ifx\N@one\empty\else ^{\box\z@}\fi
\ifx\N@two\empty\else _{\box\@ne}\fi}}
\def\T@fr@^#1{\@ifnextchar_ {\Tofr@^{#1}}{\Tofr@^{#1}_{}}}
\def\Two@^#1_#2{\def\N@one{#1}\def\N@two{#2}\mathrel{\setW@dth{#1}{#2}
\mathop{\vcenter{\hbox to \W@dth{\rightarrowfill}\kern-1.7ex
                 \hbox to \W@dth{\rightarrowfill}}%
       }\limits
\ifx\N@one\empty\else ^{\box\z@}\fi
\ifx\N@two\empty\else _{\box\@ne}\fi}}
\def\Tw@@^#1{\@ifnextchar_ {\Two@^{#1}}{\Two@^{#1}_{}}}
\def\to{\@ifnextchar^ {\t@@}{\t@@^{}}}
\def\from{\@ifnextchar^ {\t@@left}{\t@@left^{}}}
\def\tofro{\@ifnextchar^ {\t@fr@}{\t@fr@^{}}}
\def\To{\@ifnextchar^ {\T@@}{\T@@^{}}}
\def\From{\@ifnextchar^ {\T@@left}{\T@@left^{}}}
\def\Two{\@ifnextchar^ {\Tw@@}{\Tw@@^{}}}
\def\Tofro{\@ifnextchar^ {\T@fr@}{\T@fr@^{}}}
\title{Drazin Inverses in Categories}
\author{Robin Cockett, Jean-Simon Pacaud Lemay, Priyaa Varshinee Srinivasan}
\begin{document}
\allowdisplaybreaks

\maketitle
%\section{}
%\subsection{}
\begin{center} \em \vspace{-0.5cm}
    This paper is dedicated to Bill Lawvere, a great mathematician, a mentor, and a friend.
\end{center} 
 \begin{abstract} Drazin inverses are a fundamental algebraic structure which have been extensively deployed in semigroup theory, ring theory, and matrix theory. Drazin inverses can also be defined for endomorphisms in any category. However, beyond a paper by Puystjens and Robinson from 1987, there has been almost no further development of Drazin inverses in category theory. Here we provide a survey of the theory of Drazin inverses from a categorical perspective. We introduce Drazin categories, in which every endomorphism has a Drazin inverse, and provide various examples including the category of matrices over a field, the category of finite length modules over a ring, and finite set enriched categories. We also introduce the notion of expressive rank and prove that a category with expressive rank is Drazin. Moreover, we not only study Drazin inverses in mere categories, but also in additive categories and dagger categories. In an arbitrary category, we show how a Drazin inverse corresponds to an isomorphism in the idempotent splitting, as well as explain how Drazin inverses relate to Leinster's notion of eventual image duality. In additive categories, we consider core-nilpotent decompositions, image-kernel decompositions, and Fitting decompositions. We also develop the notion of Drazin inverses for pairs of opposing maps, generalizing the usual notion of Drazin inverse for endomorphisms. As an application of this new kind of Drazin inverse, for dagger categories, we provide a novel characterization of the Moore-Penrose inverse in terms of being a Drazin inverse of the pair of a map and its adjoint.  
 \end{abstract}

 \noindent \small \textbf{Acknowledgements.} The authors would like to thank Mike Prest for his basic ring theoretic reassurance. For this research, the first named author was partially funded by NSERC, and the third named author was partially funded by the Estonian Research Council (MOB3JD1227).
\setcounter{tocdepth}{1} 
 \tableofcontents

 \newpage
% %%%%%%%%%%%%%%%%%%%%%%%%%%%%%%%%%%%%%%%%%%%%%%%%%%%%%%%%%%%%%%%%%%%%%

%%%%%%%%%%%%%%%%%%%%%%%%%%%%%%%%%%%%%%%%%%%%%%%%%%%%%%%%%%%%%%%%%%%%%%%

\section{Introduction}

%%%%%%%%%%%%%%%%%%% document %%%%%%%%%%%%%%%%%%%%%%%%%%%%%%%%%%%%%%%%%%

In a semigroup $S$, a \textbf{Drazin inverse} of $x \in S$ is a $x^D \in S$ such that:
{\bf [D.1]} there is a $k \in \mathbb{N}$ (called the Drazin index) such that $x^{k+1} x^D = x^k$; {\bf [D.2]} $x^D x x ^D = x^D$; and {\bf [D.3]} $x^D x = x x^D$. While a Drazin inverse may not always exist, if a Drazin inverse exists then it is unique, so we may speak of \emph{the} Drazin inverse. The term Drazin inverse is named after Michael P. Drazin, who originally introduced the concept of Drazin inverses in rings and semigroups under the modest name ``pseudo-inverse'' \cite{drazin1958pseudo}. The ``inverse'' part in the term ``Drazin inverse'' is justified since it is a generalization of the usual notion of inverse, that is, if $S$ is a monoid and $u \in S$ is invertible, then its inverse is also its Drazin inverse, $u^D = u^{-1}$, and conversely elements of $S$ with Drazin index zero are precisely the invertible elements of $S$. Drazin inverses have an extensive literature and have been well studied in both ring theory and semigroup theory \cite{chen2019hirano, drazin1958pseudo, drazin2013commuting, lam2014jacobson, marovt2018orders, munn1961pseudo, wang2017class}, and are deeply connected to strong $\pi$-regularity \cite{ara1996strongly,azumaya1954strongly, Dischinger,nicholson1999strongly,wang2017class} and Fitting's results \cite{Fitting1933,jacobson1985basic,leinstercounting} (sometimes called Fitting's Lemma or Fitting's Decomposition result). Moreover, they have also been extensively studied in matrix theory, and have been found to have many important applications as it is a useful tool in various computations \cite{bu2005linear, campbell2009generalized, campbell1976applications, puystjens2004drazin, romo2020core, wei2011note, yang2011drazin, yu2014note}.  

At the time Drazin first introduced the concept of Drazin inverses in \cite{drazin1958pseudo}, category theory was still in its infancy. Thus, the theory of Drazin inverses was not developed from a categorical perspective nor, conversely, did the notion of a Drazin inverse become widely known in the categorical community. To the best of our knowledge, the only discussion of Drazin inverses in category theory appears in a section of a paper by Puystjens and Robinson \cite{robinson1987generalized} from 1987, where they develop the properties of Drazin inverses in additive categories. 

The purpose of this paper is, thus, to develop Drazin inverses from a categorical perspective and to make their theory more readily available to a categorical audience. In order to achieve this we have tried to make the exposition as self-contained as possible and, in particular, we have included complete proofs. We do, however, acknowledge that many of the proofs below are available in the extensive ring theory and semigroup literature and we have tried to document this. Having said that, inevitably, there is new material in our exposition including, of course, new definitions, new examples, new constructions, and new proofs. These arise largely because the categorical perspective allows one to consider a broader range of settings which are not directly related to semigroups or rings. For example, without the categorical perspective, one might not have known how Drazin inverses relate to idempotent splittings or the idempotent completion.  Nor would one have noticed that having a Drazin inverse implies having eventual image duality in the sense of Leinster \cite{leinster2022eventual}. Even more fundamentally, one might not see the point of considering Drazin inverses for arbitrary maps, not just endomorphisms.  

We suspect this just scratches the surface of novel applications for Drazin inverses that the categorical perspective uncovers. This paper was never meant to be the final word on the subject of Drazin inverses in category theory. Rather our hope is that it can be a useful entry point for others into what is a fundamental algebraic structure.  So we will be happy if we have left loose ends which others can pursue.

\textbf{Conventions:} For an arbitrary category $\mathbb{X}$, we denote objects by capital letters $A,B,X,Y$, etc. and maps by lowercase letters $f,g,x, y$, etc. Homsets are denoted by $\mathbb{X}(A,B)$ and maps as $f: A \to B$. Identity maps are denoted as $1_A: A \to A$.  Composition is written in \emph{diagrammatic order}, that is, the composition of a map $f: A \to B$ followed by $g: B \to C$ is denoted $fg: A \to C$. 

%%%%%%%%%%%%%%%%%%%%%%%%%%%%%%%%%%%%%%%%%%%%%%%%%%%%%%%%%%%%%%

%%%%%%%%%%%%%%%%%%%%%%%%%%%%%%%%%%%%%%%%%%%%%%%%%%%%%%%%%%%%%

\section{Drazin Inverses, Drazin Objects, and Drazin Categories}\label{sec:Drazin}

%%%%%%%%%%%%%%%%%%%%%%%%%%%%%%%%%%%%%%%%%%%%%%%%%%%%%%%%%%%%%

In this section, we discuss Drazin inverses in a category and introduce the notions of Drazin categories and Drazin objects. We also provide examples of these concepts. 

%%%%%%%%%%%%%%%%%%%%%%%%%%%%%%%%%%%%%%%%%%%%%%%%%%%%%%%%%%%%%%%%%%%%%%%%%%%%%%%%%
\subsection{Drazin Inverses} The concept of Drazin inverses in an arbitrary category was first considered by Puystjens and Robinson in \cite[Sec 2]{robinson1987generalized}, though they were particularly interested in studying Drazin inverses in \emph{additive} categories -- which we discuss in Section \ref{sec:additive}. The notion of Drazin inverses in a category $\mathbb{X}$ corresponds to the notion of Drazin inverses in the semigroup of endomorphisms, $\mathbb{X}(A,A)$, of each object $A$. Explicitly: 

\begin{definition}\label{def:Drazin} In a category $\mathbb{X}$, a \textbf{Drazin inverse} \cite[Sec 2]{robinson1987generalized} of $x: A \to A$ is an endomorphism $x^D: A \to A$ such that: 
\begin{enumerate}[{\bf [D.1]}]
\item There is a $k \in \mathbb{N}$ such that\footnote{By convention, $x^0 = 1_A$.} $x^{k+1} x^D = x^k$; 
\item $x^D x x ^D = x^D$; 
\item $x^D x = x x^D$. 
\end{enumerate}
If $x: A \to A$ has a Drazin inverse $x^D: A \to A$, we say that $x$ is \textbf{Drazin}, and call the least $k$ such that $x^{k+1} x^D = x^k$ the {\bf Drazin index of $x$}, which we denote by $\mathsf{ind}(x)=k$.  
\end{definition}

For the remainder of this section, we work in an arbitrary category $\mathbb{X}$. An important fact is that Drazin inverses, if they exist, are unique. Before proving this, here are some useful basic identities. 

\begin{lemma}\label{lemma:Drazin-basic} Let $x: A \to A$ be Drazin with Drazin inverse $x^D: A \to A$: 
\begin{enumerate}[(i)]
\item\label{lemma:Drazin-basic.1} For all $n \geq \mathsf{ind}(x)$, $x^{n+1} x^D = x^n = x^D x^{n+1}$;
\item \label{lemma:Drazin-basic.3} For all $n \geq \mathsf{ind}(x)$ and $m \in \mathbb{N}$, $x^{n+m} (x^D)^m = x^n = (x^D)^m x^{n+m}$;
\item\label{lemma:Drazin-basic.2} For all $n \in \mathbb{N}$, $x^{n}(x^D)^{n+1} = x^D = (x^D)^{n+1} x^n$.
\end{enumerate}
\end{lemma}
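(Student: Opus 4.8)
The plan is to handle the three identities in sequence, each by a short induction, and to exploit from the very start that axiom \textbf{[D.3]} makes $x$ and $x^D$ commute. Since commuting elements have commuting powers, any expression built from powers of $x$ and $x^D$ may be reordered freely; in particular each right-hand equality ($x^n = x^D x^{n+1}$ in (i), $x^n = (x^D)^m x^{n+m}$ in (ii), and $x^D = (x^D)^{n+1} x^n$ in (iii)) follows immediately from its left-hand counterpart. So it suffices to establish the three left-hand identities.

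For (i) I would induct on $n \geq \mathsf{ind}(x)$. The base case $n = \mathsf{ind}(x)$ is exactly \textbf{[D.1]}. For the inductive step, assuming $x^{n+1} x^D = x^n$, I premultiply by $x$ to obtain $x^{n+2} x^D = x(x^{n+1} x^D) = x \cdot x^n = x^{n+1}$, which is the claim for $n+1$. For (ii) I would fix $n \geq \mathsf{ind}(x)$ and induct on $m$, with the case $m=0$ being the trivial $x^n = x^n$. For the step I split off one copy of $x^D$ and invoke (i): writing $x^{n+m+1}(x^D)^{m+1} = \bigl(x^{(n+m)+1}x^D\bigr)(x^D)^m$ and applying (i) with exponent $n+m \geq \mathsf{ind}(x)$ collapses the bracket to $x^{n+m}$, reducing to the inductive hypothesis $x^{n+m}(x^D)^m = x^n$.

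For (iii) I would again induct, now over all $n \in \mathbb{N}$, with base case $n=0$ reading $x^D = x^D$. The crucial point here is that the driving axiom is \textbf{[D.2]} rather than \textbf{[D.1]}: assuming $x^n (x^D)^{n+1} = x^D$, one computes $x^{n+1}(x^D)^{n+2} = x\bigl(x^n (x^D)^{n+1}\bigr)x^D = x\,x^D\,x^D$, and then commuting the first two factors and applying \textbf{[D.2]} gives $x^D x x^D = x^D$. Alternatively, and perhaps more cleanly, I could first record that $xx^D$ is idempotent, since $(xx^D)(xx^D) = x(x^D x x^D) = x x^D$ by \textbf{[D.2]}, and then rewrite $x^n (x^D)^{n+1} = (xx^D)^n x^D$ using commutativity, which equals $x^D$ in all cases.

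I do not expect a genuine obstacle: all three parts are routine inductions. The only thing requiring care is the bookkeeping of which axiom feeds which claim. Part (i) iterates \textbf{[D.1]} ``upward'' in the exponent, part (iii) iterates \textbf{[D.2]} ``downward'', and part (ii) interpolates between powers of $x$ and powers of $x^D$ via part (i); throughout, consistent use of the commutativity supplied by \textbf{[D.3]} both reorders factors and yields the right-hand equalities for free.
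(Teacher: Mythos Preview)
Your proposal is correct and follows essentially the same approach as the paper: both arguments use \textbf{[D.3]} to reduce to one-sided identities, prove (i) by iterating \textbf{[D.1]}, prove (ii) by induction on $m$ using (i), and prove (iii) by induction invoking \textbf{[D.2]} and \textbf{[D.3]}. The only cosmetic differences are that the paper handles (i) by a single multiplication by $x^{n-k}$ rather than a formal induction, and groups the factors in (ii) slightly differently; your alternative for (iii) via the idempotency of $xx^D$ is a nice variant not in the paper.
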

\begin{proof} (i) Let $\mathsf{ind}(x) = k$. Then for all $n \geq k$, using \textbf{[D.1]} we compute that: 
\[ x^{n+1} x^D = x^{n-k + k +1} x^D = x^{n-k} x^{k+1} x^D = x^{n-k} x^k = x^{n-k+k} = x^n \]
So $x^{n+1} x^D = x^n$. By \textbf{[D.3]} it follows that $x^D x^{n+1} = x^n$ as well. \\ 

\noindent (ii) We prove this by induction on $m$. For the base case $m=0$, we clearly have that $x^n (x^D)^0 = x^n$. 
Now suppose that for all $0 \leq j \leq m$, $x^{n+j} (x^D)^j = x^n$. Then using the induction hypothesis and (\ref{lemma:Drazin-basic.1}), we compute that: 
\[ x^{n+m+1} (x^D)^{m+1} = x x^{n+m} (x^D)^m x^D = x x^n x^D = x^{n+1} x^D = x^n \]
So $x^{n+m+1} (x^D)^{m+1} = x^n$. Therefore all $m$, $x^{n+m} (x^D)^m = x^n$ as desired. By \textbf{[D.3]} it follows that $ x^n = (x^D)^m x^{n+m}$ as well. \\  

\noindent (iii) We prove this by induction. For the base case $n=0$, we clearly have that $x^0 x^D = x^D$. Now suppose that for all $0 \leq j \leq n$, $x^j (x^D)^{j+1} = x^D$. Then using the induction hypothesis, \textbf{[D.2]}, and \textbf{[D.3]}, we compute that: 
\[  x^{n+1}(x^D)^{n+2} = x x^n (x^D)^{n+1} x^D = x x^D x^D = x^D x x^D = x^D \]
So $x^{n+1}(x^D)^{n+2} = x^D$. Therefore all $n$, $x^{n}(x^D)^{n+1} = x^D$ as desired. By \textbf{[D.3]} it follows that $(x^D)^{n+1} x^n = x^D$ as well.
\end{proof}

\begin{proposition}\cite[Thm 1]{drazin1958pseudo} \label{prop:unique} If $x: A \to A$ has a Drazin inverse, it is unique. 
\end{proposition}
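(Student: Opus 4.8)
The plan is to assume that $x\colon A\to A$ has two Drazin inverses $y$ and $z$ and to show that both are forced to equal the single common word $xyz$ (a product in the endomorphism monoid $\mathbb{X}(A,A)$), whence $y=z$. Every manipulation will take place inside $\mathbb{X}(A,A)$ and will use only associativity, the commutativity axiom \textbf{[D.3]} (so that $x$ commutes with $y$ and with $z$), and the basic identities of Lemma \ref{lemma:Drazin-basic}. Crucially, I will never need $y$ and $z$ to commute with each other, which is good, since there is no reason they should.

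First I would fix a single exponent that is legal for both inverses. Writing $k_y$ and $k_z$ for the least integers with $x^{k_y+1}y=x^{k_y}$ and $x^{k_z+1}z=x^{k_z}$ (I keep these separate since uniqueness of the index is not yet available), I set $n\ge\max(k_y,k_z)$. I would also record the one-line fact that $xy$ and $xz$ are idempotent: for instance $(xz)(xz)=x(zx)z=x(xz)z=x^2z^2=x(xz^2)=xz$, using $xz=zx$ and the $n=1$ instance $xz^2=z$ of Lemma \ref{lemma:Drazin-basic}(iii); hence $(xz)^n=xz$ and likewise $(xy)^n=xy$ for all $n\ge 1$.

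The heart of the argument is two parallel computations that deliberately peel factors from \emph{opposite} sides. For $y$, I start from $y=y^{n+1}x^n$ (Lemma \ref{lemma:Drazin-basic}(iii)), rewrite $x^n=x^{2n}z^n$ (Lemma \ref{lemma:Drazin-basic}(ii) for $z$, legal as $n\ge k_z$), collapse $y^{n+1}x^{2n}=(y^{n+1}x^n)x^n=yx^n$, and finish with $y=yx^nz^n=y(xz)^n=y(xz)=xyz$. For $z$, I instead start from $z=x^nz^{n+1}$ (Lemma \ref{lemma:Drazin-basic}(iii)), rewrite the \emph{leading} power $x^n=y^nx^{2n}$ (Lemma \ref{lemma:Drazin-basic}(ii) for $y$, legal as $n\ge k_y$), and collapse $y^nx^n(x^nz^{n+1})=y^nx^nz=(xy)^nz=xyz$. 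Both chains terminate at $xyz$, so $y=xyz=z$.

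The single point that needs care -- and the only thing I would flag as a genuine obstacle -- is the choice of which side to reduce from. The naive symmetric attempt reduces $y$ to $xyz$ but reduces $z$ to $xzy$, and there is no evident reason for these two words to coincide, precisely because $y$ and $z$ are not known to commute. Breaking the symmetry by extracting from the right for $y$ and from the left for $z$ is exactly what makes both computations funnel into the same word $xyz$. Everything else is bookkeeping: keeping $n$ above both indices so that each invocation of Lemma \ref{lemma:Drazin-basic}(ii) is licensed, and the trivial idempotency checks for $xy$ and $xz$.
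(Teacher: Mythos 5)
Your proof is correct and follows essentially the same route as the paper's: both fix a common exponent dominating the two indices and use the power identities of Lemma \ref{lemma:Drazin-basic} to collapse $y$ and $z$ onto a single three-letter word ($xyz$ in your two parallel computations, $zxy$ in the paper's single chain $y = \cdots = zxy = \cdots = z$). The only detail to tidy is that your final collapses $(xz)^n = xz$ and $(xy)^n = xy$ require $n \geq 1$, so you should take $n = \max(k_y, k_z, 1)$.
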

\begin{proof} Suppose that $x: A \to A$ has two possible Drazin inverses $y: A \to A$ and $z: A \to A$. So explicitly, there is a $k \in \mathbb{N}$ such that $x^k xy = x^k$, and also that $yxy =y$ and $xy = yx$, and there is a $k' \in \mathbb{N}$ such that $x^{k'} xz = x^{k'}$, and also that $xz = zx$ and $zxz = z$. Now set $j = \max(k,k')$. First observe that by Lemma \ref{lemma:Drazin-basic}.(\ref{lemma:Drazin-basic.1}) we have that $x^{j+1} y = x^j$ and $z x^{j+1} = x^j$, while by Lemma \ref{lemma:Drazin-basic}.(\ref{lemma:Drazin-basic.2}) we have that $y = x^j y^{j+1}$ and $z = z^{j+1} x^j$. Then we compute that: 
\begin{gather*}
y = x^j y^{j+1} = z x^{j+1} y^{j+1} = z x x^{j} y^{j+1} = z x y =    z^{j+1} x x^j y = z^{j+1} x^{j+1} y = z^{j+1} x^j = z
\end{gather*}
So $y=z$, and we conclude that the Drazin inverse is unique. 
\end{proof}

From now on we may speak of \emph{the} Drazin inverse of an endomorphism $x$ (if it exists of course) and denote it by $x^D$. 

%%%%%%%%%%%%%%%%%%%%%%%%%%%%%%%%%%%%%%%%%%%%%%%%%%%%%%%%%%%%%%%%%%%%%%%%%%%%%%%%%
\subsection{Drazin Categories} We call a category in which every endomorphism has a Drazin inverse a {\bf Drazin category}. Since Drazin inverses are unique, being Drazin is a property of a category rather than a structure. 

It is always possible to construct a Drazin category from any category by considering the full subcategory determined by the objects whose every endomorphism is Drazin: we call these Drazin objects. 

\begin{definition}\label{def:Drazin-obj} An object $A$ is a \textbf{Drazin object} if every endomorphism $x: A \to A$ is Drazin. Then define $\mathsf{D}\left(\mathbb{X}\right)$ to be the full subcategory of Drazin objects of $\mathbb{X}$. 
\end{definition}

\begin{lemma} $\mathsf{D}\left(\mathbb{X}\right)$ is a Drazin category.  Moreover, $\mathbb{X}$ is Drazin if and only if $\mathsf{D}\left(\mathbb{X}\right) = \mathbb{X}$. 
\end{lemma}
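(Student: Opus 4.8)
The plan is to prove the two assertions directly from the definitions, since $\mathsf{D}(\mathbb{X})$ is defined as the full subcategory spanned by the Drazin objects and a Drazin category is one in which every endomorphism has a Drazin inverse. The only subtlety is that ``Drazin'' is a statement about \emph{endomorphisms}, so I must check that restricting to a full subcategory does not change which endomorphisms exist or which Drazin inverses they admit.

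First I would show that $\mathsf{D}(\mathbb{X})$ is a Drazin category. Let $A$ be an object of $\mathsf{D}(\mathbb{X})$ and let $x \colon A \to A$ be an endomorphism in $\mathsf{D}(\mathbb{X})$. Since $\mathsf{D}(\mathbb{X})$ is a \emph{full} subcategory, the endomorphism $x$ is the very same arrow in $\mathbb{X}$, and $A$ is a Drazin object of $\mathbb{X}$ by definition of $\mathsf{D}(\mathbb{X})$; hence $x$ has a Drazin inverse $x^D \colon A \to A$ in $\mathbb{X}$. The key observation is that $x^D$, being again an endomorphism of $A$, already lives in $\mathsf{D}(\mathbb{X})$ (fullness), and the three axioms \textbf{[D.1]}, \textbf{[D.2]}, \textbf{[D.3]} are equations between composites of $x$ and $x^D$, all of which are arrows of $A \to A$ and therefore interpreted identically in $\mathsf{D}(\mathbb{X})$ and in $\mathbb{X}$. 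Thus $x^D$ is a Drazin inverse of $x$ inside $\mathsf{D}(\mathbb{X})$, so every endomorphism in $\mathsf{D}(\mathbb{X})$ is Drazin, i.e.\ $\mathsf{D}(\mathbb{X})$ is a Drazin category.

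Next I would establish the biconditional. For the easy direction, if $\mathsf{D}(\mathbb{X}) = \mathbb{X}$ then every object of $\mathbb{X}$ is a Drazin object, so every endomorphism of $\mathbb{X}$ is Drazin, whence $\mathbb{X}$ is Drazin. Conversely, suppose $\mathbb{X}$ is Drazin. Then for any object $A$ and any $x \colon A \to A$, $x$ has a Drazin inverse, so by definition $A$ is a Drazin object; thus every object of $\mathbb{X}$ lies in $\mathsf{D}(\mathbb{X})$. Since $\mathsf{D}(\mathbb{X})$ is by construction full on its objects, having the same objects as $\mathbb{X}$ forces $\mathsf{D}(\mathbb{X}) = \mathbb{X}$.

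I do not expect any genuine obstacle here; the proof is essentially bookkeeping. The one point that needs care — and the only place the argument could go wrong if stated carelessly — is the appeal to fullness: it is precisely fullness of $\mathsf{D}(\mathbb{X})$ that guarantees (a) the candidate Drazin inverse $x^D$ computed in $\mathbb{X}$ actually belongs to $\mathsf{D}(\mathbb{X})$, and (b) the defining equations are verified in $\mathsf{D}(\mathbb{X})$ iff they are in $\mathbb{X}$. I would make this explicit so the reader sees that being a Drazin object transfers verbatim to the full subcategory.
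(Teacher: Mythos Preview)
Your proof is correct and follows essentially the same approach as the paper: both verify directly from the definitions that every endomorphism in $\mathsf{D}(\mathbb{X})$ is Drazin (since its domain is a Drazin object), and that the biconditional is a tautology once ``Drazin category'' is rephrased as ``every object is Drazin.'' Your explicit emphasis on fullness---ensuring that $x^D$ lies in $\mathsf{D}(\mathbb{X})$ and that the Drazin axioms are witnessed there---is a welcome clarification that the paper's terse proof leaves implicit.
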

\begin{proof} Every endomorphism in $\mathsf{D}\left(\mathbb{X}\right)$ is Drazin, so $\mathsf{D}\left(\mathbb{X}\right)$ is Drazin. Also, clearly $\mathbb{X}$ is Drazin if and only if every object is Drazin, which means that $\mathsf{D}\left(\mathbb{X}\right) = \mathbb{X}$. 
\end{proof}

Thus, an equivalent way of describing a Drazin category is as a category in which every object is Drazin. Here are now some examples of Drazin inverses, Drazin categories, and Drazin objects. 

%%%%%%%%%%%%%%%%%%%%%%%%%%%%%%%%%%%%%%%%%%%%%%%%%%%%%%%%%%%%%%%%%%%%%%%%%%%%%%%%%
\subsection{Complex Matrices}\label{ex:complex} Arguably, the best-known (and most applied) examples of Drazin inverses are those of complex matrices. So let $\mathbb{C}$ be the field of complex numbers and let $\mathsf{MAT}(\mathbb{C})$ be the category of complex matrices, that is, the category whose objects are natural numbers $n \in \mathbb{N}$ and where a map ${A: n \to m}$ is an $n \times m$ complex matrix. Composition given by matrix multiplication and the identity on $n$ is the $n$-dimensional identity matrix. Endomorphisms in $\mathsf{MAT}(\mathbb{C})$ correspond precisely to square matrices: so an endomorphism ${A: n \to n}$ is an $n \times n$ square matrix $A$ 
%and $\mathsf{MAT}(\mathbb{C})(n,n)$ is the ring of $n \times n$ square complex matrices. 
It is well-known that every complex square matrix has a Drazin inverse \cite[Chap 7]{campbell2009generalized} making $\mathsf{MAT}(\mathbb{C})$ Drazin. To compute the Drazin inverse, first recall that every $n \times n$ complex matrix $A$ can be written in the form \cite[Thm 7.2.1]{campbell2009generalized}:
\[ A = P \begin{bmatrix} C & 0 \\ 
0 & N
\end{bmatrix} P^{-1} \]
for some invertible $n \times n$ matrix $P$, an invertible $m \times m$ matrix $C$ (where $m \leq n$), and a nilpotent $n-m \times n-m$ matrix $N$ (that is, $N^k=0$ for some $k \in \mathbb{N}$). See \cite[Algorithm 7.2.1]{campbell2009generalized} for how this form is computed. The Drazin inverse of $A$ is the $n \times n$ matrix $A^D$ defined as follows: 
\[ A^D = P \begin{bmatrix} C^{-1} & 0 \\
0 & 0
\end{bmatrix} P^{-1} \]
The Drazin index of $A$ corresponds precisely to the \textbf{index} of $A$ \cite[Def 7.2.1]{campbell2009generalized}, that is the least $k \in \mathbb{N}$ such that $\mathsf{rank}(A^{k+1}) = \mathsf{rank}(A^k)$. 

In fact, it is well-known \cite{bu2005linear} -- and, as we shall shortly show in Section \ref{rank} -- that a square matrix over any field has a Drazin inverse.  Thus, for any field $k$, its category of matrices $\mathsf{MAT}(k)$ is always a Drazin category. Moreover, as explained in Corollary \ref{cor:equivalence} below, a category which is equivalent to a Drazin category is certainly itself Drazin. So $k\text{-}\mathsf{FVEC}$, the category of finite-dimensional $k$-vector spaces and $k$-linear maps between them, is Drazin as well.

%%%%%%%%%%%%%%%%%%%%%%%%%%%%%%%%%%%%%%%%%%%%%%%%%%%%%%%%%%%%%%%%%%%%%%%%%%%%%%%%%
\subsection{Modules}\label{sec:modules} Let $R$ be a ring and let $R\text{-}\mathsf{MOD}$ be the category of (left) $R$-modules and $R$-linear morphisms between them. In general, $R\text{-}\mathsf{MOD}$ is not Drazin. Indeed, when $R = \mathbb{Z}$, consider the $\mathbb{Z}$-linear endomorphism $f: \mathbb{Z} \to \mathbb{Z}$ given by multiplying by two, $f(x) = 2x$. Now if $f$ had a Drazin inverse $f^D: \mathbb{Z} \to \mathbb{Z}$, it must be of the form $f^D(x) = nx$ for some $n \in \mathbb{Z}$. Then by \textbf{[D.1]} we would have that for some $k \in \mathbb{N}$, $2^{k+1}nx = 2^kx$ for all $x \in \mathbb{Z}$. This would imply that $2n=1$, which is a contradiction. So $\mathbb{Z}\text{-}\mathsf{MOD}$ (which recall is equivalent to the category of Abelian groups) is not Drazin. 

That said, while $R\text{-}\mathsf{MOD}$ may not always be Drazin, there are various characterizations of Drazin $R$-linear endomorphism -- see for example \cite[Lemma 2.1]{wang2017class}. In particular, an $R$-linear endomorphism $f: M \to M$ has a Drazin inverse if and only if $f$ is \emph{strongly $\pi$-regular} -- which will be discussed in Section \ref{sec:strong-pi} below. Alternatively, an $R$-linear endomorphism ${f: M \to M}$ is Drazin if and only if $M = \mathsf{im}(f^k) \oplus \mathsf{ker}(f^k)$ for some $k \geq 1$ \cite[Lemma 2.1.(4)]{wang2017class}, and this decomposition is sometimes called \textbf{Fitting's decomposition} \cite[Lemma 2.4]{leinstercounting}. Then an $R$-module $M$ is said to satisfy \textbf{Fitting's Lemma} \cite[Page 665]{armendariz1978injective} if every endomorphism has a Fitting's decomposition (or equivalently if $R\text{-}\mathsf{MOD}(M,M)$ is strongly $\pi$-regular \cite[Prop 2.3]{armendariz1978injective}). As such, the Drazin objects in $R\text{-}\mathsf{MOD}$ are precisely the $R$-modules which satisfy Fitting's Lemma, or in other words, $\mathsf{D}\left(R\text{-}\mathsf{MOD} \right)$ is the full subcategory of $R$-modules which satisfy Fitting's Lemma. For more on modules which satisfy Fitting's Lemma, see \cite{ara1996strongly, armendariz1978injective, nicholson1999strongly}, and also see \cite{armendariz1978injective} for classes of rings for which all finitely generated modules satisfy Fitting's Lemma and are therefore Drazin.

Famously, ``Fitting's Decomposition Theorem'' says that for every endomorphism of a \textbf{finite length} $R$-module has a Fitting's decomposition \cite[Page 113]{jacobson1985basic}. This implies that every $R$-linear endomorphism of a finite length $R$-module is Drazin, and thus finite length $R$-modules are Drazin. Therefore the full subcategory of finite length $R$-modules is Drazin. It is important to note that while every finite length $R$-module is Drazin, there are modules which do not have finite length which are Drazin. Indeed, consider the rationals $\mathbb{Q}$ seen as a $\mathbb{Z}$-module. Every $\mathbb{Z}$-linear endomorphism $f: \mathbb{Q} \to \mathbb{Q}$ is also $\mathbb{Q}$-linear, therefore $f(x) = \frac{p}{q} x$ for some fixed $\frac{p}{q} \in \mathbb{Q}$. Therefore, every $\mathbb{Z}$-linear endomorphism $f: \mathbb{Q} \to \mathbb{Q}$ is either zero or an isomorphism. If $f$ is zero, then as we will see in Lemma \ref{lemma:nilpotent-drazin}, $f$ is Drazin with $f^D=0$. On the other hand if $f$ is an isomorphism, so $f(x) = \frac{p}{q} x$ with $p, q \neq 0$, then as we we will see in Lemma \ref{lem:Drazin-0}, $f$ is Drazin with $f^D(x) = f^{-1}(x) =  \frac{q}{p} x$. So it follows that $\mathbb{Q}$ is Drazin. Modules of finite length are always finitely generated, while $\mathbb{Q}$ is famously not finitely generated as a $\mathbb{Z}$-module, and therefore cannot be of finite length as a $\mathbb{Z}$-module. So $\mathbb{Q}$ is a $\mathbb{Z}$-module which is Drazin but not of finite length. As such, the category of finite length $\mathbb{Z}$-modules is a proper subcategory of $\mathsf{D}\left(\mathbb{Z}\text{-}\mathsf{MOD} \right)$. 

%%%%%%%%%%%%%%%%%%%%%%%%%%%%%%%%%%%%%%%%%%%%%%%%%%%%%%%%%%%%%%%%%%%%%%%%%%%%%%%%%
\subsection{Finite Sets}\label{ex:finset} Let $\mathsf{SET}$ be the category of sets and functions between them, and let $\mathsf{FinSET}$ be the full subcategory of finite sets. Not all Drazin inverses exist in $\mathsf{SET}$. As an example, consider the successor function $s: \mathbb{N} \to \mathbb{N}$, $s(n) = n+1$. Suppose that $s$ had a Drazin inverse $s^D: \mathbb{N} \to \mathbb{N}$. By \textbf{[D.3]}, we would have that $s^D(n) = s^D(s^n(0)) =  s^n(s^D(0)) = s^D(0) + n$. Now if $\mathsf{ind}(s) =k$, by \textbf{[D.1]} we compute that $k = s^k(0) = s^D(s^{k+1}(0)) = s^D(k+1) = s^D(0) + k +1$. This implies that $0 = s^D(0) + 1$ -- which is a contradiction since $s^D(0) \in \mathbb{N}$. So the successor function does not have a Drazin inverse, and therefore $\mathsf{SET}$ is not Drazin. 

On the other hand, $\mathsf{FinSET}$ does have all Drazin inverses. In fact, every finite set enriched category is Drazin. Recall that a category is finite set enriched if and only if every homset is finite. 

\begin{lemma}\label{lem:finset-enriched} Every finite set enriched category is Drazin. 
\end{lemma}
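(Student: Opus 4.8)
The plan is to exploit that the hypothesis makes each endomorphism monoid $\mathbb{X}(A,A)$ a \emph{finite} monoid, and then to run the classical argument that every element of a finite monoid has a Drazin inverse. Fix an object $A$ and an endomorphism $x \colon A \to A$. First I would observe that, since $\mathbb{X}(A,A)$ is finite, the infinite list of powers $x, x^2, x^3, \dots$ cannot be injective, so there exist $i < j$ with $x^i = x^j$. Writing $p := j - i \geq 1$ and $s := i$, multiplying on the right by powers of $x$ gives $x^a = x^{a+p}$ for every $a \geq s$, and hence, by iteration, $x^a = x^b$ whenever $a, b \geq s$ and $a \equiv b \pmod p$. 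This eventual periodicity is the only place the finiteness hypothesis is used.

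The second step is to single out an idempotent among the powers of $x$. Choosing $n$ to be any multiple of $p$ with $n > s$, the element $e := x^n$ satisfies $e^2 = x^{2n} = x^n = e$, since $2n \geq s$ and $2n \equiv n \pmod p$. I would then propose the candidate Drazin inverse $x^D := x^{n-1}$, noting that $n - 1 \geq s$ so that this power already lies in the ``stable range''. Because every power of $x$ commutes with $x$, axiom \textbf{[D.3]} holds automatically; this is precisely the simplification that makes taking $x^D$ to be a power of $x$ worthwhile, reducing everything to congruences of exponents.

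It then remains to verify the two remaining axioms, each of which collapses to such a congruence in the stable range. For \textbf{[D.1]} with $k = n$ I would compute $x^{n+1} x^D = x^{2n} = x^n$, using idempotency of $e$. For \textbf{[D.2]} I would compute $x^D x x^D = x^{2n-1}$ and observe that $2n - 1 \equiv n - 1 \pmod p$ with both exponents $\geq s$, so $x^{2n-1} = x^{n-1} = x^D$. This exhibits $x^D$ as a Drazin inverse of $x$, and since $x$ and $A$ were arbitrary, the category is Drazin.

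The argument is essentially routine once periodicity is in hand; the only genuinely delicate point is bookkeeping the thresholds, namely ensuring that the chosen $n$ is simultaneously a multiple of $p$ and large enough ($n > s$) that all exponents appearing in the verifications ($n-1$, $n$, $2n-1$, $2n$) lie in the range $\geq s$ where the identity $x^a = x^{a+p}$ is valid. I would therefore foreground the inequality $n > s$ at the moment of choosing $n$, since every subsequent equality silently depends on it.
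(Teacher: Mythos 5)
Your proof is correct and follows essentially the same route as the paper: finiteness of the homset forces eventual periodicity of the powers of $x$, and the Drazin inverse is then exhibited as a suitable power $x^{n-1}$ with all axioms reduced to congruences of exponents in the stable range. The only difference is cosmetic: by insisting that $n$ be a multiple of the period $p$ with $n>s$ strictly, you handle in one uniform computation what the paper splits into three cases ($m=0$, $k=1$, and $m\geq 1,\ k\geq 2$ with $x^D=x^{mk-1}$).
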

\begin{proof} Let $\mathbb{X}$ be a finite set enriched category.  Let $x: A \to A$ be an endomorphism in $\mathbb{X}$. Since $\mathbb{X}(X,X)$ is a finite set, of cardinality $\vert \mathbb{X}(X,X) \vert$, we have that $x^0 = 1_A, x, x^2, x^3, \hdots,$ and $x^{\vert \mathbb{X}(X,X) \vert}$ are not all distinct. So as explained in the proof of \cite[Prop 6.3]{leinster2022eventual}, there is a $m\geq 0$ and a $k\geq 1$ with $m+k \leq \vert \mathbb{X}(X,X) \vert$, such that $x^m = x^{m+k}$. Moreover, $x^{n+rk} = x^n$ for all $n \geq m$ and $r \geq 0$, and so in particular $x^{mk} = x^{2mk}$, implying $x^{mk}$ is an idempotent. We now consider the following cases: 
\begin{enumerate}[(i)]
\item If $m=0$, then $x$ is an isomorphism with inverse $x^{k-1}$. Then as we will review in Lemma \ref{lem:Drazin-0}, $x$ is Drazin with $x^D= x^{k-1}$.
\item If $k=1$, then $x^{m} = x^{m+1}$ and $x^m$ is an idempotent. So setting $x^D = x^m$, we check that three Drazin identities hold: 
\begin{enumerate}[{\bf [D.1]}]
\item  $x^{m+1} x^D = x^{m+1}x^m = x^m x^m = x^m$
\item $x^D x x^D = x^m x x^m = x^{m+1}x^m = x^m x^m = x^m$
\item $x^D x = x^m x = x^{m+1} = x x^m = x x^D$
\end{enumerate}
So $x^D$ is the Drazin inverse of $x$. 
\item If $m \geq 1$ and $k \geq 2$, then $mk-1 \geq m \geq 0$. So setting $x^D = x^{mk-1}$, we show that the three Drazin axioms hold: 
\begin{enumerate}[{\bf [D.1]}]
\item  $x^{mk+1}x^D = x^{mk+1} x^{mk-1} = x^{mk+1+mk-1} = x^{2mk} = x^{mk}$
\item $x^D x x^D = x^{mk-1} x x^{mk-1} = x^{mk-1+1+mk-1} = x^{mk-1 + mk} = x^{mk-1} = x^D$
\item   $xx^D = xx^{mk-1} = x^{1+mk-1} = x^{mk-1+1}= x^{mk-1}x = x^Dx$
\end{enumerate}
So $x^D$ is the Drazin inverse of $x$.
\end{enumerate}
So, every $x: A \to A$ is Drazin, and therefore $\mathbb{X}$ is Drazin. 
\end{proof}

\begin{corollary} \label{cor:finiteset} {\sf FinSET} is a Drazin category. 
\end{corollary}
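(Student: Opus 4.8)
The plan is to derive this immediately from Lemma \ref{lem:finset-enriched}, which establishes that every finite set enriched category is Drazin. Thus the entire task reduces to verifying that $\mathsf{FinSET}$ is a finite set enriched category, that is, that every homset $\mathsf{FinSET}(A,B)$ is a finite set.

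First I would recall that the objects of $\mathsf{FinSET}$ are finite sets and that a map $A \to B$ is just an ordinary function. For finite sets $A$ and $B$, the collection of all functions from $A$ to $B$ is in bijection with the set $B^A$ of $A$-indexed tuples of elements of $B$, which has cardinality $\vert B \vert^{\vert A \vert}$. Since $\vert A \vert$ and $\vert B \vert$ are both finite natural numbers, this cardinality is finite, so each homset $\mathsf{FinSET}(A,B)$ is finite. In particular every endomorphism homset $\mathsf{FinSET}(A,A)$ is finite, which is all that the cited lemma actually uses.

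Having checked that $\mathsf{FinSET}$ satisfies the hypothesis of Lemma \ref{lem:finset-enriched}, I would simply invoke that lemma to conclude that $\mathsf{FinSET}$ is Drazin. There is no real obstacle here: the substantive content, namely the case analysis on the exponents $m$ and $k$ in the eventual-periodicity $x^m = x^{m+k}$ and the explicit construction of the Drazin inverse in each case, has already been carried out in the proof of the lemma. The only point worth stating explicitly is the elementary counting fact that function spaces between finite sets are finite, after which the corollary follows with one sentence.
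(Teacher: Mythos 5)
Your proof is correct and is exactly the route the paper takes: the corollary is stated as an immediate consequence of Lemma \ref{lem:finset-enriched}, with the paper likewise noting that $\mathsf{FinSET}$ is finite set enriched because every homset is finite. Your explicit verification that $\vert \mathsf{FinSET}(A,B)\vert = \vert B\vert^{\vert A\vert}$ is finite just makes that observation precise.
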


When $X$ is a finite set, another way of understanding the Drazin inverse of a function $f: X \to X$, is to consider the inclusion of subsets $X \supseteq \mathsf{im}(f) \supseteq \mathsf{im}(f^2) \supseteq \hdots$, which must eventually stabilize after at most $\vert X \vert$ steps. So there is a smallest $k$ such that $\mathsf{im}(f^k) =  \mathsf{im}(f^{k+1}) = \hdots$.  Then $f$ becomes an isomorphism on $\mathsf{im}(f^k)$, which we denote as $f\vert_{\mathsf{im}(f^k)}: \mathsf{im}(f^k) \to \mathsf{im}(f^k)$. The Drazin inverse of $f$ is defined as $f^D(x) = f\vert_{\mathsf{im}(f^k)}^{-1}(f^k(x))$, and its Drazin index is $\mathsf{ind}(f)=k$.

On the other hand, while $\mathsf{SET}$ may not be Drazin, we may still ask what are the Drazin objects in $\mathsf{SET}$. It turns out that they are precisely the finite sets. 

\begin{lemma}\label{Drazin-set} A set $X$ is Drazin in $\mathsf{SET}$ if and only if $X$ is a finite set. Therefore $\mathsf{D}\left( \mathsf{SET} \right) = \mathsf{FinSET}$. 
\end{lemma}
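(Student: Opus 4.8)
The statement is a biconditional, so the plan is to handle the two directions separately, with the forward direction (finite $\Rightarrow$ Drazin) essentially already in hand and the reverse direction (infinite $\Rightarrow$ not Drazin) requiring a single counterexample. For the easy direction, suppose $X$ is finite. Then $\mathsf{SET}(X,X)$ coincides with $\mathsf{FinSET}(X,X)$, and the three Drazin axioms are equations purely among endomorphisms of $X$; hence a Drazin inverse of $f \colon X \to X$ computed in $\mathsf{FinSET}$ is literally a Drazin inverse in $\mathsf{SET}$. Since $\mathsf{FinSET}$ is Drazin (Corollary~\ref{cor:finiteset}), every $f \colon X \to X$ has a Drazin inverse in $\mathsf{SET}$, so $X$ is a Drazin object.

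For the hard direction I would argue by contraposition: assuming $X$ is infinite, I would exhibit one endomorphism of $X$ with no Drazin inverse, thereby witnessing that $X$ is not Drazin. Since $X$ is infinite it contains a countably infinite subset $\{a_0, a_1, a_2, \dots\}$, and I would define $f \colon X \to X$ to be the ``successor'' $f(a_n) = a_{n+1}$ on this subset and the identity elsewhere. This is the natural generalization to $X$ of the successor counterexample on $\mathbb{N}$ that was already shown not to be Drazin in $\mathsf{SET}$, now localized to the embedded copy of $\mathbb{N}$.

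Suppose toward a contradiction that $f$ has a Drazin inverse $f^D$ with $\mathsf{ind}(f) = k$, and write $c = f^D(a_0)$. Reading composition in diagrammatic order, \textbf{[D.3]} says that $f^D$ commutes with $f$, so from $f(a_n) = a_{n+1}$ a straightforward induction gives $f^D(a_n) = f^n(c)$. Meanwhile \textbf{[D.1]} unwinds to $f^D\big(f^{k+1}(a)\big) = f^k(a)$ for all $a$; taking $a = a_0$ yields $f^D(a_{k+1}) = a_k$, that is, $f^{k+1}(c) = a_k$. I would then finish with a short case analysis on $c$: if $c$ lies outside the chosen subset then $f$ fixes it, so $f^{k+1}(c) = c \notin \{a_n\}$, contradicting $f^{k+1}(c) = a_k$; and if $c = a_m$ then $f^{k+1}(c) = a_{m+k+1} = a_k$ forces $m = -1$, which is impossible. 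Either way no Drazin inverse exists.

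The main obstacle is entirely in the reverse direction, and within it the only genuinely delicate point is bookkeeping: one must embed $\mathbb{N}$ into $X$ and track the diagrammatic composition order carefully so that \textbf{[D.1]} and \textbf{[D.3]} read as above. Once that setup is fixed, the contradiction is exactly the one already recorded for the successor function. Combining the two directions then shows that the Drazin objects of $\mathsf{SET}$ are precisely the finite sets, i.e. $\mathsf{D}\left(\mathsf{SET}\right) = \mathsf{FinSET}$.
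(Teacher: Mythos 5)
Your proof is correct and follows essentially the same route as the paper: the finite case is delegated to Corollary~\ref{cor:finiteset}, and for an infinite $X$ you build the same ``successor on an embedded copy of $\mathbb{N}$, identity elsewhere'' endomorphism and derive the same two-case contradiction from \textbf{[D.1]} and \textbf{[D.3]} applied to $f^D(a_0)$. No gaps; the only cosmetic difference is notation ($a_n$ versus $\phi_n$).
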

\begin{proof} By Corollary \ref{cor:finiteset}, we know that finite sets are always Drazin.  So, it remains to show if a set is infinite, that it cannot be Drazin. So suppose that $X$ is an infinite set, then there is an injection $\phi: \mathbb{N} \to X$. Writing $\phi_n \colon = \phi(n)$, define the function $f: X \to X$ as $f(x) = x$ if $x \notin \mathsf{im}(\phi)$, otherwise if $x=\phi_n$ then set $f(x) = \phi_{n+1}$ -- which is well-defined since $\phi$ is injective. So in particular, $f^n(\phi_m ) = \phi_{m+n}$. If $X$ is Drazin, $f$ would have a Drazin inverse $f^D: X \to X$ and, if $\mathsf{ind}(f) = k$, then by \textbf{[D.1]} we would have that:  
\[ \phi_k = f^k(\phi_0) = f^D(f^{k+1}(\phi_0)) = f^D(\phi_{k+1}) \]
So $f^D(\phi_{k+1}) = \phi_k$. Now consider $f^D(\phi_0)$. There are two possible cases:
\begin{enumerate}[(i)]
\item Case 1: $f^D(\phi_0) \notin \mathsf{im}(\phi)$. By definition of $f$, we have that $f^{k+1}(f^D(\phi_0)) = f^D(\phi_0)$. However by \textbf{[D.1]} and \textbf{[D.3]}: 
\[f^D(\phi_0) =f^{k+1}(f^D(\phi_0)) = f^D( f^{k+1} (\phi_0 ) ) = f^k(\phi_0) = \phi_k\]
So $f^D(\phi_0) =\phi_k$ which means $f^D(\phi_0) \in \mathsf{im}(\phi)$, which is a contradiction. 
\item Case 2:  $f^D(\phi_0) \in \mathsf{im}(\phi)$. So there is some $n \in \mathbb{N}$ such that $f^D(\phi_0) = \phi_n$. But by \textbf{[D.2]}, we get that: 
\[ \phi_k = f^D(\phi_{k+1}) = f^D(f^{k+1}(\phi_0)) = f^{k+1}(f^D(\phi_0)) = f^{k+1}(\phi_n) = \phi_{n+k+1}   \]
Since $\phi$ is injective, this would imply that $k = n+k+1$. In turn, this implies that $0=n+1$, which is a contradiction since $n \in \mathbb{N}$.
\end{enumerate}
Since both cases lead to a contradiction, we conclude that $X$ cannot be an infinite set. Therefore, if $X$ is Drazin, $X$ must be a finite set. So $\mathsf{D}\left( \mathsf{SET} \right) = \mathsf{FinSet}$ as desired. 
\end{proof}

%%%%%%%%%%%%%%%%%%%%%%%%%%%%%%%%%%%%%%%%%%%%%%%%%%%%%%%%%%%%%

%%%%%%%%%%%%%%%%%%%%%%%%%%%%%%%%%%%%%%%%%%%%%%%%%%%%%%%%%%%%%

\section{Properties of Drazin Inverses}\label{sec:properties}

%%%%%%%%%%%%%%%%%%%%%%%%%%%%%%%%%%%%%%%%%%%%%%%%%%%%%%%%%%%%%

In this section, we review some well-known properties of Drazin inverses from ring theory literature, as well as providing some new properties of Drazin inverses with a more categorical flavour. In particular, we show that many basic categorical constructions behave nicely with respect to Drazin inverses. Thus, when applying these constructions to Drazin categories, we obtain new Drazin categories. Again, throughout this section we work in an arbitrary category $\mathbb{X}$. 

%%%%%%%%%%%%%%%%%%%%%%%%%%%%%%%%%%%%%%%%%%%%%%%%%%%%%%%%%%%%%%%%%%%%%%%%%%%%%%%%

\subsection{Drazin maps whose composite is not Drazin.} \label{drazin-composition} We begin with the important observation that, unfortunately, Drazin inverses do not behave well with respect to composition. So here we take the opportunity to exhibit two Drazin endomorphisms whose composite is not Drazin. We first observe that a function $f: \mathbb{N} \to \mathbb{N}$ which is strictly increasing (i.e. $f(n) > n$) and 
order-preserving (i.e. if $m \leq n$ then $f(m) \leq f(n)$) cannot be Drazin. Indeed, suppose that there is a Drazin inverse $f^D$ with index $k$, then by \textbf{[D.1]} and \textbf{[D.3]} we get that:
\[ f^k(0) = f^D(f(f^k(0))) = f(f^k(f^D(0))) > f^k(f^D(0)) \geq f^k(0). \]
So $f^k(0) > f^k(0)$, which is a contradiction, and so $f$ does not have a Drazin inverse. Now consider the two idempotents:
\[ e(n) = \left\{ \begin{array}[c]{ll} n+1 & \mbox{if $n$ is even} \\  n & \mbox{otherwise} \end{array} \right. ~~~~~  e'(n) = \left\{ \begin{array}[c]{ll} n & \mbox{if $n$ is even} \\  n+1 & \mbox{otherwise} \end{array} \right.\]
As these are idempotents they certainly have a Drazin inverse: namely, themselves (as we will explain in Lemma \ref{lemma:e-drazin}). Their composite 
is the function:
\[ e'(e(n)) = \left\{ \begin{array}[c]{ll} n+2 & \mbox{if $n$ is even} \\  n+1 & \mbox{otherwise} \end{array} \right. \]
which is strictly increasing and order-preserving, and therefore not Drazin.

%%%%%%%%%%%%%%%%%%%%%%%%%%%%%%%%%%%%%%%%%%%%%%%%%%%%%%%%%%%%%%%%%%%%%%%%%%%%%%%%%%%%%%%%%%%%%%%%%%%
\subsection{Strong $\pi$-Regularity}\label{sec:strong-pi}
In ring theory, an important equivalent way of describing being Drazin is in terms of a being \emph{strongly $\pi$-regular} \cite[Sec 2]{azumaya1954strongly} and there is a extensive literature on strongly $\pi$-regular rings. This characterization of being Drazin is also valid for maps and objects in a category as we now explain. 

\begin{definition}\label{def:strong-pi} $x: A \to A$ is \textbf{strongly $\pi$-regular} if there exists endomorphisms ${y: A \to A}$ and $z: A \to A$, and $p,q \in \mathbb{N}$ such that $yx^{p+1}=x^p$ and $x^{q+1}z=x^q$. An object $A$ is said to be \textbf{strongly $\pi$-regular} if every endomorphism of $A$ is strongly $\pi$-regular. Similarly, a category $\mathbb{X}$ is said to be \textbf{strongly $\pi$-regular} if every endomorphism of $\mathbb{X}$ is strongly $\pi$-regular (or equivalently if every object is strongly $\pi$-regular). 
\end{definition}

\begin{lemma}\label{lem:pi}\cite[Thm 4]{drazin1958pseudo} $x: A \to A$ is Drazin if and only if it $x$ is strongly $\pi$-regular. Moreover, an object (resp. category) is Drazin if and only if it is strongly $\pi$-regular. 
\end{lemma}
\begin{proof} For ($\Rightarrow$), suppose that $x$ is Drazin with $\mathsf{ind}(x)=k$. Then set $y=z=x^D$ and $p=q=k$. By Lemma \ref{lemma:Drazin-basic}.(\ref{lemma:Drazin-basic.1}), we get $x^{q+1}z=x^q$ and $yx^{p+1}=x^p$ as desired. 

For ($\Leftarrow$), let $k = \mathsf{max}(p,q)$ and define $x^D \colon = x^{k}z^{k+1}$. We show that $x^D$ satisfies the three Drazin axioms: 
\begin{description}
\item[\textbf{[D.1]}] First observe that for all $n\in \mathbb{N}$, by assumption it follows that $x^{k+n+1} z^{n+1} = x^k$. Therefore, we compute that: 
\[ x^{k+1} x^D = x^{k+1} x^{k} z^{k+1} = x^{k+k+1} z^{k+1} = x^k  \]
\item[\textbf{[D.3]}] First observe that $yx^{k+1}=x^k = x^{k+1}z$, and so 
\[ y^n x^{k} = y^{n+1}x^{k+1} = x^{k+1}z^{n+1} = x^k z^n \] 
for all $n \in \mathbb{N}$. As such, we also have that $x^D = y^{k+1}x^k$. Using this, we compute that: 
\[ xx^D = xx^{k}z^{k+1}= x^{k+1} z z^{k} = x^k z^k = y^k x^k = y^k y x^{k+1} = y^{k+1} x^k x = x^D x  \]
\item[{\bf [D.2]}] Using \textbf{[D.3]} and \textbf{[D.1]}, we have: 
\[ x^D x x^D = x^D x x^{k}z^{k+1}  = x^D x^{k+1} z^{k+1} = x^{k+1} x^D z^{k+1} = x^{k} z^{k+1}  = x^D  \]
\end{description} 
So we conclude that $x^D$ is the Drazin inverse of $x$. From this it is immediete that either an object or a category, being strongly $\pi$-regular is equivalent to being Drazin as well. 
\end{proof}

We will revisit strong $\pi$-regularity in an \emph{additive} category in Section \ref{sec:additive-pi}. 

%%%%%%%%%%%%%%%%%%%%%%%%%%%%%%%%%%%%%%%%%%%%%%%%%%%%%%%%%%%%%%%%%%%%%%%%%%%%%%%%

\subsection{Isomorphisms} What are the endomorphisms whose Drazin index is zero? It turns out that they precisely correspond to isomorphisms. So every isomorphism is Drazin and its Drazin inverse is precisely its inverse. Thus the notion of a Drazin inverse is a true generalization of an inverse. This implies that identity morphisms are Drazin and their own Drazin inverse. Furthermore, it provides another basic source of Drazin categories as any groupoid is immediately Drazin.

\begin{lemma}\label{lem:Drazin-0} $x: A \to A$ is Drazin with $\mathsf{ind}(x)=0$ if and only if $x$ is an isomorphism. In particular, the identity $1_A: A \to A$ is Drazin and its own Drazin inverse, $1^D_A = 1_A$. 
\end{lemma}
\begin{proof} For ($\Rightarrow$), suppose that $x$ is Drazin with $\mathsf{ind}(x)=0$. In particular, \textbf{[D.1]} can be rewritten as $xx^D = 1_A$. It follows from \textbf{[D.3]}, that we also have that $x^D x = 1_A$. Therefore $x$ is an isomorphism with inverse $x^D$. For ($\Leftarrow$), suppose that $x$ is an isomorphism. Then its inverse $x^{-1}$ satisfies the three Drazin axioms: \textbf{[D.1]}, $xx^{-1} = 1_A = x^0$; \textbf{[D.2]} $x^{-1} x x^{-1} = x^{-1}$; and \textbf{[D.3]} $xx^{-1} = 1_A = x^{-1} x$. So $x^{-1}$ is the Drazin inverse of $x$. Moreover, \textbf{[D.1]} holds for $k=0$, thus $\mathsf{ind}(x)=0$. 
\end{proof}

%%%%%%%%%%%%%%%%%%%%%%%%%%%%%%%%%%%%%%%%%%%%%%%%%%%%%%%%%%%%%%%%%%%%%%%%%%%%%%%%%%%%%%%%%%%%%%%%%%%
\subsection{Group Inverses}\label{sec:groupinv} Another special case to consider is when the Drazin index is less than or equal to $1$. In the literature, this is better known as having a \emph{group inverse} \cite[Def 7.2.4]{campbell2009generalized}. In other words, a group inverse is the Drazin inverse for endomorphisms with Drazin index less than or equal to $1$. Puystjens and Robinson described group inverses in an arbitrary category in \cite[Sec 2]{robinson1987generalized}. 

\begin{definition}\label{def:groupinv} A \textbf{group inverse} \cite[Sec 2]{robinson1987generalized} of $x: A \to A$ is an endomorphism $x^D: A \to A$ such that the following equalities hold: 
 \[ \textbf{[G.1]}~ x x^D x = x; ~~~~~\textbf{[G.2]}~ x^D x x ^D = x^D; ~~~~~\textbf{[G.3]}~ x^D x = x x^D. \]   
\end{definition}

\begin{lemma}\label{lemma:ind1} $x: A \to A$ is Drazin with $\mathsf{ind}(x) \leq 1$ if and only if $x$ has a group inverse. 
\end{lemma}
\begin{proof} For ($\Rightarrow$), suppose that $x$ is Drazin and $\mathsf{ind}(x) \leq 1$. We show that its Drazin inverse $x^D$ is a group inverse. Note that \textbf{[G.2]} and \textbf{[G.3]} are the same as \textbf{[D.2]} and \textbf{[D.3]}. Since $\mathsf{ind}(x) \leq 1$, by Lemma \ref{lemma:Drazin-basic}.(\ref{lemma:Drazin-basic.1}), we have that $x^2 x^D = x$, which using \textbf{[D.3]} can be rewritten as $x x^D x = x$, so \textbf{[G.1]} also holds. Therefore $x^D$ is a group inverse of $x$. For ($\Leftarrow$), suppose that $x$ has a group inverse $x^D$. Again, note that \textbf{[D.2]} and \textbf{[D.3]} are the same as \textbf{[G.2]} and \textbf{[G.3]}. Now using \textbf{[G.3]}, we can rewrite \textbf{[G.1]} as $x^{1+1} x^D = x$, so \textbf{[D.1]} holds for at least $k=1$. Therefore, $x$ is Drazin with Drazin inverse $x^D$ and $\mathsf{ind}(x) \leq 1$. 
\end{proof}

Since group inverses are special cases of Drazin inverses, they are unique. A special case of an endomorphism with a group inverse is an idempotent.  Later, in Lemma \ref{lemma:ind1=cbi}, we will provide an alternative description of an endomorphism with a group inverse as a commuting \emph{binary idempotent}.  

%%%%%%%%%%%%%%%%%%%%%%%%%%%%%%%%%%%%%%%%%%%%%%%%%%%%%%%%%%%%%%%%%%%%%%%%%%%%%%%%%%%%%%%%%%%%%%%%
\subsection{The Drazin Inverse of a Drazin Inverse} A Drazin inverse is always Drazin, and in fact its Drazin inverse is a group inverse, meaning that the Drazin index of the Drazin inverse is less than or equal to one. The Drazin inverse of a Drazin inverse is called its \textbf{core} and will play an important role in Section \ref{sec:additive}. 

\begin{lemma} \label{lem:Drazin-inverse1} Let $x: A \to A$ be Drazin. Then: 
\begin{enumerate}[(i)]
\item \label{Drazin-inverse-inverse} \cite[Thm 3]{drazin1958pseudo} \label{repeated-Drazin-inverse.i} $x^D$ is Drazin where $x^{DD} \colon = x x^D x$ and $\mathsf{ind}(x^D) \leq 1$;
\item \label{Drazin-inverse-inverse-inverse}  \cite[Cor 4]{drazin1958pseudo} $x^{DD}$ is Drazin where $x^{DDD} = x^D$;
\item \label{cor:ginverse-ginverse} If $\mathsf{ind}(x) \leq 1$, then $x^{DD} = x$. 
\end{enumerate} 
\end{lemma}
\begin{proof} We begin by checking that $x^{DD} \colon = x x^D x$ satisfies the three group inverse axioms. 
\begin{enumerate}[{\bf [G.1]}]
\item By {\bf [D.2]} twice, we compute that: 
\[ x^D x^{DD} x^D = x^D x x^D x x^D = x^D x x^D = x^D \]
\item By {\bf [D.2]} twice, we compute that: 
\[ x^{DD} x^D x^{DD} = x x^D x x^D x x^D x = x x^D x x^D x =  x x^D x = x^{DD} \]
\item By {\bf [D.3]} we get that: 
\[ x^D  x^{DD} = x^D x x^D x  = x x^D x x^D = x^{DD} x^D \]
\end{enumerate}
So $x^{DD}$ is a group inverse of $x^D$. By Lemma \ref{lemma:ind1}, $x^D$ is Drazin with Drazin inverse $x^{DD}$ and $\mathsf{ind}(x^D) \leq 1$. Now by applying (\ref{Drazin-inverse-inverse}) to $x^{DD}$, we have that $x^{DD}$ also has a Drazin inverse given by $x^{DDD} = x^D x^{DD} x^D$. Expanding this out and applying {\bf [D.2]} twice again, we compute that: 
\[ x^{DDD} =  x^D x^{DD} x^D = x^D x x^D x x^D = x^D x x^D = x^D \]
So $x^D$ is the Drazin inverse of $x^{DD}$, or in other words, $x^{DDD} = x^D$. Lastly, if $\mathsf{ind}(x) \leq 1$, then by Lemma \ref{lemma:ind1}, $x^D$ is a group inverse of $x$. So by \textbf{[G.1]}, we have that $x^{DD} = x x^D x = x$.
\end{proof}

%%%%%%%%%%%%%%%%%%%%%%%%%%%%%%%%%%%%%%%%%%%%%%%%%%%%%%%%%%%%%%%%%%%%%%%%%%%%%%%%%%%%%%%%%%%%%%%%%%%%
\subsection{Drazin inverses are absolute} We now turn our attention to other properties of Drazin inverse that are more categorically flavoured. We first observe that Drazin inverses are \emph{absolute}, that is, every functor preserves Drazin inverses on the nose. 

\begin{proposition}\label{Drazin-absolute} Let $\mathsf{F}: \mathbb{X} \to \mathbb{Y}$ be a functor and let $x: A \to A$ be Drazin in $\mathbb{X}$. Then $\mathsf{F}(x): \mathsf{F}(A) \to \mathsf{F}(A)$ is Drazin in $\mathbb{Y}$ where $\mathsf{F}(x)^D = \mathsf{F}(x^D)$ and $\mathsf{ind}\left( \mathsf{F}(x) \right) \leq \mathsf{ind}(x)$. 
\end{proposition}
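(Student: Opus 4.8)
The plan is to apply the functor $\mathsf{F}$ directly to the three Drazin axioms and exploit the two defining properties of any functor: that it preserves composition, $\mathsf{F}(gf) = \mathsf{F}(f)\mathsf{F}(g)$ in diagrammatic order, and that it preserves identities, $\mathsf{F}(1_A) = 1_{\mathsf{F}(A)}$. The first observation I would record is that functoriality gives $\mathsf{F}(x^n) = \mathsf{F}(x)^n$ for every $n \in \mathbb{N}$, including the case $n = 0$ where both sides equal the identity on $\mathsf{F}(A)$. With this power-preservation identity in hand, each axiom transports along $\mathsf{F}$ essentially verbatim.

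Concretely, write $k = \mathsf{ind}(x)$, so that the equations $x^{k+1} x^D = x^k$, $x^D x x^D = x^D$, and $x^D x = x x^D$ hold in $\mathbb{X}$. Applying $\mathsf{F}$ to the first equation and using $\mathsf{F}(x^{k+1} x^D) = \mathsf{F}(x)^{k+1}\mathsf{F}(x^D)$ together with $\mathsf{F}(x^k) = \mathsf{F}(x)^k$ yields $\mathsf{F}(x)^{k+1}\mathsf{F}(x^D) = \mathsf{F}(x)^k$, which is exactly \textbf{[D.1]} for $\mathsf{F}(x)$ with candidate inverse $\mathsf{F}(x^D)$ and witness $k$. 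Applying $\mathsf{F}$ to the second and third equations in the same way gives $\mathsf{F}(x^D)\mathsf{F}(x)\mathsf{F}(x^D) = \mathsf{F}(x^D)$ and $\mathsf{F}(x^D)\mathsf{F}(x) = \mathsf{F}(x)\mathsf{F}(x^D)$, which are \textbf{[D.2]} and \textbf{[D.3]}. Hence $\mathsf{F}(x^D)$ satisfies all three axioms, so $\mathsf{F}(x)$ is Drazin and, by uniqueness (Proposition \ref{prop:unique}), $\mathsf{F}(x)^D = \mathsf{F}(x^D)$.

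For the index inequality, the point is that \textbf{[D.1]} for $\mathsf{F}(x)$ is witnessed by $k = \mathsf{ind}(x)$, but the \emph{Drazin index} of $\mathsf{F}(x)$ is by definition the \emph{least} natural number for which \textbf{[D.1]} holds. Since $\mathsf{ind}(x)$ is one such witness, the least one can only be smaller or equal, giving $\mathsf{ind}(\mathsf{F}(x)) \leq \mathsf{ind}(x)$. I would note that equality need not hold: $\mathsf{F}$ may collapse structure (for instance sending a genuinely nilpotent-of-order-$k$ piece to something of lower nilpotency), which is precisely why the statement is an inequality rather than an equation.

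There is really no main obstacle here beyond bookkeeping the diagrammatic-order convention correctly when expanding composites under $\mathsf{F}$; the entire content is that functoriality is an equational preservation property and the three Drazin axioms are equations between composites of powers of $x$ and $x^D$. The only subtlety worth flagging explicitly is the $n = 0$ case of power-preservation, which relies on $\mathsf{F}$ preserving identities and underlies the correct handling of \textbf{[D.1]} when $\mathsf{ind}(x) = 0$.
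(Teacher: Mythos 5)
Your proof is correct and follows essentially the same route as the paper: apply $\mathsf{F}$ to each of the three Drazin equations, use $\mathsf{F}(x^n) = \mathsf{F}(x)^n$, and observe that $k = \mathsf{ind}(x)$ witnesses \textbf{[D.1]} for $\mathsf{F}(x)$, whence $\mathsf{ind}(\mathsf{F}(x)) \leq \mathsf{ind}(x)$. The only blemish is the preliminary line ``$\mathsf{F}(gf) = \mathsf{F}(f)\mathsf{F}(g)$ in diagrammatic order,'' which should read $\mathsf{F}(fg) = \mathsf{F}(f)\mathsf{F}(g)$; your subsequent computations use the correct convention.
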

\begin{proof} We show that $\mathsf{F}(x)^D \colon = \mathsf{F}(x^D)$ satisfies the three Drazin inverse axioms.  
\begin{enumerate}[{\bf [D.1]}]
\item Let $\mathsf{ind}(x)=k$. By {\bf [D.1]} for $x$, we compute that: 
\[ \mathsf{F}(x)^{k+1} \mathsf{F}(x)^D = \mathsf{F}(x^{k+1}) \mathsf{F}(x^D) = \mathsf{F}(x^{k+1} x^D) = \mathsf{F}(x^k) = \mathsf{F}(x)^k  \]
\item By {\bf [D.2]} for $x$, we compute that: 
\[ \mathsf{F}(x)^D \mathsf{F}(x) \mathsf{F}(x)^D = \mathsf{F}(x^D) \mathsf{F}(x) \mathsf{F}(x^D) = \mathsf{F}(x^D x x^D) = \mathsf{F}(x^D) = \mathsf{F}(x)^D \]
\item By {\bf [D.3]} for $x$, we have: 
\[  \mathsf{F}(x)^D \mathsf{F}(x) =  \mathsf{F}(x^D) \mathsf{F}(x) =  \mathsf{F}(x^D x) =  \mathsf{F}(x x^D) = \mathsf{F}(x) \mathsf{F}(x^D) = \mathsf{F}(x) \mathsf{F}(x)^D  \]
\end{enumerate}
So we conclude that $\mathsf{F}(x^D)$ is indeed the Drazin inverse of $\mathsf{F}(x)$. Moreover, by the calculation for \textbf{[D.1]}, we have that $\mathsf{ind}\left( \mathsf{F}(x) \right) \leq \mathsf{ind}(x)$.
\end{proof}

Being Drazin is, furthermore, a property that is preserved by categorical equivalence. To see this, we first show that conjugation by an isomorphism preserves being Drazin. 

\begin{lemma}\label{lemma:conjugate} If $x: A \to A$ is Drazin, then for any isomorphism $p: B \to A$, the composite $p x p^{-1}: B \to B$ is also Drazin where $(p x p^{-1})^D \colon = p x^D p^{-1}$. 
\end{lemma}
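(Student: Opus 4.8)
The plan is to verify the three Drazin axioms directly for the candidate $y := p x^D p^{-1}$, the only subtlety being that composition is written in diagrammatic order, so that $p x p^{-1} : B \to B$ first applies $p$, then $x$, then $p^{-1}$. The single computational fact that makes everything work is a telescoping identity for powers: since $p^{-1} p = 1_A$, we have $(p x p^{-1})(p x p^{-1}) = p x (p^{-1} p) x p^{-1} = p x^2 p^{-1}$, and an easy induction on $n$ then gives $(p x p^{-1})^n = p x^n p^{-1}$ for every $n \in \mathbb{N}$, with the base case $n=0$ reading $1_B = p 1_A p^{-1}$. First I would record this identity, since it is used repeatedly in what follows.

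With it in hand, each axiom reduces to the corresponding axiom for $x$, because the interior factors $p^{-1} p$ collapse to identities. Writing $\mathsf{ind}(x) = k$, for \textbf{[D.1]} I would compute
\[ (p x p^{-1})^{k+1} y = (p x^{k+1} p^{-1})(p x^D p^{-1}) = p\, x^{k+1} x^D\, p^{-1} = p x^k p^{-1} = (p x p^{-1})^k, \]
using \textbf{[D.1]} for $x$ in the middle step; for \textbf{[D.2]},
\[ y (p x p^{-1}) y = (p x^D p^{-1})(p x p^{-1})(p x^D p^{-1}) = p\, x^D x x^D\, p^{-1} = p x^D p^{-1} = y; \]
and for \textbf{[D.3]},
\[ y (p x p^{-1}) = p\, x^D x\, p^{-1} = p\, x x^D\, p^{-1} = (p x p^{-1}) y. \]
This shows that $y = p x^D p^{-1}$ is the Drazin inverse of $p x p^{-1}$, and the computation for \textbf{[D.1]} shows the index does not increase, giving $\mathsf{ind}(p x p^{-1}) \leq \mathsf{ind}(x)$ as a byproduct.

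There is essentially no hard part here: every step is a cancellation of $p^{-1} p$ or $p p^{-1}$ followed by an application of a known Drazin identity for $x$, so the whole argument is bookkeeping, and the only genuine care needed is in keeping the diagrammatic composition order straight. Conceptually, I would note that conjugation $x \mapsto p x p^{-1}$ is a monoid isomorphism $\mathbb{X}(A,A) \to \mathbb{X}(B,B)$, i.e. a functor between the associated one-object categories, so the result is in fact a one-object instance of the absolutivity of Drazin inverses established in Proposition \ref{Drazin-absolute}; this alternative route yields both the statement and the formula $(p x p^{-1})^D = p x^D p^{-1}$ immediately, but the explicit verification above is more transparent and self-contained.
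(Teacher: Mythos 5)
Your proof is correct and follows essentially the same route as the paper: establish $(pxp^{-1})^n = px^np^{-1}$ by cancelling the interior $p^{-1}p$ factors, then verify each of \textbf{[D.1]}--\textbf{[D.3]} by reducing to the corresponding axiom for $x$. The closing remark that this is a one-object instance of Proposition \ref{Drazin-absolute} is a nice observation but does not change the substance of the argument.
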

\begin{proof} We show that $(p x p^{-1})^D \colon = p x^D p^{-1}$ satisfies the three Drazin inverse axioms. 
\begin{enumerate}[{\bf [D.1]}]
\item Suppose that $\mathsf{ind}(x) = k$. Now note that $(p x p^{-1})^{n} = px^n p^{-1}$ for all $n\in \mathbb{N}$. So by {\bf [D.1]} for $x$, we have: 
\[ (p x p^{-1})^{k+1}(p x p^{-1})^D = px^{k+1} p^{-1} p x^D p^{-1} = px^{k+1} x^D p^{-1} = p x^k p^{-1} = (p x p^{-1})^{k} \]
\item By {\bf [D.2]} for $x$, we compute: 
\[ (p x p^{-1})^D p x p^{-1} (p x p^{-1})^D = p x^D p^{-1} p x p^{-1} p x^D p^{-1} = p x^D x x^D p^{-1} = p x^D p^{-1} = (p x p^{-1})^D  \]
\item By {\bf [D.3]} for $x$, we have: 
\begin{gather*}
     (p x p^{-1})^D p x p^{-1} \!= \! p x^D p^{-1}p x p^{-1} \! = \! p x^D x p^{-1} \!= \! p x^D x p^{-1} \! = \! p x p^{-1}  p x^D p^{-1} = p x p^{-1}(p x p^{-1})^D 
\end{gather*}
\end{enumerate}
    So we conclude that $(p x p^{-1})^D$ is indeed the Drazin inverse of $p x p^{-1}$ as desired. 
\end{proof}

\begin{corollary}\label{cor:equivalence} If $\mathbb{Y}$ is Drazin and $\mathbb{X}$ is equivalent to $\mathbb{Y}$, then $\mathbb{X}$ is Drazin.
\end{corollary}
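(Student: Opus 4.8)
The plan is to transport the Drazin structure across the equivalence using the two results just established: absoluteness (Proposition \ref{Drazin-absolute}) and invariance under conjugation by isomorphisms (Lemma \ref{lemma:conjugate}). Recall that an equivalence $\mathbb{X} \simeq \mathbb{Y}$ supplies functors $\mathsf{F}: \mathbb{X} \to \mathbb{Y}$ and $\mathsf{G}: \mathbb{Y} \to \mathbb{X}$ together with a natural isomorphism $\eta: 1_{\mathbb{X}} \Rightarrow \mathsf{G}\mathsf{F}$ (we will not even need the counit). Fix an arbitrary endomorphism $x: A \to A$ in $\mathbb{X}$; the goal is to produce its Drazin inverse.

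First I would push $x$ into $\mathbb{Y}$: the endomorphism $\mathsf{F}(x): \mathsf{F}(A) \to \mathsf{F}(A)$ has a Drazin inverse, simply because $\mathbb{Y}$ is assumed to be Drazin. Next I would pull this back into $\mathbb{X}$ by applying $\mathsf{G}$: by Proposition \ref{Drazin-absolute}, every functor preserves Drazin inverses on the nose, so $\mathsf{G}\mathsf{F}(x): \mathsf{G}\mathsf{F}(A) \to \mathsf{G}\mathsf{F}(A)$ is Drazin in $\mathbb{X}$, with $(\mathsf{G}\mathsf{F}(x))^D = \mathsf{G}(\mathsf{F}(x)^D)$. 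It then remains only to relate $\mathsf{G}\mathsf{F}(x)$ back to $x$ itself, and this is exactly what the unit isomorphism accomplishes. Applying naturality of $\eta$ to the morphism $x$ yields $x\, \eta_A = \eta_A\, \mathsf{G}\mathsf{F}(x)$ (in diagrammatic order), and since $\eta_A: A \to \mathsf{G}\mathsf{F}(A)$ is an isomorphism this rearranges to $x = \eta_A\, \mathsf{G}\mathsf{F}(x)\, \eta_A^{-1}$. Thus $x$ is the conjugate of the Drazin endomorphism $\mathsf{G}\mathsf{F}(x)$ by the isomorphism $\eta_A$, and invoking Lemma \ref{lemma:conjugate} (with $\mathsf{G}\mathsf{F}(x)$ as the Drazin endomorphism and $\eta_A$ as the isomorphism) immediately gives that $x$ is Drazin, with explicit inverse $x^D = \eta_A\, \mathsf{G}(\mathsf{F}(x)^D)\, \eta_A^{-1}$. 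Since $x$ was arbitrary, every endomorphism of $\mathbb{X}$ is Drazin, so $\mathbb{X}$ is a Drazin category.

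There is essentially no genuine obstacle here: once absoluteness and conjugation-invariance are available, the statement is a routine transport along the equivalence. The only points requiring care are bookkeeping ones — getting the direction of the naturality square and the composition order right, so that it is $\eta_A\, \mathsf{G}\mathsf{F}(x)\, \eta_A^{-1}$, rather than its reverse, that recovers $x$ under the paper's diagrammatic convention — together with the observation that only the unit isomorphism $\eta$ is used, never the counit.
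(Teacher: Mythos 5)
Your proposal is correct and follows essentially the same route as the paper: push $x$ into $\mathbb{Y}$ via $\mathsf{F}$, use that $\mathbb{Y}$ is Drazin, pull back with $\mathsf{G}$ via absoluteness (Proposition \ref{Drazin-absolute}), and conjugate by the unit isomorphism $\eta_A$ using Lemma \ref{lemma:conjugate} together with naturality to recover $x = \eta_A\, \mathsf{G}\mathsf{F}(x)\, \eta_A^{-1}$. Your remark that only the unit is needed is a fair (if minor) tidying of the paper's statement, which names the counit but never uses it.
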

\begin{proof} Suppose that $\mathsf{F}: \mathbb{X} \to \mathbb{Y}$ and $\mathsf{G}: \mathbb{Y} \to \mathbb{X}$ form an equivalence with natural isomorphisms $\eta_A: A \to \mathsf{G}\mathsf{F}(A)$ and $\epsilon_B: B \to \mathsf{F}\mathsf{G}(B)$. Now for every endomorphism $x: A \to A$ in $\mathbb{X}$, since $\mathbb{Y}$ is Drazin, we have that the endomorphism $\mathsf{F}(x): \mathsf{F}(A) \to \mathsf{F}(A)$ is Drazin. Then by Proposition \ref{Drazin-absolute}, we have that $\mathsf{G}\mathsf{F}(x): \mathsf{G}\mathsf{F}(A) \to \mathsf{G}\mathsf{F}(A)$ is Drazin. Then by Lemma \ref{lemma:conjugate}, we have that $\eta_A \mathsf{G}\mathsf{F}(x) \eta^{-1}_A: A \to A$ is Drazin. Of course, $x = \eta_A \mathsf{G}\mathsf{F}(x) \eta^{-1}_A$, so $x$ is Drazin, and therefore $\mathbb{X}$ is also a Drazin category. 
\end{proof}

%%%%%%%%%%%%%%%%%%%%%%%%%%%%%%%%%%%%%%%%%%%%%%%%%%%%%%%%%%%%%%%%%%%%%%%%%%%%%%%%%%%%%%%%%%%%%%%%%%%
\subsection{Commutative Squares} Drazin observed that the Drazin inverse of an endomorphism commutes with everything with which that endomorphism commutes \cite{drazin2013commuting}. Here we generalize this observation and show that the idea applies to commutative squares: 

\begin{proposition}\label{commutative-squares}
\label{Drazin-commuting}  Let $x: A \to A$ and $y: B \to B$ be Drazin. If the square on the left commutes, then the square on the right commutes: 
\[ \begin{array}[c]{c} \xymatrixcolsep{5pc}\xymatrix{A \ar[d]_-{f} \ar[r]^-{x} & A \ar[d]^-{f} \\ B \ar[r]_-{y} & B} \end{array} \Rightarrow \begin{array}[c]{c} \xymatrixcolsep{5pc}\xymatrix{A \ar[d]_-{f} \ar[r]^-{x^D} & A \ar[d]^-{f} \\ B \ar[r]_-{y^D} & B} \end{array} \]
\end{proposition}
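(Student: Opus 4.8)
The plan is to reduce everything to the single intertwining relation read off the left-hand square. In diagrammatic order the hypothesis is $xf = fy$, and a one-line induction using only associativity upgrades this to $x^n f = f y^n$ for every $n \geq 0$. The goal is the analogous statement one level up, $x^D f = f y^D$; so I would set $g := x^D f$ and $h := f y^D$, both of type $A \to B$, reducing the problem to proving $g = h$.

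The naive attempt --- to push $f$ through powers of $x$ and $y$ and then contract with the identities of Lemma \ref{lemma:Drazin-basic} --- is hopeless, because moving $f$ past $x^D$ (or $y^D$) is exactly what we are trying to prove, whereas moving it past honest powers of $x$ and $y$ is free and goes in circles. The device that breaks the loop is to first pass to a high power and then descend through the induced idempotents $e_x := x x^D = x^D x$ and $e_y := y y^D = y^D y$. Fix $k \geq \max(\mathsf{ind}(x), \mathsf{ind}(y))$. First I would multiply $g$ and $h$ on the left by $x^{k+1}$: by Lemma \ref{lemma:Drazin-basic}.(\ref{lemma:Drazin-basic.1}) we have $x^{k+1} x^D = x^k$ and $y^{k+1} y^D = y^k$, which together with the intertwining relation make both $x^{k+1} g$ and $x^{k+1} h$ equal to $f y^k$; hence $x^{k+1} g = x^{k+1} h$. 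Symmetrically, multiplying on the right by $y^{k+1}$ and using $x^D x^{k+1} = x^k$ and $y^D y^{k+1} = y^k$ gives $g y^{k+1} = h y^{k+1} = x^k f$.

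Finally I would descend again and combine. Left-multiplying $x^{k+1} g = x^{k+1} h$ by $(x^D)^{k+1}$ and using $(x^D)^{k+1} x^{k+1} = \big( (x^D)^{k+1} x^k \big) x = x^D x = e_x$ (via Lemma \ref{lemma:Drazin-basic}.(\ref{lemma:Drazin-basic.2})) yields $e_x g = e_x h$; dually, right-multiplying $g y^{k+1} = h y^{k+1}$ by $(y^D)^{k+1}$ gives $g e_y = h e_y$. Now \textbf{[D.2]} gives $e_x g = g$ and $h e_y = h$, so these two equations read $g = e_x h$ and $h = g e_y$, and substituting one into the other closes the argument:
\[ g = e_x h = e_x(g e_y) = (e_x g) e_y = g e_y = h. \]
Thus $x^D f = f y^D$ and the right-hand square commutes. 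The only real obstacle is the circularity flagged above; once the ``ascend to $x^{k+1}$, descend through $(x^D)^{k+1}$'' maneuver is in place, everything else is routine bookkeeping with the identities of Lemma \ref{lemma:Drazin-basic}.
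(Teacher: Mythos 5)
Your proof is correct; every step checks out against Lemma \ref{lemma:Drazin-basic} and \textbf{[D.2]}, and the central ``ascend to $x^{k+1}$, then descend'' maneuver is exactly the engine of the paper's own argument. The difference is in how the descent is organized. The paper derives the single intermediate relation $x^D x^k f = f y^k y^D$ in one chain, observes that (since $x^k f = f y^k$) this relation can itself be iterated to give $(x^D)^{k+1} x^k f = f y^k (y^D)^{k+1}$, and then contracts both sides at once using $(x^D)^{k+1}x^k = x^D$ and $y^k(y^D)^{k+1} = y^D$ from Lemma \ref{lemma:Drazin-basic}.(\ref{lemma:Drazin-basic.2}). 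You instead avoid any iteration of a derived relation: you cancel the high powers separately on the left (through $x$) and on the right (through $y$), landing on the two idempotent equations $g = e_x h$ and $h = g e_y$, which you then splice together. Your version is arguably a little more transparent about \emph{why} the argument closes --- the final line $g = e_x h = g e_y = h$ makes visible that the two one-sided cancellations are individually insufficient and must be combined --- at the cost of carrying two parallel computations where the paper carries one. Both proofs use precisely the same ingredients, so neither is more general; this is a difference of bookkeeping, not of substance.
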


\begin{proof} Let $k = \max\left( \mathsf{ind}(x), \mathsf{ind}(y) \right)$. By Lemma \ref{lemma:Drazin-basic}.(\ref{lemma:Drazin-basic.1}), we have that $y^{k+1} y^D = y^k$ and $x^D x^{k+1} = x^k$. Moreover, by the left square, we have that $x^k f = f y^k$. So we first compute: 
\[ x^D x^k f = x^D f y^k =  x^D f y^{k+1} y^D =  x^D x^{k+1} f y^D = x^k f y^D = f y^k y^D  \]
So $x^D x^k f = f y^k y^D$. From this, we also get that $(x^D)^{k+1} x^k f =  f y^k (y^D)^{k+1}$. Now by Lemma \ref{lemma:Drazin-basic}.(\ref{lemma:Drazin-basic.2}), recall that we also have that $(x^D)^{k+1} x^k = x^D$ and $y^k (y^D)^{k+1} = y^D$. Then we compute that: 
\[ x^D f = (x^D)^{k+1} x^k f =  f y^k (y^D)^{k+1} = f y^D \]
So the square on the right commutes as desired. 
\end{proof}

Proposition \ref{Drazin-commuting} will be quite useful for constructing new Drazin categories. 

%%%%%%%%%%%%%%%%%%%%%%%%%%%%%%%%%%%%%%%%%%%%%%%%%%%%%%%%%%%%%%%%%%%%%%%%%%%%%%%%%
\subsection{Iteration} A basic operation on an endomorphism is to iterate it. The iteration of an endomorphism which is Drazin is again Drazin, and the Drazin inverse of that iteration is just the iteration of the Drazin inverse.  

\begin{lemma} \cite[Thm 2]{drazin1958pseudo} \label{lem:Drazin-iteration} If $x: A \to A$ is Drazin, with $\mathsf{ind}(x)=k$, then for each $n \in \mathbb{N}$, $x^n$ is Drazin where $(x^n)^D = (x^D)^n$ and $\mathsf{ind}(x^n) \leq k$. 
\end{lemma}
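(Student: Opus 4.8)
The plan is to verify directly that the candidate inverse $(x^n)^D := (x^D)^n$ satisfies the three Drazin axioms with respect to $x^n$. I would first dispose of the degenerate case $n=0$, where $x^0 = 1_A$ is its own Drazin inverse with index $0$ by Lemma \ref{lem:Drazin-0}, and then assume $n \geq 1$ throughout. The two ingredients I expect to do all the work are the commutativity of powers (iterating \textbf{[D.3]} shows every power of $x$ commutes with every power of $x^D$) together with the precompiled identities of Lemma \ref{lemma:Drazin-basic}.

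Axiom \textbf{[D.3]} is immediate: since $x$ and $x^D$ commute, so do $x^n$ and $(x^D)^n$, giving $(x^D)^n x^n = x^n (x^D)^n$. For \textbf{[D.2]}, I would use commutativity to rewrite $(x^D)^n x^n (x^D)^n$ as $(x^D)^{2n} x^n$, split off a factor $(x^D)^{n-1}$, and apply Lemma \ref{lemma:Drazin-basic}.(\ref{lemma:Drazin-basic.2}) in the form $(x^D)^{n+1} x^n = x^D$ to collapse the remaining tail; this leaves $(x^D)^{n-1} x^D = (x^D)^n$, as required (here the hypothesis $n \geq 1$ is what makes the exponent $n-1$ legitimate).

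The interesting step is \textbf{[D.1]}, since it also yields the index bound. The required identity is $(x^n)^{k+1}(x^D)^n = (x^n)^k$, i.e. $x^{n(k+1)}(x^D)^n = x^{nk}$. I would obtain this from Lemma \ref{lemma:Drazin-basic}.(\ref{lemma:Drazin-basic.3}), which states $x^{m+p}(x^D)^p = x^m$ whenever $m \geq \mathsf{ind}(x)$: taking $m = nk$ and $p = n$ gives exactly $x^{nk+n}(x^D)^n = x^{nk}$. Hence $j = k$ witnesses \textbf{[D.1]} for $x^n$, so $x^n$ is Drazin and $\mathsf{ind}(x^n) \leq k$.

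I do not anticipate a genuine obstacle, as the content is elementary exponent bookkeeping; the one point that needs care is precisely this index estimate. The tempting misstep is to feed $k$ itself into Lemma \ref{lemma:Drazin-basic}.(\ref{lemma:Drazin-basic.3}) and conclude an index bound like $\lceil k/n \rceil$. Instead one must feed $nk$ as the ``$\geq \mathsf{ind}(x)$'' argument and observe that $n \geq 1$ forces $nk \geq k$, so that the single value $j = k$ genuinely satisfies \textbf{[D.1]} and delivers the stated bound $\mathsf{ind}(x^n) \leq k$ rather than something smaller. I would note in passing that the commutative-squares Proposition \ref{commutative-squares} is not needed here, since it presupposes that both endomorphisms are already Drazin and so cannot by itself establish that $x^n$ is Drazin.
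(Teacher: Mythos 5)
Your proof is correct and follows essentially the same route as the paper: a direct verification of the three axioms for $(x^D)^n$ using the identities of Lemma \ref{lemma:Drazin-basic}, with the $n=0$ case handled separately via Lemma \ref{lem:Drazin-0}. The only cosmetic differences are that the paper collapses \textbf{[D.2]} termwise as $(x^D x x^D)^n = (x^D)^n$ rather than via Lemma \ref{lemma:Drazin-basic}.(\ref{lemma:Drazin-basic.2}), and for \textbf{[D.1]} it applies $x^{k+n}(x^D)^n = x^k$ and pads with $x^{(n-1)k}$ where you invoke Lemma \ref{lemma:Drazin-basic}.(\ref{lemma:Drazin-basic.3}) directly at base exponent $nk$; both give the same bound $\mathsf{ind}(x^n)\leq k$.
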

\begin{proof} When $n=0$, by Lemma \ref{lem:Drazin-0}, we know that $x^0=1_A$ is Drazin with $(x^0)^D = (x^D)^0 = 1_A$ and $\mathsf{ind}(x^0) = 0$ which is less than or equal to $k$. So now assume that $n \geq 1$. We show $x^n$ has Drazin index $k$ with $(x^n)^D \colon = (x^D)^n$ by checking the three Drazin axioms:
\begin{enumerate}[{\bf [D.1]}]
\item First observe that, from \textbf{[D.1]} for $x$, we have that $x^{k+n} (x^D)^{n} = x^{k}$ and thus:
\[ (x^n)^{k + 1} (x^{n})^D = x^{nk+n} (x^D)^{n} =  x^{(n-1)k} x^{k+n} (x^D)^{n} = x^{(n-1)k} x^k = x^{nk} = (x^n)^{k}  \]
\item By {\bf [D.2]} and {\bf [D.3]} for $x$, we compute that: 
\[ (x^n)^D x^n (x^n)^D = (x^D)^n x^n (x^D)^n = \left( x^D x x^D \right)^n = (x^D)^n = (x^n)^D \]
\item By {\bf [D.3]} for $x$ we have: 
\[ (x^n)^D x^n = (x^D)^n x^n = x^n (x^D)^n  = x^n (x^n)^D \]
\end{enumerate}
So we conclude that $x^n$ is Drazin. 
\end{proof}

More surprisingly, we use Proposition \ref{commutative-squares} to show that if a non-trivial iteration of an endomorphism is Drazin then the original endomorphism must have been Drazin.

\begin{lemma} $x: A \to A$ is Drazin if and only if there is a $k \in \mathbb{N}$ such that $x^{k+1}: A \to A$ is Drazin. 
\end{lemma}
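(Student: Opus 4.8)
The plan is to dispose of the forward implication immediately and concentrate all effort on the converse. If $x$ is Drazin then $x^{0+1} = x$ is Drazin, so one may simply take $k = 0$ (or invoke Lemma \ref{lem:Drazin-iteration}, which already shows every power of $x$ is Drazin). Thus the real content is: if $x^{k+1}$ is Drazin for some $k \in \mathbb{N}$, then $x$ is Drazin. Throughout I write $m = k+1 \geq 1$ and assume $x^m$ is Drazin, with $x^0 = 1_A$ by convention so that $x^{m-1}$ always makes sense.

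My preferred route passes through strong $\pi$-regularity via Lemma \ref{lem:pi}, since this avoids committing to a candidate inverse up front. By hypothesis $x^m$ is strongly $\pi$-regular, so there exist endomorphisms $y, z$ and $p, q \in \mathbb{N}$ with $y\,x^{m(p+1)} = x^{mp}$ and $x^{m(q+1)} z = x^{mq}$. The key move is to absorb a factor of $x^{m-1}$: setting $y' = y x^{m-1}$ and $z' = x^{m-1} z$ rewrites these as $y'\,x^{mp+1} = x^{mp}$ and $x^{mq+1}\,z' = x^{mq}$, which is exactly the definition of $x$ being strongly $\pi$-regular (with witnesses $p' = mp$, $q' = mq$). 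Lemma \ref{lem:pi} then gives that $x$ is Drazin. I expect this argument to be essentially calculation-free.

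If instead one wants the explicit formula for the Drazin inverse — which is what makes the statement ``surprising'' — the candidate is $x^D := x^{k}\,(x^{k+1})^D$, i.e. $x^{m-1} w$ where $w := (x^m)^D$. Verifying the axioms requires knowing that $w$ commutes with $x$, and this is precisely where Proposition \ref{commutative-squares} enters: since $x$ commutes with its own power $x^m$, applying Proposition \ref{commutative-squares} to the square with horizontal maps $x^m$ and vertical maps $x$ yields $w x = x w$. With this commutation in hand, \textbf{[D.3]} is immediate, \textbf{[D.2]} follows from the identity $w x^m w = w$ (which is \textbf{[D.2]} for $x^m$), and \textbf{[D.1]} follows from Lemma \ref{lemma:Drazin-basic}.(\ref{lemma:Drazin-basic.1}) applied to $x^m$: with $\ell = \mathsf{ind}(x^m)$ we have $x^{m(\ell+1)} w = x^{m\ell}$, which gives $x^{m\ell + 1} x^D = x^{m\ell}$ and hence $\mathsf{ind}(x) \leq m\,\mathsf{ind}(x^{k+1})$.

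The only genuine obstacle is the commutativity $wx = xw$; once it is available, all three axiom checks are routine bookkeeping of exponents, and the main thing to watch is choosing the candidate as $x^{m-1} w$ (commutativity makes this equal to $w\,x^{m-1}$) and correctly tracking the resulting index bound. I would lead with the $\pi$-regularity argument for brevity and record the explicit inverse formula afterwards.
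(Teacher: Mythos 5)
Your proposal is correct, and your preferred route is genuinely different from the paper's. The paper proves the converse directly: it takes the candidate $(x^{k+1})^D x^k$ (the same map as your $x^{k}(x^{k+1})^D$, once Proposition \ref{commutative-squares} gives $x(x^{k+1})^D = (x^{k+1})^D x$) and verifies \textbf{[D.1]}--\textbf{[D.3]} by hand, exactly as in your second paragraph on the explicit formula. Your primary argument instead routes through Lemma \ref{lem:pi}: strong $\pi$-regularity of $x^{k+1}$ gives $y\,x^{m(p+1)} = x^{mp}$ and $x^{m(q+1)}z = x^{mq}$ with $m = k+1$, and absorbing a factor $x^{m-1}$ into the witnesses turns these into the defining equations of strong $\pi$-regularity for $x$ itself with indices $mp$ and $mq$; this checks out and is essentially calculation-free. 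What each buys: the $\pi$-regularity route is shorter and isolates the one real idea (a single power of $x$ can be redistributed between the exponent and the witness), but it delivers the Drazin inverse only implicitly through the proof of Lemma \ref{lem:pi}; the paper's direct computation yields the explicit formula $x^D = (x^{k+1})^D x^k$ and a concrete index bound, at the cost of needing Proposition \ref{commutative-squares} for \textbf{[D.3]}. Your index bound $\mathsf{ind}(x) \leq (k+1)\,\mathsf{ind}(x^{k+1})$, obtained from Lemma \ref{lemma:Drazin-basic}.(\ref{lemma:Drazin-basic.1}) applied to $x^{k+1}$, is in fact slightly sharper than the bound $kp+k+p$ implicit in the paper's \textbf{[D.1]} computation. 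Both halves of your write-up are sound.
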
 
\begin{proof} For ($\Rightarrow$), this is immediate by Lemma \ref{lem:Drazin-iteration}. For ($\Leftarrow$), suppose that for some $k \in \mathbb{N}$, $x^{k+1}$ is Drazin with $\mathsf{ind}(x^{k+1}) =p$. Define $x^D \colon = (x^{k+1})^D x^k$. We show that $x^D$ satisfies the three Drazin axioms:
\begin{enumerate}[{\bf [D.1]}]
\item Using \textbf{[D.1]} for $x^{k+1}$, we compute that: 
\[ x^{kp+k+p+1} x^D = (x^{k+1})^{p+1} (x^{k+1})^D x^k = (x^{k+1})^p x^k = x^{kp+k+p} \]
\item Using {\bf [D.2]} for $x^{k+1}$, we compute that:
\[ x^D x x^D = (x^{k+1})^D x^k x (x^{k+1})^D x^k = (x^{k+1})^D x^{k+1} (x^{k+1})^D x^k = (x^{k+1})^D x^k = x^D\]
\item Since $x x^{k+1} = x^{k+1} x$, by Proposition \ref{commutative-squares} we get that $x (x^{k+1})^D = (x^{k+1})^D x$. So we get that $x^k (x^{k+1})^D = (x^{k+1})^D x^k = x^D$. Then using {\bf [D.3]} for $x^{k+1}$, we get that:  
\[ x^D x = (x^{k+1})^D x^k x =  (x^{k+1})^D x^{k+1} = x^{k+1} (x^{k+1})^D = x x^k (x^{k+1})^D = x x^D \]
\end{enumerate}
So we conclude that $x$ is Drazin. \end{proof}

%%%%%%%%%%%%%%%%%%%%%%%%%%%%%%%%%%%%%%%%%%%%%%%%%%%%%%%%%%%%%%%%%%%%%%%%%%%%%%%%

\subsection{Slice Categories}\label{sec:slice} We now turn our attention to constructing Drazin categories from other Drazin categories. As a first example, we apply Proposition \ref{Drazin-commuting} to show that slice and coslice categories of a Drazin category are again Drazin. Indeed,  for a category $\mathbb{X}$ and an object $X$, consider the slice category $\X/X$ and the underlying functor $\mathsf{U}: \X/X \to \X$. Now by Proposition \ref{Drazin-absolute}, we know that if $x: (f: A \to X) \to (f: A \to X)$ has a Drazin inverse in $\mathbb{X}/X$ then $U(x)=x: A \to A$ will have a Drazin inverse in $\mathbb{X}$ where  $\mathsf{U}(x)^D = \mathsf{U}(x^D)$.  Now $\mathsf{U}$ not only preserves Drazin inverses, $\mathsf{U}$ also reflects them. This follows immediately from the following concrete observation:

\begin{lemma} Let $x: A \to A$ be Drazin. Then if the diagram on the left commutes, then the diagram on the right commutes: 
\[ \begin{array}[c]{c} \xymatrixcolsep{5pc}\xymatrix{A \ar[dr]_-{f} \ar[rr]^-{x} && A \ar[dl]^-{f} \\ & X } \end{array} \Rightarrow \begin{array}[c]{c} \xymatrixcolsep{5pc}\xymatrix{A \ar[dr]_-{f} \ar[rr]^-{x^D} && A \ar[dl]^-{f} \\ & X} \end{array} \]
\end{lemma}
\begin{proof} Apply Proposition \ref{Drazin-commuting} by setting $y=1_X$, which recall is Drazin by Lemma \ref{lem:Drazin-0}.  
\end{proof}

\begin{corollary}
If $\X$ is a Drazin category then for every object $X \in \mathbb{X}$, the slice category $\X/X$ (respectively, the coslice $X/\mathbb{X}$) is Drazin. 
\end{corollary}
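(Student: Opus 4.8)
The plan is to reduce the statement to the concrete observation recorded in the Corollary immediately preceding it. First I would spell out what the data amount to: an endomorphism of an object $f\colon A \to X$ in the slice $\X/X$ is precisely a map $x\colon A \to A$ in $\X$ satisfying the triangle condition $xf = f$, and composition in $\X/X$ is computed as composition in $\X$. Since $\X$ is Drazin, the underlying map $x$ has a Drazin inverse $x^D\colon A \to A$ in $\X$, and the whole task is to verify that this $x^D$ is in fact a Drazin inverse of $x$ \emph{inside} the slice.

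Two things then have to be checked. First, that $x^D$ is a genuine morphism of $\X/X$, i.e. that $x^D f = f$; this is exactly the content of the preceding Corollary, obtained from Proposition \ref{commutative-squares} by taking $y = 1_X$. Second, that $x^D$ satisfies the three Drazin axioms in $\X/X$. Here I would invoke the faithfulness of the forgetful functor $\mathsf{U}\colon \X/X \to \X$: each of \textbf{[D.1]}, \textbf{[D.2]}, \textbf{[D.3]} is an equation between composites of $x$ and $x^D$, all of which are morphisms of the slice, and $\mathsf{U}$ carries these equations to the corresponding Drazin identities for $x$ in $\X$, which hold by hypothesis. Since $\mathsf{U}$ is faithful and therefore reflects such equalities, the identities lift back to $\X/X$. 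Thus $x^D$ is the Drazin inverse of $x$ in $\X/X$, and as $x$ was arbitrary, $\X/X$ is Drazin.

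The coslice case $X/\X$ is handled by the evident dual argument, using the dual of the preceding Corollary (equivalently, applying the slice result in $\X^{\mathsf{op}}$, noting that being Drazin is self-dual, since the axioms \textbf{[D.1]}--\textbf{[D.3]} are symmetric under reversal of composition by Lemma \ref{lemma:Drazin-basic}.(\ref{lemma:Drazin-basic.1})). I do not expect any real obstacle: the only substantive point is that the Drazin inverse computed in $\X$ respects the triangle over $X$, and that has already been isolated as the preceding Corollary. The remaining work is the purely formal transfer of the three axioms across the faithful functor $\mathsf{U}$.
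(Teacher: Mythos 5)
Your proposal is correct and follows essentially the same route as the paper: the paper also reduces the statement to the preceding corollary (the triangle $xf=f \Rightarrow x^Df=f$, obtained from Proposition \ref{commutative-squares} with $y=1_X$) and to the observation that the forgetful functor $\mathsf{U}\colon \X/X \to \X$ reflects Drazin inverses because composition in the slice is computed in $\X$. Your explicit appeal to faithfulness of $\mathsf{U}$ and the self-duality of the Drazin axioms for the coslice case just spells out details the paper leaves implicit.
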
 

More generally it is not hard to see that the same is true for comma categories. 

\begin{corollary} For functors $\mathsf{F}: \mathbb{Y} \to \mathbb{X}$ and $\mathsf{G}: \mathbb{Z} \to \mathbb{X}$, if $\mathbb{Y}$ and $\mathbb{Z}$ are Drazin, then the comma category $\mathsf{F}/\mathsf{G}$ is Drazin. 
\end{corollary}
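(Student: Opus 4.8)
The final statement asserts that for functors $\mathsf{F}: \mathbb{Y} \to \mathbb{X}$ and $\mathsf{G}: \mathbb{Z} \to \mathbb{X}$ with $\mathbb{Y}$ and $\mathbb{Z}$ both Drazin, the comma category $\mathsf{F}/\mathsf{G}$ is Drazin. The plan is to unwind the definition of an endomorphism in a comma category, produce a candidate Drazin inverse componentwise using the Drazin inverses guaranteed in $\mathbb{Y}$ and $\mathbb{Z}$, and then verify that this candidate is a legitimate morphism of $\mathsf{F}/\mathsf{G}$, whence the three Drazin axioms follow automatically since they hold componentwise.

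\textbf{Setup.} Recall an object of $\mathsf{F}/\mathsf{G}$ is a triple $(Y, h, Z)$ with $Y \in \mathbb{Y}$, $Z \in \mathbb{Z}$, and $h: \mathsf{F}(Y) \to \mathsf{G}(Z)$ in $\mathbb{X}$; a morphism $(Y,h,Z) \to (Y',h',Z')$ is a pair $(a: Y \to Y', b: Z \to Z')$ making the evident square commute, namely $\mathsf{F}(a) h' = h \mathsf{G}(b)$. An endomorphism $x$ on $(Y,h,Z)$ is thus a pair $(a: Y \to Y, b: Z \to Z)$ with $\mathsf{F}(a) h = h \mathsf{G}(b)$. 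First I would use that $\mathbb{Y}$ and $\mathbb{Z}$ are Drazin to obtain the Drazin inverses $a^D: Y \to Y$ and $b^D: Z \to Z$, and propose $x^D := (a^D, b^D)$ as the candidate. Composition in $\mathsf{F}/\mathsf{G}$ is componentwise, so once $x^D$ is shown to be a valid morphism, each of the three axioms \textbf{[D.1]}, \textbf{[D.2]}, \textbf{[D.3]} reduces to the corresponding axiom for $a$ in $\mathbb{Y}$ and for $b$ in $\mathbb{Z}$ simultaneously (taking $k = \max(\mathsf{ind}(a), \mathsf{ind}(b))$ for \textbf{[D.1]} via Lemma \ref{lemma:Drazin-basic}.(\ref{lemma:Drazin-basic.1})). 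Those hold by hypothesis, so the bulk of the work is the well-definedness check.

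\textbf{The main obstacle.} The key step, and the only nonroutine one, is verifying that $(a^D, b^D)$ actually commutes, i.e.\ that $\mathsf{F}(a^D) h = h \mathsf{G}(b^D)$. This is exactly a compatibility-of-Drazin-inverses statement across the square defining the endomorphism $x$, and it is precisely the situation governed by Proposition \ref{Drazin-commuting} (the ``commutative squares'' result). I would apply that proposition to the commuting square in $\mathbb{X}$
\[ \begin{array}[c]{c} \xymatrixcolsep{5pc}\xymatrix{\mathsf{F}(Y) \ar[d]_-{h} \ar[r]^-{\mathsf{F}(a)} & \mathsf{F}(Y) \ar[d]^-{h} \\ \mathsf{G}(Z) \ar[r]_-{\mathsf{G}(b)} & \mathsf{G}(Z)} \end{array} \]
which commutes because $(a,b)$ is a morphism of $\mathsf{F}/\mathsf{G}$. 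By Proposition \ref{Drazin-absolute} the functor images $\mathsf{F}(a)$ and $\mathsf{G}(b)$ are Drazin in $\mathbb{X}$ with $\mathsf{F}(a)^D = \mathsf{F}(a^D)$ and $\mathsf{G}(b)^D = \mathsf{G}(b^D)$, so Proposition \ref{Drazin-commuting} yields $\mathsf{F}(a^D) h = h \mathsf{G}(b^D)$, exactly the required commutativity. This confirms $x^D = (a^D, b^D)$ is a morphism of $\mathsf{F}/\mathsf{G}$, and combined with the componentwise verification of the three axioms above, we conclude $x$ is Drazin and hence $\mathsf{F}/\mathsf{G}$ is a Drazin category. (The earlier slice and coslice corollaries are then recovered as the special cases where one of $\mathsf{F}, \mathsf{G}$ is the identity and the other is constant at $X$.)
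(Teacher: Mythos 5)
Your proposal is correct and follows essentially the same route as the paper's own proof: both apply Proposition \ref{Drazin-absolute} to identify $\mathsf{F}(a^D)$ with $\mathsf{F}(a)^D$ and $\mathsf{G}(b^D)$ with $\mathsf{G}(b)^D$, then invoke Proposition \ref{Drazin-commuting} on the defining square to show $(a^D,b^D)$ is a morphism of $\mathsf{F}/\mathsf{G}$. Your write-up is slightly more explicit about the componentwise verification of the three axioms and the choice of index, but the argument is the same.
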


\begin{proof}
An endomorphism on $a$ in $\mathsf{F}/\mathsf{G}$ is a pair of maps $f:X \to X$ and $g: Y \to Y$ such that 
the left diagram below commutes.  As $F(f^D) = F(f)^D$ and $G(g^D) = G(g)^D$ then by Proposition \ref{Drazin-commuting} we have:
\[ \xymatrix{X \ar[d]_f & F(X) \ar[d]_{F(f)} \ar[r]^a & G(Y) \ar[d]^{G(y)} & Y \ar[d]^{g~~~~~~\Rightarrow} 
       & & X \ar[d]_{f^D} & F(X) \ar[d]_{F(f)^D} \ar[r]^a & G(Y) \ar[d]^{G(y)^D} & Y \ar[d]^{g^D}\\ 
             X & F(X) \ar[r]_a & G(Y) & Y & & X & F(X) \ar[r]_a & G(Y) & Y} \]
\end{proof}

\subsection{Finitely Specified Structures} We can also use Proposition \ref{Drazin-commuting} to show that for any Drazin category, finitely specified structures and their morphisms will also form a Drazin category. 

\begin{lemma} When $\mathbb{X}$ is a Drazin category, if $(\mathbb{S},{\cal F},{\cal L})$ is a sketch which involves only finitely many objects, then the category of models ${\sf Mod}((\mathbb{S},{\cal F},{\cal L}),\X)$ is Drazin.
\end{lemma}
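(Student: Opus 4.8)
The plan is to exhibit the Drazin inverse of an endomorphism in $\mathsf{Mod}((\mathbb{S},{\cal F},{\cal L}),\X)$ by constructing it componentwise. Recall that a model $M$ is a functor $M \colon \mathbb{S} \to \X$ carrying the distinguished cones in ${\cal F}$ to limits and the distinguished cocones in ${\cal L}$ to colimits, and a morphism of models is simply a natural transformation between such functors. Thus an endomorphism of a model $M$ in $\mathsf{Mod}((\mathbb{S},{\cal F},{\cal L}),\X)$ is a natural transformation $\alpha \colon M \Rightarrow M$, that is, a family of components $\alpha_s \colon M(s) \to M(s)$ in $\X$, one for each object $s$ of $\mathbb{S}$, such that for every arrow $a \colon s \to t$ of $\mathbb{S}$ the naturality square $\alpha_s M(a) = M(a) \alpha_t$ commutes. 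Since $\X$ is Drazin, each component $\alpha_s$ has a Drazin inverse $\alpha_s^D$ in $\X$, and I would define $\alpha^D$ componentwise by $(\alpha^D)_s := (\alpha_s)^D$.

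First I would check that $\alpha^D$ is again a natural transformation, hence a legitimate endomorphism of $M$ in the model category (there is no further condition on morphisms of models beyond naturality, since the model conditions constrain the objects $M$, not the maps between them). This is exactly an application of Proposition \ref{commutative-squares}: for each arrow $a \colon s \to t$, naturality of $\alpha$ supplies a commuting square $\alpha_s M(a) = M(a) \alpha_t$ in which both $\alpha_s$ and $\alpha_t$ are Drazin, so the proposition (with $x = \alpha_s$, $y = \alpha_t$, $f = M(a)$) yields $(\alpha_s)^D M(a) = M(a)(\alpha_t)^D$, which is precisely the naturality of $\alpha^D$.

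It then remains to verify the three Drazin axioms for $\alpha^D$. Since composition, identities, and powers in $\mathsf{Mod}((\mathbb{S},{\cal F},{\cal L}),\X)$ are all computed componentwise, axioms \textbf{[D.2]} and \textbf{[D.3]} reduce immediately to the corresponding axioms for each $\alpha_s^D$ in $\X$. The one subtle point---and the place where the hypothesis that $\mathbb{S}$ involves only finitely many objects is essential---is axiom \textbf{[D.1]}, which demands a single natural number $k$ valid simultaneously for every component. The individual Drazin indices $\mathsf{ind}(\alpha_s)$ may differ, but because there are only finitely many objects $s$ I would set $k := \max_s \mathsf{ind}(\alpha_s)$; then Lemma \ref{lemma:Drazin-basic}.(\ref{lemma:Drazin-basic.1}) guarantees $(\alpha_s)^{k+1}(\alpha_s)^D = (\alpha_s)^k$ for every $s$, so $\alpha^{k+1}\alpha^D = \alpha^k$ holds componentwise and hence in the model category.

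In summary, the real content is the naturality of $\alpha^D$, which Proposition \ref{commutative-squares} dispatches cleanly, together with the need for a uniform Drazin index, for which the finiteness of the object set of $\mathbb{S}$ is exactly what is required. All remaining verifications are routine componentwise checks, and I therefore expect no genuine obstacle beyond correctly invoking these two ingredients.
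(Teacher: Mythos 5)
Your proof is correct and follows essentially the same route as the paper's: the Drazin inverse is taken componentwise, Proposition \ref{commutative-squares} supplies the naturality of $\alpha^D$, and the finiteness of the object set is used exactly to obtain a uniform index $k = \max_s \mathsf{ind}(\alpha_s)$ for \textbf{[D.1]}. No issues.
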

\begin{proof}
The Drazin inverse of a natural transformation (morphism of models) must be taken pointwise. Thus Proposition \ref{Drazin-commuting} assures us that this will result in a morphism of the models. In order to have an index which works globally for all the objects of ${\cal F}$, we must ensure that there is an index which works at each component of the natural transformation. To obtain this we have to take the maximum of the indices at each component, however, this is only guaranteed to be defined if there are only finitely many objects -- hence the caveat.
\end{proof}

As such, the category of {\em any\/} finitely specified structure over a Drazin category is itself Drazin. For example, the arrow category of a Drazin category is Drazin, and the category of directed graphs and graph morphisms of a Drazin category is Drazin.  Many algebraic structures have a finite specification so this provides many examples: thus, monoids and rings (with respect to products) in a Drazin category form a Drazin category. 

\subsection{Algebras of Endofunctors}\label{sec:algebras} Another interesting application of Proposition \ref{Drazin-commuting} is that an endomorphism of an algebra of an endofunctor has a Drazin inverse whenever the underlying endomorphism has a Drazin inverse (and similarly for coalgebras). Recall that for an endofunctor ${\mathsf{T}: \mathbb{X} \to \mathbb{X}}$, a $\mathsf{T}$-algebra is a pair $(A, \nu)$ consisting of an object $A \in \mathbb{X}$ and a map $\nu: \mathsf{T}(A) \to A$, and that a $\mathsf{T}$-algebra morphism $f: (A, \nu) \to (B, \nu^\prime)$ is a map $f: A \to A$ such that $\mathsf{T}(f) \nu^\prime = \nu f$. 

\begin{lemma} Let $x: (A, \nu) \to (A, \nu)$ be a $\mathsf{T}$-algebra endomorphism. If $x: A \to A$ is Drazin, then $x^D: (A, \nu) \to (A, \nu)$ is a $\mathsf{T}$-algebra endomorphism. 
\end{lemma}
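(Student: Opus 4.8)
The plan is to observe that the condition we must verify, namely that $x^D \colon (A,\nu) \to (A,\nu)$ is a $\mathsf{T}$-algebra endomorphism, unwinds to the single equation $\mathsf{T}(x^D)\,\nu = \nu\, x^D$, and that this is exactly the conclusion of Proposition \ref{commutative-squares} applied to the square expressing that $x$ itself is an algebra endomorphism. So the whole argument reduces to setting up that square and checking the hypotheses of Proposition \ref{commutative-squares}.

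First I would unwind the hypothesis. Since $x \colon (A,\nu) \to (A,\nu)$ is a $\mathsf{T}$-algebra endomorphism, by definition $\mathsf{T}(x)\,\nu = \nu\, x$. Reading this as a square with horizontal edges $\mathsf{T}(x)\colon \mathsf{T}(A) \to \mathsf{T}(A)$ on top and $x \colon A \to A$ on the bottom, and with both vertical edges equal to $\nu \colon \mathsf{T}(A) \to A$, the algebra law says precisely that this square commutes. To invoke Proposition \ref{commutative-squares} I then need both horizontal endomorphisms to be Drazin: the bottom map $x$ is Drazin by hypothesis, and the top map $\mathsf{T}(x)$ is Drazin by absoluteness, since Proposition \ref{Drazin-absolute} gives that the functor $\mathsf{T}$ preserves Drazin inverses, with $\mathsf{T}(x)^D = \mathsf{T}(x^D)$.

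Applying Proposition \ref{commutative-squares} to this commuting square (taking its ``$x$'' to be $\mathsf{T}(x)$, its ``$y$'' to be $x$, and its ``$f$'' to be $\nu$) yields the commutativity of the corresponding square of Drazin inverses, that is, $\mathsf{T}(x)^D\,\nu = \nu\, x^D$. Substituting $\mathsf{T}(x)^D = \mathsf{T}(x^D)$ from the previous step gives $\mathsf{T}(x^D)\,\nu = \nu\, x^D$, which is exactly the statement that $x^D \colon (A,\nu) \to (A,\nu)$ is a $\mathsf{T}$-algebra endomorphism, as required.

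There is essentially no genuine obstacle here beyond bookkeeping; the argument is a direct assembly of two earlier results. The only points requiring care are fixing the orientation of the square so that the algebra law $\mathsf{T}(x)\,\nu = \nu\, x$ reads off correctly as commutativity, and recording the identity $\mathsf{T}(x)^D = \mathsf{T}(x^D)$ supplied by absoluteness. The dual statement for coalgebras is proved identically, using instead the commuting square $x\,\gamma = \gamma\,\mathsf{T}(x)$ coming from the coalgebra structure map $\gamma \colon A \to \mathsf{T}(A)$.
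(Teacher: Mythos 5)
Your proof is correct and follows essentially the same route as the paper: both arguments first use Proposition \ref{Drazin-absolute} to get that $\mathsf{T}(x)$ is Drazin with $\mathsf{T}(x)^D = \mathsf{T}(x^D)$, and then apply Proposition \ref{commutative-squares} to the commuting square $\mathsf{T}(x)\,\nu = \nu\,x$ to conclude $\mathsf{T}(x^D)\,\nu = \nu\,x^D$. No issues.
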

\begin{proof} Suppose that $x: A \to A$ is Drazin. By Proposition \ref{Drazin-absolute}, $\mathsf{T}(x): \mathsf{T}(A) \to \mathsf{T}(A)$ is also Drazin where $\mathsf{T}(x)^D = \mathsf{T}(x^D)$. By definition of being a $\mathsf{T}$-algebra endomorphism, the diagram below on the left commutes. Then by Proposition \ref{Drazin-commuting}, the diagram on the right commutes: 
\[ \xymatrixcolsep{5pc}\xymatrix{\mathsf{T}(A) \ar[d]_-{\nu} \ar[r]^-{\mathsf{T}(x)} & \mathsf{T}(A) \ar[d]^-{\nu} \\ A \ar[r]_-x & A} ~~\xymatrixcolsep{5pc}\xymatrix{~ \ar@{}[d]|{\Rightarrow} \\ ~} ~~  \xymatrixcolsep{5pc}\xymatrix{\mathsf{T}(A) \ar[d]_-{\nu} \ar[r]^-{\mathsf{T}(x)^D = \mathsf{T}(x^D)} & \mathsf{T}(A) \ar[d]^-{\nu} \\ A \ar[r]_-{x^D} & A} \]
However, the diagram on the right is precisely the statement that $x^D: (A, \nu) \to (A, \nu)$ is a $\mathsf{T}$-algebra endomorphism. 
\end{proof}

Therefore, it follows that the category of (co)algebras of any endofunctor on a Drazin category is again Drazin. For an endofunctor $\mathsf{T}: \mathbb{X} \to \mathbb{X}$, we denote $\mathsf{ALG}(\mathsf{T})$ (resp. $\mathsf{COALG}(\mathsf{T})$) to be the category of $\mathsf{T}$-(co)algebras and $\mathsf{T}$-(co)algebra morphisms between them. 

\begin{corollary}If $\mathbb{X}$ is Drazin, then $\mathsf{ALG}(\mathsf{T})$ and $\mathsf{COALG}(\mathsf{T})$ are Drazin categories. 
\end{corollary}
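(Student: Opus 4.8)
The plan is to reduce the corollary to the preceding Lemma together with the observation that the forgetful functor $\mathsf{U}: \mathsf{ALG}(\mathsf{T}) \to \mathbb{X}$ is faithful and computes composition and identities exactly as in $\mathbb{X}$. First I would fix an endomorphism $x: (A,\nu) \to (A,\nu)$ in $\mathsf{ALG}(\mathsf{T})$. Its underlying map $x: A \to A$ lives in $\mathbb{X}$, which is Drazin by hypothesis, so $x$ has a Drazin inverse $x^D: A \to A$ in $\mathbb{X}$.

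Next I would invoke the preceding Lemma to conclude that $x^D$ is itself a $\mathsf{T}$-algebra endomorphism, so that $x^D: (A,\nu) \to (A,\nu)$ is a genuine morphism of $\mathsf{ALG}(\mathsf{T})$. The crucial point is that composites and identities in $\mathsf{ALG}(\mathsf{T})$ are inherited on underlying maps from $\mathbb{X}$: the composite of two algebra morphisms has underlying map their composite in $\mathbb{X}$, and two parallel algebra morphisms coincide precisely when their underlying maps do. Since the three Drazin axioms \textbf{[D.1]}, \textbf{[D.2]}, \textbf{[D.3]} are equations between composites of $x$ and $x^D$, and these equations already hold in $\mathbb{X}$ (as $x^D$ is the Drazin inverse of $x$ there), they transfer verbatim to $\mathsf{ALG}(\mathsf{T})$ now that every map appearing in them is known to be an algebra morphism. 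Hence $x^D$ is the Drazin inverse of $x$ in $\mathsf{ALG}(\mathsf{T})$, and since $x$ was arbitrary, $\mathsf{ALG}(\mathsf{T})$ is a Drazin category.

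Finally, for $\mathsf{COALG}(\mathsf{T})$ I would run the dual argument. A coalgebra endomorphism $x$ is a map commuting with the structure map $\nu: A \to \mathsf{T}(A)$ in the reversed orientation, i.e.\ $x\nu = \nu\mathsf{T}(x)$; applying Proposition \ref{Drazin-commuting} to this commuting square (with $\mathsf{T}(x)$ in place of $y$ and using $\mathsf{T}(x)^D = \mathsf{T}(x^D)$ from Proposition \ref{Drazin-absolute}) yields $x^D\nu = \nu\mathsf{T}(x^D)$, so $x^D$ is again a coalgebra morphism. The identical faithfulness argument then transfers the Drazin axioms into $\mathsf{COALG}(\mathsf{T})$. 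I do not expect a genuine obstacle here: the only substantive step is checking that $x^D$ is a morphism of (co)algebras, and that is exactly what the preceding Lemma and its dual supply, so the remainder is purely formal bookkeeping about how composition in the (co)algebra category is inherited from $\mathbb{X}$.
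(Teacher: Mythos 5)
Your argument is correct and is exactly the route the paper intends: the corollary is stated as an immediate consequence of the preceding Lemma, and your proof simply fills in the (routine) verification that the Drazin equations transfer along the faithful forgetful functor once $x^D$ is known to be a (co)algebra morphism. Nothing further is needed.
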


It follows that the (co)Eilenberg-Moore category of a (co)monad on a Drazin category is also a Drazin category. 

%%%%%%%%%%%%%%%%%%%%%%%%%%%%%%%%%%%%%%%%%%%%%%%%%%%%%%%%%%%%%%%%%%%%%%%%%%%%%%%%%%%%%%%%%%%%%%%%%%%%%%
\subsection{Chu Construction} \label{chu-construction} Surprisingly, another important construction that behaves well with Drazin inverses is the \emph{Chu construction}, which is a construction for building star-autonomous categories from monoidal closed categories \cite{Barr1996}. We will show that if a monoidal category is Drazin, then its Chu construction is Drazin as well. In particular, we can apply the Chu construction to the category of finite sets or the category of finite-dimensional vector spaces to yield examples of Drazin star-autonomous categories. 

For a monoidal category $\mathbb{X}$, we denote the monoidal product by $\otimes$ and the monoidal unit by $I$. For the rest of this section we work in a monoidal category $\mathbb{X}$. We first observe that the monoidal product preserves Drazin inverses: 

\begin{lemma} If $x: A \to A$ and $y: B \to B$ are Drazin, then $x \otimes y: A \otimes B \to A \otimes B$ is Drazin where $(x \otimes y)^D = x^D \otimes y^D$. 
\end{lemma}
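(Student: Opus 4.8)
The plan is to exploit the fact, already established in Proposition \ref{Drazin-absolute}, that Drazin inverses are absolute, together with the observation that the monoidal product is a functor. Indeed, the tensor is a bifunctor $\otimes \colon \mathbb{X} \times \mathbb{X} \to \mathbb{X}$, where $\mathbb{X} \times \mathbb{X}$ is the product category, in which composition, and hence iteration, is computed componentwise. So the entire statement will follow once we show that the pair $(x,y) \colon (A,B) \to (A,B)$ is Drazin in $\mathbb{X} \times \mathbb{X}$ with Drazin inverse $(x^D, y^D)$, and then simply apply the functor $\otimes$.

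First I would verify that $(x,y)$ is Drazin in $\mathbb{X} \times \mathbb{X}$. Since composition in the product category is given componentwise by $(f,g)(f',g') = (ff', gg')$, each of the three Drazin axioms for $(x,y)$ reduces to the corresponding pair of axioms for $x$ and for $y$ separately. Axioms \textbf{[D.2]} and \textbf{[D.3]} then hold immediately, as they carry no numerical parameter. The only point requiring a moment's care is \textbf{[D.1]}, since $x$ and $y$ may have different Drazin indices: to obtain a single $k$ that works in both components, I would set $k = \max(\mathsf{ind}(x), \mathsf{ind}(y))$ and use Lemma \ref{lemma:Drazin-basic}.(\ref{lemma:Drazin-basic.1}) to ensure that both $x^{k+1} x^D = x^k$ and $y^{k+1} y^D = y^k$ hold at this common index. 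Then $(x,y)^{k+1}(x^D,y^D) = (x^{k+1}x^D, y^{k+1}y^D) = (x^k, y^k) = (x,y)^k$, establishing \textbf{[D.1]} for $(x,y)$.

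Finally, I would apply Proposition \ref{Drazin-absolute} to the functor $\otimes \colon \mathbb{X} \times \mathbb{X} \to \mathbb{X}$ and the Drazin endomorphism $(x,y)$. This yields at once that $\otimes(x,y) = x \otimes y$ is Drazin in $\mathbb{X}$ with Drazin inverse $\otimes(x^D, y^D) = x^D \otimes y^D$, as claimed. This routing through the product category means there is essentially no real obstacle: the only substantive step is the index synchronization above, and everything else is delivered by absoluteness. (As a bonus, the same argument records that $\mathsf{ind}(x \otimes y) \leq \max(\mathsf{ind}(x), \mathsf{ind}(y))$.)
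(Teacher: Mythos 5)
Your proof is correct and follows exactly the route the paper takes: establish that $(x,y)$ is Drazin in the product category $\mathbb{X} \times \mathbb{X}$ and then apply the absoluteness of Drazin inverses (Proposition \ref{Drazin-absolute}) to the tensor functor. The paper leaves the product-category step as "straightforward to see," so your explicit index synchronization via $k = \max(\mathsf{ind}(x), \mathsf{ind}(y))$ and Lemma \ref{lemma:Drazin-basic} simply fills in that detail.
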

\begin{proof} It is straightforward to see that if $x$ and $y$ are Drazin in $\mathbb{X}$, then $(x,y)$ is also Drazin in $\mathbb{X} \times \mathbb{X}$ with $(x,y)^D = (x^D, y^D)$. Then since the monoidal product is a functor $\otimes: \mathbb{X} \times \mathbb{X} \to \mathbb{X}$, by Proposition \ref{Drazin-absolute} we get that $\otimes(x,y) = x \otimes y$ is Drazin with Drazin inverse $\otimes(x,y)^D = \otimes(x^D, y^D)$, which is precisely that $(x \otimes y)^D = x^D \otimes y^D$.
\end{proof}

Now for each object $Z \in \mathbb{X}$, define ${\sf Chu}(\X,Z)$, to be the category whose objects are triples $(X, Y, a)$ consisting of objects $X$ and $Y$ and a map $a: X \otimes Y \to Z$, and whose maps are pairs $(f,g): (X, Y, a) \to (X^\prime, Y^\prime, a^\prime)$ consisting of maps $f: X \to X^\prime$ and $g: Y^\prime \to Y$ such that $(f \otimes 1_Y) a = (1_A \otimes g) a^\prime$. Identities are $(1_X, 1_Y): (X, Y, a) \to (X, Y, a)$, and composition is $(f,g)(h,k) = (fh, kg)$. It is well known that if $\X$ is a monoidal closed category with finite limits that ${\sf Chu}(\X,Z)$ is a star-autonomous category. 

\begin{proposition} $(x,y): (X, Y, a) \to (X, Y, a)$ in ${\sf Chu}(\X,Z)$ is Drazin  whenever $x$ and $y$ are Drazin in $\mathbb{X}$.
\end{proposition}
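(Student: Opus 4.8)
The plan is to show that the componentwise candidate $(x,y)^D := (x^D, y^D)$ is the Drazin inverse of $(x,y)$ in ${\sf Chu}(\X,I)$. There are two things to verify: first, that $(x^D, y^D)$ is actually a morphism of ${\sf Chu}(\X,I)$, i.e. that it satisfies the defining compatibility condition with $a$; and second, that it satisfies the three Drazin axioms \textbf{[D.1]}, \textbf{[D.2]}, \textbf{[D.3]} with respect to $(x,y)$. The first is the real content; the second is essentially formal.

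I would dispatch the Drazin axioms first. Since composition in ${\sf Chu}(\X,I)$ is componentwise up to a reversal in the second coordinate, $(f,g)(h,k) = (fh, kg)$, every power is computed componentwise, $(x,y)^n = (x^n, y^n)$, and a product such as $(x,y)^{k+1}(x^D,y^D)$ equals $(x^{k+1}x^D,\, y^D y^{k+1})$. Choosing the single index $k := \max(\mathsf{ind}(x), \mathsf{ind}(y))$ and invoking Lemma \ref{lemma:Drazin-basic}.(\ref{lemma:Drazin-basic.1}) (which gives both $x^{k+1}x^D = x^k$ and the left-handed $y^D y^{k+1} = y^k$, exactly matching the reversal in the second coordinate), the three axioms for $(x,y)$ reduce precisely to the corresponding axioms for $x$ with $x^D$ and for $y$ with $y^D$, which hold by hypothesis. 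This shows $(x^D,y^D)$ is a Drazin inverse \emph{once we know it is a legitimate morphism}.

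The crux is therefore well-definedness. Writing the endomorphism condition out, $(x,y)$ being a morphism means $(x \otimes 1_Y)\, a = (1_X \otimes y)\, a$, and I must derive the analogous identity $(x^D \otimes 1_Y)\, a = (1_X \otimes y^D)\, a$ for the candidate. I would package this by setting $P := X \otimes Y$, $u := x \otimes 1_Y$ and $v := 1_X \otimes y$. By the preceding lemma that $\otimes$ preserves Drazin inverses, together with $1^D_Y = 1_Y$ and $1^D_X = 1_X$ from Lemma \ref{lem:Drazin-0}, both $u$ and $v$ are Drazin endomorphisms of $P$ with $u^D = x^D \otimes 1_Y$ and $v^D = 1_X \otimes y^D$; moreover $u$ and $v$ commute, as do $u, u^D, v, v^D$ pairwise, since $(x \otimes 1_Y)(1_X \otimes y) = x \otimes y = (1_X \otimes y)(x \otimes 1_Y)$ and likewise for the inverses. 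In this notation the hypothesis is $ua = va$ and the goal is $u^D a = v^D a$, which is exactly the statement that the Drazin inverses $u^D, v^D$ remain equalized by $a$.

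The main obstacle is this last implication, $ua = va \Rightarrow u^D a = v^D a$, and I expect it to be where all the work lies. My plan is purely equational, using only commutativity and Lemma \ref{lemma:Drazin-basic}. First, a one-line induction using that $u$ and $v$ commute with everything in sight shows $u^n a = v^n a$, and hence $u^i v^j a = u^{i+j} a$, for all $i,j \ge 0$. Then, taking $k$ as above and repeatedly applying Lemma \ref{lemma:Drazin-basic} (together with the idempotents $u^D u$ and $v^D v$), I would reduce both $u^D a$ and $v^D a$ to the single symmetric normal form $(u^D)^{k+1}(v^D)^{k+1} u^{2k+1} a$, from which $u^D a = v^D a$ follows by commutativity. (Conceptually, the same fact can be seen another way: $u$ and $v$ induce the same operator on the orbit $\{ m a : m \in \langle u,u^D,v,v^D\rangle \}$, so by uniqueness of Drazin inverses, Proposition \ref{prop:unique}, their Drazin inverses induce the same operator, and evaluating at $a$ gives $u^D a = v^D a$.) Unwinding the definitions of $u^D$ and $v^D$ turns $u^D a = v^D a$ back into $(x^D \otimes 1_Y) a = (1_X \otimes y^D) a$, completing the proof that $(x^D, y^D)$ is a morphism and hence the Drazin inverse of $(x,y)$.
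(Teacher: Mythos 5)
Your proposal is correct and follows essentially the same route as the paper: the substance is the well-definedness of $(x^D,y^D)$ as a Chu morphism, which you establish by the same computation (using Lemma \ref{lemma:Drazin-basic} to write $x^D$ via high powers of $x$ and $x^D$, shuttling powers across the pairing $a$ via the endomorphism condition, and collapsing), merely repackaged with the abbreviations $u = x\otimes 1_Y$ and $v = 1_X\otimes y$ and organized as a meet-in-the-middle at a symmetric normal form rather than a one-directional chain. Your explicit treatment of the reversal in the second coordinate when checking \textbf{[D.1]}--\textbf{[D.3]} is a welcome bit of care that the paper leaves as ``straightforward.''
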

\begin{proof}
Let $(x,y): (X, Y, a) \to (X, Y, a)$ be an endomorphism in ${\sf Chu}(\X,Z)$, so we have $(x \otimes 1_Y) a= (1_X \otimes y)a$. Also, suppose that $x$ and $y$ are Drazin in $\mathbb{X}$. We first show that $(x^D,y^D): (X, Y, a) \to (X, Y, a)$ is also an endomorphism in the Chu construction. So let $j = \mathsf{max}\left( \mathsf{ind}(x), \mathsf{ind}(y) \right)$. Using Lemma \ref{lemma:Drazin-basic} we compute that:  
\begin{gather*}
   (x^D \otimes 1_Y) a  = ((x^D)^{j+1} \otimes 1_Y)(x^j \otimes 1_Y) a = ((x^D)^{j+1} \otimes 1_Y) (1_X \otimes y^j) a \\ 
   = \! (\!(x^D)^{j+1} \otimes\! 1_Y) (1_X \otimes \! (y^D)^{j+1}) (1_X \otimes y^{j+j+1}) a  \! = \!(\!(x^D)^{j+1} \otimes 1_Y) (x^{j+j+1} \otimes 1_Y)  (1_X \otimes (y^D)^{j+1}) a \\
   = (x^j \otimes 1_Y) (1_X \otimes (y^D)^{j+1}) a = (1_X \otimes y^j) (1_X \otimes (y^D)^{j+1}) a = (1_X \otimes y^D) a
\end{gather*}
So $(x^D \otimes 1_Y) a = (1_X \otimes y^D) a$, and therefore $(x^D,y^D): (X, Y, a) \to (X, Y, a)$ is indeed a map in ${\sf Chu}(\X,Z)$. From here, by the definition of composition in ${\sf Chu}(\X,Z)$, it is straightforward to see that $(x^D,y^D)$ is a Drazin inverse of $(x,y)$. 
\end{proof}

\begin{corollary}
If $\X$ is a monoidal category which is Drazin then ${\sf Chu}(\X,Z)$ is Drazin.
\end{corollary}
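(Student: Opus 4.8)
The plan is to deduce this immediately from the preceding Proposition, which already carries all the real content. The only thing left to check is that the hypothesis of that Proposition---namely that the two underlying maps are Drazin in $\X$---is automatically satisfied as soon as $\X$ is assumed to be a Drazin category.

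First I would recall the shape of an arbitrary endomorphism in ${\sf Chu}(\X,I)$. By the definition of morphisms in the Chu construction, a map $(X,Y,a) \to (X,Y,a)$ is precisely a pair $(x,y)$ consisting of maps $x: X \to X$ and $y: Y \to Y$ in $\X$ subject to the compatibility condition $(x \otimes 1_Y)a = (1_X \otimes y)a$. In particular, the underlying data of \emph{any} endomorphism in ${\sf Chu}(\X,I)$ is just a pair of endomorphisms of $\X$.

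Next, since $\X$ is Drazin, every endomorphism of $\X$ has a Drazin inverse; in particular both $x$ and $y$ are Drazin in $\X$. The preceding Proposition then applies verbatim and yields that $(x,y)$ is Drazin in ${\sf Chu}(\X,I)$, with Drazin inverse $(x^D, y^D)$. As $(x,y)$ was an arbitrary endomorphism, every endomorphism of ${\sf Chu}(\X,I)$ is Drazin, which is exactly the assertion to be proved. There is essentially no obstacle here: the genuine work---verifying that $(x^D,y^D)$ is again a Chu morphism and satisfies the three Drazin axioms---was dispatched in the Proposition, and the corollary is a one-line specialization obtained by observing that the Drazin hypothesis holds simultaneously for all pairs. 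The only incidental bookkeeping worth noting is that the componentwise assignment $(x,y) \mapsto (x^D,y^D)$ is unambiguous because Drazin inverses in $\X$ are unique (Proposition \ref{prop:unique}).
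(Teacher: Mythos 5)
Your proposal is correct and is exactly how the paper derives this corollary: the preceding Proposition does all the work, and since $\X$ is Drazin both components $x$ and $y$ of any Chu endomorphism $(x,y)$ are Drazin in $\X$, so the Proposition applies to every endomorphism of ${\sf Chu}(\X,I)$. Nothing further is needed.
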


%%%%%%%%%%%%%%%%%%%%%%%%%%%%%%%%%%%%%%%%%%%%%%%%%%%%%%%%%%%%%%%%%%%%%%%%%%%%%%%%

\section{Rank} \label{rank}

%%%%%%%%%%%%%%%%%%%%%%%%%%%%%%%%%%%%%%%%%%%%%%%%%%%%%%%%%%%%%%%%%%%%%%%%%%%%%%%%

For a complex square matrix $A$, an intuitive way of finding its Drazin inverse is to iterate $A$ until the rank does not change (which is always guaranteed to happen), that is, $\mathsf{rank}(A^{k}) = \mathsf{rank}(A^{k+1}) = \mathsf{rank}(A^{k+2}) = \hdots$. When this happens, one can reverse any later iterations and thus build a Drazin inverse. The same principle holds true for linear endomorphisms on a finite-dimensional vector space or endomorphisms on a finite set. The purpose of this section is to make this procedure rigorous.  This involves, in particular, making precise what is meant by ``rank''. As such, we introduce the concept of \textbf{expressive rank} for an arbitrary category and show that any category with expressive rank is Drazin. 

\subsection{Expressive Rank}\label{sec:expressive} In linear algebra, the rank of a matrix or a linear transformation is the dimension of its image space. We wish to generalize this in a category by associating every map to a natural number which represents its rank. We express this in terms of a \emph{colax} functor into a 2-category which we call $\mathsf{Rank}$\footnote{Steve Lack pointed out to us that $\mathsf{Rank}$ is precisely the span category for the natural numbers.}. To help with notation, for $n,m \in \mathbb{N}$ we denote $n \wedge m = \mathsf{min}(n,m)$.  $\mathsf{Rank}$ is the 2-category defined as follows:
\begin{description}
\item{{\bf [0-cells]}:}  $n \in \mathbb{N}$;
\item{{\bf [1-cells]}:}  $m : n_1 \to n_2$ where $m \in \mathbb{N}$ with $m \leq n_1\wedge n_2$,  the identity on $n$ is $n: n \to n$, and composition of $m_1: n_1 \to n_2$ and $m_2: n_2 \to n_3$ is $m_1 \wedge m_2: n_1 \to n_3$; 
\item{{\bf [2-cells]}:}  $m_1 \Rightarrow m_2$ if and only if $m_1 \leq m_2$.
\end{description}
For a category $\mathbb{X}$, by a colax functor $\mathsf{rank}: \mathbb{X} \to \mathsf{Rank}$, we mean a mapping which associates objects $A$ of $\mathbb{X}$ to $0$-cells $\mathsf{rank}(A) \in \mathbb{N}$, maps $f: A \to B$ of $\mathbb{X}$ to $1$-cells $\mathsf{rank}(f): \mathsf{rank}(A)\to \mathsf{rank}(B)$, so in particular $\mathsf{rank}(f) \leq \mathsf{rank}(A)$ and $\mathsf{rank}(f) \leq \mathsf{rank}(B)$. We also ask that a colax functor preserves identities, $\mathsf{rank}(1_A) = \mathsf{rank}(A)$, while for composition we only require that $\mathsf{rank}(fg) \leq \mathsf{rank}(f) \wedge \mathsf{rank}(g)$. For objects, we think of $\mathsf{rank}(A)$ as the dimension of $A$, while for maps we think of $\mathsf{rank}(f)$ as the rank of $f$. 

The notion of expressive rank is defined on a category with a \emph{factorization system}\footnote{In this paper, by a factorization system we mean an \emph{orthogonal} factorization system.}, and we ask that the colax functor into $\mathsf{Rank}$ be compatible with the factorization system as well as reflects isomorphisms. To this end, we note that the only 1-cell isomorphisms in $\mathsf{Rank}$ are the identity 1-cells $n: n \to n$. Moreover, given any 1-cell $r: n \to m$ we may factorize the map as $n \to^r r \to^r m$. Thus setting $\mathcal{E} = \lbrace r: n \to r \vert~ \forall n,r \in \mathbb{N}, r \leq n \rbrace$ and $\mathcal{M} = \lbrace r: r \to n \vert~ \forall n,r\in \mathbb{N}, r \leq n \rbrace$, we get a factorization system on the $1$-cells. 

\begin{definition}\label{def:expressive} A category $\mathbb{X}$ is said to have \textbf{expressive rank} if: 
\begin{enumerate}[{\bf [ER.1]}]
\item $\mathbb{X}$ comes equipped with a colax functor $\mathsf{rank}: \X \to \mathsf{Rank}$;
\item $\mathbb{X}$ has a factorization system $(\mathcal{E}, \mathcal{M})$ which {\bf expresses rank}, that is, for every map $f: A \to B$, if the diagram on the left is a factorization of $f$ with $\varepsilon_f \in \mathcal{E}$ and $m_f \in \mathcal{M}$, 
\[ \xymatrixcolsep{5pc}\xymatrix{ A \ar[dd]_f \ar[dr]^{\varepsilon_x} \\ & {\sf im}(f) \ar[dl]^{m_f} \\ B} ~~~~~~~~~~~~
   \xymatrixcolsep{5pc}\xymatrix{ \mathsf{rank}(A) \ar[dd]_{\mathsf{rank}(f)} \ar[dr]^{\mathsf{rank}(\varepsilon_f)} \\ & \mathsf{rank}({\sf im}(f) \ar[dl]^{\mathsf{rank}(m_f)} \\ \mathsf{rank}(B) } \]
then $\mathsf{rank}(f) = \mathsf{rank}(\varepsilon_f)= \mathsf{rank}(m_f) =  \mathsf{rank}({\sf im}(f))$.
\item $\mathsf{rank}$ reflects isomorphisms, that is, if $f: A \to B$ and $\mathsf{rank}(f) = \mathsf{rank}(A)= \mathsf{rank}(B)$, then $f$ is an isomorphism.
\end{enumerate}
In a category $\mathbb{X}$ with expressive rank, we call $\mathsf{rank}(A)$ the \textbf{dimension} of an object $A$, and we call $\mathsf{rank}(f)$ the \textbf{rank} of a map $f$. 
\end{definition} 

Here are now some examples of expressive rank. 

%%%%%%%%%%%%%%%%%%%%%%%%%%%%%%%%%%%%%%%%%%%%%%%%%%%%%%%%%%%%%%%%%%%%%%%%%%%%%%%%%%%%

\subsection{Matrices}\label{ex:matrix-rank} For a field $k$, $\mathsf{MAT}(k)$ has expressive rank, where the factorization system is the usual surjection-injection factorization system, $\mathsf{rank}(n) =n$, and for a matrix $A$, $\mathsf{rank}(A)$ is the usual rank of the matrix. 
%%%%%%%%%%%%%%%%%%%%%%%%%%%%%%%%%%%%%%%%%%%%%%%%%%%%%%%%%%%%%%%%%%%%%%%%%%%%%%%%%%%%

\subsection{Finite-Dimensional Vector Spaces}\label{ex:vec-rank} Slightly generalizing the above example, for a field $k$, $k\text{-}\mathsf{FVEC}$ also has expressive rank, where the factorization system is the usual surjection-injection factorization system, for a finite-dimensional vector space $V$, $\mathsf{rank}(V) = \mathsf{dim}(V)$, and for a linear transformation $f$, $\mathsf{rank}(f) = \mathsf{dim}(\mathsf{im}(f))$, which recall is called the rank of $f$.

%%%%%%%%%%%%%%%%%%%%%%%%%%%%%%%%%%%%%%%%%%%%%%%%%%%%%%%%%%%%%%%%%%%%%%%%%%%%%%%%%%%%

\subsection{Finite Sets} \label{finite-sets-drazin} $\mathsf{FinSet}$ has expressive rank, where the factorization system is the usual surjection-injection factorization system, for a finite set $X$, $\mathsf{rank}(X) = \vert X \vert$, and for a function $f$, $\mathsf{rank}(f) = \vert \mathsf{im}(f) \vert$. 

%%%%%%%%%%%%%%%%%%%%%%%%%%%%%%%%%%%%%%%%%%%%%%%%%%%%%%%%%%%%%%%%%%%%%%%%%%%%%%%%%%%%

\subsection{Finite Length Modules}\label{ex:module-rank} Let $R$ be a ring. Recall that for an $R$-module $M$, a \emph{composition series} of $M$ is a finite chain of submodules $0 = M_0 \subseteq M_1 \subseteq M_2 \subseteq \hdots \subseteq M_n = M$ such that the quotients $M_{i+1}/M_i$ are simple. Then $M$ is said to be of \textbf{finite length} $n$ if it has a composition series of length $n$, and we call $\mathsf{length}(M) = n$ the length of $M$. The \emph{Jordan-H\"older theorem} \cite[Page 108]{jacobson1985basic} says that this length of decompositions and the factors are unique. Equivalently, $M$ has finite length if and only if $M$ is Artinian and Noetherian \cite[Theorem 3.5]{jacobson1985basic}. So let $R\text{-}\mathsf{FLMOD}$ be the category of $R$-modules of finite length and $R$-linear morphisms between them. Then $R\text{-}\mathsf{FLMOD}$ has expressive rank, where the factorization system is again the usual surjection-injection factorization system, for a finite length $R$-module $M$, $\mathsf{rank}(M)=\mathsf{length}(M)$, and for an $R$-linear morphism $f$ between finite length $R$-modules, $\mathsf{rank}(f)=\mathsf{length}(\mathsf{im}(f))$. It is clear that this gives a colax functor $\mathsf{rank}: R\text{-}\mathsf{FLMOD} \to \mathsf{Rank}$ and that the factorization system expresses rank, but it is not immediately clear why it reflects isomorphisms. So here is an explanation for why  $\mathsf{rank}$  indeed reflects isomorphisms. It is well-known that if $0 \to M_1 \to M_2 \to M_3 \to 0$ is a short exact sequence of $R$-modules, then $M_2$ has finite length if and only if $M_1$ and $M_3$ have finite length, and in this case $\mathsf{length}(M_2) = \mathsf{length}(M_1) + \mathsf{length}(M_3)$. So in particular if $f: M \to N$ is a $R$-linear morphism between finite length $R$-modules, then since $0 \to M \to N \to N/{\sf im}(f) \to 0$ is a short-exact sequence and $N$ has finite length, as do $\mathsf{im}(f)$ and $N/{\sf im}(f)$, and moreover $\mathsf{length}(N) = \mathsf{length}(\mathsf{im}(f)) + \mathsf{length}(N/\mathsf{im}(f))$. Now suppose that $\mathsf{rank}(f)$ is an isomorphism. This means that $\mathsf{rank}(f)=\mathsf{rank}(M)=\mathsf{rank}(N)$, so $M$, $N$, and $\mathsf{im}(f)$ all have the same length. However if $\mathsf{length}(N) = \mathsf{length}(\mathsf{im}(f))$, then we must have that $\mathsf{length}(N/{\sf im}(f)) =0$. The only $R$-module of length zero is $0$, so $N/{\sf im}(f) = 0$. This gives us that $N = {\sf im}(f)$, and so $f$ is surjective. Finally, when $f$ is surjective we have, similarly, $\mathsf{length}(\mathsf{ker}(f)) + \mathsf{length}(M) =\mathsf{length}(N)$ so the kernel is $0$ and the map is also injective and so is an isomorphism. Thus, $\mathsf{rank}$ reflects isomorphisms.

%%%%%%%%%%%%%%%%%%%%%%%%%%%%%%%%%%%%%%%%%%%%%%%%%%%%%%%%%%%%%%%%%%%%%%%%%%%%%%%%%%%%

\subsection{From Expressive Rank to Drazin} We now prove the main result of this section. 

\begin{theorem} \label{rank-theorem} A category which has an expressive rank is Drazin.
\end{theorem}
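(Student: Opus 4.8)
The plan is to make rigorous the ``iterate until the rank stabilises, then invert'' idea. First I would note that $\mathsf{rank}(x^n)$ is non-increasing in $n$: since $x^{n+1}=x\, x^n$ and $\mathsf{rank}$ is colax, $\mathsf{rank}(x^{n+1}) \leq \mathsf{rank}(x) \wedge \mathsf{rank}(x^n) \leq \mathsf{rank}(x^n)$. Being a non-increasing sequence in $\mathbb{N}$, it stabilises, so there is a least $k$ with $\mathsf{rank}(x^n) = \mathsf{rank}(x^k)$ for all $n \geq k$; write $d := \mathsf{rank}(x^k)$. Using the factorization system $(\mathcal{E},\mathcal{M})$, factor $x^k = \varepsilon m$ with $\varepsilon: A \to I$ in $\mathcal{E}$, $m: I \to A$ in $\mathcal{M}$, and $I := \mathsf{im}(x^k)$; then \textbf{[ER.2]} gives $\mathsf{rank}(I) = \mathsf{rank}(\varepsilon) = \mathsf{rank}(m) = d$.

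The crux -- and the step I expect to be the main obstacle -- is to show that $x$ restricts to an \emph{isomorphism} $\widetilde{x}: I \to I$ on the eventual image, i.e. that $mx = \widetilde{x}\, m$ for a unique iso $\widetilde{x}$. For this I would argue that $\mathsf{im}(x^{k+1}) = I$ as a subobject: the containment $\mathsf{im}(x^{k+1}) \leq \mathsf{im}(x^k) = I$ is immediate from $x^{k+1} = x\, x^k$, while equality follows because the resulting inclusion is a map in $\mathcal{M}$ between objects of equal rank $d$ and is therefore an isomorphism by \textbf{[ER.3]} (its rank is $d$ by \textbf{[ER.2]}, using $\mathsf{rank}(x^{k+1}) = d$ from stabilisation). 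Since $x^{k+1} = \varepsilon(mx)$ with $\varepsilon \in \mathcal{E}$, the image of $mx$ coincides with $\mathsf{im}(x^{k+1}) = I$; hence $mx$ factors through $m$ as $mx = \widetilde{x}\, m$, and the corestriction $\widetilde{x}: I \to I$ is an $\mathcal{E}$-map of full rank $d$, so is an isomorphism by \textbf{[ER.3]}. This is where expressivity of the rank and reflection of isomorphisms do the real work; everything afterwards is formal. From $mx = \widetilde{x}\, m$ together with $\varepsilon m = x^k$ and the fact that $m$ is monic (being in $\mathcal{M}$), I would then read off two further identities by right-cancelling $m$: first $x\varepsilon = \varepsilon\widetilde{x}$ (since $(x\varepsilon)m = x\, x^k = x^{k+1} = \varepsilon(mx) = (\varepsilon\widetilde{x})m$), and second $m\varepsilon = \widetilde{x}^{\,k}$ (since $(m\varepsilon)m = m\, x^k = \widetilde{x}^{\,k} m$, obtained by iterating the first relation).

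With these three identities in hand I would define the candidate Drazin inverse
\[ x^D := \varepsilon\, \widetilde{x}^{\,-(k+1)}\, m : A \to A \]
and verify the axioms directly. Iterating $x\varepsilon = \varepsilon\widetilde{x}$ yields $x^{k+1}\varepsilon = \varepsilon\widetilde{x}^{\,k+1}$, so $x^{k+1} x^D = \varepsilon\widetilde{x}^{\,k+1}\widetilde{x}^{\,-(k+1)} m = \varepsilon m = x^k$, which is \textbf{[D.1]} and shows $\mathsf{ind}(x) \leq k$. For \textbf{[D.3]}, using $x\varepsilon = \varepsilon\widetilde{x}$ on one side and $mx = \widetilde{x}m$ on the other gives $x\, x^D = \varepsilon\widetilde{x}^{\,-k} m = x^D x$. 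For \textbf{[D.2]}, substituting $x\, x^D = \varepsilon\widetilde{x}^{\,-k} m$ and then $m\varepsilon = \widetilde{x}^{\,k}$ gives $x^D x\, x^D = \varepsilon\widetilde{x}^{\,-(k+1)}(m\varepsilon)\widetilde{x}^{\,-k} m = \varepsilon\widetilde{x}^{\,-(k+1)} m = x^D$. Hence $x$ is Drazin, and as $x$ was arbitrary, $\mathbb{X}$ is a Drazin category. One could equivalently route the construction through the endomorphism $\widehat{x} := mx\varepsilon = \widetilde{x}^{\,k+1}$ of $I$, whose invertibility the $\mathsf{rank}$ computation establishes directly; but passing through $\widetilde{x}$ keeps the three cancellation identities -- and hence the verification -- most transparent.
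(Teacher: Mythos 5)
Your overall strategy is the paper's: stabilise the rank sequence, factor $x^k = \varepsilon m$ through $\mathsf{im}(x^k)$, show that $x$ induces an automorphism $\widetilde{x}$ of $\mathsf{im}(x^k)$ by rank reflection, and set $x^D = \varepsilon\,\widetilde{x}^{-(k+1)}m$ (the paper's $\gamma_{x^k}$ is your $\widetilde{x}$, and its formula for $x^D$ is identical). Your final verification of \textbf{[D.1]}--\textbf{[D.3]} from the three identities $mx=\widetilde{x}m$, $x\varepsilon=\varepsilon\widetilde{x}$ and $m\varepsilon=\widetilde{x}^{\,k}$ is correct. However, there is a genuine gap in how you obtain the second and third identities: you right-cancel $m$ on the strength of ``$m$ is monic (being in $\mathcal{M}$)''. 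Definition \ref{def:expressive} only asks for an \emph{orthogonal} factorization system; properness (all $\mathcal{M}$-maps monic, all $\mathcal{E}$-maps epic) is an extra hypothesis that the paper introduces only later, in Proposition \ref{factorization-drazin}, precisely because it is not assumed here. So the cancellation step is unjustified at the stated level of generality, and with it both $x\varepsilon=\varepsilon\widetilde{x}$ and $m\varepsilon=\widetilde{x}^{\,k}$ are left unproved.

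Both identities are nevertheless true, but they need different arguments. For $x\varepsilon=\varepsilon\widetilde{x}$ the clean route is the one the paper takes: $\varepsilon(mx)=(x\varepsilon)m$ is a commutative square with $\varepsilon\in\mathcal{E}$ on one side and $m\in\mathcal{M}$ on the other, so orthogonality produces a \emph{unique} diagonal $\gamma$ satisfying $\varepsilon\gamma=x\varepsilon$ \emph{and} $\gamma m=mx$ simultaneously; one should define $\widetilde{x}$ as this diagonal (or check that your image-comparison map coincides with it) rather than deriving the first equation from the second. For $m\varepsilon=\widetilde{x}^{\,k}$, the paper first proves that $m\varepsilon$ is an isomorphism by a separate rank computation, namely $\mathsf{rank}(m\varepsilon)\geq\mathsf{rank}(\varepsilon m\varepsilon m)=\mathsf{rank}(x^{2k})=\mathsf{rank}(\mathsf{im}(x^k))$ followed by \textbf{[ER.3]}; after that, $\widetilde{x}^{\,k} = \widetilde{x}^{\,k}(m\varepsilon)(m\varepsilon)^{-1} = m x^k\varepsilon(m\varepsilon)^{-1} = (m\varepsilon)(m\varepsilon)(m\varepsilon)^{-1} = m\varepsilon$, using $\widetilde{x}^{\,k} m = m x^k$, which does follow by iterating $\widetilde{x}m=mx$. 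With these two repairs your proof goes through; if you are willing to assume the factorization system is proper, your argument is correct as written, but that proves strictly less than the theorem claims.
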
 
\begin{proof} Let $\mathbb{X}$ be a category with expressive rank, so with colax functor $\mathsf{rank}: \mathbb{X} \to {\sf Rank}$ and factorization system $(\mathcal{E},\mathcal{M})$. We will first show that for every endomorphism $x: A \to A$, there is a $k \in \mathbb{N}$ such that the rank of $x^k$ is equal to $x^{k+1}$. First observe that by colaxity, for any suitable maps we get the inequalities: $\mathsf{rank}(g) \geq \mathsf{rank}(f) \wedge \mathsf{rank}(g) \wedge \mathsf{rank}(h) \geq \mathsf{rank(fgh)}$. As such, for any endomorphism $x: A \to A$ we get a descending chain of inequalities: $ \mathsf{rank}(A) = \mathsf{rank}(x^0) \geq \mathsf{rank}(x^1) \geq  \mathsf{rank}(x^2) \geq \hdots$. As the natural numbers are well-ordered there is an $r \in \mathbb{N}$ which is the minimum of all these ranks, so $r = \mathsf{min}_{n\in \mathbb{N}}( \mathsf{rank}(x^n) )$. So set $k$ to be the least natural number such that $\mathsf{rank}(x^k) = r$. Once the sequence hits this rank, all subsequent ranks are equal, so $r = \mathsf{rank}(x^k)=\mathsf{rank}(x^{k+1}) = \hdots$. 

Now consider a factorization of $x^k: A \to A$ via $\varepsilon_{x^k}: A \to {\sf im}(x^k)$ and $m_{x^k}: A \to {\sf im}(x^k)$, so $x^k = \varepsilon_{x^k} m_{x^k}$ with $\varepsilon_{x^k} \in \mathcal{E}$ and $m_{x^k} \in \mathcal{M}$. In order to define the Drazin inverse, we will first show that the composite $m_{x^k} \varepsilon_{x^k}$ is an isomorphism, and so is the unique map $\gamma_{x^k}: {\sf im}(x^k) \to {\sf im}(x^k)$ induced by the factorization system which makes the following diagram commute: 
\[ \xymatrixcolsep{5pc}\xymatrix{X \ar@/_2pc/[dd]_{x^k}  \ar[r]^-x \ar[d]^-{\varepsilon_{x^k}} & X \ar[d]_-{\varepsilon_{x^k}} \ar@/^2pc/[dd]^-{x^k}  \\
                {\sf im}(x^k) \ar[d]^-{m_{x^k}} \ar@{-->}[r]^{\gamma_{x^k}} &  {\sf im}(x^k) \ar[d]_-{m_{x^k}} \\
                X \ar[r]_-x & X} \]
 Now since $x^{k+1} = \varepsilon_{x^k} \gamma_{x^k} m_{x^k}$, by colaxity and \textbf{[ER.2]}, we get:  
             \begin{gather*}
               \mathsf{rank}(x^k) \! = \! \mathsf{rank}({\sf im}(x^k)) \!\geq\!  \mathsf{rank}(\gamma_{x^k}) \!\geq\! \mathsf{rank}(\varepsilon_{x^k} \gamma_{x^k} m_{x^k}) \! =\! \mathsf{rank}(x^{k+1})\! = \! \mathsf{rank}(x^k) \! = \!\mathsf{rank}({\sf im}(x^k)\!) 
             \end{gather*}   
So we get that $\mathsf{rank}(\gamma_{x^k}) = \mathsf{rank}({\sf im}(x^k))$. Then since $\mathsf{rank}$ reflects isomorphism, by \textbf{[ER.3]} we get that $\gamma_{x^k}$ is an isomorphism. Now note that by transposing the above diagram we also get that the following diagram commutes as well: 
 \[ \xymatrixcolsep{5pc}\xymatrix{ {\sf im}(x^k)  \ar[d]_-{m_{x^k}} \ar[r]^-{\gamma_{x^k}} &  {\sf im}(x^k) \ar[d]^-{m_{x^k}}   \\
                X \ar[d]_-{\varepsilon_{x^k}} \ar[r]_-x & X \ar[d]^-{\varepsilon_{x^k}} \\
                {\sf im}(x^k) \ar[r]_-{\gamma_{x^k}} &  {\sf im}(x^k)} \]
Moreover, by definition of $\gamma_{x^k}$ we also get that: 
\[ \varepsilon_{x^k} \gamma_{x^k}^k m_{x^k} = x^k \varepsilon_{x^k} m_{x^k} = \varepsilon_{x^k} m_{x^k} \varepsilon_{x^k} m_{x^k} = x^k x^k = x^{2k} \]
So $\varepsilon_{x^k} \gamma_{x^k}^k m_{x^k} = x^{2k}$. Also by \textbf{[ER.2]}, since $\mathsf{rank}({\sf im}(x^k)) = \mathsf{rank}(m_{x^k}) = \mathsf{rank}(\varepsilon_{x^k})$, we of course have that $\mathsf{rank}({\sf im}(x^k)) = \mathsf{rank}(m_{x^k}) \wedge \mathsf{rank}(\varepsilon_{x^k})$. Then using colaxity and  $\mathsf{rank}(x^k)=\mathsf{rank}(x^{2k})$, we have that: 
\begin{gather*}\mathsf{rank}({\sf im}(x^k)) = \mathsf{rank}(m_{x^k}) \wedge \mathsf{rank}(\varepsilon_{x^k}) \geq \mathsf{rank}(m_{x^k} \varepsilon_{x^k}) \geq \mathsf{rank}(\varepsilon_{x^k} m_{x^k} \varepsilon_{x^k} m_{x^k}) \\
= \mathsf{rank}(x^{2k}) =  \mathsf{rank}(x^k) = \mathsf{rank}({\sf im}(x^k)) 
\end{gather*} 
Thus $\mathsf{rank}({\sf im}(x^k)) = \mathsf{rank}(m_{x^k} \varepsilon_{x^k})$, and by \textbf{[ER.3]}, we obtain that $m_{x^k} \varepsilon_{x^k}$ is also an isomorphism. Moreover, we also compute that:
\begin{gather*}
    \gamma^k_{x^k} = \gamma^k_{x^k} m_{x^k} \varepsilon_{x^k} (m_{x^k} \varepsilon_{x^k})^{-1} =  m_{x^k} x^k \varepsilon_{x^k} (m_{x^k} \varepsilon_{x^k})^{-1} = m_{x^k} \varepsilon_{x^k} m_{x^k} \varepsilon_{x^k} (m_{x^k} \varepsilon_{x^k})^{-1} =  m_{x^k} \varepsilon_{x^k}
\end{gather*}
So $\gamma_{x^k}^k = m_{x^k} \varepsilon_{x^k}$. So in particular, $\gamma_{x^k} m_{x^k} \varepsilon_{x^k} = \gamma_{x^k}^{k+1}$.  

Finally, setting $x^D \colon = \varepsilon_{x^k} (\gamma_{x^k}^{-1})^{k+1} m_{x^k}$, we show that $x^D$ satisfies the Drazin axioms:
\begin{enumerate}[{\bf [D.1]}]
\item Using $\gamma_{x^k} m_{x^k} \varepsilon_{x^k} = \gamma_{x^k}^{k+1}$, we compute that: 
\begin{gather*}
    x^{k+1} x^D = \varepsilon_{x^k} \gamma_{x^k} m_{x^k} \varepsilon_{x^k} (\gamma_{x^k}^{-1})^{k+1} m_{x^k} = \varepsilon_{x^k} \gamma_{x^k}^{k+1} (\gamma_{x^k}^{-1})^{k+1} m_{x^k} = \varepsilon_{x^k} m_{x^k} = x^k 
\end{gather*}
\item Again using $\gamma_{x^k} m_{x^k} \varepsilon_{x^k} = \gamma_{x^k}^{k+1}$, we compute that: 
\begin{gather*}
    x^D x x^D = \varepsilon_{x^k} (\gamma_{x^k}^{-1})^{k+1} m_{x^k} x \varepsilon_{x^k} (\gamma_{x^k}^{-1})^{k+1} m_{x^k} = \varepsilon_{x^k} (\gamma_{x^k}^{-1})^{k+1} \gamma_{x^k} m_{x^k} \varepsilon_{x^k} (\gamma_{x^k}^{-1})^{k+1} m_{x^k} \\
    = \varepsilon_{x^k} (\gamma_{x^k}^{-1})^{k+1} \gamma_{x^k}^{k+1} (\gamma_{x^k}^{-1})^{k+1} m_{x^k} = \varepsilon_{x^k} (\gamma_{x^k}^{-1})^{k+1} m_{x^k} = x^D 
\end{gather*}
\item Here we use that $\gamma_{x^k} m_{x^k} \varepsilon_{x^k} = \gamma_{x^k}^{k+1} = m_{x^k} \varepsilon_{x^k} \gamma_{x^k}$, so we compute that: 
\begin{gather*}
    x^D x = \varepsilon_{x^k} (\gamma_{x^k}^{-1})^{k+1} m_{x^k} x = \varepsilon_{x^k} (\gamma_{x^k}^{-1})^{k+1} \gamma_{x^k} m_{x^k}  = \varepsilon_{x^k} (\gamma_{x^k}^{-1})^{k} m_{x^k} \\
    = \varepsilon_{x^k} \gamma_{x^k} (\gamma_{x^k}^{-1})^{k+1} m_{x^k} = x \varepsilon_{x^k} (\gamma_{x^k}^{-1})^{k+1} m_{x^k} = x x^D 
\end{gather*}
\end{enumerate} 
Thus $x$ is Drazin with Drazin inverse $x^D$, and so we conclude that $\mathbb{X}$ is Drazin. 
\end{proof}

%%%%%%%%%%%%%%%%%%%%%%%%%%%%%%%%%%%%%%%%%%%%%%%%%%%%%%%%%%%%%%%%%%%%%%%%%%%%%%%%%%%%

\subsection{Drazin Inverses in Proper Factorization Systems} The proof of Theorem \ref{rank-theorem} suggests that having a factorization system allows for an equivalent description of an endomorphism $x$ being Drazin in terms of $\gamma_{x^k}$ and $m_{x^k} \varepsilon_{x^k}$ being isomorphisms. We show that this is true for a \emph{proper} factorization system, that is a factorization system in which all the ${\cal E}$-maps are epic and all the ${\cal M}$-maps are monic.

\begin{proposition}\label{factorization-drazin}
In a category $\mathbb{X}$ with a proper factorization system $(\mathcal{M}, \mathcal{E})$, an endomorphism ${x: A \to A}$ is Drazin if and only if there is a $k \in \mathbb{N}$ for which $x^{k+1}: A \to A$ has a factorization via $\varepsilon_{x^{k+1}}: A \to {\sf im}(x^{k+1}) \in \mathcal{E}$ and $m_{x^{k+1}}: {\sf im}(x^{k+1}) \to A \in \mathcal{M}$ for which $m_{x^{k+1}} \varepsilon_{x^{k+1}}: {\sf im}(x^{k+1}) \to {\sf im}(x^{k+1})$ is an isomorphism.
\end{proposition}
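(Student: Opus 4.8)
The plan is to reduce both implications to the single statement that $y := x^{k+1}$ admits a \emph{group inverse}, and then to read the two directions off from the factorization $y = \varepsilon m$, where I abbreviate $\varepsilon := \varepsilon_{x^{k+1}} \in \mathcal{E}$, $m := m_{x^{k+1}} \in \mathcal{M}$, and write $u := m\varepsilon \colon {\sf im}(x^{k+1}) \to {\sf im}(x^{k+1})$. The organizing observation is that $y^n = \varepsilon u^{n-1} m$ for all $n \geq 1$, so all the relevant algebra can be pushed onto powers of $u$. I will use that, by Lemma \ref{lemma:ind1}, having a group inverse is the same as being Drazin of index at most $1$, and that by the preceding lemma characterizing Drazinness of $x$ through that of a power $x^{k+1}$, the endomorphism $x$ is Drazin if and only if $x^{k+1}$ is.

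For ($\Leftarrow$), I would assume $u$ is an isomorphism and exhibit the group inverse of $y$ explicitly by setting $y^D := \varepsilon u^{-2} m$. Using $y^n = \varepsilon u^{n-1} m$ and cancelling matching powers of $u$ through the relation $m\varepsilon = u$, one checks directly that $y y^D = \varepsilon u^{-1} m = y^D y$ (giving \textbf{[G.3]}), that $y y^D y = \varepsilon m = y$ (giving \textbf{[G.1]}), and that $y^D y y^D = \varepsilon u^{-2} m = y^D$ (giving \textbf{[G.2]}). Hence $x^{k+1}$ has a group inverse and is in particular Drazin, so $x$ is Drazin. Note this direction uses only the invertibility of $u$ and not properness.

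For ($\Rightarrow$), I would first fix the witness $k$: any $k$ with $k+1 \geq \mathsf{ind}(x)$ forces, by Lemma \ref{lem:Drazin-iteration} together with the identities of Lemma \ref{lemma:Drazin-basic}, that $y = x^{k+1}$ is Drazin with $\mathsf{ind}(y) \leq 1$, i.e. $y$ has a group inverse $y^D = (x^D)^{k+1}$. Now factor $y = \varepsilon m$. The crucial step is to convert the group-inverse axioms into the invertibility of $u$. Rewriting \textbf{[G.1]}, $y y^D y = y$, through the factorization gives $\varepsilon\,(m y^D \varepsilon)\,m = \varepsilon m$; this is exactly where properness is needed, as cancelling the epimorphism $\varepsilon$ on the left and the monomorphism $m$ on the right yields $m y^D \varepsilon = 1_{{\sf im}(x^{k+1})}$. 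With this identity available, I would verify that $s := m (y^D)^2 \varepsilon$ is a two-sided inverse of $u = m\varepsilon$: both composites reduce, using $\varepsilon m = y$ and the group-inverse consequences $y (y^D)^2 = y^D = (y^D)^2 y$, to the established identity $m y^D \varepsilon = 1_{{\sf im}(x^{k+1})}$. Thus $u$ is an isomorphism, and since any two factorizations of $x^{k+1}$ agree up to isomorphism, the conclusion does not depend on the chosen factorization.

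The main obstacle is this last direction, and specifically the passage from the abstract equation $y y^D y = y$ to the concrete invertibility of $m\varepsilon$: properness is used in precisely one place, where epi/mono cancellation turns a von Neumann regularity relation on $y$ into the splitting identity $m y^D \varepsilon = 1_{{\sf im}(x^{k+1})}$. The remainder is routine bookkeeping with powers of $u$ for ($\Leftarrow$) and with the group-inverse equations for ($\Rightarrow$).
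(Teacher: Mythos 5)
Your proof is correct, but it takes a genuinely different route from the paper's. The paper keeps everything at the level of the factorization system: for ($\Rightarrow$) it obtains the inverse of $\beta := m_{x^{k+1}}\varepsilon_{x^{k+1}}$ as the unique diagonal fill-in induced by $(x^D)^{k+1}$ and verifies $\beta\beta^{-1}=1=\beta^{-1}\beta$ by epi/mono cancellation; for ($\Leftarrow$) it introduces the induced endomorphism $\gamma_{x^{k+1}}$ of the image, proves $\gamma_{x^{k+1}}^{k+1}=\beta$ by cancellation (so properness is used in \emph{both} directions), and then assembles $x^D = \varepsilon(\gamma^{-1})^{k+1}m$ following the template of Theorem \ref{rank-theorem}. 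You instead push everything onto the group inverse of $y=x^{k+1}$: in the backward direction your explicit formula $y^D=\varepsilon u^{-2}m$ with $u=m\varepsilon$ verifies \textbf{[G.1]}--\textbf{[G.3]} by pure computation with $m\varepsilon=u$, needs no cancellation and hence no properness, and you then conclude via the (unnumbered) lemma that $x$ is Drazin iff some power $x^{k+1}$ is; in the forward direction you extract $m y^D \varepsilon = 1$ from \textbf{[G.1]} by one epi/mono cancellation and exhibit $m(y^D)^2\varepsilon$ as a two-sided inverse of $u$, using $y(y^D)^2=y^D=(y^D)^2y$. The trade-off: the paper's argument produces the Drazin inverse of $x$ itself in the form used throughout Section \ref{rank} and reuses that machinery, while yours is more self-contained, isolates the single point where properness matters, and shows the ($\Leftarrow$) implication holds for an arbitrary factorization $x^{k+1}=\varepsilon m$ with $m\varepsilon$ invertible, proper or not. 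All the auxiliary facts you invoke (Lemma \ref{lemma:ind1}, Lemma \ref{lem:Drazin-iteration}, Lemma \ref{lemma:Drazin-basic}) are indeed available at this point in the paper, so the argument stands as written.
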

\begin{proof}~For ($\Rightarrow$), Suppose $x: A \to A$ is Drazin and let $\mathsf{ind}(x) = k$. Then for $x^{k+1}: A \to A$, let $x^{k+1}$ factor via $\varepsilon_{x^{k+1}}: A \to {\sf im}(x^{k+1})$ and $m_{x^{k+1}}: A \to {\sf im}(x^{k+1})$, so $x^{k+1} = \varepsilon_{x^{k+1}} m_{x^{k+1}}$ with $\varepsilon_{x^{k+1}} \in \mathcal{E}$ and $m_{x^{k+1}} \in \mathcal{M}$. Denote $\beta_{x^{k+1}} \colon = m_{x^{k+1}} \varepsilon_{x^{k+1}}$. Now observe that by the factorization system, $\beta_{x^{k+1}}$ is the unique map which makes the below diagram on the left commute. On the other hand by \textbf{[D.3]}, we have that $(x^D)^k x^k = x^k (x^D)^k$. Therefore, define $\beta_{x^{k+1}}^{-1}$ as the unique map induced by the factorization system which makes the below diagram on the right commute:
\[  \begin{array}[c]{c} \xymatrixcolsep{3pc}\xymatrix{A \ar@/_2.5pc/[dd]_{x^{k+1}}  \ar[r]^-{x^{k+1}} \ar[d]^-{\varepsilon_{x^{k+1}}} & A \ar[d]_-{\varepsilon_{x^{k+1}}} \ar@/^2.5pc/[dd]^-{x^{k+1}}  \\
                {\sf im}(x^{k+1}) \ar[d]^-{m_{x^{k+1}}} \ar@{-->}[r]^{\beta_{x^{k+1}}} &  {\sf im}(x^{k+1}) \ar[d]_-{m_{x^{k+1}}} \\
                A \ar[r]_-{x^{k+1}} & A} \end{array} \quad \begin{array}[c]{c} \xymatrixcolsep{3pc}\xymatrix{A \ar@/_2.5pc/[dd]_{x^{k+1}}  \ar[r]^-{(x^D)^{k+1}} \ar[d]^-{\varepsilon_{x^{k+1}}} & A \ar[d]_-{\varepsilon_{x^{k+1}}} \ar@/^2.5pc/[dd]^-{x^{k+1}}  \\
                {\sf im}(x^{k+1}) \ar[d]^-{m_{x^{k+1}}} \ar@{-->}[r]^{\beta^{-1}_{x^{k+1}}} &  {\sf im}(x^{k+1}) \ar[d]_-{m_{x^{k+1}}} \\
                A \ar[r]_-{(x^D)^{k+1}} & A} \end{array} \]
Now by Lemma \ref{lemma:Drazin-basic}.(\ref{lemma:Drazin-basic.2}), we compute that: 
\begin{gather*}
 \varepsilon_{x^{k+1}} \beta_{x^{k+1}}\beta^{-1}_{x^{k+1}} m_{x^{k+1}} = x^{k+1} \varepsilon_{x^{k+1}} m_{x^{k+1}} (x^D)^{k+1} \\
 = x^{k+1} x^{k+1} (x^D)^{k+1} = x^{k+1+k+1} (x^D)^{k+1} = x^{k+1} = \varepsilon_{x^{k+1}} m_{x^{k+1}} 
\end{gather*}
Since $\varepsilon_{x^{k+1}}$ is epic and $m_{x^{k+1}}$ is monic, we get that $\beta_{x^{k+1}}\beta^{-1}_{x^{k+1}} = 1_{{\sf im}(x^{k+1})}$. Similarly, we can also show that $\beta^{-1}_{x^{k+1}}\beta_{x^{k+1}} = 1_{{\sf im}(x^{k+1})}$ as well. So we conclude that $\beta_{x^{k+1}} \colon = m_{x^{k+1}} \varepsilon_{x^{k+1}}$ is an isomorphism as desired. 

For ($\Leftarrow$), suppose that there is a $k \in \mathbb{N}$ such that for $x^{k+1}: A \to A$, and a factorization $\varepsilon_{x^{k+1}}: A \to {\sf im}(x^{k+1}) \in \mathcal{E}$ and $m_{x^{k+1}}: A \to {\sf im}(x^{k+1}) \in \mathcal{M}$, that $\beta_{x^{k+1}} \colon = m_{x^{k+1}} \varepsilon_{x^{k+1}}$ is an isomorphism. Now consider the unique map induced by the factorization system $\gamma_{x^{k+1}}: {\sf im}(x^{k+1}) \to {\sf im}(x^{k+1})$ defined as in the proof of Theorem \ref{rank-theorem}. Recall that in that proof we needed $\gamma_{x^{k+1}}$ to be an isomorphism to build the Drazin inverse, and we achieved this with an argument involving expressive rank. However, we will now show that from $\beta_{x^{k+1}}$ being an isomorphism, it follows that $\gamma_{x^{k+1}}$ is also an isomorphism. So by definition of $\gamma_{x^{k+1}}$, we first compute that: 
\begin{gather*}
    \varepsilon_{x^{k+1}} \gamma^{k+1}_{x^{k+1}} m_{x^{k+1}} = x^{k+1} \varepsilon_{x^{k+1}}  m_{x^{k+1}} = \varepsilon_{x^{k+1}}  m_{x^{k+1}} \varepsilon_{x^{k+1}}  m_{x^{k+1}} =  \varepsilon_{x^{k+1}}  \beta_{x^{k+1}} m_{x^{k+1}}  
\end{gather*}
Since $\varepsilon_{x^{k+1}}$ is epic and $m_{x^{k+1}}$ is monic, we get that $\gamma^{k+1}_{x^{k+1}} = \beta_{x^{k+1}}$. However, this implies that $\gamma_{x^{k+1}}$ is an isomorphism with inverse $\gamma^{-1}_{x^{k+1}} = \gamma^k_{x^{k+1}} \beta^{-1}_{x^{k+1}} = \beta^{-1}_{x^{k+1}} \gamma^k_{x^{k+1}}$. Then setting $x^D \colon = \varepsilon_{x^k} (\gamma_{x^k}^{-1})^{k+1} m_{x^k}$, using the same calculations as in the proof of Theorem \ref{rank-theorem}, we get that $x^D$ is a Drazin inverse of $x$. \end{proof}

%%%%%%%%%%%%%%%%%%%%%%%%%%%%%%%%%%%%%%%%%%%%%%%%%%%%%%%%%%%%%%%%%%%%%%

%%%%%%%%%%%%%%%%%%%%%%%%%%%%%%%%%%%%%%%%%%%%%%%%%%%%%%%%%%%%%%%%%%%%%%

\section{Drazin Inverses and Idempotents} \label{drazin-idempotents}

%%%%%%%%%%%%%%%%%%%%%%%%%%%%%%%%%%%%%%%%%%%%%%%%%%%%%%%%%%%%%%%%%%%%%%

An important class of endomorphisms in any category are the idempotents, that is, endomorphisms $e: A \to A$ such that $e^2 = e$. In this section, we explore the connection between Drazin inverses and idempotents. In particular, we will show how to characterize Drazin maps in terms of isomorphisms in the idempotent splitting of a category -- this displays an aspect of Drazin inverses which is categorically inspired.   Moreover, this allows us to relate the notion of a Drazin inverse to Leinster's concept of an eventual image \cite{leinster2022eventual}. Throughout this section, we work in an arbitrary category $\mathbb{X}$.

%%%%%%%%%%%%%%%%%%%%%%%%%%%%%%%%%%%%%%%%%%%%%%%%%%%%%%%%%%%%%%%%%%%%%%%%%%%%%%%%%%%%%%%%
\subsection{Idempotents} We begin by observing that every Drazin endomorphism induces an idempotent by composing it with its Drazin inverse. 

\begin{lemma}\label{lemma:e_x} Let $x: A \to A$ be Drazin. Define the map $e_x\colon = x^Dx: A \to A$ (or equivalently by \textbf{[D.3]} as $e_x = xx^D$). Then $e_x$ is an idempotent.
\end{lemma}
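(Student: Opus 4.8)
The plan is to verify directly that $e_x := x^Dx$ is idempotent, i.e.\ that $e_x e_x = e_x$, using only the three Drazin axioms. First I would note that, by \textbf{[D.3]}, $x^Dx = xx^D$, so the two candidate descriptions of $e_x$ agree and I am free to rewrite $e_x$ in whichever of these forms is convenient at each step. The whole computation is a short string of rewrites, so the main task is simply to choose the order that lets \textbf{[D.2]} do the work.

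The key step is the following chain. Starting from the definition and inserting the second factor,
\[
e_x e_x = (x^D x)(x^D x) = x^D (x x^D) x = x^D (x^D x) x = x^D x^D x^2.
\]
Here I used \textbf{[D.3]} once to swap $xx^D$ into $x^Dx$. At this point I would instead aim to land directly on \textbf{[D.2]}. The cleanest route is to regroup so that the middle reads $x^D x x^D$: writing
\[
e_x e_x = x^D x\, x^D x = (x^D x x^D)\, x = x^D x = e_x,
\]
where the penultimate equality is exactly \textbf{[D.2]} applied to the parenthesized factor. This is the entire argument.

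I expect no real obstacle here: the statement is essentially immediate from \textbf{[D.2]} once the associativity regrouping is performed, and \textbf{[D.3]} is only needed to justify the ``or equivalently'' remark that $x^D x = x x^D$, so that $e_x$ is well defined independently of which composite one writes. If one wished to be completely symmetric, the same computation carried out with $e_x = xx^D$ gives $xx^D\,xx^D = x(x^Dxx^D) = xx^D = e_x$, again by \textbf{[D.2]}, confirming both forms yield the same idempotent.
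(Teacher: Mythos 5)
Your proposal is correct and matches the paper's proof exactly: the paper also computes $e_x e_x = x^D x\, x^D x = (x^D x x^D)x = x^D x$ by a single application of \textbf{[D.2]}. The extra detour through \textbf{[D.3]} in your first displayed chain is harmless but unnecessary, as your second regrouping already completes the argument.
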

\begin{proof} This is immediate by \textbf{[D.2]} since: $e_x e_x \colon = x^D x x^D x = x^D x = e_x$.
\end{proof}

On the other hand, every idempotent is Drazin and, indeed, is its own Drazin inverse. Moreover, every idempotent has Drazin index at most one, meaning that it is its own group inverse as well. The only idempotents with Drazin index zero are the identity maps. Thus, non-trivial idempotents have Drazin index one. 

\begin{lemma}\label{lemma:e-drazin} An idempotent $e: A \to A$ is Drazin, its own Drazin inverse, $e^D = e$, and $\mathsf{ind}(e) \leq 1$. Moreover, $\mathsf{ind}(e) = 0$ if and only if $e= 1_A$. 
\end{lemma}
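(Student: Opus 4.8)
The plan is to take $e^D := e$ as the candidate Drazin inverse and verify the three axioms directly, exploiting the single fact that idempotency forces all positive powers to collapse. First I would record the elementary observation that $e^2 = e$ implies $e^n = e$ for every $n \geq 1$ (immediate induction). With this collapse in hand, each axiom reduces to a trivial computation: for \textbf{[D.1]} take $k = 1$, so that $e^{k+1} e^D = e^2\, e = e = e^1 = e^k$; for \textbf{[D.2]} we have $e^D e\, e^D = e\, e\, e = e = e^D$; and \textbf{[D.3]} holds trivially because $e$ commutes with itself, $e^D e = e\,e = e\,e^D$. This simultaneously shows that $e$ is Drazin with $e^D = e$ and that $\mathsf{ind}(e) \leq 1$, since the witnessing value $k = 1$ already satisfies \textbf{[D.1]}.

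For the final claim, I would characterize exactly when the index drops to $0$. By definition $\mathsf{ind}(e) = 0$ means \textbf{[D.1]} holds at $k = 0$, which reads $e^{1} e^D = e^{0}$, i.e. $e^2 = 1_A$ (using the convention $e^0 = 1_A$). Combined with idempotency $e^2 = e$, this yields $e = 1_A$. Conversely, if $e = 1_A$ then $\mathsf{ind}(e) = 0$ by Lemma \ref{lem:Drazin-0}, since the identity is an isomorphism. Alternatively, one can route the whole equivalence through Lemma \ref{lem:Drazin-0}: $\mathsf{ind}(e) = 0$ forces $e$ to be an isomorphism with $e^{-1} = e^D = e$, whence $e^2 = 1_A$ and again $e = 1_A$.

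The main ``obstacle'' here is essentially nonexistent, as all the content is carried by the collapse $e^n = e$; the only point requiring genuine care is the treatment of the $k = 0$ case of \textbf{[D.1]}, where $e^0 = 1_A$ rather than $e$. This boundary case is precisely what separates the non-trivial idempotents (Drazin index $1$) from the identity (Drazin index $0$), so I would make sure to state the $x^0 = 1_A$ convention explicitly when handling it.
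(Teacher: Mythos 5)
Your proof is correct and takes essentially the same approach as the paper: both set $e^D = e$ and verify the axioms directly, the only cosmetic difference being that the paper routes the verification through the group-inverse characterization (Lemma \ref{lemma:ind1}) rather than checking \textbf{[D.1]}--\textbf{[D.3]} with $k=1$ by hand. Your handling of the $\mathsf{ind}(e)=0$ case via the convention $e^0 = 1_A$ is also equivalent to the paper's argument through Lemma \ref{lem:Drazin-0}.
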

\begin{proof} By Lemma \ref{lemma:ind1}, to show that $e$ is its own Drazin inverse and $\mathsf{ind}(e) \leq 1$, it suffices to show that it is its own group inverse. Then setting $e^D = e$, we clearly have that: \textbf{[G.1]} $ee^De= e$; \textbf{[G.2]} $e^D e e^D = e^D$; and  \textbf{[G.3]} $e^D e = e e^D$. So $e$ is its own group inverse, and therefore also its own Drazin inverse with $\mathsf{ind}(e) \leq 1$. Now if $\mathsf{ind}(e) = 0$, then by Lemma \ref{lem:Drazin-0}, $e$ is an isomorphism and its Drazin inverse is its inverse. This would mean that $e^{-1} = e^D = e$, so $e$ is its own inverse. However, by idempotency, this would imply that $1_A = e e^{-1} = ee = e$. So the only idempotent with Drazin index zero is the identity. 
\end{proof}

The converse is not true: that is not every Drazin endomorphism with Drazin index at most one is an idempotent. Instead, having a Drazin index less than or equal to one can be equivalently described in terms of \emph{binary idempotents}. 

%%%%%%%%%%%%%%%%%%%%%%%%%%%%%%%%%%%%%%%%%%%%%%%%%%%%%%%%%%%%%%%%%%%%%%%%%%%%%%%%%%%%%%%%%%%%%%%%%%%

\subsection{Binary Idempotents}\label{sec:binary-idempotent} A \textbf{binary idempotent} \cite[Def 9.2]{priyaathesis} is a pair $(f,g)$ consisting of maps $f: A \to B$ and $g: B \to A$ such that $fgf=f$ and $gfg=g$. If $(f,g)$ is a binary idempotent, then $fg: A \to A$ and $gf: B \to B$ are both idempotents. Now when $A=B$, a \textbf{commuting binary idempotent} is a pair $(x,y)$ consisting of endomorphisms $x: A \to A$ and $y: A \to A$ such that $(x,y)$ is a binary idempotent and $xy=yx$. For a commuting binary idempotent $(x,y)$ we denote its induced idempotent as $e_{(x,y)}: A \to A$, that is, $e_{(x,y)}= xy = yx$. 

\begin{lemma}\label{lemma:ind1=cbi} $x: A \to A$ is Drazin with $\mathsf{ind}(x) \leq 1$ if and only if there is an endomorphism $x^D: A \to A$ such that $(x,x^D)$ is a commuting binary idempotent. 
\end{lemma}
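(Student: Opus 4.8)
The plan is to recognize that the defining equations of a commuting binary idempotent $(x, x^D)$ are literally the group inverse axioms \textbf{[G.1]}, \textbf{[G.2]}, \textbf{[G.3]}, and then to invoke Lemma \ref{lemma:ind1}, which already establishes that $x$ is Drazin with $\mathsf{ind}(x) \leq 1$ if and only if $x$ admits a group inverse. So the real content is a direct unwinding of definitions rather than any computation.

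More precisely, for a pair $(x, x^D)$ of endomorphisms of $A$, the conditions that $(x, x^D)$ be a commuting binary idempotent are: $x x^D x = x$ (this is $fgf = f$ with $f = x$ and $g = x^D$), $x^D x x^D = x^D$ (this is $gfg = g$), and $x x^D = x^D x$ (commutativity). These three equations are exactly \textbf{[G.1]}, \textbf{[G.2]}, and \textbf{[G.3]} respectively. Hence asserting that there exists $x^D$ with $(x, x^D)$ a commuting binary idempotent is word-for-word the same as asserting that $x$ has a group inverse.

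Given this observation, I would dispatch both directions through Lemma \ref{lemma:ind1}. For the forward direction, assume $x$ is Drazin with $\mathsf{ind}(x) \leq 1$; by Lemma \ref{lemma:ind1} its Drazin inverse $x^D$ is a group inverse, so \textbf{[G.1]}--\textbf{[G.3]} hold, which is precisely the statement that $(x, x^D)$ is a commuting binary idempotent. For the converse, given a commuting binary idempotent $(x, x^D)$, the three equations above exhibit $x^D$ as a group inverse of $x$, and Lemma \ref{lemma:ind1} then yields that $x$ is Drazin with $\mathsf{ind}(x) \leq 1$.

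The only point worth flagging is the matching of names: the endomorphism witnessing the commuting binary idempotent and the group inverse (Drazin inverse) coincide, which is why the same notation $x^D$ is appropriate on both sides. There is no genuine obstacle here — the statement amounts to the observation that two presentations of the same axioms agree, with Lemma \ref{lemma:ind1} supplying the bridge to the Drazin formulation, so the proof is a short paragraph rather than a calculation.
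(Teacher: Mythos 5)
Your proposal is correct and is essentially identical to the paper's own proof: both observe that the defining equations of a commuting binary idempotent $(x,x^D)$ coincide verbatim with the group inverse axioms \textbf{[G.1]}--\textbf{[G.3]}, and then invoke Lemma \ref{lemma:ind1} to translate between having a group inverse and being Drazin with $\mathsf{ind}(x) \leq 1$. Nothing is missing.
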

\begin{proof} Recall that by Lemma \ref{lemma:ind1}, $x$ is Drazin with $\mathsf{ind}(x) \leq 1$ if and only if $x$ has a group inverse $x^D$. However, the requirements for $(x,x^D)$ being a commuting binary idempotent are precisely the same as saying that $x^D$ is a group inverse of $x$. 
\end{proof}

While not every Drazin endomorphism and its Drazin inverse form a commuting binary idempotent, we can always obtain one involving the Drazin inverse. 

\begin{corollary} If $x: A \to A$ is Drazin, then $(x^D, x^{DD})$ is a commuting binary idempotent. Moreover, the resulting idempotent is precisely $e_{(x^D, x^{DD})} = e_x$.
\end{corollary}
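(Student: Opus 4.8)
The plan is to derive this corollary by chaining together the two lemmas immediately preceding it. First I would invoke Lemma~\ref{lem:Drazin-inverse1}.(\ref{Drazin-inverse-inverse}), which tells us that $x^D$ is itself Drazin with $\mathsf{ind}(x^D) \leq 1$ and with explicit Drazin inverse $x^{DD} = x x^D x$. Being a Drazin endomorphism of index at most one is precisely the hypothesis of Lemma~\ref{lemma:ind1=cbi}, so I would apply that lemma with $x^D$ playing the role of ``$x$''. It yields an endomorphism that forms a commuting binary idempotent with $x^D$; since that endomorphism is the Drazin inverse of $x^D$, which by Proposition~\ref{prop:unique} is uniquely $x^{DD}$, we conclude that $(x^D, x^{DD})$ is a commuting binary idempotent. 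This settles the first assertion.

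For the second assertion, I would simply compute the induced idempotent directly from its definition. Since $(x^D,x^{DD})$ is a commuting binary idempotent, we have $e_{(x^D, x^{DD})} = x^D x^{DD}$; substituting $x^{DD} = x x^D x$ and regrouping via \textbf{[D.2]} gives
\[ e_{(x^D, x^{DD})} = x^D x^{DD} = x^D x x^D x = (x^D x x^D) x = x^D x = e_x, \]
where the final equality is the definition of $e_x$ from Lemma~\ref{lemma:e_x}.

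There is no substantive obstacle here; the statement is a bookkeeping corollary of Lemma~\ref{lem:Drazin-inverse1} and Lemma~\ref{lemma:ind1=cbi}. The only points that warrant a moment's care are matching the witness supplied by Lemma~\ref{lemma:ind1=cbi} to $x^{DD}$ (immediate from the uniqueness of Drazin inverses) and keeping track of which component is read off in the definition of $e_{(x^D,x^{DD})}$, both of which are routine.
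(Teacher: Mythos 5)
Your proposal is correct and follows exactly the paper's own argument: Lemma~\ref{lem:Drazin-inverse1} gives that $x^D$ is Drazin with index at most one, Lemma~\ref{lemma:ind1=cbi} then yields the commuting binary idempotent, and the induced idempotent is computed via \textbf{[D.2]} as $x^D x x^D x = x^D x = e_x$. No differences worth noting.
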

\begin{proof} By Lemma \ref{lem:Drazin-inverse1}.(\ref{Drazin-inverse-inverse}), if $x$ is Drazin, then $x^D$ is also Drazin and $\mathsf{ind}(x^D) \leq 1$. Then by Lemma \ref{lemma:ind1=cbi}, it follows that $(x^D, x^{DD})$ is a commuting binary idempotent. Now recall that $x^{DD} = x x^D x$. Then using \textbf{[D.2]}, we have that:  
\[ e_{(x^D, x^{DD})} = x^D x^{DD} = x^D x x^D x = x^D x = e_x \]
So $e_{(x^D, x^{DD})} = e_x$ as desired. 
\end{proof}

%%%%%%%%%%%%%%%%%%%%%%%%%%%%%%%%%%%%%%%%%%%%%%%%%%%%%%%%%%%%%%%%%%%%%%%%%%%%%%%%%%%%%%%%
\subsection{Drazin split} We can also consider the case when the induced idempotent of a Drazin endomorphism is \emph{split}. Recall that an idempotent $e: A \to A$ \textbf{splits} (or is a \textbf{split idempotent}) if there are maps $r: A \to B$ and $s: B \to A$ such that $rs=e$ and $sr=1_A$.   

\begin{definition} $x: A \to A$ is \textbf{Drazin split} if it is Drazin and the induced idempotent $e_x\colon = xx^D: A \to A$ splits. 
\end{definition}

We can alternatively characterize Drazin split endomorphisms as follows: 

\begin{lemma}\label{lemma:alpha1} $x: A \to A$ is Drazin split if and only if there is an idempotent $e: A \to A$, with splitting $r: A \to B$ and $s: B \to A$, a $k \in \mathbb{N}$, and an isomorphism $\alpha: B \to B$ such that the following diagram commutes: 
\[ \xymatrixcolsep{5pc}\xymatrix{   & A  \ar[d]^-r  \ar[r]^-x & A  \ar[d]^-r \\  A \ar[ru]^{x^k} \ar[dr]_{x^k}& B  \ar[d]^-s \ar[r]_-\alpha & B  \ar[d]^-s \\ & A  \ar[r]_-x & A } \]
\end{lemma}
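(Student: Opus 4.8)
The plan is to prove both implications constructively, by writing down explicit maps and checking the defining identities directly; no abstract machinery is needed. Throughout I would write $e_x := x^D x = x x^D$ for the idempotent induced by a Drazin inverse (Lemma \ref{lemma:e_x}), and freely use the standard splitting identities $rs = e$, $sr = 1_B$, together with their consequences $er = r$ and $se = s$. The conceptual picture driving everything is that $x$ acts as an \emph{invertible} map on the retract $B$ and is ``nilpotent off $B$'', the latter being exactly what the triangle condition $x^k e = x^k$ encodes.

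For the forward implication, I would suppose $x$ is Drazin split, set $k = \mathsf{ind}(x)$, and take $r: A \to B$, $s: B \to A$ to be a splitting of $e_x$. The key step is to guess the isomorphism: set $\alpha := s x r$ with proposed inverse $\beta := s x^D r$. Two facts carry the argument, both coming from the Drazin axioms and Lemma \ref{lemma:Drazin-basic}: that $e_x$ commutes with $x$, so $e_x x = x e_x$, and that $e_x x^D = x^D = x^D e_x$. Granting these, the commutativity $r\alpha = xr$ reduces to $(rs)xr = e_x x r = x e_x r = xr$, and $\alpha s = sx$ reduces similarly by absorbing $e_x$ into $s$; the triangle condition $x^k e_x = x^k$ is exactly $x^{k+1}x^D = x^k$ from Lemma \ref{lemma:Drazin-basic}.(\ref{lemma:Drazin-basic.1}); and $\alpha\beta = 1_B = \beta\alpha$ reduces to the collapses $x e_x x^D = e_x$ and $x^D e_x x = e_x$ together with $sr = 1_B$.

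For the converse, suppose we are given the splitting $(e,r,s)$, an index $k$, and an isomorphism $\alpha$ with $r\alpha = xr$, $\alpha s = sx$, and $x^k e = x^k$. The key step is to set $x^D := r\alpha^{-1}s$ and verify the three Drazin axioms. Iterating the commutativities gives $r\alpha^n = x^n r$ and $\alpha^n s = s x^n$; then a short computation yields $x^D x = r\alpha^{-1}(sx) = r\alpha^{-1}\alpha s = rs = e$ and dually $x x^D = (xr)\alpha^{-1}s = rs = e$. This single calculation simultaneously establishes \textbf{[D.3]} and identifies the induced idempotent as the given split idempotent $e$, so $x$ is indeed Drazin split. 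Then \textbf{[D.2]} follows from $e x^D = x^D$ (using $sr = 1_B$), and \textbf{[D.1]} is immediate since $x^{k+1}x^D = x^k(x x^D) = x^k e = x^k$.

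I expect the main obstacle to be not the verifications, which are routine once the maps are in hand, but correctly identifying the candidate isomorphism $\alpha = sxr$ together with its inverse $sx^D r$ in the forward direction, and dually recognising $x^D = r\alpha^{-1}s$ in the converse. Once the right maps are chosen, every equation is forced by the four relations $e_x x = x e_x$, $e_x x^D = x^D = x^D e_x$, $er = r$, and $se = s$, so the remaining work is bookkeeping.
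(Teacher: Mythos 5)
Your proposal is correct and follows essentially the same route as the paper's proof: the same candidate isomorphism $\alpha = sxr$ with inverse $sx^D r$ in the forward direction, and the same definition $x^D = r\alpha^{-1}s$ in the converse, with the verifications organized around the identities $e_x x = x e_x$, $e_x x^D = x^D$, $er = r$, and $se = s$ just as in the paper. No gaps.
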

\begin{proof} For $(\Rightarrow)$, suppose that $x$ is Drazin split where $e_x$ splits via $r: A \to B$ and $s: B \to A$, and with $\mathsf{ind}(x) = k$. For the required split idempotent we take $e_x$, and for the natural number we take $\mathsf{ind}(x) = k$. For the isomorphism, define the map $\alpha: B \to B$ as the composite $\alpha \colon = s x r$, and define the map $\alpha^{-1}: B \to B$ as the composite $\alpha^{-1} \colon = s x^D r$. We first check that these maps are inverses of each other. To do so, we use \textbf{[D.2]}, \textbf{[D.3]}, and the splitting:
\[ \alpha \alpha^{-1} = s x rs x^D r= s x e_x x^D  r =  s x x^D x x^D r = s x x^D r = s e_x r = s r sr = 1_B  \]
\[ \alpha^{-1} \alpha = s x^D r s x r = s x^D e_x x r = s x^D x x^D x r = s x^D x r = s e_x r = s r sr = 1_B \]
So $\alpha$ is indeed an isomorphism. Now we show that the diagram commutes. By \textbf{[D.1]}, the triangle on the left commutes: 
\[ x^k r s = x^k e_x = x^k x x^D = x^{k+1} x^D = x^k \]
By \textbf{[D.2]}, \textbf{[D.3]}, and the splitting, we can show that both the top and bottom squares commute: 
\[ x r = x r \alpha^{-1} \alpha = x r s x^D r \alpha = x e_x x^D r \alpha = x x^D x x^D r \alpha = x x^D r \alpha = e_x r \alpha = r s r \alpha = r \alpha \]
\[ sx = \alpha  \alpha^{-1} s x = \alpha s x^D r s x =  \alpha s x^D e_x x = \alpha s x^D x x^D x = \alpha s x^D x = \alpha s e_x = \alpha s r s = \alpha s  \]
So we conclude that the desired identities hold. 

For ($\Leftarrow$), define $x^D: A \to A$ as the composite $x^D \colon = r \alpha^{-1} s$. We show that $x^D$ satisfies the three Drazin inverse axioms. 
\begin{enumerate}[{\bf [D.1]}]
\item Using the $k\in \mathbb{N}$ in the assumptions, and that $x^ke = e$ and $r = x r \alpha^{-1}$ we have: 
\[x^{k+1} x^D = x^k x r \alpha^{-1} s = x^k r s = x^k e = x^k \]
\item Using $s x =  \alpha s$, we compute that: 
\[x^D x x^D =  r \alpha^{-1} s x  r \alpha^{-1} s = r \alpha^{-1} \alpha s r \alpha^{-1} s = rsr\alpha^{1} s = r \alpha^{-1} s = x^D\]
\item Using both $s x =  \alpha s$ and $x r = r \alpha$, we have: 
\[x^D x = r \alpha^{-1} s x = r \alpha^{-1} \alpha s = rs = r \alpha \alpha^{-1} s = x r \alpha^{-1}s = x x^D\]
\end{enumerate}
So $x$ is Drazin with Drazin inverse $x^D$. Next, observe that the induced idempotent $e_x$ is $e$ since: 
\[ e_x = x^D x = r \alpha^{-1} sx = r \alpha^{-1} \alpha s = rs = e \]
So the induced idempotent is precisely the idempotent with which we started. Since $e$ was assumed to be split, we conclude that $x$ is Drazin split. 
\end{proof}

Recall that a category is {\bf idempotent complete} if all its idempotents split. Similarly, we can consider categories in which every Drazin endomorphism Drazin splits:

\begin{definition} A \textbf{Drazin complete category} is a category in which every endomorphism Drazin splits. 
\end{definition}

\begin{lemma}\label{lemma:Drazin-complete} A category is Drazin complete if and only if it is Drazin and idempotent complete. 
\end{lemma}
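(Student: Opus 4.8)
The plan is to prove the biconditional by two direct implications, leaning entirely on the facts about idempotents already established in this section, so no new computation is required.

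For the forward direction ($\Rightarrow$), suppose $\mathbb{X}$ is Drazin complete, so that every endomorphism is Drazin split. Being Drazin split entails being Drazin, so $\mathbb{X}$ is immediately Drazin; the only point needing care is idempotent completeness. Here I would use the key observation that an idempotent is its own induced idempotent. Concretely, let $e: A \to A$ be any idempotent. By Lemma \ref{lemma:e-drazin}, $e$ is Drazin with $e^D = e$, so its induced idempotent is $e_e = e e^D = e^2 = e$. Since $\mathbb{X}$ is Drazin complete, $e$ is Drazin split, which by definition means precisely that $e_e$ splits; but $e_e = e$, so $e$ splits. As $e$ was arbitrary, every idempotent splits and $\mathbb{X}$ is idempotent complete.

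For the backward direction ($\Leftarrow$), suppose $\mathbb{X}$ is both Drazin and idempotent complete, and let $x: A \to A$ be an arbitrary endomorphism. Since $\mathbb{X}$ is Drazin, $x$ has a Drazin inverse $x^D$, and by Lemma \ref{lemma:e_x} the composite $e_x = x x^D$ is an idempotent. Because $\mathbb{X}$ is idempotent complete, this idempotent $e_x$ splits. By the definition of Drazin split, $x$ is therefore Drazin split, and since $x$ was arbitrary, $\mathbb{X}$ is Drazin complete.

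The proof is essentially routine once the two directions are set up, so there is no serious obstacle; the one point that is not purely mechanical is the forward direction, where the argument hinges on recognizing that the induced idempotent of an idempotent $e$ (viewed via its own Drazin inverse $e^D = e$) is $e$ itself, which is exactly what converts "Drazin split for idempotents" into "idempotents split." Everything else is a direct unwinding of the definitions of Drazin split, Drazin, and idempotent completeness.
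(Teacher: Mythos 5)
Your proof is correct and follows essentially the same route as the paper: the forward direction uses Lemma \ref{lemma:e-drazin} to see that an idempotent is its own induced idempotent (so Drazin split forces it to split), and the backward direction is a direct unwinding of the definitions. Nothing to add.
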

\begin{proof} For ($\Rightarrow$), by definition of being Drazin complete, every endomorphism is Drazin split, so in particular Drazin, and therefore the category is Drazin. Moreover, again by definition of being Drazin complete, every idempotent is Drazin split and therefore since by Lemma \ref{lemma:e-drazin} it is its own induced idempotent, every idempotent also splits. For ($\Leftarrow$), by definition of being Drazin and idempotent complete, every endomorphism is Drazin and the induced idempotent splits. Therefore, every endomorphism is Drazin split and so the category is Drazin complete. \end{proof}

Every Drazin category embeds into a Drazin complete category via the idempotent splitting construction. Indeed, recall that every category $\mathbb{X}$ embeds into an idempotent complete category $\mathsf{Split}(\mathbb{X})$ called its idempotent splitting. The objects of $\mathsf{Split}(\mathbb{X})$ are pairs $(A,e)$ consisting of an object $A$ and an idempotent $e: A \to A$, and where a map $f: (A, e) \to (B, e^\prime)$ is a map $f: A \to B$ such that $efe^\prime = f$ (or equivalently $ef = f= fe^\prime$). Composition in $\mathsf{Split}(\mathbb{X})$ is defined as in $\mathbb{X}$, while the identity for $(A,e)$ is the idempotent $e$, that is, $1_{(A,e)}\colon = e: (A,e) \to (A,e)$. To show that the idempotent splitting of a Drazin category is Drazin complete, we first show that for any endomorphism in the idempotent splitting, whose underlying endomorphism in the base category is Drazin, has its Drazin inverse also an endomorphism in the idempotent splitting. 

\begin{lemma}\label{lemma:Drazininv-splitmap} If $x: (A,e) \to (A,e)$ is an endomorphism in $\mathsf{Split}(\mathbb{X})$ and $x: A \to A$ is Drazin in $\mathbb{X}$, then $x^D: (A,e) \to (A,e)$ is an endomorphism in $\mathsf{Split}(\mathbb{X})$, and moreover, $x: (A,e) \to (A,e)$ is Drazin with Drazin inverse $x^D: (A,e) \to (A,e)$.
\end{lemma}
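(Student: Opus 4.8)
The plan is to check two things in sequence: first, that $x^D$, viewed as a map of $\mathbb{X}$, is in fact a morphism of type $(A,e) \to (A,e)$ in $\mathsf{Split}(\mathbb{X})$; and second, that this morphism satisfies the three Drazin axioms relative to the composition and \emph{identities} of $\mathsf{Split}(\mathbb{X})$. The single feature to keep in mind throughout is that the identity on $(A,e)$ is $e$, not $1_A$, so zeroth powers behave differently in $\mathsf{Split}(\mathbb{X})$ than in $\mathbb{X}$; every other composite is computed exactly as in $\mathbb{X}$.

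For the first part, recall that $x$ being an endomorphism of $(A,e)$ means $ex = x = xe$. I would invoke Lemma \ref{lemma:Drazin-basic}.(\ref{lemma:Drazin-basic.2}) with $n=1$, which gives $x^D = x(x^D)^2 = (x^D)^2 x$. Substituting $ex = x$ into the first expression yields $e x^D = ex(x^D)^2 = x(x^D)^2 = x^D$, and substituting $xe = x$ into the second yields $x^D e = (x^D)^2 xe = (x^D)^2 x = x^D$. Hence $e x^D = x^D = x^D e$, which is precisely the condition for $x^D \colon (A,e) \to (A,e)$ to be a morphism of $\mathsf{Split}(\mathbb{X})$. (Note that Proposition \ref{commutative-squares} alone, applied to the square with $e$ on the sides, would only deliver the commutation $ex^D = x^D e$, so the use of Lemma \ref{lemma:Drazin-basic}.(\ref{lemma:Drazin-basic.2}) here is what pins $x^D$ down as a morphism over $(A,e)$.)

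For the second part, observe that composition in $\mathsf{Split}(\mathbb{X})$ agrees with composition in $\mathbb{X}$, so for every $n \geq 1$ the $n$-fold composite $x^n$ formed in $\mathsf{Split}(\mathbb{X})$ has the same underlying map as $x^n$ in $\mathbb{X}$, and all the relevant composites remain morphisms over $(A,e)$ by the first part. Consequently axioms \textbf{[D.2]} and \textbf{[D.3]} reduce verbatim to the equations $x^D x x^D = x^D$ and $x^D x = x x^D$ already known in $\mathbb{X}$, and need no further work.

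The only axiom demanding care is \textbf{[D.1]}, exactly because of the altered zeroth power, and this is where the main (if modest) obstacle lies. The key move is to take the candidate Drazin index in $\mathsf{Split}(\mathbb{X})$ to be $k := \mathsf{max}(\mathsf{ind}(x),1) \geq 1$. Since both $k$ and $k+1$ are at least $1$, the powers $x^k$ and $x^{k+1}$ computed in $\mathsf{Split}(\mathbb{X})$ coincide with those in $\mathbb{X}$, so the equation $x^{k+1} x^D = x^k$ to be verified in $\mathsf{Split}(\mathbb{X})$ is literally the identity $x^{k+1} x^D = x^k$ of $\mathbb{X}$, which holds by Lemma \ref{lemma:Drazin-basic}.(\ref{lemma:Drazin-basic.1}) as $k \geq \mathsf{ind}(x)$. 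One must resist simply reusing the original index: if $\mathsf{ind}(x)=0$, then $k=0$ would require $x^{0+1}x^D = x^0$, i.e. $x x^D = e$ in $\mathsf{Split}(\mathbb{X})$ rather than $x x^D = 1_A$, forcing a separate argument that $e = 1_A$. Pushing the index up to at least $1$ never refers to a zeroth power and so bypasses the discrepancy cleanly, completing the verification that $x \colon (A,e) \to (A,e)$ is Drazin in $\mathsf{Split}(\mathbb{X})$ with Drazin inverse $x^D$.
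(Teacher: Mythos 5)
Your proof is correct and follows essentially the same route as the paper: first show that $x^D$ absorbs $e$ on both sides (the paper computes $e x^D e = x^D$ directly from \textbf{[D.2]} and \textbf{[D.3]}, whereas you derive $e x^D = x^D = x^D e$ from Lemma \ref{lemma:Drazin-basic}.(\ref{lemma:Drazin-basic.2}) with $n=1$ --- both work), then observe that composition in $\mathsf{Split}(\mathbb{X})$ is unchanged so the Drazin axioms transfer. Your extra care with \textbf{[D.1]}, replacing the index by $\max(\mathsf{ind}(x),1)$ so as never to invoke the zeroth power $x^0 = e \neq 1_A$, addresses a point the paper passes over silently and is a worthwhile refinement.
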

\begin{proof} Since  $x: (A,e) \to (A,e)$ is a map in the idempotent splitting, we have that $exe = x$. Then using \textbf{[D.2]} and \textbf{[D.3]}, we compute that: 
\[ e x^D e = e x^D x x^D e = e x^D x^D x e = e x^D x^D x = e x x^D x^D x = x x^D x^D = x^D x x^D = x^D \]
    So $e x^D e = x^D$, and therefore we have that $x^D: (A,e) \to (A,e)$ is indeed an endomorphism in $\mathsf{Split}(\mathbb{X})$. Since the composition in $\mathsf{Split}(\mathbb{X})$ is the same as in $\mathbb{X}$, we get that $x^D: (A,e) \to (A,e)$ is also the Drazin inverse of $x: (A,e) \to (A,e)$ in $\mathsf{Split}(\mathbb{X})$. 
\end{proof}

\begin{proposition} If $\mathbb{X}$ is a Drazin category, then $\mathsf{Split}(\mathbb{X})$ is Drazin complete.
\end{proposition}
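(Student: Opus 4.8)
The plan is to invoke Lemma \ref{lemma:Drazin-complete}, which reduces the statement that $\mathsf{Split}(\mathbb{X})$ is Drazin complete to verifying two separate properties: that $\mathsf{Split}(\mathbb{X})$ is Drazin, and that it is idempotent complete.

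For the Drazin half, I would take an arbitrary endomorphism $x: (A,e) \to (A,e)$ in $\mathsf{Split}(\mathbb{X})$. Its underlying arrow $x: A \to A$ is an endomorphism in $\mathbb{X}$, and since $\mathbb{X}$ is Drazin by hypothesis, $x$ has a Drazin inverse $x^D$ in $\mathbb{X}$. Lemma \ref{lemma:Drazininv-splitmap} then applies verbatim and tells us both that $x^D$ is again an endomorphism of $(A,e)$ in $\mathsf{Split}(\mathbb{X})$ and that it is the Drazin inverse of $x$ computed in $\mathsf{Split}(\mathbb{X})$. As $x$ was arbitrary, every endomorphism of $\mathsf{Split}(\mathbb{X})$ is Drazin, so $\mathsf{Split}(\mathbb{X})$ is a Drazin category.

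For the idempotent-complete half, I would appeal to the classical fact that the idempotent splitting of any category is idempotent complete, recording the splitting explicitly for self-containedness. An idempotent $g: (A,e) \to (A,e)$ in $\mathsf{Split}(\mathbb{X})$ is precisely an idempotent $g$ of $\mathbb{X}$ satisfying $eg = g = ge$, so $(A,g)$ is a legitimate object of $\mathsf{Split}(\mathbb{X})$. Setting $r := g: (A,e) \to (A,g)$ and $s := g: (A,g) \to (A,e)$, one checks these are well-defined maps (the conditions reduce to $egg = g$ and $gge = g$, both immediate from $eg = g = ge$ and $gg = g$), and then $rs = gg = g$ while $sr = gg = g = 1_{(A,g)}$, the last equality because the identity on $(A,g)$ is $g$ itself. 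Hence $g$ splits, and $\mathsf{Split}(\mathbb{X})$ is idempotent complete.

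Combining the two properties through Lemma \ref{lemma:Drazin-complete} yields that $\mathsf{Split}(\mathbb{X})$ is Drazin complete. The proposition is in effect a corollary of the two preceding lemmas: the Drazin half is handed over wholesale by Lemma \ref{lemma:Drazininv-splitmap}, so the only genuine work is the standard idempotent-completeness verification. The one place to be careful is the bookkeeping there, namely remembering that the identity on the splitting object $(A,g)$ is $g$ rather than $1_A$, which is exactly what makes $sr = g$ read as $sr = 1_{(A,g)}$.
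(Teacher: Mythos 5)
Your proof is correct and follows essentially the same route as the paper's: both reduce Drazin completeness to being Drazin plus idempotent complete via Lemma \ref{lemma:Drazin-complete}, and both dispatch the Drazin half by applying Lemma \ref{lemma:Drazininv-splitmap} to an arbitrary endomorphism of $(A,e)$. The only difference is that you spell out the standard verification that $\mathsf{Split}(\mathbb{X})$ is idempotent complete (correctly, including the point that $1_{(A,g)} = g$), whereas the paper simply cites this as known.
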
 
\begin{proof} Since we know that $\mathsf{Split}(\mathbb{X})$ is idempotent complete, to show that it is also Drazin complete, by Lemma \ref{lemma:Drazin-complete} we need only show that it is also Drazin. So consider $x: (A,e) \to (A,e)$ in $\mathsf{Split}(\mathbb{X})$. Since $\mathbb{X}$ is Drazin, $x: A\to A$ is Drazin. So by Lemma \ref{lemma:Drazininv-splitmap}, we have that $x: (A,e) \to (A,e)$ is also Drazin. So $\mathsf{Split}(\mathbb{X})$ is Drazin, and so we conclude that $\mathsf{Split}(\mathbb{X})$ is Drazin complete as well.  
\end{proof}

%%%%%%%%%%%%%%%%%%%%%%%%%%%%%%%%%%%%%%%%%%%%%%%%%%%%%%%%%%%%%%%%%%%%%%%%%%%%%%%%%%%%%%%%
\subsection{Intertwining idempotents} We now begin to build up towards showing that Drazin inverses give rise to isomorphisms in the idempotent splitting. Toward this goal, we first introduce a useful intermediate notion. Let $(e,e^\prime)$ be a pair of idempotents $e: A \to A$ and $e^\prime: B \to B$. We shall say that a map $f: A \to B$ is {\bf $(e,e^\prime)$-intertwined} if $ef = fe^\prime$, so the below diagram on the left commutes. We say that an $(e,e^\prime)$-intertwined map $f$ has an {\bf $(e,e^\prime)$-intertwined inverse} in case there is a map $g: B \to A$ such that $e^\prime g e = g$ (or equivalently if $e^\prime g = g = ge$), and $fg= e$ and $gf = e^\prime$, that is, the diagram on the right below commutes. 
\begin{align*}
    \begin{matrix}
 \xymatrixcolsep{5pc}\xymatrix{A  \ar[d]_-e \ar[r]^-f & B \ar[d]^-{e^\prime} \\  A \ar[r]_-f & B} & ~~~~~~~~~ &
\xymatrixcolsep{5pc}\xymatrix{A  \ar[d]_-e \ar[r]^-f & B \ar[d]^-{e^\prime} \ar@{-->}[ldd]|g \ar@{-->}[ld]|g\\  
                A \ar[d]_-e & B \ar[d]^-{e^\prime} \ar@{-->}[ld]|g\\
                 A \ar[r]_-f & B} \\
                 (e,e^\prime)\mbox{-intertwined}  & ~ &(e,e^\prime)\mbox{-intertwined inverse}
 \end{matrix}
\end{align*}

\begin{lemma}\label{lemma:intertwined-iso} Let $f: A \to B$ be a $(e,e^\prime)$-intertwined map in $\X$. Then: 
\begin{enumerate}[(i)]
\item \label{lemma:intertwined-iso1} $fe^\prime : (A,e) \to (B, e^\prime)$ is a map in $\mathsf{Split}(\mathbb{X})$;
\item \label{lemma:intertwined-iso2} If $f$ has an $(e,e^\prime)$-intertwined inverse $g: B \to A$, then $fe^\prime : (A,e) \to (B, e^\prime)$ is an isomorphism in $\mathsf{Split}(\mathbb{X})$ with inverse $g: (B, e^\prime) \to (A,e)$. 
\end{enumerate} 
\end{lemma}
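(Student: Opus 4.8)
The plan is to prove both parts by direct computation from the defining equations: the idempotency $e^2 = e$ and $e' e' = e'$, the intertwining relation $ef = fe'$, and (for the second part) the three intertwined-inverse equations $e' g e = g$, $fg = e$, and $gf = e'$. Throughout I would invoke the characterization that a morphism $h: (A,e) \to (B,e')$ in $\mathsf{Split}(\mathbb{X})$ is exactly a map $h: A \to B$ in $\mathbb{X}$ satisfying $e h e' = h$, equivalently $eh = h = he'$, together with the fact that the identity on $(A,e)$ is the idempotent $e$ itself.

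For part (\ref{lemma:intertwined-iso1}), I would check that the representative $fe'$ is absorbed on each side by the appropriate idempotent. On the left, $e(fe') = (ef)e' = (fe')e' = fe'$, where the middle equality is intertwining and the last is $e' e' = e'$; on the right, $(fe')e' = f e' e' = fe'$ directly by idempotency. Together these give $e(fe')e' = fe'$, which is precisely the condition that $fe'$ be a morphism $(A,e) \to (B,e')$ in $\mathsf{Split}(\mathbb{X})$.

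For part (\ref{lemma:intertwined-iso2}), I would first observe that $g$ is already a morphism $(B,e') \to (A,e)$, since the hypothesis $e' g e = g$ is exactly the required defining condition. Then I would compute the two round-trip composites, recalling that composition in $\mathsf{Split}(\mathbb{X})$ is composition in $\mathbb{X}$: namely $(fe')g = f(e' g) = fg = e$, using $e' g = g$ followed by $fg = e$, and $g(fe') = (gf)e' = e' e' = e'$, using $gf = e'$ followed by idempotency. Since $e$ and $e'$ are the identities on $(A,e)$ and $(B,e')$ respectively, this exhibits $fe'$ and $g$ as mutually inverse, establishing the isomorphism claim; uniqueness of the intertwined inverse then follows from uniqueness of inverses in a category.

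There is no deep obstacle here; the argument is essentially a bookkeeping exercise. The single point warranting care is conceptual rather than computational: one must keep in mind that in $\mathsf{Split}(\mathbb{X})$ the identity morphisms are the chosen idempotents, so that ``isomorphism'' means the composites equal $e$ and $e'$ rather than $1_A$ and $1_B$, and that the morphism tested is the representative $fe'$ (not $f$) so that the target idempotent is correctly absorbed. The equivalent forms of the intertwined-inverse condition, $e' g = g = ge$, should be deployed as needed so that the stray idempotent factors collapse cleanly in each step.
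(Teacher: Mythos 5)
Your proof is correct and follows essentially the same route as the paper's: verify the absorption identity $e(fe')e' = fe'$ via the intertwining relation and idempotency, note that $e'ge = g$ makes $g$ a morphism, and compute the two composites to $e$ and $e'$. The only cosmetic difference is that you simplify $(fe')g$ via $e'g = g$ whereas the paper first rewrites $fe'$ as $ef$; both collapse identically.
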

\begin{proof} For (i), since $f$ is  $(e,e')$-intertwined, we have that:
\[ e fe^\prime  e^\prime = e e f e^\prime = e f e^\prime = e e f = e f= fe^\prime  \]
So $fe^\prime : (A,e) \to (B, e^\prime)$ is indeed a map in $\mathsf{Split}(\mathbb{X})$. For (ii), by definition of an $(e,e^\prime)$-intertwined inverse, we have that $e^\prime g e =g$. So we get that $g: (B, e^\prime) \to (A,e)$ is indeed a map in $\mathsf{Split}(\mathbb{X})$. Using that $fg = e$ and $gf = e^\prime$, we can show that $fe^\prime$ and $g$ are inverses in the idempotent splitting:
\begin{align*}
  fe^\prime  g = e fg = e e = e = 1_{(A,e)} &&   g fe^\prime = e^\prime e^\prime = e^\prime = 1_{(B,e^\prime)}
\end{align*}
So we conclude that $fe^\prime : (A,e) \to (B, e^\prime)$ is an isomorphism in $\mathsf{Split}(\mathbb{X})$. \end{proof}

We can now characterize being Drazin in terms of being idempotent intertwined. 

\begin{lemma} 
\label{drazin-intertwined} $x: A \to A$ is Drazin if and only if there is an idempotent $e: A \to A$ and a $k \in \mathbb{N}$ such that $x^k e = x^k$ and 
$x$ is an $(e,e)$-intertwined map with an $(e,e)$-intertwined inverse.
\end{lemma}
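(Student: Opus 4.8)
The plan is to prove both implications by recognising that the intertwined data $(e,g,k)$ and the Drazin data $(x^D,\mathsf{ind}(x))$ are essentially the same witnesses: the idempotent $e$ should be the induced idempotent $e_x$, the intertwined inverse $g$ should be the Drazin inverse $x^D$, and $k$ should be $\mathsf{ind}(x)$. Once this dictionary is fixed, each direction reduces to translating one axiom system into the other, so that neither direction involves any genuinely new construction.

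For the forward direction, suppose $x$ is Drazin with Drazin inverse $x^D$ and set $e := e_x = x x^D = x^D x$, $k := \mathsf{ind}(x)$, and $g := x^D$. I would first invoke Lemma \ref{lemma:e_x} for idempotency of $e$. The equation $x^k e = x^k$ follows by rewriting $x^k e = x^{k+1} x^D$ and applying \textbf{[D.1]}. That $x$ is $(e,e)$-intertwined, i.e.\ $ex = xe$, is immediate from \textbf{[D.3]}, since $e$ is built from $x$ and $x^D$, which commute. The only computation needing a little care is the intertwined-inverse conditions for $g = x^D$: the absorption $eg = g = ge$ amounts to $x^D x x^D = x^D$, which is exactly \textbf{[D.2]} (using the two descriptions $e = x^D x$ and $e = x x^D$), while $xg = e$ and $gx = e$ hold by the very definition of $e_x$.

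For the converse, suppose the idempotent $e$, the index $k$, the commutation $ex = xe$, and an intertwined inverse $g$ (so $eg = g = ge$, $xg = e$, $gx = e$) are given, and set $x^D := g$. I would verify the three Drazin axioms directly: \textbf{[D.3]} is just $gx = e = xg$; \textbf{[D.2]} follows from $gxg = (gx)g = eg = g$; and \textbf{[D.1]} follows from $x^{k+1} g = x^k (xg) = x^k e = x^k$, using the hypothesis $x^k e = x^k$. Hence $x^D = g$ is a Drazin inverse of $x$, and $x$ is Drazin.

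I do not expect a serious obstacle here; the content lies entirely in choosing the right witnesses. The single step that is not purely formal is the identity $e x^D = x^D = x^D e$ in the forward direction, which must be extracted from \textbf{[D.2]} by exploiting both forms $e = x^D x$ and $e = x x^D$ of the induced idempotent -- precisely the computation already carried out in Lemma \ref{lemma:Drazininv-splitmap}. Uniqueness of the intertwined inverse (noted just after its definition) is consistent with the uniqueness of Drazin inverses from Proposition \ref{prop:unique}, so the correspondence $g \leftrightarrow x^D$ is unambiguous.
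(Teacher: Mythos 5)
Your proof is correct and follows essentially the same route as the paper's: both directions use the dictionary $e = e_x$, $g = x^D$, $k = \mathsf{ind}(x)$, and verify the axioms by the same direct computations. No issues.
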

\begin{proof} For $(\Rightarrow)$, suppose that $x$ is Drazin with $\mathsf{ind}(x) = k$. Take the idempotent $e$ to be the induced idempotent $e_x$. Then by \textbf{[D.1]}, we have that: 
\[ x^k e_x = x^k x x^D = x^{k+1} x^D = x^k\]
So $x^k e_x = x^k$. Now by \textbf{[D.3]}, we have that:
\[ e_x x = x x^D x = x e_x  \]
So $x$ is $(e_x,e_x)$-intertwined. Next, by \textbf{[D.2]}, we have that: 
\[ e_x x^D  = x^D x x^D  = x^D   \]
Moreover, by definition, we also have that $xx^D = e_x$ and $x^D x=e_x$. So we conclude that $x$ has a $(e_x,e_x)$-intertwined inverse and it is $x^D$. 

For $(\Leftarrow)$, let $x^D$ be the $(e,e)$-intertwined inverse of $x$. We show that $x^D$ satisfies the three Drazin inverse axioms. 
\begin{enumerate}[{\bf [D.1]}]
\item Using that $x x^D = e$ and $x^k e = x^k$, we compute that: 
\[ x^{k+1} x^D = x^k x x^D = x^k e = x^k \]
\item Using that $x x^D = e$ and $x^D e = x^D$, we compute that: 
\[ x^D x x^D = x^D e = x^D \]
\item By definition of being an $(e,e)$-intertwined inverse, we have that $xx^D =e$ and $x^D x=e$. So $xx^D = x^D x$. 
\end{enumerate}
So we conclude that $x$ is Drazin with Drazin inverse $x^D$. \end{proof}

We can therefore completely capture the requirements of a Drazin inverse with the following commuting diagram:
\[ \xymatrixcolsep{5pc}\xymatrix{& A \ar[d]_-{e_x} \ar[r]^-x & A \ar[d]^-{e_x} \ar@{-->}[dl]|{x^D} \ar@{-->}[ddl]|{x^D} \ar@/^2pc/[dd]^{e_x} \\
                  A \ar[ur]^{x^k} \ar[dr]_{x^k} & A \ar[d]_-{e_x} & A \ar[d]^-{e_x} \ar@{-->}[dl]|{x^D} \\
                  & A  \ar[r]_-x & A } \]
Moreover, by Lemma \ref{lemma:intertwined-iso}, we see how being Drazin induces an isomorphism on its induced idempotent: 

\begin{corollary}\label{cor:iso-split} Let $x: A \to A$ be Drazin in $\X$, then $x e_x: (A, e_x) \to (A, e_x)$ is an isomorphism in $\mathsf{Split}(\mathbb{X})$ with inverse $x^D: (A, e_x) \to (A, e_x)$. 
\end{corollary}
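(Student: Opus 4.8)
The plan is to simply combine the two immediately preceding lemmas, since the corollary is essentially a repackaging of them. I would apply Lemma \ref{lemma:intertwined-iso} in the special case $B = A$ and $e = e^\prime = e_x$, taking $f = x$. To invoke part (\ref{lemma:intertwined-iso2}) of that lemma I need to verify its two hypotheses for this instance: that $x$ is $(e_x, e_x)$-intertwined, and that it admits an $(e_x, e_x)$-intertwined inverse.

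Both of these were already established in the forward direction of the proof of Lemma \ref{drazin-intertwined}. There, for a Drazin $x$ we chose the idempotent to be the induced idempotent $e_x$ and showed precisely that $x$ is $(e_x,e_x)$-intertwined --- this is just \textbf{[D.3]}, giving $e_x x = x x^D x = x e_x$ --- and that its $(e_x,e_x)$-intertwined inverse is $x^D$, using \textbf{[D.2]} for $e_x x^D = x^D x x^D = x^D$ together with the defining identities $x x^D = e_x = x^D x$ for the two remaining intertwined-inverse conditions $x x^D = e_x$ and $x^D x = e_x$. Thus all the hypotheses of Lemma \ref{lemma:intertwined-iso} are in place, with the intertwined inverse being $g = x^D$.

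With these hypotheses in hand, Lemma \ref{lemma:intertwined-iso}.(\ref{lemma:intertwined-iso2}) yields immediately that $f e^\prime = x e_x : (A, e_x) \to (A, e_x)$ is an isomorphism in $\mathsf{Split}(\mathbb{X})$ whose inverse is the intertwined inverse $g = x^D : (A, e_x) \to (A, e_x)$, which is exactly the asserted statement.

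Since everything reduces to citing the two lemmas, I expect no real obstacle here; the corollary is a formal consequence. The only point requiring a little care is bookkeeping of notation: the morphism in question is $f e^\prime = x e_x$ rather than $x$ itself, and one must confirm that the intertwined inverse delivered by Lemma \ref{drazin-intertwined} is indeed $x^D$ (and not, say, some other representative of the same map in $\mathsf{Split}(\mathbb{X})$), so that the inverse named in the statement is correct.
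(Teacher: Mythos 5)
Your proposal is correct and follows exactly the same route as the paper: cite the forward direction of Lemma \ref{drazin-intertwined} to get that $x$ is $(e_x,e_x)$-intertwined with intertwined inverse $x^D$, then apply Lemma \ref{lemma:intertwined-iso}.(\ref{lemma:intertwined-iso2}) to conclude. Nothing is missing.
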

\begin{proof} In the proof of Lemma \ref{drazin-intertwined}, we showed that $x$ is $(e_x,e_x)$-intertwined and $x^D$ is its $(e_x,e_x)$-intertwined inverse. Applying Lemma \ref{lemma:intertwined-iso}.(\ref{lemma:intertwined-iso2}), we get that ${x e_x\! :\! (A, e_x) \to (A, e_x)}$ is an isomorphism in $\mathsf{Split}(\mathbb{X})$ with inverse $x^D: \! (A, e_x) \to (A, e_x)$. \end{proof}

%%%%%%%%%%%%%%%%%%%%%%%%%%%%%%%%%%%%%%%%%%%%%%%%%%%%%%%%%%%%%%%%%%%%%%%%%%%%%%%%%%%%%%%%%%%%%%%%%%%
\subsection{Munn's Power Theorem} We now generalize Drazin's \cite[Thm 7]{drazin1958pseudo} to the categorical context using idempotent splitting. In the context of semigroups, this result was expressed particularly succinctly by Munn as ``An element $x$ of a semigroup $S$ is pseudo-invertible if and only if some power of $x$ lies in a subgroup of $S$'' \cite[Thm 1]{munn1961pseudo}. We can understand this statement categorically using the idempotent splitting, that is, a map $x$ is Drazin if and only if some positive power of $x$ underlies an endo-isomorphism in the idempotent splitting.

\begin{theorem}\label{characterization-by-iteration} $x: A \to A$ is Drazin in $\X$ if and only if there is an idempotent $e: A \to A$ such that for some $k \in \mathbb{N}$, $x^{k+1}: (A,e) \to (A,e)$ is an isomorphism in $\mathsf{Split}(\mathbb{X})$. 
\end{theorem}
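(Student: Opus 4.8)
The plan is to prove both implications by reducing everything to the single power $x^{k+1}$ together with the intertwined-inverse characterization of Lemma \ref{drazin-intertwined}, rather than re-deriving Drazin axioms from scratch.

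For the forward direction, suppose $x$ is Drazin with $\mathsf{ind}(x) = k$ and take the idempotent $e := e_x = x x^D = x^D x$. I would exhibit $(x^D)^{k+1}$ as the inverse of $x^{k+1}$ in $\mathsf{Split}(\mathbb{X})$ at the object $(A, e_x)$. First, both $x^{k+1}$ and $(x^D)^{k+1}$ are genuine endomorphisms of $(A,e_x)$: by Lemma \ref{lemma:Drazin-basic}.(\ref{lemma:Drazin-basic.1}) one has $x^{k+1} e_x = x^{k+2} x^D = x^{k+1}$ and, symmetrically, $e_x x^{k+1} = x^{k+1}$, while the analogous identities $e_x (x^D)^{k+1} = (x^D)^{k+1} = (x^D)^{k+1} e_x$ follow from \textbf{[D.2]}, since $e_x x^D = x^D x x^D = x^D = x^D e_x$. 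Since $x$ and $x^D$ commute by \textbf{[D.3]}, the composites $x^{k+1}(x^D)^{k+1} = (x x^D)^{k+1} = e_x^{k+1} = e_x$ and $(x^D)^{k+1} x^{k+1} = e_x$ both equal the identity $1_{(A,e_x)} = e_x$. Hence $x^{k+1}: (A,e_x) \to (A,e_x)$ is an isomorphism, which is exactly the required statement. (One could alternatively invoke Lemma \ref{lem:Drazin-iteration} together with Corollary \ref{cor:iso-split}, after checking that the induced idempotent of $x^{k+1}$ is again $e_x$.)

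For the converse, suppose $x^{k+1}: (A,e) \to (A,e)$ is an isomorphism in $\mathsf{Split}(\mathbb{X})$ with inverse $g$. Writing $y := x^{k+1}$, the strategy is to show $y$ is Drazin and then descend to $x$. Being a morphism of $(A,e)$ means $e y e = y$, and this alone forces $ey = ye = y$, so $y$ is trivially $(e,e)$-intertwined and satisfies $y^{1} e = y$; being an isomorphism with inverse $g$ gives $ege = g$, $yg = e$, and $gy = e$, which is precisely the data of an $(e,e)$-intertwined inverse. Lemma \ref{drazin-intertwined} then yields that $y = x^{k+1}$ is Drazin. Finally, applying the earlier lemma stating that $x$ is Drazin if and only if some power $x^{k+1}$ is Drazin, I conclude that $x$ itself is Drazin.

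The only routine verifications are the bracket identities drawn from Lemma \ref{lemma:Drazin-basic} and \textbf{[D.2]}. The one genuinely structural point is the converse, where the key observation is that mere membership of $x^{k+1}$ in the homset of $(A,e)$, namely $e x^{k+1} e = x^{k+1}$, automatically upgrades to $e x^{k+1} = x^{k+1} = x^{k+1} e$, handing us the intertwining condition for free so that Lemma \ref{drazin-intertwined} applies. The main obstacle to keep in view is that the theorem asserts $x$ (not $x^{k+1}$) is Drazin, so the concluding descent through the power lemma is essential rather than cosmetic.
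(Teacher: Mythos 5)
Your proof is correct. The forward direction is essentially the paper's: the paper invokes Lemma \ref{lem:Drazin-iteration} and Corollary \ref{cor:iso-split} and then checks that $e_{x^{k+1}} = e_x$ and $x^{k+1}e_x = x^{k+1}$, which amounts to the same identities you verify directly by exhibiting $(x^D)^{k+1}$ as the inverse; you even flag the paper's route as your alternative. The converse is where you genuinely diverge, and your route is sound. The paper proves the converse from scratch: it establishes, via a fairly long chain of manipulations with the identities $ex^{k+1}=x^{k+1}=x^{k+1}e$, $ev=v=ve$, $x^{k+1}v=e=vx^{k+1}$, that $x^k$ commutes with the inverse $v$, then sets $x^D := vx^k = x^kv$ and verifies \textbf{[D.1]}--\textbf{[D.3]} by hand. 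You instead observe that the isomorphism data for $x^{k+1}$ at $(A,e)$ is \emph{literally} the data of an $(e,e)$-intertwined map with an $(e,e)$-intertwined inverse and intertwining exponent $1$ (using that $eye=y$ upgrades to $ey=y=ye$), so Lemma \ref{drazin-intertwined} gives that $x^{k+1}$ is Drazin, and the power-descent lemma ($x$ is Drazin iff some $x^{k+1}$ is) finishes. Both of those lemmas precede the theorem, so there is no circularity. What the paper's direct route buys is the explicit closed form $x^D = vx^k = x^kv$ in terms of the splitting data, which is reused later (e.g.\ in Theorem \ref{thm:kernels} and in recovering the Puystjens--Robinson formula $x^D = x^k(x^{k+1}+e^c_x)^{-1}$); a reader following your argument would need to extract it separately, though it does fall out, since the power-descent lemma produces $x^D = (x^{k+1})^Dx^k$ and Lemma \ref{drazin-intertwined} identifies $(x^{k+1})^D$ with $v$. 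What your route buys is the elimination of the longest computation in the paper's proof, the commutation of $v$ with $x^k$, which in your factored argument is absorbed into results already established.
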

\begin{proof} For ($\Rightarrow$), let $x: A \to A$ be Drazin with $\mathsf{ind}(x) = k$. By Lemma \ref{lem:Drazin-iteration}, we know that $x^{k+1}$ is also Drazin. Therefore by Corollary \ref{cor:iso-split}, $x^{k+1} e_{x^{k+1}}: (A, e_{x^{k+1}}) \to (A, e_{x^{k+1}})$ is an isomorphism in $\mathsf{Split}(\mathbb{X})$. Now recall that $(x^{k+1})^D = (x^D)^{k+1}$. Then by using \textbf{[D.3]}, we compute that: 
\[ e_{x^{k+1}} = (x^{k+1}) (x^{k+1})^D = x^{k+1} (x^D)^{k+1} = (x x^D)^{k+1} = e_x^{k+1} = e_x \]
So $e_{x^{k+1}} = e_x$. Moreover, using \textbf{[D.1]} we compute that: 
\[ x^{k+1} e_x = x^{k+1} x^D x = x^{k} x= x^{k+1} \]
So $x^{k+1} e_x = x^{k+1}$. So we conclude that $x^{k+1}: (A,e_x) \to (A,e_x)$ is an isomorphism in $\mathsf{Split}(\mathbb{X})$ as desired. 

For ($\Leftarrow$), this direction requires more work. So suppose that there is some $k$ such that $x^{k+1}: (A,e) \to (A,e)$ is an isomorphism in $\mathsf{Split}(\mathbb{X})$ with inverse $v: (A,e) \to (A,e)$. Explicitly this means that the following equalities hold:
\begin{align*}
e x^{k+1} = x^{k+1} = x^{k+1} e && ev = v =ve && x^{k+1} v = 1_{(A,e)} = e && v x^{k+1} = 1_{(A,e)} = e   
\end{align*}
We will first show that $x^k$ and $v$ commute with each other. To do so, we use the above identities: 
\begin{gather*}
    x^k v = x^k e v = x^k x^{k+1} v v = x^{k+1} x^k v v = e x^{k+1} x^k v v = e x^k x^{k+1} v v = e x^k e v = e x^k v \\
    = v x^{k+1} x^k v = v x^k x^{k+1} v = v x^k e = v e x^k e = v v x^{k+1} x^k e = v v x^k x^{k+1} e \\
    = v v x^{k} x^{k+1} =  v v x^{k+1} x^k = v e x^k = v x^k
\end{gather*}
So $v x^k = x^k v$. Then define $x^D \colon = v x^k = x^k v$. We show that $x^D$ satisfies the three Drazin inverse axioms. 
\begin{enumerate}[{\bf [D.1]}]
\item Here we use that $x^{k+1} v=e$ and $x^{k+1} e = x^{k+1}$: 
\[x^{(k+1)+1} x^D = x^{k+1} x x^k v = x^{k+1} x^{k+1} v = x^{k+1} e = x^{k+1}\]
\item Here we use that $x^{k+1} v =e $ and $ve = v$: 
\[x^D x x^D = x^{k} v x x^{k} v= x^{k} v x^{k+1} v = x^{k} v e = x^{k} v = x^D\]  
\item Here we use that $x^{k+1} v =  e  = v x^{k+1}$: 
\[x x^D = x x^{k} v = x^{k+1} v =  e  = v x^{k+1} = v x^{k} x = x^D x\]
\end{enumerate}
So we conclude that $x$ is Drazin.\end{proof}

%%%%%%%%%%%%%%%%%%%%%%%%%%%%%%%%%%%%%%%%%%%%%%%%%%%%%%%%%%%%%%%%%%%%%%%%%%%%%%%%%%%%%%%%%%%%%%%%%%%

\subsection{Eventual Image Duality} \label{ssec:eventual-image} In \cite{leinster2022eventual}, Leinster introduced the concept of the \emph{eventual image} of an endomorphism in a category. Naively, the idea is that after iterating an endomorphism enough times, it will eventually stabilize, and this is the eventual image. Since this concept of iterating an endomorphism until it stabilizes is also closely linked to Drazin inverses, it is natural to ask what the relationship is between Drazin inverses and Leinster's eventual image. Here we answer this question by using the results relating Drazin inverses to idempotents. In particular, we show that every Drazin split endomorphism has an eventual image which is absolute.  

Let $\mathbb{Z}$ be the set of integers. Then $x: A \to A$ in $\X$ has \textbf{eventual image duality} \cite[Definition 2.1]{leinster2022eventual} in the case the diagram $\mathsf{EI}(x): \mathbb{Z} \to \X$ given by the doubly infinite chain: 
\[ ... \to^x A\to^x A\to^x A\to^x ...\]
has both a limit cone $(\pi_i: L \to A)_{i \in \mathbb{Z}}$ and a colimit cocone $(\sigma_i: A \to M)_{i \in \mathbb{Z}}$ such that the canonical map $\tilde{x} \colon= \pi_i \sigma_i: L \to M$ (which is independent of the choice of $i \in \mathbb{Z}$) is an isomorphism. Of course, we can arrange for this isomorphism to be the identity, so we can obtain: 
\[ \xymatrix{ & & & {\sf im}^\infty(x) \ar[dll]|{\pi_{-2}} \ar[dl]|{\pi_{-1}} \ar[d]|{\pi_0} \ar[dr]|{\pi_1}  \ar[drr]|{\pi_2} \\
                   ~ \ar@{}[r]|{....}  & A \ar[r]|-{x} \ar[drr]|{\sigma_{-2}} & A \ar[dr]|{\sigma_{-1}} \ar[r]|-{x} & A \ar[d]|{\sigma_{0}} \ar[r]|-{x} & A \ar[dl]|{\sigma_{1}} \ar[r]|f & A  \ar[dll]|{\sigma_{2}} \ar@{}[r]|{....} & ~\\
                   & & & {\sf im}^\infty(x) } \]
where $\pi_i\sigma_i =1_{{\sf im}^\infty(x)}$ making $\pi_i$ a section of the retraction $\sigma_i$. This induces the idempotent $x^\infty \colon = \sigma_i\pi_i: A \to A$, which is independent of the choice of $i \in \mathbb{Z}$. 

In an arbitrary category, not all endomorphisms have eventual image duality, see for example \cite[Example 2.2]{leinster2022eventual} which explains why most endomorphisms in $\mathsf{SET}$ do not have eventual image duality. Furthermore, eventual image duality need not be absolute, that is, eventual images are not necessarily preserved by all functors. However, there is a class of endomorphisms which do have {\em absolute\/} eventual image duality, and this class includes the ones induced by being Drazin.  

\begin{definition} For $x: A \to A$, we say that $(s_i: E \to A,r_i: A \to E)_{i \in \mathbb{Z}}$ {\bf eventuates} $x$ with \textbf{index} $k \in \mathbb{N}$ in case:
    \begin{enumerate}[{\bf [ev.1]}]
        \item For all $i \in \mathbb{Z}$,  $s_ir_i = 1_E$; 
        \item For all $i,j \in \mathbb{Z}$, $r_i s_i = r_j s_j$; 
        \item For all $i \in \mathbb{Z}$, $s_i x = s_{i+1}$ and $x r_{i+1} = r_i$;
        \item For all $i \in \mathbb{Z}$, $r_i s_i x^{k+1} = x^{k+1} = x^{k+1} r_i s_i$. 
    \end{enumerate}
\end{definition}

We first show that if an endomorphism has a family of maps which eventuates it, then it has an absolute eventual image duality. 

\begin{lemma}\label{lemma:eventuates1}
    If $(s_i: E \to A,r_i: A \to E)_{i \in \mathbb{Z}}$ eventuates $x: A \to A$ with index $k$, then $x$ has an absolute eventual image duality.
\end{lemma}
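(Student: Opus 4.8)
The plan is to isolate a single self-contained statement from which absoluteness follows formally: \emph{whenever} a family $(s_i, r_i)_{i \in \mathbb{Z}}$ eventuates an endomorphism $x: A \to A$ with index $k$ in \emph{any} category, the cone $(s_i: E \to A)_{i \in \mathbb{Z}}$ is a limit of $\mathsf{EI}(x)$, the cocone $(r_i: A \to E)_{i \in \mathbb{Z}}$ is a colimit of $\mathsf{EI}(x)$, and the resulting canonical map is $\tilde{x} = s_i r_i = 1_E$, which is an isomorphism. The point of phrasing it this way is that all four axioms \textbf{[ev.1]}--\textbf{[ev.4]} are equations between composites of the structure maps and of $x$, so any functor $\mathsf{F}: \mathbb{X} \to \mathbb{Y}$ carries an eventuating family for $x$ to a family $(\mathsf{F}(s_i), \mathsf{F}(r_i))$ that eventuates $\mathsf{F}(x)$ with the \emph{same} index $k$. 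Applying the self-contained statement in $\mathbb{Y}$ then shows $\mathsf{F}$ sends the limit cone and colimit cocone of $x$ to the limit cone and colimit cocone of $\mathsf{F}(x)$, and $\tilde{x}$ to $\mathsf{F}(1_E) = 1_{\mathsf{F}(E)}$; this is precisely what it means for the eventual image duality of $x$ to be absolute.

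First I would record that \textbf{[ev.3]} gives $s_i x = s_{i+1}$ and $x r_{i+1} = r_i$, so $(s_i)$ is a cone and $(r_i)$ a cocone over $\mathsf{EI}(x)$ with apex $E$ in both cases, and I would write $e := r_i s_i$, which is independent of $i$ by \textbf{[ev.2]} and idempotent by \textbf{[ev.1]}. The one place the index $k$ enters is an absorption fact: for any cone $(t_i: X \to A)$ over $\mathsf{EI}(x)$ one has $t_i e = t_i$. Indeed the cone condition iterates to $t_{i-(k+1)} x^{k+1} = t_i$, whence $t_i e = t_{i-(k+1)} x^{k+1} e = t_{i-(k+1)} x^{k+1} = t_i$, using $x^{k+1} e = x^{k+1}$ from \textbf{[ev.4]}; dually $e w_i = w_i$ for any cocone $(w_i: A \to Y)$, since $x^{k+1} w_{i+k+1} = w_i$ and $e x^{k+1} = x^{k+1}$.

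With this in hand the universal properties are short. Given a cone $(t_i)$, I would set $u := t_i r_i$; this is independent of $i$ because $t_{i+1} r_{i+1} = t_i x r_{i+1} = t_i r_i$, it mediates the cone since $u s_j = t_j r_j s_j = t_j e = t_j$, and it is the unique mediator because any $u'$ with $u' s_i = t_i$ satisfies $u' = u' s_i r_i = t_i r_i = u$ by \textbf{[ev.1]}. Dually, given a cocone $(w_i)$, the map $v := s_i w_i$ is independent of $i$ (as $s_{i+1} w_{i+1} = s_i x w_{i+1} = s_i w_i$), satisfies $r_j v = r_j s_j w_j = e w_j = w_j$, and is the unique mediator again by $s_i r_i = 1_E$. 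Finally $\tilde{x} = \pi_i \sigma_i = s_i r_i = 1_E$ is an isomorphism, so $x$ has eventual image duality with $\mathsf{im}^\infty(x) = E$.

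The only genuinely delicate bookkeeping is verifying that the \emph{same} index $k$ survives the application of $\mathsf{F}$, but this is immediate because \textbf{[ev.4]} is preserved on the nose by functoriality. I therefore expect no real obstacle beyond care with the $\mathbb{Z}$-indexing and with the diagrammatic composition order; the genuinely conceptual step, which I would emphasize, is simply the observation that ``eventuates with index $k$'' is a purely equational notion, so that the limit and colimit it produces are automatically absolute.
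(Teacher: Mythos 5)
Your proof is correct and follows essentially the same route as the paper's: establish the cone/cocone structure from \textbf{[ev.3]}, use \textbf{[ev.4]} together with the iterated cone condition to show the mediating map $t_i r_i$ works, get uniqueness from $s_i r_i = 1_E$, dualize, and read off $\tilde{x} = 1_E$. Your treatment of absoluteness (functors preserve the purely equational notion of ``eventuates with index $k$,'' so the whole construction transports) is in fact more explicit than the paper's one-line remark that the reasoning only uses functor-preserved constructs.
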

\begin{proof} Note that by \textbf{[ev.3]}, $(r_i: A \to E)_{i \in \mathbb{Z}}$ is a cone and $(s_i: E \to A)_{i \in \mathbb{Z}}$ a cocone over the diagram $\mathsf{EI}(x)$. So we need to show that they are respectively a limit and a colimit. So let's show that $(s_i)_{i \in \mathbb{Z}}$ is a universal cone over $\mathsf{EI}(x)$. So suppose that $(z_i: Z \to A)_{i \in \mathbb{Z}}$ is a cone over $\mathsf{EI}(x)$: 
    \[ \xymatrix{ & & Z \ar[dl]_{z_i} \ar[dr]^{z_{i+1}} \\
       ~ \ar@{}[r]|{...} & A \ar[rr]_x & & A \ar@{}[r]|{...} & ~} \]
Then we first observe that:  
\[ z_{i+1} r_{i+1} = z_i x r_{i+1} = z_i r_i \]
So for any $i, j \in \mathbb{Z}$, we have that $z_i r_i = z_j r_j$. So define the map $h: Z \to E$ as the composite $h\colon = z_ir_i$ (for any $i \in \mathbb{Z}$). Next note that by the cone, we have that $z_i = z_{i-n}x^n$ for all $n\in \mathbb{N}$. In particular by \textbf{[ev.4]}, we get that:   
    \[  h s_i = z_i r_i s_i = z_{i-(k+1)}x^{k+1} r_i s_i = z_{i-(k+1)}x^{k+1} = z_i  \]
So we get that the following diagram commutes:  
    \[ \xymatrix{ & & Z \ar@{.>}[d]|-{h} \ar@/_1pc/[ddl]_{z_i} \ar@/^1pc/[ddr]^{z_{i+1}} \\ 
                  & &E \ar[dl]_{s_i} \ar[dr]^{r_i} \\
                  ~ \ar@{}[r]|{...} & A \ar[rr]_x & & A \ar@{}[r]|{...} & ~ } \]
Now by \textbf{[ev.1]}, each $s_i$ is monic. Therefore it follows that $h$ is necessarily unique, and therefore we get that $(s_i)_{i \in \mathbb{Z}}$ is a limit cone. Similarly, by dual arguments, we also get that $(r_i)_{i \in \mathbb{Z}}$ is a colimit cocone. By \textbf{[ev.1]}, the canonical map is $\tilde{x} = s_i r_i = 1_E$. So we have that $x$ has eventual image duality. Finally, this (co)limit is clearly absolute as the reasoning only uses constructs which are preserved by all functors.
\end{proof}

We now show that if an endomorphism is Drazin split, then we can build a family which eventuates it: 

\begin{lemma}\label{lemma:eventuates2} If $x: A \to A$ is Drazin split where $e_x$ splits via $r: A \to E$ and $s: E \to A$, then $(s_i: E \to A,r_i: A \to E)_{i \in \mathbb{Z}}$ eventuates $x$ with index $\mathsf{ind}(x) =k$ where: 
    \begin{description} 
    \item[ {[$i = 0$]}:] $s_0 \colon = s$,  $r_0 \colon = r$;
    \item[ {[$i > 0$]}:]  $s_i \colon = s x^i$ and $r_i \colon = (x^D)^i r$;
    \item[ {[$i < 0$]}:]\footnote{Notice in the last case since $i<0$ that $-i$ is positive so that the definitions of $s_i$ and $r_i$ make sense.
} $s_i \colon = s (x^D)^{-i}$ and $r_i = x^{-i} r$.
    \end{description}
\end{lemma}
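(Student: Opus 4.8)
The plan is to set $k = \mathsf{ind}(x)$ and verify the four axioms \textbf{[ev.1]}--\textbf{[ev.4]} directly, after first assembling a small toolkit of identities that makes each computation collapse. From the splitting we have $rs = e_x$ and $sr = 1_E$, where $e_x = x x^D = x^D x$. The identities I would record are: (a) $x$ and $x^D$ commute by \textbf{[D.3]}, so all their powers commute and $x^j(x^D)^j = (x x^D)^j = e_x$ for $j \geq 1$; (b) $e_x$ is absorbed by $x^D$, namely $x^D e_x = e_x x^D = x^D$ (immediate from \textbf{[D.2]}), hence $(x^D)^j e_x = (x^D)^j = e_x (x^D)^j$ for all $j \geq 1$; (c) $e_x$ is absorbed by the splitting maps, since $s e_x = s(rs) = (sr)s = s$ and $e_x r = (rs)r = r(sr) = r$; and (d) $x^n e_x = x^n = e_x x^n$ for all $n \geq k$, which is Lemma \ref{lemma:Drazin-basic}.(\ref{lemma:Drazin-basic.1}) rewritten using $e_x = xx^D = x^Dx$.

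With these in hand, \textbf{[ev.1]}, \textbf{[ev.2]} and \textbf{[ev.4]} are the routine axioms. In each of the three sign-cases of $i$, the ``middle word'' in $x$ and $x^D$ appearing in $s_i r_i$ and in $r_i s_i$ reduces to $e_x$ by (a); for instance, for $i > 0$ one has $s_i r_i = s\,x^i (x^D)^i\,r = s\,e_x\,r$ and $r_i s_i = (x^D)^i\, rs\, x^i = (x^D)^i e_x x^i$. Then \textbf{[ev.1]} follows because $s e_x r = s(rs)r = (sr)(sr) = 1_E$, and the analogous reductions hold at $i = 0$ and $i < 0$. For \textbf{[ev.2]} each $r_i s_i$ simplifies to the single idempotent $e_x$ using (a) and (b), so $r_i s_i$ is manifestly independent of $i$. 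For \textbf{[ev.4]}, having identified $r_i s_i = e_x$, it remains only to check $e_x x^{k+1} = x^{k+1} = x^{k+1} e_x$, which is precisely (d) with $n = k+1$.

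The remaining axiom \textbf{[ev.3]}, the shift relations $s_i x = s_{i+1}$ and $x r_{i+1} = r_i$, is where the genuine bookkeeping lives and is the step I expect to be the main obstacle, because the defining formulas for $s_i$ and $r_i$ switch form across the boundaries $i = 0$ and $i = -1$. Away from the boundaries the relations are immediate: for $i \geq 1$ one has $s_i x = s x^i x = s_{i+1}$ and $x r_{i+1} = x(x^D)^{i+1} r = e_x (x^D)^i r = (x^D)^i r = r_i$ by (a) and (b); for $i \leq -2$, writing $m = -i \geq 2$, the symmetric computation gives $s(x^D)^m x = s(x^D)^{m-1} e_x = s(x^D)^{m-1} = s_{i+1}$ via (b) and $x\,x^{m-1}r = x^m r = r_i$. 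The two boundary cases must be handled by hand: at $i = 0$ one needs $s_0 x = s x = s_1$ and $x r_1 = x x^D r = e_x r = r = r_0$ by (c); at $i = -1$ one needs $s_{-1} x = s x^D x = s e_x = s = s_0$ by (c), together with $x r_0 = x r = r_{-1}$ by the definition of $r_{-1}$. Assembling the three sign-ranges with these two boundary adjustments completes \textbf{[ev.3]}, and hence establishes that $(s_i, r_i)_{i \in \mathbb{Z}}$ eventuates $x$ with index $k$.
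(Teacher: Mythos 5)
Your proposal is correct and follows essentially the same route as the paper: a direct verification of \textbf{[ev.1]}--\textbf{[ev.4]} by case analysis on the sign of $i$, reducing the middle words $x^i(x^D)^i$ to $e_x$ and using the splitting identities $sr=1_E$, $rs=e_x$ together with $e_x x^{k+1}=x^{k+1}=x^{k+1}e_x$. Your explicit separation of the boundary cases $i=0$ and $i=-1$ in \textbf{[ev.3]} is slightly more careful bookkeeping than the paper's uniform treatment of the negative range, but the argument is the same.
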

\begin{proof}
    \begin{enumerate}[{\bf [ev.1]}]
        \item Recall that $sr=1_E$. When $i=0$, $s_0r_0 = sr = 1_E$.  If $i > 0$ then by \textbf{[D.3]} and that $e_x$ is idempotent: 
        \[ s_ir_i = s x^i (x^D)^i r = s (xx^D)^i r = s e_x^i r= se_x r = srsr = 1_E \]
        while if $i < 0$ then similarly: 
        \[ s_ir_i = s (x^D)^{-i} x_i^{-i} = s(x^Dx)^{-i} r = s e_x^{-i} r = s e_x r = sr s r = 1_E\]
        \item When $i=0$, $r_0s_0 = rs = e_x$. When $i > 0$, by \textbf{[D.3]} and that $e_x$ is idempotent, we compute that: 
        \[ r_i s_i = (x^D)^i r s x^i = (x^D)^i e_x x^i = (x^D)^i x^D x x^i = (x^D)^{i+1} x^{i+1} = (x^Dx)^{i+1} = e_x^{i+1} = e_x \]
        When $i<0$, we compute that: 
        \begin{gather*} r_i s_i = x^{-i} r s (x^D)^{-i} = x^{-i} e_x (x^D)^{-i} =  x^{-i} x x^D (x^D)^{-i} \\
        =x^{-i+1}  (x^D)^{-i+1}  = (xx^D)^{-i+1} = e_x^{-i+1} = e_x 
        \end{gather*}
        So for $i \in \mathbb{Z}$, $r_i s_i = e_x$.  
        \item When $i=0$: 
        \begin{gather*}
            s_0 x = s x = s_1 \\
            x r_1 = x x^D r = e_x r = rsr = r = r_0
        \end{gather*}
When $i >0$:
  \begin{gather*}
           s_i x = s x^i x = s x^{i+1} = s_{i+1} \\
           x r_{i+1} = x (x^D)^{i+1} r = (x^D)^{i} x x^D r =(x^D)^{i} e_x r = (x^D)^{i} rsr = (x^D)^{i} r = r_i
        \end{gather*}
        When $i < 0$: 
     \begin{gather*}
           s_i x = s (x^D)^{-i} x \!=\! s x x^D (x^D)^{-(i+1)} = s e_x (x^D)^{-(i+1)} = s rs (x^D)^{-(i+1)}  = s (x^D)^{-(i+1)}  = s_{i+1} \\
           x r_{i+1} = x x^{-(i+1)} r = x^{-i} r = r_i 
        \end{gather*}    
        \item As shown above, for all $i \in \mathbb{Z}$ we have that $r_i s_i = e_x$. Now in the proof of Theorem \ref{characterization-by-iteration}, we showed that $x^{k+1}: (A,e_x) \to (A,e_x)$ was a map in the idempotent splitting. Recall this means that $e_x x^{k+1} = x^{k+1} = x^{k+1} e_x$. So we get that $r_i s_i x^{k+1} = x^{k+1} = x^{k+1} r_i s_i$. 
    \end{enumerate} 
So we conclude that $(s_i,r_i)_{i \in \mathbb{Z}}$ eventuates $x$. 
\end{proof}

Then by applying Lemma \ref{lemma:eventuates1} to a Drazin split endomorphism, we get that: 

\begin{corollary}\label{cor:Drazin-eventual} An endomorphism that is Drazin split has eventual image duality.
\end{corollary}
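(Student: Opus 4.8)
The plan is to obtain this as an immediate composition of the two preceding lemmas, with essentially no new work required. Suppose $x \colon A \to A$ is Drazin split. By definition this means $x$ is Drazin and its induced idempotent $e_x = x x^D$ splits, so there exist maps $r \colon A \to E$ and $s \colon E \to A$ with $rs = e_x$ and $sr = 1_E$. This is precisely the input data demanded by Lemma \ref{lemma:eventuates2}.

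First I would apply Lemma \ref{lemma:eventuates2} to this splitting. That lemma constructs an explicit family $(s_i \colon E \to A, r_i \colon A \to E)_{i \in \mathbb{Z}}$ — given by $s_0 = s$, $r_0 = r$, and the iterated formulas $s_i = s x^i$, $r_i = (x^D)^i r$ for $i > 0$ (and the dual formulas using $x^D$ and $x$ for $i < 0$) — and verifies that this family eventuates $x$ with index $\mathsf{ind}(x)$.

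Next I would feed this family directly into Lemma \ref{lemma:eventuates1}, which asserts that any endomorphism admitting a family that eventuates it has an \emph{absolute} eventual image duality. Chaining the two lemmas then yields that $x$ has eventual image duality, which is exactly the claim; in fact we get the stronger conclusion that the duality is absolute, for free. There is no genuine obstacle to overcome here: all the substantive verification (the limit/colimit universal properties and the isomorphism $\tilde x = 1_E$) was already discharged in Lemma \ref{lemma:eventuates1}, while the verification of the four conditions \textbf{[ev.1]}--\textbf{[ev.4]} was discharged in Lemma \ref{lemma:eventuates2}. The corollary is therefore a one-line consequence, and the only thing to state explicitly is which splitting of $e_x$ is being used to seed the construction.
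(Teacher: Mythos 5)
Your proposal is correct and matches the paper's own argument exactly: the corollary is obtained by applying Lemma \ref{lemma:eventuates2} to produce the eventuating family from the splitting of $e_x$, and then Lemma \ref{lemma:eventuates1} to conclude (absolute) eventual image duality. No further comment is needed.
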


This raises the question when if $(s_i,r_i)_{i \in \mathbb{Z}}$ eventuates $x$, whether $x$ is Drazin. In general, this does not appear to be the case. We now add what is missing:

\begin{lemma}\label{lemma:eventual3} $x: A \to A$ is Drazin split if and only if there exists a family of maps $(s_i: E \to A,r_i: A \to E)_{i \in \mathbb{Z}}$ which eventuates $x$ with index $k$ and $s_{i} x r_i: E \to E$ is an isomorphism for some $i \in \mathbb{Z}$.
\end{lemma}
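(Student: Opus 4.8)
The plan is to route both directions through Lemma \ref{lemma:alpha1}, which characterises being Drazin split by the existence of a split idempotent $e$ (with splitting $r\colon A\to E$, $s\colon E\to A$), a natural number, and an isomorphism $\alpha\colon E\to E$ fitting into its displayed diagram, together with the eventuation machinery of Lemma \ref{lemma:eventuates2}. The observation driving everything is that for an eventuating family the endomorphism $\alpha:=s_ixr_i\colon E\to E$ is independent of $i$: by \textbf{[ev.3]} we have $s_ixr_i=s_{i+1}r_i=s_{i+1}xr_{i+1}$, so $s_ixr_i=s_{i+1}xr_{i+1}$ for every $i$. Hence the hypothesis ``$s_ixr_i$ is an isomorphism for some $i$'' is really a statement about this single map $\alpha$.

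For the forward direction I would start from a Drazin split $x$ and invoke Lemma \ref{lemma:eventuates2} to produce an explicit family $(s_i,r_i)_{i\in\mathbb{Z}}$ eventuating $x$, whose degree-$0$ maps are exactly the splitting $s_0=s$, $r_0=r$ of the induced idempotent $e_x$. Then $s_0xr_0=sxr$, and this is precisely the map called $\alpha$ in the proof of Lemma \ref{lemma:alpha1}, where it was shown to be an isomorphism with inverse $sx^Dr$. So the required condition holds for $i=0$, and the forward direction is immediate.

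The substance is the backward direction. Given an eventuating family with $\alpha=s_ixr_i$ invertible, I would set $e:=r_is_i$ (well defined by \textbf{[ev.2]}); by \textbf{[ev.1]} this is an idempotent split by $r_i\colon A\to E$ and $s_i\colon E\to A$, since $r_is_i=e$ and $s_ir_i=1_E$. I then aim to verify the three commutativities required by the converse half of Lemma \ref{lemma:alpha1}, taking its index to be $k+1$. The triangle condition $x^{k+1}e=x^{k+1}$ is exactly the right-hand equation of \textbf{[ev.4]}. For the top square $xr_i=r_i\alpha$ I would compute $r_i\alpha=r_is_ixr_i=exr_i$ and, using \textbf{[ev.3]} in the form $xr_i=r_{i-1}$ together with $er_{i-1}=r_{i-1}$ (from $e=r_{i-1}s_{i-1}$ and \textbf{[ev.1]}), conclude $exr_i=er_{i-1}=r_{i-1}=xr_i$. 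The bottom square $s_ix=\alpha s_i$ is dual: $\alpha s_i=s_ixr_is_i=s_ixe$, while $s_ix=s_{i+1}=s_{i+1}e=s_ixe$ using $s_ix=s_{i+1}$ and $s_{i+1}e=s_{i+1}$. With $e$ split and $\alpha$ invertible, Lemma \ref{lemma:alpha1} then delivers that $x$ is Drazin split (and, tracing its construction, $x^D=r_i\alpha^{-1}s_i$ with $e_x=e$).

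I expect the only real obstacle to be the index bookkeeping in this last paragraph: one must keep straight how \textbf{[ev.3]} shifts the indices of $s_i$ and $r_i$ in opposite directions and combine this with the two splitting identities $s_ir_i=1_E$ and $r_is_i=e$ read off at the correct indices. Once $\alpha$ is recognised as index-independent and the two absorption facts $er_{i-1}=r_{i-1}$ and $s_{i+1}e=s_{i+1}$ are isolated, the squares fall out and no genuinely new idea is needed beyond Lemma \ref{lemma:alpha1}.
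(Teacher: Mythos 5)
Your proposal is correct and follows essentially the same route as the paper: the forward direction reads off $s_0xr_0=sxr$ from Lemma \ref{lemma:eventuates2} together with the isomorphism $\alpha$ constructed in Lemma \ref{lemma:alpha1}, and the backward direction builds the split idempotent $e=r_is_i$ from \textbf{[ev.1]}--\textbf{[ev.2]}, takes $\alpha=s_ixr_i$ and index $k+1$, and feeds this into Lemma \ref{lemma:alpha1}. Your verification of the two squares via the absorption identities $er_{i-1}=r_{i-1}$ and $s_{i+1}e=s_{i+1}$ is in fact slightly cleaner than the paper's computation (which detours through \textbf{[ev.4]} and powers of $x$), and your observation that $s_ixr_i$ is independent of $i$ is a nice touch, but the argument is the same in substance.
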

\begin{proof} For ($\Rightarrow$), suppose that $x: A \to A$ is Drazin split where $e_x$ splits via $r: A \to E$ and $s: E \to A$. Then in Lemma \ref{lemma:eventuates2}, we showed that the family $(s_i: E \to A,r_i: A \to A)_{i \in \mathbb{Z}}$ eventuates $x$. Now in the proof of Lemma \ref{lemma:alpha1}, we showed that $\alpha = s x r$ was an isomorphism. However recall that $s_0 = s$ and $r_0 = r$. So $s_0 x r_0$ is an isomorphism. 

For ($\Leftarrow$), suppose that $(s_i,r_i)_{i \in \mathbb{Z}}$ eventuates $x$ with index $k$ and $s_{i} x^k r_i$ is an isomorphism for some $i \in \mathbb{Z}$. To show that $x$ is Drazin split, we will apply Lemma \ref{lemma:alpha1}. By \textbf{[ev.1]} and \textbf{[ev.2]}, we get a split idempotent $e = r_{i+1} s_{i+1}$ with splitting $r=r_{i+1}$ and $s= s_{i+1}$. Moreover, we also have our isomorphism by setting $\alpha = s_{i} x r_i$. Lastly, take the natural number to be $k+1$. Now note that by \textbf{[ev.3]}, we get that $x^n r_{j+n} = r_{j}$ and $s_{j} x^n = s_{j+n}$ for all $j \in \mathbb{Z}$ and $n \in \mathbb{N}$. From these and \textbf{[ev.4]}, we compute that: 
\[ r \alpha = r_i s_i x r_i = r_i s_i x x^k r_{i+k} = r_i s_i x^{k+1} r_{i+k} = x^{k+1} r_{i+k} = x x^k r_{i+k} = x r_i = x r \]
\[ \alpha s = s_i x r_i s_i = s_{i-k} x^k x r_i s_i =  s_{i-k} x^{k+1} r_i s_i = s_{i-k} x^{k+1} = s_{i-k} x^k x = s_i x = sx \]
Moreover, by \textbf{[ev.4]} we get that $x^{k+1} rs = x^{k+1}$. So the diagram in Lemma \ref{lemma:alpha1} commutes, and therefore we conclude that $x$ is Drazin split. 
\end{proof}

%%%%%%%%%%%%%%%%%%%%%%%%%%%%%%%%%%%%%%%%%%%%%%%%%%%%%%%%%%%%%%%%%%%%%%%%%%%%%%%%

%%%%%%%%%%%%%%%%%%%%%%%%%%%%%%%%%%%%%%%%%%%%%%%%%%%%%%%%%%%%%%%%%%%%%%%%%%%%%%%%

\section{Drazin Inverses in Additive Categories} \label{sec:additive}

%%%%%%%%%%%%%%%%%%%%%%%%%%%%%%%%%%%%%%%%%%%%%%%%%%%%%%%%%%%%%%%%%%%%%%%%%%%%%%%%

In this section we consider Drazin inverses in \emph{additive} categories, following Robinson and Puystjens' work in \cite{robinson1987generalized}. In this paper, by an additive\footnote{Additive categories are sometimes called ``ringoids'' (a many object ring), or $\mathsf{Ab}$-enriched categories.} category, we mean a category $\mathbb{X}$ which is enriched over Abelian groups, that is, every homset $\mathbb{X}(A,B)$ is an Abelian group  and composition, $\mathbb{X}(A,B) \ox \mathbb{X}(B,C) \to \mathbb{X}(A,C)$, and composition units, $\mathbb{Z} \to \mathbb{X}(A,A)$, are Abelian group homomorphisms. 

%%%%%%%%%%%%%%%%%%%%%%%%%%%%%%%%%%%%%%%%%%%%%%%%%%%%%%%%%%%%%%%%%%%%%%%%%%%%%%%%%%%%%%%%%%%%%%%%%%%%%%%
\subsection{Right/Left $\pi$-Regularity in Additive Categories}\label{sec:additive-pi} In Section \ref{sec:strong-pi}, we reviewed how being Drazin was equivalent to being strongly $\pi$-regular. The definition of being strongly $\pi$-regular can be split into two parts: being left $\pi$-regular and being right $\pi$-regular. Dischinger showed that for a ring, being left $\pi$-regular was equivalent to being right $\pi$-regular\footnote{Dischinger only states and proves that for a ring, being right $\pi$-regular implies being left $\pi$-regular. However, by dualizing the proof, it is clear that for a ring, being left $\pi$-regular also implies being right $\pi$-regular.} \cite[Thm 1]{Dischinger}. Unfortunately the same is not true for an arbitrary category (in fact Dischinger even remarks that his proof does not extend to semigroups). Here we show that Dischinger's result generalizes to the setting of an additive category. Therefore, for an additive category, being Drazin is equivalent to being left $\pi$-regular or right $\pi$-regular.  

We first define the notions of left/right $\pi$-regularity for an arbitrary category. 

\begin{definition}\label{def:left-right-pi} In a category $\mathbb{X}$, $x: A \to A$ is said to be: 
\begin{enumerate}[(i)]
\item \textbf{right $\pi$-regular} if there is an endomorphism $x^R: A \to A$ and a $k \in \mathbb{N}$ such that $x^{k+1}x^R = x^k$. The $k \in \mathbb{N}$ is a called a \textbf{right $\pi$-index}.  
\item \textbf{left $\pi$-regular} if there is an endomorphism $x^L: A \to A$ and a $k \in \mathbb{N}$ such that $x^L x^{k+1} = x^k$. The $k \in \mathbb{N}$ is a called a \textbf{left $\pi$-index}. 
\end{enumerate}
An object $A \in \mathbb{X}$ is said to be \textbf{right (resp. left) $\pi$-regular} if every endomorphism of type $A \to A$ is right (resp. left) $\pi$-regular. Similarly, a category $\mathbb{X}$ is said to be \textbf{right (resp. left) $\pi$-regular} if every endomorphism in $\mathbb{X}$ is right (resp. left) $\pi$-regular. 
\end{definition}

Of course, by definition, we have that an endomorphism which is strongly $\pi$-regular is both right $\pi$-regular and left $\pi$-regular. Now recall from Section \ref{sec:strong-pi} that being strongly $\pi$-regular was equivalent to being Drazin. So we may state:

\begin{lemma}In a category $\mathbb{X}$, if $x: A \to A$ is Drazin, then $x$ is both right $\pi$-regular and left $\pi$-regular. Moreover, if $\mathbb{X}$ is a Drazin category (resp. if $A$ is a Drazin object), then $\mathbb{X}$ (resp. $A$) is both right $\pi$-regular and left $\pi$-regular.
\end{lemma}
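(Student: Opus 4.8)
The plan is to observe that this statement is essentially a repackaging of facts already established, so the work is in identifying the right witnesses rather than in any new computation. The most direct route goes through the basic identities of Lemma \ref{lemma:Drazin-basic}. Suppose $x \colon A \to A$ is Drazin with $\mathsf{ind}(x) = k$. Taking $n = k$ in Lemma \ref{lemma:Drazin-basic}.(\ref{lemma:Drazin-basic.1}), I would read off both $x^{k+1} x^D = x^k$ and $x^D x^{k+1} = x^k$ simultaneously. The first equation witnesses right $\pi$-regularity by setting $x^R := x^D$ with right $\pi$-index $k$, while the second witnesses left $\pi$-regularity by setting $x^L := x^D$ with left $\pi$-index $k$. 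The point worth recording is that a single endomorphism, namely the Drazin inverse $x^D$ itself, and a single index $k$ serve on both sides at once.

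An equivalent formulation would appeal to Lemma \ref{lem:pi}: being Drazin is equivalent to being strongly $\pi$-regular, and unwinding Definition \ref{def:strong-pi} directly supplies endomorphisms $y, z$ together with indices $p, q$ satisfying $y x^{p+1} = x^p$ and $x^{q+1} z = x^q$, which are precisely left $\pi$-regularity (via $x^L := y$, left $\pi$-index $p$) and right $\pi$-regularity (via $x^R := z$, right $\pi$-index $q$). Either route makes the first sentence of the lemma immediate; I would present the Lemma \ref{lemma:Drazin-basic} version since it emphasizes that the same witness works both ways.

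For the ``Moreover'' clause there is nothing further to prove beyond quantification. If $\mathbb{X}$ is Drazin, then every endomorphism of $\mathbb{X}$ is Drazin, hence by the first part every endomorphism is both right and left $\pi$-regular, which is exactly the assertion that $\mathbb{X}$ is right $\pi$-regular and left $\pi$-regular. The argument for a Drazin object $A$ is identical, simply restricting attention to endomorphisms of type $A \to A$. I do not anticipate any genuine obstacle here; the result is a corollary, and the only subtlety is the bookkeeping observation that the Drazin index $k$ can be reused verbatim as both a left and a right $\pi$-index, which is precisely what Lemma \ref{lemma:Drazin-basic}.(\ref{lemma:Drazin-basic.1}) delivers in one line.
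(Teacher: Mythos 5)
Your proposal is correct and matches the paper's intended argument: the paper states this lemma without a separate proof, as an immediate consequence of Lemma \ref{lem:pi} (Drazin $\Leftrightarrow$ strongly $\pi$-regular) together with the observation that strong $\pi$-regularity is by definition the conjunction of left and right $\pi$-regularity, and the forward direction of Lemma \ref{lem:pi} itself uses exactly your witness $y = z = x^D$ with index $k = \mathsf{ind}(x)$ via Lemma \ref{lemma:Drazin-basic}.(\ref{lemma:Drazin-basic.1}). Both routes you describe therefore coincide with the paper's, and the quantification for the ``Moreover'' clause is handled as the paper intends.
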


We work towards showing that in an \emph{additive category}, the converse is also true. To do so, we first make the following useful observation:

\begin{lemma}\label{lemma:right-power} In a category $\mathbb{X}$, $x: A \to A$ is right (resp. left) $\pi$-regular if and only if there is a $k \in \mathbb{N}$ such that $x^{k+1}$ is right (resp. left) $\pi$-regular with a right (resp. left) $\pi$-index $1$.
\end{lemma}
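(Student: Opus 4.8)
The plan is to prove the ``right'' statement directly and observe that the ``left'' statement follows by the evident mirror-image argument (reversing the order of every composite). Throughout I work inside the monoid $\mathbb{X}(A,A)$, so all the manipulations are purely monoid-theoretic. Recall that $x$ being right $\pi$-regular means there are $x^R \colon A \to A$ and $k \in \mathbb{N}$ with $x^{k+1}x^R = x^k$, while the phrase ``$x^{k+1}$ is right $\pi$-regular with right $\pi$-index $1$'' unwinds to the single equation $x^{2k+2}y = x^{k+1}$ for some $y \colon A \to A$ (since $(x^{k+1})^{1+1}y = (x^{k+1})^1$). Both implications then reduce to elementary exponent bookkeeping.

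For the ($\Rightarrow$) direction, suppose $x^{k+1}x^R = x^k$. I would first record the one-sided iteration identity
\[ x^{n+1}x^R = x^n \qquad \text{for all } n \geq k, \]
proved by induction on $n$: the base case $n=k$ is the hypothesis, and multiplying $x^{n+1}x^R = x^n$ on the left by $x$ yields $x^{n+2}x^R = x^{n+1}$. Note that, unlike in Lemma \ref{lemma:Drazin-basic}, I cannot appeal to a commutation law such as \textbf{[D.3]} here, so this one-sided identity must be established on its own. Peeling off copies of $x^R$ one at a time, the exponents $2k+2, 2k+1, \dots, k+1$ all stay $\geq k$, so applying the identity $k+1$ times gives
\[ x^{2k+2}(x^R)^{k+1} = x^{k+1}. \]
Setting $y := (x^R)^{k+1}$, this is exactly $(x^{k+1})^2 y = x^{k+1}$, so $x^{k+1}$ is right $\pi$-regular with right $\pi$-index $1$, the required $k$ being the original right $\pi$-index of $x$.

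For the ($\Leftarrow$) direction, suppose $x^{2k+2}y = x^{k+1}$ for some $y$ and some $k$. Writing $x^{2k+2} = x^{k+2}x^{k}$ and using associativity, this says $x^{k+2}(x^{k}y) = x^{k+1}$, i.e. $x^{(k+1)+1}x^R = x^{k+1}$ with $x^R := x^{k}y$; hence $x$ is right $\pi$-regular (with right $\pi$-index $k+1$). The left statement is obtained by reading every composite in the opposite order. I do not expect a genuine obstacle here; the only point requiring care is that the asymmetry of one-sided $\pi$-regularity forbids the commutation tricks available for honest Drazin inverses, so the iteration identity in the forward direction must be derived purely from the single defining equation.
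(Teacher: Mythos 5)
Your proof is correct and follows essentially the same route as the paper's: in the forward direction both arguments establish an iteration identity by induction from the single hypothesis $x^{k+1}x^R = x^k$ and take $(x^R)^{k+1}$ as the witness for $(x^{k+1})^2 y = x^{k+1}$, and in the backward direction both take a product of $y$ with $x^k$ as the new $x^R$. The only (immaterial) differences are that you phrase the induction as $x^{n+1}x^R = x^n$ for $n \geq k$ rather than $x^{k+m}(x^R)^m = x^k$, and in the converse you multiply $y$ by $x^k$ on the other side, which yields the smaller right $\pi$-index $k+1$ in place of the paper's $2k+1$.
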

\begin{proof} We prove the statement for right $\pi$-regularity -- the left $\pi$-regular statement is proved in dual fashion. For ($\Rightarrow$), suppose $x$ is right $\pi$-regular with right $\pi$-index $k \in \mathbb{N}$, that is, there is an endomorphism $x^R: A \to A$ such that $x^{k+1}x^R = x^k$. We first prove by induction that for all $m \in \N$, $x^{k+m} (x^R)^m = x^k$. The base case $m=0$ is trivial. So suppose that for all $0 \leq j \leq m$, $x^{k+j} (x^R)^j = x^k$. Then for $m+1$ we compute that: 
\[ x^{k+m+1} (x^R)^{m+1} = x x^{k+m} (x^R)^m x^R = x x^k x^R = x^{k+1} x^R = x^k \]
So $x^{k+m} (x^R)^m = x^k$ holds for all $m \in \mathbb{N}$. Then setting $(x^{k+1})^R = (x^R)^{k+1}$, we compute: 
\[ (x^{k+1})^{2} (x^{k+1})^R = x x^{k+k+1} (x^R)^{k+1} = x x^k = x^{k+1}  \]
Therefore, $x^{k+1}$ is right $\pi$-regular with a right $\pi$-index of $1$. 

For ($\Leftarrow$), Suppose that for some $k \in \mathbb{N}$, $x^{k+1}$ is right $\pi$-regular with a right $\pi$-index of $1$. So there is an endomorphism $y: A \to A$ such that $(x^{k+1})^{2} y = x^{k+1}$. Now define $x^R = y x^{k}$. So we compute: 
\[ x^{2k+1+1} x^R = (x^{k+1})^{2} y x^k = x^{k+1} x^k = x^{2k+1} \]
So $x$ is right $\pi$-regular. 
\end{proof}

We are now in a position to prove our desired result for additive categories. The proof follows the same steps as in \cite{Dischinger}, however, we fill in some gaps and provide some of the details which were omitted. 

\begin{theorem}\label{right-implies-left} In an additive category $\mathbb{X}$, an object $A$ is right $\pi$-regular if and only if $A$ is left $\pi$-regular. Therefore, an additive category $\mathbb{X}$ is right $\pi$-regular if and only if $\mathbb{X}$ is left $\pi$-regular.
\end{theorem}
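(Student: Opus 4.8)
The plan is to reduce the statement to a purely ring-theoretic one and then run Dischinger's argument inside a single endomorphism ring. First, by duality it suffices to prove one implication, say that right $\pi$-regularity of $A$ implies left $\pi$-regularity: the opposite category $\mathbb{X}^{\mathrm{op}}$ is again additive, and reversing composition interchanges the defining equations $x^{k+1}x^R = x^k$ and $x^L x^{k+1} = x^k$ of Definition \ref{def:left-right-pi}, so the left-to-right implication for all additive categories is exactly the right-to-left implication applied to $\mathbb{X}^{\mathrm{op}}$. The statement for the whole category then follows by quantifying over objects. Crucially, everything happens inside the single ring $R := \mathbb{X}(A,A)$ (addition from the $\mathsf{Ab}$-enrichment, multiplication from composition, unit $1_A$), and right/left $\pi$-regularity of the object $A$ is precisely right/left $\pi$-regularity of $R$; thus the content is Dischinger's ring theorem, and the only way additivity enters is through the availability of subtraction in $R$.

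Next I would normalize the indices to $1$. By Lemma \ref{lemma:right-power} every $x$ has a power $x^{k+1}$ that is right $\pi$-regular with index $1$, and by the dual of that lemma it suffices, in order to conclude that $x$ is left $\pi$-regular, to exhibit some power of $x$ that is left $\pi$-regular with index $1$. So after replacing $x$ by a suitable power, the problem becomes: given $a \in R$ with $a = a^2 b$ for some $b$, produce $c$ and $m$ with $a^m = c\,a^{m+1}$. This normalization is exactly what Lemma \ref{lemma:right-power} was set up to provide, and it is the reason the proof can work with index-$1$ data throughout.

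The heart of the argument is to build from $a$ and $b$ a \emph{complementary} element — a difference of the shape $a - a^2 b$, whose formation genuinely uses subtraction — and to show it is nilpotent by an induction that repeatedly feeds auxiliary elements back into the right $\pi$-regularity hypothesis. Along the way one exploits the elementary symmetry that $uv$ is right $\pi$-regular iff $vu$ is (one checks directly that $(uv)^{n+1}c = (uv)^n$ forces $(vu)^{n+2}(vcu) = (vu)^{n+2}$), which lets factors be moved from one side to the other. Once nilpotency is established, the key point is that nilpotency is a \emph{two-sided} condition: it converts the right-sided equation coming from $a = a^2 b$ into a left-sided one, delivering the desired $a^m = c\,a^{m+1}$ and hence, via the dual of Lemma \ref{lemma:right-power}, the left $\pi$-regularity of the original endomorphism.

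I expect the inductive nilpotency step to be the main obstacle. The duality reduction, the passage to the endomorphism ring, and the index normalization are all routine, and the $uv$/$vu$ symmetry is a short direct check; but making the nilpotency argument precise — in particular tracking how the relevant index grows as auxiliary elements are substituted back into the hypothesis, and pinning down the resulting bound on the nilpotency degree — is exactly the place where Dischinger's original account is terse and where the gaps must be filled. This is also precisely the step at which the additive structure is indispensable, consistent with Dischinger's own remark that the result fails for general semigroups.
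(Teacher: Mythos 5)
Your reduction steps match the paper's: the duality observation, the passage to the endomorphism ring $\mathbb{X}(A,A)$, and the normalization to index $1$ via Lemma \ref{lemma:right-power} are all exactly how the paper's proof of Theorem \ref{right-implies-left} begins. But the core of the argument --- the part you defer as ``the inductive nilpotency step'' --- is where the entire content of the theorem lives, and your sketch of it is not merely incomplete but pointed in the wrong direction. First, the ``complementary element'' you name, $a - a^2b$, is identically zero once you have normalized to $a = a^2b$, so it cannot carry any argument. The element that actually does the work (following Dischinger, as in the paper) is $c-a$, where $c$ is obtained by applying the right $\pi$-regularity hypothesis a \emph{second} time, to a power of the witness $b$, giving $b^2c = b$. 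Second, the paper never establishes that $c-a$ (or anything else) is nilpotent; instead it derives $(c-a)^2 = c(c-a)$, $ab(c-a)^2 = 0$, and $b(c-a) = b^{n+1}(c-a)^{n+1}$, and then applies the right $\pi$-regularity hypothesis a \emph{third} time, to $c-a$ itself, to unwind the iterate and conclude $ab(c-a) = 0$; from this one gets $a = abc = aba$ and $ab = ab^2a$, hence $ab^2a^2 = a$, which is left $\pi$-regularity of $a$ with index $1$. Nilpotency plays no role, and the identity $(c-a)^{n+1} = c^n(c-a)$ shows it would not in general be available.

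A further caution: your statement that ``the problem becomes: given $a \in R$ with $a = a^2b$, produce $c$ and $m$ with $a^m = c\,a^{m+1}$'' is false as a claim about a single element --- an individual right $\pi$-regular element of a ring need not be left $\pi$-regular. The hypothesis that \emph{every} endomorphism of $A$ is right $\pi$-regular is consumed three separate times (for $x$, for the witness $y$, and for the difference $c-a$), which is precisely why the theorem is stated at the level of objects and categories rather than single endomorphisms as in Definition \ref{def:left-right-pi}. You gesture at this when you speak of feeding auxiliary elements back into the hypothesis, but that bookkeeping \emph{is} the proof, and it is missing. Finally, the $uv$/$vu$ symmetry you invoke is not used in the paper's argument and does not obviously substitute for any of the steps above.
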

\begin{proof} For ($\Rightarrow$), suppose that $A$ is right $\pi$-regular. Then every endomorphism $x: A \to A$ is right $\pi$-regular. As such by Lemma \ref{lemma:right-power}, there is a $k \in \mathbb{N}$ such that $x^{k+1}$ is right $\pi$-regular with a right $\pi$-index of $1$. So let $y: A \to A$ be an endomorphism such that $(x^{k+1})^{2} y = x^{k+1}$. Now $y$ is also right $\pi$-regular, so there is a $j \in \mathbb{N}$ such that $y^{k+1}$ is right $\pi$-regular with a right $\pi$-index of $1$. Setting $b=y^{j+1}$, that means we have an endomorphism $c: A \to A$ such that $b^2 c= b$. Now setting $a= (x^{k +1})^{j+1}$, we compute that: 
\begin{gather*}
    a^2 b = ((x^{k +1})^{j+1})^2 y^{j+1} \!=\! (x^{k +1})^{2j + 2} y^{j+1} = (x^{k+1})^{j} (x^{k +1})^{1+j+1} y^{j+1}  =(x^{k +1})^{j} x^{k +1}  = (x^{k+1})^{j+1} = a 
\end{gather*}
So $a$ is right $\pi$-regular with a right $\pi$-index of $1$, and in particular $a^2 b =a$. Now from $a^2 b =a$ and $b^2 c = c$, we compute the following equalities: 
\begin{eqnarray*}
        abc & = & a^2b^2c = a^2b = a\\
        ac & = & a^2bc = a(abc) = a^2 \\
        (c-a)^2 & = & c^2 -ac -ca + a^2 = c^2 -ca = c(c-a) \\
        ab(c-a)^2 & = & abc(c-a) = a(c-a) = ac-a^2 = 0 \\
        b(c-a) & = & b^2c(c-a) = b^2(c-a)^2 
    \end{eqnarray*}
The last equation can be iterated to give $b(c-a) = b^{n+1}(c-a)^{n+1}$ for all $n \in \mathbb{N}$. Now, as $(c-a)$ is also right $\pi$-regular, we have a $d$ and a $p \in \mathbb{N}$ such that $(c-a)^{p+1} d = (c-a)^p$. From this, we get that: 
\begin{gather*} ab(c-a) = a b^{p+1}(c-a)^{p+1} =ab^{p+1}(c-a)^{p+2}d^2 = \\
ab^{p+1}(c-a)^{p+1} (c-a) d^2 = a b (c-a) (c-a) d^2 =  ab(c-a)^2 d^2= 0 \end{gather*}
So from $ab(c-a) = 0$ we get that $a = abc = aba$. This gives us the following: 
\begin{gather*} ab - ab^2a = ab^2c - ab^2a = ab^2(c-a) = abb (c-a) = \\
ab b^{p+1}(c-a)^{p+1} = ab^{p+2}(c-a)^{p+2}d^2  = ab(c-a)d^2 = 0 \end{gather*}
As such we get that $ab = ab^2a$.  Finally, we have that:
\[ a b^2 a^2 = a b^2 a a = ab a = a  \]
Therefore $a = (x^{k +1})^{j+1}$ is left $\pi$-regular with index 1. Then by Lemma \ref{lemma:right-power}, $x$ is left $\pi$-regular. For ($\Leftarrow$), the proof of this direction is dual to that of the previous direction. 

Now a category is a right (resp. left) $\pi$-regular if and only if every object is right (resp. left) $\pi$-regular. Then since we have shown that for an additive category, an object is right $\pi$-regular if and only if it is left $\pi$-regular, it follows that an additive category is right $\pi$-regular if and only if it is left $\pi$-regular, as desired. 
\end{proof}

The above theorem immediately gives us an equivalent characterization of being Drazin for an additive category: 

\begin{corollary} In an additive category $\mathbb{X}$, an object $A$ is Drazin if and only if $A$ is right (or left) $\pi$-regular. So $\mathbb{X}$ is Drazin if and only if $\mathbb{X}$ is right (or left) $\pi$-regular.
\end{corollary}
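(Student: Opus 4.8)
The plan is to assemble the corollary from three ingredients already in hand: the equivalence between being Drazin and being strongly $\pi$-regular (Lemma \ref{lem:pi}), the definitional fact that strong $\pi$-regularity is exactly the conjunction of right and left $\pi$-regularity, and the collapse of these two one-sided notions in the additive setting (Theorem \ref{right-implies-left}).

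First I would recall that, by Lemma \ref{lem:pi}, an endomorphism is Drazin if and only if it is strongly $\pi$-regular, so an object $A$ is Drazin precisely when every endomorphism $x \colon A \to A$ is strongly $\pi$-regular. Unwinding Definition \ref{def:strong-pi}, such an $x$ is strongly $\pi$-regular exactly when it admits a left witness $y x^{p+1} = x^p$ together with a right witness $x^{q+1} z = x^q$; that is, $A$ is strongly $\pi$-regular if and only if $A$ is both right $\pi$-regular and left $\pi$-regular.

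The crux is then Theorem \ref{right-implies-left}, which states that in an additive category a single object is right $\pi$-regular if and only if it is left $\pi$-regular. Hence either one of the two one-sided conditions already forces the other, and therefore forces their conjunction. Chaining the equivalences yields, for an object $A$ of an additive category, that $A$ being right $\pi$-regular is equivalent to $A$ being left $\pi$-regular, which is equivalent to $A$ being strongly $\pi$-regular, which in turn is equivalent to $A$ being Drazin. The categorical statement then follows by the usual quantifier lifting: a category is Drazin (respectively right or left $\pi$-regular) exactly when every one of its objects is, so the object-level equivalence propagates verbatim.

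Honestly, there is no real obstacle remaining at this stage, since all of the work lives in Theorem \ref{right-implies-left}. The only point requiring a moment's care is that the left and right witnesses for strong $\pi$-regularity may a priori carry different indices $p$ and $q$; but since Definition \ref{def:strong-pi} explicitly permits independent indices, no reconciliation is needed and the corollary is immediate.
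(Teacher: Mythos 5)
Your argument is correct and is exactly the route the paper intends: the corollary is stated without proof precisely because it follows immediately from Lemma \ref{lem:pi} (Drazin $\Leftrightarrow$ strongly $\pi$-regular), the definitional splitting of strong $\pi$-regularity into its left and right halves, and Theorem \ref{right-implies-left}. Your remark about the independent indices $p$ and $q$ is a fair observation but, as you note, requires no further work.
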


\subsection{Negatives and Scalar Multiplication} In an additive category, we can ask how the Drazin inverse interacts with the sum and the negation. As for usual inverses, the Drazin inverse does not necessarily behave well with sums. Indeed, the sum of Drazin endomorphisms $x$ and $y$ is not necessarily Drazin, and even if $x+y$ was Drazin, then the Drazin inverse $(x+y)^D$ is not necessarily the sum of the Drazin inverses $x^D + y^D$. On the other hand, the negation of a Drazin endomorphism is Drazin with Drazin inverse the negation of the original Drazin inverse. 

\begin{lemma} In an additive catgory $\mathbb{X}$, if $x: A \to A$ is Drazin, then $-x: A \to A$ is Drazin where $(-x)^D = -x^D$ and $\mathsf{ind}(-x) = \mathsf{ind}(x)$. 
\end{lemma}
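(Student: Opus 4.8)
The plan is to verify directly that $-x^D$ satisfies the three Drazin axioms with respect to $-x$, which by uniqueness (Proposition \ref{prop:unique}) will identify it as the Drazin inverse. The only ingredient beyond the additive structure we already have is the sign rule for powers: since composition in an additive category is a group homomorphism in each variable, negation commutes with composition, giving $(-x)^n = (-1)^n x^n$ for all $n \in \mathbb{N}$. First I would record this observation, as it is what makes all the ensuing signs bookkeep correctly.

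Next I would check the three axioms in turn, writing $k = \mathsf{ind}(x)$. For \textbf{[D.1]}, using $(-x)^{k+1} = (-1)^{k+1} x^{k+1}$ and $(-x)^D := -x^D$, the two sign factors combine to $(-1)^{k+2} = (-1)^k$, so that $(-x)^{k+1}(-x)^D = (-1)^{k+2} x^{k+1} x^D = (-1)^k x^k = (-x)^k$, using \textbf{[D.1]} for $x$. For \textbf{[D.2]}, the three negatives produce a single overall sign, $(-x^D)(-x)(-x^D) = -\,x^D x x^D = -x^D = (-x)^D$, using \textbf{[D.2]} for $x$. For \textbf{[D.3]}, the two negatives cancel on each side, so $(-x^D)(-x) = x^D x = x x^D = (-x)(-x^D)$ by \textbf{[D.3]} for $x$. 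Hence $-x^D$ is the Drazin inverse of $-x$.

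Finally, for the index claim, the \textbf{[D.1]} computation above already holds at exponent $k = \mathsf{ind}(x)$, so $\mathsf{ind}(-x) \leq \mathsf{ind}(x)$. For the reverse inequality I would apply the result just proved to $-x$ in place of $x$: since $-x$ is Drazin with inverse $-x^D$, the statement gives that $-(-x) = x$ is Drazin with inverse $-(-x^D) = x^D$ and $\mathsf{ind}(x) = \mathsf{ind}(-(-x)) \leq \mathsf{ind}(-x)$. Combining the two inequalities yields $\mathsf{ind}(-x) = \mathsf{ind}(x)$.

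There is really no hard part here; the proof is a routine sign-chase. The only place demanding a moment's care is the parity argument in \textbf{[D.1]}, where one must confirm that the accumulated sign $(-1)^{k+2}$ matches the target sign $(-1)^k$ rather than its negative, and the analogous care that the index is an exact equality (not merely an inequality), which I would obtain by the symmetric reapplication described above rather than by any direct argument about minimality.
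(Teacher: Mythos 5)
Your proposal is correct and follows essentially the same route as the paper: verify the three axioms by a direct sign-chase using $(-x)^n = (-1)^n x^n$, and obtain the exact equality of indices by applying the result symmetrically to $-x$ (the paper phrases this last step as a contradiction, but it is the same observation).
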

\begin{proof} We show that $(-x)^D = -x^D$ satisfies the three axioms of a Drazin inverse. 
    \begin{enumerate}[{\bf [D.1]}]
\item Suppose that $\mathsf{ind}(x) =k$, then applying \textbf{[D.1]} for $x$ we compute that: 
\begin{gather*}
  (-x)^{k+1} (-x)^D = (-x)^{k+1} (-x^D) = (-1)^{k+2} x^{k+1} x^D = (-1)^k x^k = (-x)^k
\end{gather*}
So ${\sf ind}(-x) \leq k$.
\item Applying \textbf{[D.2]} for $x$ we compute that: 
\[(-x)^D (-x) (-x)^D = (-x^D) (-x) (-x^D) = (-1)^3 x^D x x^D = - x^D = (-x)^D \]  
\item Applying \textbf{[D.3]} for $x$ we compute that: 
\[ (-x)^D (-x) = (-x^D) (-x) = (-1)^2 x^D x = (-1)^2 x x^D = (-x) (-x)^D \]
\end{enumerate}
So $-x$ is Drazin with Drazin inverse $(-x)^D = -x^D$. Now if $\mathsf{ind}(-x) = j < k= \mathsf{ind}(x)$, then the above calculation would tells us that \textbf{[D.1]} for $-(-x) = x$ holds for $j$, which contradicts that $k$ is the Drazin index. So $\mathsf{ind}(-x) = \mathsf{ind}(x)$. 
\end{proof}

Similarly, if we happen to be in a setting where we can scalar multiply maps by rationals $\frac{p}{q} \in \mathbb{Q}$, then if $x$ is Drazin so is $\frac{p}{q} x$ where $(\frac{p}{q} x)^D = \frac{q}{p} x^D$ when $\frac{p}{q} \neq 0$. In particular, in such a setting, for $m \geq 1$, we would have that if $x$ is Drazin, then $mx = x + x + \hdots + x$ is also Drazin with $(mx)^D = \frac{1}{m} x^D$. More generally, if $R$ is a commutative ring, then in a category $\mathbb{X}$ which is enriched over $R$-modules, for any unit $u \in R$, if $x$ is Drazin then $ux$ is Drazin where $(ux)^D = u^{-1} x^D$. The case of scalar multiplying by $0$ is discussed in the next section.  

\subsection{Nilpotents} Recall that in an additive category, an endomorphism $n: A \to A$ is said to be \textbf{nilpotent} if there is a $k \in \mathbb{N}$ such that $n^{k} =0$, and the smallest such $k$ is called the \textbf{nilpotent index} of $n$. It turns out that nilpotent endomorphisms are precisely the Drazin endomorphisms whose Drazin inverse is zero. In this case, the Drazin index and the nilpotent index coincide. 

\begin{lemma}\label{lemma:nilpotent-drazin} In any additive category $\mathbb{X}$, $n: A \to A$ is nilpotent if and only if $n$ is Drazin with $n^D = 0$. In particular, the zero morphism $0: A \to A$ is Drazin and its own Drazin inverse, $0^D = 0$.
\end{lemma} 
\begin{proof} For ($\Rightarrow$), suppose that $n$ is nilpotent with nilpotent index $k$, so $n^k =0$. We must show that $n^D =0$ satisfies the three axioms of a Drazin inverse. However, note that \textbf{[D.2]} and \textbf{[D.3]} trivially hold since $0n0 =0$ and $n0=0=0n$. On the other hand, \textbf{[D.1]} also holds since $n^{k+1} 0 = 0 = n^k$. So $n$ is Drazin with Drazin inverse $0$. Now we know that $\mathsf{ind}(n) \leq k$. But if $\mathsf{ind}(n) < k$, then there would be a $j < k$ such that $n^{j} = n^{j+1} 0 = 0$, which contradicts $k$ being the nilpotent index. So we have that $\mathsf{ind}(n)=k$ as desired. 

For ($\Leftarrow$), suppose that $n$ is Drazin with Drazin inverse $n^D = 0$ and $\mathsf{ind}(n)=k$. Then \textbf{[D.1]} tells us that $n^k = n^{k+1} 0 = 0$. So $n$ is nilpotent. If there was a $j < k$ such that $n^j =0$, then we would also have that $n^{j} = n^{j+1} 0 = 0$, which contradicts $k$ being the Drazin index. So we have that the nilpotent index of $n$ is its Drazin index. 
\end{proof}

%%%%%%%%%%%%%%%%%%%%%%%%%%%%%%%%%%%%%%%%%%%%%%%%%%%%%%%%%%%%%%%%%%%%%%%%%%%%%%%%%%%%%%%%%%%%%%%%%%%%%%%
\subsection{Core-Nilpotent Decomposition} \label{ssec:core-nilpotent} For matrices, an important concept in relation to the Drazin inverse is the notion of the \emph{core-nilpotent decomposition} \cite[Thm 2.2.21]{mitra2010matrix}. The \emph{core} \cite[Def 7.3.1]{campbell2009generalized} of a matrix is defined as the Drazin inverse of its Drazin inverse, while its \emph{nilpotent part} \cite[Def 7.3.2]{campbell2009generalized} is the matrix minus its core. The \emph{core-nilpotent decomposition} is the statement that a matrix is equal to the sum of its core and nilpotent parts. Drazin later generalized this concept for elements in a ring \cite[Sec 1]{drazin2001generalizations}, and showed that having a core-nilpotent decomposition was equivalent to being Drazin \cite[Prop 3.1 \& 3.2]{drazin2001generalizations}. Here, we discuss core-nilpotent decompositions of endomorphisms in an additive category, in other words, core-nilpotent decomposition in the ring $\mathbb{X}(A,A)$. 

\begin{definition} In an additive category $\mathbb{X}$, a \textbf{core-nilpotent decomposition} of ${x: A \to A}$ is a pair $(c,n)$ of endomorphisms $c: A \to A$ and $n: A \to A$ such that: 
    \begin{enumerate}[{\bf [CND.1]}]
\item $c$ is Drazin with $\mathsf{ind}(c) \leq 1$ (so $c$ has a group inverse); 
\item $n$ is nilpotent with nilpotent index $k \in \mathbb{N}$ (so that $n^{k} = 0$);
\item $cn=0=nc$;
\item $x = c + n$. 
\end{enumerate}
\end{definition}

Before we revisit the proof that having a core-nilpotent decomposition is equivalent to being Drazin, it will be useful to explicitly define the core and nilpotent parts of a Drazin endomorphism. 

\begin{definition} In any additive category $\mathbb{X}$, for a Drazin endomorphism $x: A \to A$, 
\begin{enumerate}[(i)]
\item The \textbf{core}\footnote{Note that by Lemma \ref{lem:Drazin-inverse1}.(\ref{Drazin-inverse-inverse}), the core can in fact be defined in an arbitrary category.} of $x$ is the endomorphism $c_x: A \to A$ defined by $c_x \colon = x^{DD} = x x^D x$. 
\item The \textbf{nilpotent part} of $x$ is the endomorphism $n_x \colon = x - c_x: A \to A$.
\end{enumerate}
\end{definition}

\begin{theorem}\label{thm:cnd} \cite[Prop 3.1, 3.2, \& 3.3]{drazin2001generalizations} In an additive category $\mathbb{X}$, $x: A \to A$ is Drazin if and only if $x$ has a core-nilpotent decomposition. Moreover, when $x$ is Drazin, $(c_x,n_x)$ is its unique core-nilpotent decomposition. 
\end{theorem}
\begin{proof} For ($\Rightarrow$), Suppose that $x$ is Drazin with index $k$. We show that $(c_x,n_x)$ satisfies the four axioms of a core-nilpotent decomposition. 
\begin{enumerate}[{\bf [CND.1]}]
\item This is the statement of Lemma \ref{lem:Drazin-inverse1}.(\ref{Drazin-inverse-inverse}). 
\item Using the binomial theorem (which recall holds in any additive category), \textbf{[D.3]}, and Lemma \ref{lemma:Drazin-basic}.(\ref{lemma:Drazin-basic.3}), we compute that: 
\begin{gather*}
    n_x^{k} = (x - c_x)^{k} = \sum\limits^{k}_{i=0} (-1)^i \binom{k}{i} x^{k-i} c_x^i =  \sum\limits^{k}_{i=0} (-1)^i \binom{k}{i} x^{k-i} (x x^D x)^i \\
    = \sum\limits^{k}_{i=0} (-1)^i \binom{k}{i} x^{k-i} x^{2i} {x^D}^i =  \sum\limits^{k+1}_{i=0} (-1)^i \binom{k}{i} x^{k-i+2i} {x^D}^i \\
    = \sum\limits^{k}_{i=0} (-1)^i\binom{k}{i} x^{k+i} {x^D}^i = \sum\limits^{k}_{i=0} (-1)^i\binom{k}{i} x^{k} = (1-1)^{k} x^{k} = 0
\end{gather*}
So $n_x$ has nilpotent index less than or equal to $k$.
\item Using \textbf{[D.2]} and \textbf{[D.3]}, we have:
\begin{gather*}
    c_x n_x = c_x(x - c_x) = c_x x - c_x c_x = x x^D x x - x x^D x x x^D x\\
    = x^3 x^D - x^3 x^D x x^D =  x^3 x^D - x^3 x^D = 0
\end{gather*}
So $c_x n_x = 0$, and similarly  $n_x c_x = 0$. 
\item From the definition of $n_x$ it follows that $x = c_x + (x - c_x) = c_x + n_x$. 
\end{enumerate}

For ($\Leftarrow$),  Suppose that $x$ has a core-nilpotent decomposition $(c,n)$. Let $k \geq 1$ be the nilpotent index of $n$ so that $n^{k} = 0$. Using {\bf [CND.1]}, we set $x^D \colon = c^D$ and show that $x^D$ satisfies the three Drazin inverse axioms. 
\begin{enumerate}[{\bf [D.1]}]
\item First note that, for $m > 0$, $x^m = (c + n)^m = c^m + n^m$ as the cross-terms vanish using {\bf [CND.3]}. So, in particular, for $j \geq k$, we have that $x^{j} = c^{j}$ using {\bf [CND.2]}. Now since $\mathsf{ind}(c) \leq 1 \leq k$, by Lemma \ref{lemma:Drazin-basic}.(\ref{lemma:Drazin-basic.1}) we compute that: 
\begin{gather*}
    x^{k+1} x^D = (c+n)^{k+1} c^D = (c^{k+1} + n^{k+1}) c^D = c^{k+1} c^D \\ = c^{k} = c^{k} + n^{k} = (c+n)^{k} = x^{k} 
\end{gather*}
So ${\sf ind}(x) \leq k$.
\item By {\bf [CND.3]}, we have that $cn = 0 = 0c$. So by Lemma \ref{Drazin-commuting}, we have that $c^Dn = 0c^D$, so $c^D n =0$. Using this and \textbf{[D.2]}, we compute that: 
\[x^D x x^D = c^D (c + n) c^D = c^D c c^D + c^D n c^D = c^D + 0 = c^D = x^D \]  
\item By {\bf [CND.3]}, we have that $cn= nc$. So by Lemma \ref{Drazin-commuting}, we have that $c^Dn = nc^D$. Using this and \textbf{[D.3]}, we have: 
\[x x^D = (c+n)c^D = cc^D + nc^D = c^Dc +c^Dn = c^D(c+n) = x^D x\]
\end{enumerate}
So $x$ is Drazin with Drazin inverse $x^D \colon = c^D$ and ${\sf ind}(x) \leq k$.  

It remains to show that a core-nilpotent decomposition is unique. So let $x$ be Drazin. Suppose that $x$ has another core-nilpotent decomposition $(c,n)$. Then we have that $c_x^D = x^D = c^D$. Since $\mathsf{ind}(c) \leq 1$ and $\mathsf{ind}(c_x) \leq 1$, by Lemma \ref{lem:Drazin-inverse1}.(\ref{cor:ginverse-ginverse}) we have that $c = (c^D)^D = (c^D_x)^D = c_x$, so $c = c_x$. By \textbf{[CND.4]}, $c + n = x = c_x + n_x$, so subtracting $c=c_x$ from both sides gives us $n=n_x$. So we conclude that $(c_x, n_x)$ is the unique core-nilpotent decomposition of $x$.
\end{proof} 

Therefore, we obtain another characterization of when an additive category is Drazin: 

\begin{corollary} An additive category $\mathbb{X}$ is Drazin if and only if every endomorphism has a core-nilpotent decomposition.
\end{corollary}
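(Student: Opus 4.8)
The plan is to observe that this corollary is an immediate consequence of Theorem \ref{thm:cnd} applied uniformly across all endomorphisms, combined with the definitional characterization of a Drazin category. First I would recall that, by Definition \ref{def:Drazin-cat}, an additive category $\mathbb{X}$ is Drazin precisely when every endomorphism $x \colon A \to A$ is Drazin.

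Next, I would invoke Theorem \ref{thm:cnd}, which establishes for a single endomorphism that, in an additive category, $x$ is Drazin if and only if $x$ has a core-nilpotent decomposition. Since this biconditional holds for each individual endomorphism, I can quantify it over all endomorphisms: every endomorphism of $\mathbb{X}$ is Drazin if and only if every endomorphism of $\mathbb{X}$ has a core-nilpotent decomposition. Chaining this with the previous observation yields that $\mathbb{X}$ is Drazin if and only if every endomorphism has a core-nilpotent decomposition, which is exactly the claim.

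There is no genuine obstacle here, as the corollary is purely a repackaging of Theorem \ref{thm:cnd} at the level of the whole category rather than a fixed endomorphism; the only thing to be careful about is that the equivalence in Theorem \ref{thm:cnd} is object-independent, so no uniformity or compatibility condition across different objects $A$ is needed. Concretely, the proof reads: By Definition \ref{def:Drazin-cat}, $\mathbb{X}$ is Drazin if and only if every endomorphism $x \colon A \to A$ is Drazin. By Theorem \ref{thm:cnd}, each such $x$ is Drazin if and only if it admits a core-nilpotent decomposition. Therefore $\mathbb{X}$ is Drazin if and only if every endomorphism has a core-nilpotent decomposition.
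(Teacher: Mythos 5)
Your proof is correct and takes the same route the paper intends: the corollary is stated without proof precisely because it is the immediate quantification of the per-endomorphism equivalence in Theorem \ref{thm:cnd} over all endomorphisms, combined with Definition \ref{def:Drazin-cat}. Nothing further is needed.
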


%%%%%%%%%%%%%%%%%%%%%%%%%%%%%%%%%%%%%%%%%%%%%%%%%%%%%%%%%%%%%%%%%%%%%%%%%%%%%%%%%%%%%%%%%%%%%%%%%%%%%%%
\subsection{Kernel-cokernel coincidence} \label{ssec:kernel-cokernel} In \cite[Thm 2]{robinson1987generalized}, Robinson and Puystjens provide conditions for being Drazin in an additive category in terms of kernels and cokernels. Here we revisit their result and show how their setup is in fact equivalent to being Drazin such that the \emph{complement} of the induced idempotent is split.  

In an additive category, the \textbf{complement} of an idempotent $e: A \to A$ is the endomorphism $e^c \colon=1_A - e: A \to A$. Note that since $e$ is an idempotent, we have that $ee^c =0 = e^c e$. Moreover, the complement of an idempotent is again an idempotent, so we may consider when it splits. In an additive category, we say that an idempotent $e$ is \textbf{complement-split} if $e^c$ is split. In particular, we may consider a Drazin endomorphism whose induced idempotent (from Lemma \ref{lemma:e_x}) is complement-split. 

\begin{definition} In an additive category $\mathbb{X}$, $x: A\to A$ is \textbf{Drazin complement-split} if $x$ is Drazin and its induced idempotent $e_x: A \to A$ is complement-split. 
\end{definition}

We now show an endomorphism $x$ is Drazin complement-split precisely when some power of $x$ has a kernel and a cokernel which are isomorphic. In particular, this means that we can describe the Drazin inverse of $x$ in terms of the splitting of the complement of the induced idempotent. 

\begin{theorem}\label{thm:kernels} In an additive category $\mathbb{X}$, $x: A \to A$ is Drazin complement-split if and only if there is a $k \in \mathbb{N}$ such that $x^{k+1}$ has a kernel and cokernel:
\[ \xymatrixcolsep{5pc}\xymatrix{ \mathsf{ker}(x^{k+1}) ~\ar@{>->}[r]^-{\kappa} & A \ar@<1ex>[r]^-{x^{k+1}} \ar@<-1ex>[r]_0 & A \ar@{->>}[r]^-{\lambda} & \mathsf{coker}(x^{k+1}) } \]
such that $\kappa \lambda$ is an isomorphism and $x^{k+1}: (A, e^c_{\lambda,\kappa}) \to (A, e^c_{\lambda,\kappa})$ is an isomorphism in $\mathsf{Split}(\mathbb{X})$, where $e^c_{\lambda,\kappa}$ is the complement of the idempotent $e_{\lambda,\kappa} = \lambda (\kappa\lambda)^{-1} \kappa$. 
\end{theorem}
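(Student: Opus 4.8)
The plan is to pivot both directions on Theorem \ref{characterization-by-iteration} (the categorical form of Munn's power theorem) together with a single structural identification, namely $e_{\lambda,\kappa} = e_x^c$, equivalently $e^c_{\lambda,\kappa} = e_x$. The guiding intuition is Fitting-theoretic: when $x$ is Drazin, the complement $e_x^c$ of its induced idempotent is exactly the ``kernel projector'' of $x^{k+1}$, so that a splitting of $e_x^c$ produces \emph{simultaneously} a kernel and a cokernel of $x^{k+1}$ that are canonically isomorphic, while the complementary ``core'' idempotent $e_x$ is precisely where $x^{k+1}$ becomes invertible in $\mathsf{Split}(\mathbb{X})$.

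For the forward direction I would take $k = \mathsf{ind}(x)$ and first record the two identities $e_x^c x^{k+1} = 0$ and $x^{k+1} e_x^c = 0$, both of which drop out of Lemma \ref{lemma:Drazin-basic} since $x^D x^{k+2} = x^{k+1} = x^{k+2} x^D$. As $x$ is Drazin complement-split, $e_x^c$ splits, say via $r: A \to E$ and $s: E \to A$ with $rs = e_x^c$ and $sr = 1_E$. I would then show that $s$ is a kernel and $r$ a cokernel of $x^{k+1}$. The equations $s x^{k+1} = 0$ and $x^{k+1} r = 0$ follow from the two identities above using $sr = 1_E$, and the universal properties reduce to the observation that any $g$ with $g x^{k+1} = 0$ satisfies $g e_x = g x^{k+1} (x^D)^{k+1} = 0$ (writing $e_x = x^{k+1}(x^D)^{k+1}$), hence $g = g e_x^c$ factors through the splitting; uniqueness is immediate as $s$ is a split mono and $r$ a split epi, and the dual argument handles the cokernel. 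Setting $\kappa := s$ and $\lambda := r$ then gives $\kappa\lambda = sr = 1_E$, an isomorphism, and $e_{\lambda,\kappa} = rs = e_x^c$, so $e^c_{\lambda,\kappa} = e_x$. Corollary \ref{cor:iso-split} (or Theorem \ref{characterization-by-iteration}) finally supplies that $x^{k+1}: (A, e_x) \to (A, e_x)$ is an isomorphism in $\mathsf{Split}(\mathbb{X})$, which is exactly the required condition.

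For the backward direction the hypothesis literally hands us an idempotent $e := e^c_{\lambda,\kappa}$ together with a $k$ for which $x^{k+1}: (A, e) \to (A, e)$ is an isomorphism in $\mathsf{Split}(\mathbb{X})$, so Theorem \ref{characterization-by-iteration} immediately yields that $x$ is Drazin. Inspecting the construction in that theorem's proof (where $x^D := v x^k$ for $v$ the splitting-inverse of $x^{k+1}$) shows that the induced idempotent coincides with $e$, since $e_x = x x^D = x^{k+1} v = e = e^c_{\lambda,\kappa}$. Consequently $e_x^c = e_{\lambda,\kappa} = \lambda(\kappa\lambda)^{-1}\kappa$, and this idempotent splits by construction through $\mathsf{ker}(x^{k+1})$, via $\lambda(\kappa\lambda)^{-1}: A \to \mathsf{ker}(x^{k+1})$ and $\kappa: \mathsf{ker}(x^{k+1}) \to A$, whose composite in the other order is $\kappa\lambda(\kappa\lambda)^{-1} = 1$. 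Hence $e_x^c$ is split and $x$ is Drazin complement-split; note this half does not even require the universal properties of $\kappa$ and $\lambda$, only that $\kappa\lambda$ is invertible.

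I expect the main obstacle to be the bookkeeping in the forward direction, confirming that the splitting maps of $e_x^c$ genuinely realize the kernel and cokernel of $x^{k+1}$ — in particular establishing $g e_x = 0$ for every $g$ annihilating $x^{k+1}$, which hinges on rewriting $e_x$ as both $x^{k+1}(x^D)^{k+1}$ and $(x^D)^{k+1}x^{k+1}$ using the commuting relations of Lemma \ref{lemma:Drazin-basic}. Once the identification $e_{\lambda,\kappa} = e_x^c$ is secured, the remaining content is a direct appeal to the already-established power characterization, so the essential work is exactly this Fitting-style kernel/cokernel recognition.
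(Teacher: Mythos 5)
Your proposal is correct and follows essentially the same route as the paper: both directions pivot on identifying $e_{\lambda,\kappa}$ with $e_x^c$, realizing the kernel and cokernel of $x^{k+1}$ from the splitting of $e_x^c$ via the identities $e_x^c x^{k+1} = 0 = x^{k+1} e_x^c$ and $g\,e_x = g\,x^{k+1}(x^D)^{k+1}$, and then invoking Theorem \ref{characterization-by-iteration} for the isomorphism in $\mathsf{Split}(\mathbb{X})$. The only (immaterial) difference is that in the converse you split $e_{\lambda,\kappa}$ through $\mathsf{ker}(x^{k+1})$ where the paper splits it through $\mathsf{coker}(x^{k+1})$.
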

\begin{proof} For ($\Rightarrow$), suppose that $x$ is Drazin complement-split where $e^c_x$ splits via $r^c: A \to K$ and $s^c: K \to A$. Let $\mathsf{ind}(x) = k$. We first show that $s^c: K \to A$ is the kernel of $x^{k+1}$ and that  $r^c: A \to K$ is the cokernel of $x^{k+2}$. Starting with the kernel, we first compute using Lemma \ref{lemma:Drazin-basic}.(\ref{lemma:Drazin-basic.1}) that: 
\[ x^{k+1} e^c_x = x^{k+1}(1_A- e_x) = x^{k+2} - x^{k+1}e_x =   x^{k+1} - x^{k+1}x x^D = x^{k+1} - x^{k+1+1} x^D = x^{k+1} - x^{k+1} = 0 \]
So $x^{k+1} e^c_x = 0$. Since $e^c_x r^c = s^c$, post-composing both since by $r^c$ gives us that $x^{k+1} s^c =0$. Now suppose that we had a map $f: B \to A$ such that $f x^{k+1} =0$ as well. Now observe that using idempotency and \textbf{[D.3]}, we have that: 
\[ f e_x = f (e_x)^{k+1} = f (x x^D)^{k+1} = f x^{k+2} (x^D)^{k+1} = 0 (x^D)^{k+1} = 0   \]
Since $f e_x =0$, it follows that $fe^c_x = f$. Then define $f^\sharp: B \to K$ as $f^\sharp \colon= f r^c$. Since $r^c s^c = e^c_x$, it follows that $f^\sharp s^c =f$. Since $s^c$ is monic (since $s^c r^c =1_K$), it follows that $f^\sharp$ must be unique. So we conclude that $s^c: K \to A$ is the kernel of $x^{k+1}$ as desired. By similar arguments, one can also show that $r^c: A \to K$ is the cokernel of $x^{k+1}$. Now setting $\kappa = s^c$ and $\lambda= r^c$, trivially $\kappa \lambda = 1_K$ is an isomorphism. Moreover:
\[e_{\lambda,\kappa} =  \lambda (\kappa\lambda)^{-1} \kappa = \lambda \kappa = r^c s^c = e^c_x\] 
So $e_{\lambda,\kappa} = e^c_x$ and $e^c_{\lambda,\kappa} = e_x$. Now as was shown in the proof of Theorem \ref{characterization-by-iteration}, we have that $x^{k+1}: (A, e_x) \to (A, e_x)$ is an isomorphism in the idempotent splitting, or in other words, $x^{k+1}: (A, e^c_{\lambda,\kappa}) \to (A, e^c_{\lambda,\kappa})$ is an isomorphism in $\mathsf{Split}(\mathbb{X})$ as desired. 

For ($\Leftarrow$), By Theorem \ref{characterization-by-iteration}, since $x^{k+1}: (A, e^c_{\lambda,\kappa}) \to (A, e^c_{\lambda,\kappa})$ is an isomorphism in $\mathsf{Split}(\mathbb{X})$, we get that $x$ is Drazin. So it remains to explain why its induced idempotent is complement-split. Recall that in the proof of Theorem \ref{characterization-by-iteration}, we had showed that the Drazin inverse of $x$ is $x^D \colon = v x^k = x^k v$, where $v: (A, e^c_{\lambda,\kappa}) \to (A, e^c_{\lambda,\kappa})$ is the inverse of $x^{k+1}: (A, e^c_{\lambda,\kappa}) \to (A, e^c_{\lambda,\kappa})$ in $\mathsf{Split}(\mathbb{X})$. So its induced idempotent is $e_x = x^{k+1} v$. However $x^{k+1} v= 1_{(A, e^c_{\lambda,\kappa})} = e^c_{\lambda,\kappa}$. So $e_x = e^c_{\lambda,\kappa}$ and thus $e_x^c = e_{\lambda,\kappa}$. However, clearly $e^c_x = e_{\lambda,\kappa}$ splits via $\lambda: A \to \mathsf{coker}(x^{k+1})$ and $(\kappa\lambda)^{-1} \kappa: \mathsf{coker}(x^{k+1}) \to A$. So we conclude that $x$ is Drazin complement-split. 
\end{proof}

In \cite[Thm 2]{robinson1987generalized}, rather than asking that $x^{k+1}$ is an isomorphism in the idempotent splitting, Robinson and Puystjens ask that $x^{k+1}+e_{\lambda,\kappa}$ be an actual isomorphism. However, the following lemma shows that these statements are equivalent. 

\begin{lemma} \label{lemma:iso-comp} In an additive category $\mathbb{X}$, $f: (A,e) \to (A,e)$ is an isomorphism in $\mathsf{Split}(\mathbb{X})$ if and only if $f + e^c$ is an isomorphism in $\mathbb{X}$. 
\end{lemma}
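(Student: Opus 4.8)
The plan is to exploit the three defining relations of the situation. Since $f\colon (A,e)\to (A,e)$ is an endomorphism in $\mathsf{Split}(\mathbb{X})$ we have $ef=f=fe$, and since $e$ is idempotent its complement $e^c=1_A-e$ satisfies $ee^c=e^ce=0$, $(e^c)^2=e^c$, and $e+e^c=1_A$. The one computation I would record at the outset is that $f$ and $e^c$ are ``orthogonal'': from $fe=f$ we get $fe^c=f(1_A-e)=f-fe=0$, and from $ef=f$ we get $e^cf=0$; more generally, any endomorphism $g$ of $(A,e)$ in $\mathsf{Split}(\mathbb{X})$ (so $eg=g=ge$) satisfies $ge^c=e^cg=0$. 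These orthogonality identities are what make $f+e^c$ behave like a ``block-diagonal'' map that is invertible on the $e$-part and already the identity on the $e^c$-part, and the whole proof is just making this intuition precise through the additive (bilinear) structure.

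For the forward direction, I would suppose $f$ has an inverse $g$ in $\mathsf{Split}(\mathbb{X})$, so $eg=g=ge$ and $fg=e=gf$ (recalling $1_{(A,e)}=e$), and claim that $g+e^c$ is a two-sided inverse of $f+e^c$ in $\mathbb{X}$. Expanding $(f+e^c)(g+e^c)=fg+fe^c+e^cg+(e^c)^2$ and using the orthogonality identities to kill the two middle terms yields $e+e^c=1_A$; the composite $(g+e^c)(f+e^c)$ is symmetric. This direction is essentially immediate once the orthogonality relations are in hand.

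For the backward direction, I would let $h$ be an inverse of $f+e^c$ in $\mathbb{X}$. Pre-composing $(f+e^c)h=1_A$ with $e$ and using $e(f+e^c)=ef+ee^c=f$ gives $fh=e$; symmetrically, post-composing $h(f+e^c)=1_A$ with $e$ and using $(f+e^c)e=fe+e^ce=f$ gives $hf=e$. The one subtlety here --- and the only real obstacle --- is that $h$ is a priori an arbitrary map of $\mathbb{X}$ and need not be a morphism of $(A,e)$ in $\mathsf{Split}(\mathbb{X})$, so it cannot serve directly as the inverse. The fix is to compress it: set $g:=ehe$, which manifestly satisfies $eg=g=ge$ and is therefore a legitimate endomorphism of $(A,e)$. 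Using $fe=f$, $ef=f$, and the two relations just obtained, one checks $fg=fehe=(fh)e=e$ and $gf=eh(ef)=e(hf)=e$, so $g$ inverts $f$ in $\mathsf{Split}(\mathbb{X})$. This completes both implications.
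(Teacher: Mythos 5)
Your proof is correct and follows essentially the same route as the paper: the forward direction uses the inverse $g+e^c$ with the same orthogonality cancellations, and the backward direction compresses the ambient inverse to $e(f+e^c)^{-1}e$, exactly the paper's candidate (you merely split the verification into the intermediate identities $fh=e$ and $hf=e$ before compressing). You also correctly flag the one subtlety — that $(f+e^c)^{-1}$ need not itself be a morphism of $(A,e)$ — which is the reason the compression by $e$ is needed.
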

\begin{proof} Assume that $f: (A,e) \to (A,e)$ is a map in $\mathsf{Split}(\mathbb{X})$, which recall means that $ef = f =fe$. So for ($\Rightarrow$), suppose that $f: (A,e) \to (A,e)$ is an isomorphism in $\mathsf{Split}(\mathbb{X})$ with inverse $g: (A,e) \to (A,e)$. Recall that this means that: $eg=g=ge$ and also that $fg=e=gf$. Now note that this implies that $fe^c=0=fe^c$ and $ge^c =0 = e^cg$. Then define $(f + e^c)^{-1} \colon = g + e^c$. So we compute that:
\[ (f +e^c)(f + e^c)^{-1} = (f + e^c) (g + e^c) = fg + fe^c + e^c g + e^c e^c = e + 0 + 0 + e^c = e + e^c = 1_A \]
\[ (f + e^c)^{-1}(f +e^c) = (g + e^c) (f + e^c) = gf + ge^c + e^c f + e^c e^c = e + 0 + 0 + e^c = e + e^c = 1_A  \]
So $f + e^c$ is an isomorphism.

For  ($\Leftarrow$), suppose that $f + e^c$ is an isomorphism. Our goal is to find the inverse of $f: (A,e) \to (A,e)$ in $\mathsf{Split}(\mathbb{X})$. So consider the map $e(f + e^c)^{-1})e$. We clearly have that $e(f + e^c)^{-1}e: (A,e) \to (A,e)$ is a map in $\mathsf{Split}(\mathbb{X})$, so it remains to show that it is the inverse of $f: (A,e) \to (A,e)$. So we compute: 
\begin{gather*}
    f e(f + e^c)^{-1})e = e f (f + e^c)^{-1})e = (ef + 0) (f + e^c)^{-1})e \\
    =  (ef + ee^c) (f + e^c)^{-1})e = e (f + e^c)  (f + e^c)^{-1})e = e e = e 
\end{gather*}
So $f e(f + e^c)^{-1})e = e$, and similarly we can show that $e(f + e^c)^{-1})e f = e$ as well. 
\end{proof}

The reader may notice that in the statement of the lemma, we started with a map in the idempotent splitting, while in \cite[Thm 2]{robinson1987generalized} this is not stated. However, this is not an issue. Indeed, if $x^{k+1}$ has a kernel and cokernel as above, such that $\kappa \lambda$ is an isomorphism, then we can compute that: 
\[ x^{k+1}e^c_{\lambda,\kappa} = x^{k+1}(1_A - \lambda (\kappa\lambda)^{-1} \kappa) = x^{k+1} - x^{k+1}  \lambda (\kappa\lambda)^{-1} \kappa = x^{k+1} - 0 = x^{k+1}  \]
So $x^{k+1}e^c_{\lambda,\kappa} = x^{k+1}$ and similarly $x^{k+1}e^c_{\lambda,\kappa} = x^{k+1}$. So $x^{k+1}: (A,e^c_{\lambda,\kappa}) \to (A,e^c_{\lambda,\kappa})$ is a map in the idempotent splitting. Then under the other conditions of the above theorem or \cite[Thm 2]{robinson1987generalized}, by applying the above lemma, the statement that $x^{k+1}: (A, e^c_{\lambda,\kappa}) \to (A, e^c_{\lambda,\kappa})$ is an isomorphism is equivalent to saying that $x^{k+1} + e_{\lambda,\kappa}$ is an isomorphism. Finally, we also recapture the formula for the Drazin inverse in \cite[Thm 2]{robinson1987generalized} in terms of the complement of the induced idempotent (recalling that we showed that $e_{\lambda,\kappa}=e^c_x$ and $e^c_{\lambda,\kappa} = e_x$): 

\begin{corollary} In an additive category $\mathbb{X}$, if $x: A \to A$ is Drazin complement-split and $\mathsf{ind}(x) = k$, then $x^D = x^k (x^{k+1} + e^c_x)^{-1} = (x^{k+1} + e^c_x)^{-1} x^k$. 
\end{corollary}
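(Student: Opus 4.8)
The plan is to produce the inverse of $x^{k+1}+e^c_x$ explicitly and then read off $x^D$ by composing with $x^k$. The starting point is Theorem \ref{characterization-by-iteration}: since $x$ is Drazin with $\mathsf{ind}(x)=k$, the map $x^{k+1} \colon (A,e_x) \to (A,e_x)$ is an isomorphism in $\mathsf{Split}(\mathbb{X})$. To name its inverse, I would invoke Corollary \ref{cor:iso-split} applied to the Drazin endomorphism $x^{k+1}$ (which is Drazin by Lemma \ref{lem:Drazin-iteration} with $(x^{k+1})^D=(x^D)^{k+1}$), together with the identity $e_{x^{k+1}}=e_x$ already established inside the proof of Theorem \ref{characterization-by-iteration}; this shows the inverse of $x^{k+1}$ in $\mathsf{Split}(\mathbb{X})$ is $(x^D)^{k+1}$.

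Next I would feed this into Lemma \ref{lemma:iso-comp} with $e=e_x$ and $f=x^{k+1}$. The lemma asserts that $x^{k+1}+e^c_x$ is an isomorphism in $\mathbb{X}$, and, crucially, the forward direction of its proof constructs the inverse as the sum of the splitting-inverse and the complement. Hence $(x^{k+1}+e^c_x)^{-1} = (x^D)^{k+1} + e^c_x$.

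It then remains to compute. Expanding, $x^k\bigl((x^D)^{k+1}+e^c_x\bigr) = x^k(x^D)^{k+1} + x^k e^c_x$. The first term equals $x^D$ by Lemma \ref{lemma:Drazin-basic}.(\ref{lemma:Drazin-basic.2}). For the second, note that $x^k e_x = x^k x x^D = x^{k+1}x^D = x^k$ by \textbf{[D.1]} (equivalently Lemma \ref{lemma:Drazin-basic}.(\ref{lemma:Drazin-basic.1})), so $x^k e^c_x = x^k - x^k e_x = 0$; this yields $x^k(x^{k+1}+e^c_x)^{-1} = x^D$. The right-handed identity is symmetric: $(x^D)^{k+1}x^k = x^D$ again by Lemma \ref{lemma:Drazin-basic}.(\ref{lemma:Drazin-basic.2}), while $e^c_x x^k = x^k - e_x x^k = x^k - x^D x^{k+1} = 0$.

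The computation is routine, so there is no real obstacle; the only point requiring care is correctly identifying $(x^D)^{k+1}$ as the inverse in $\mathsf{Split}(\mathbb{X})$ and confirming that the complement terms annihilate against $x^k$, which reduces entirely to \textbf{[D.1]}. It is worth remarking that complement-split is not actually used here -- the formula holds for any Drazin $x$ -- since Lemma \ref{lemma:iso-comp} does not require $e_x$ to split.
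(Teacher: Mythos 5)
Your proof is correct and follows essentially the same route as the paper: both pass through the isomorphism $x^{k+1}\colon (A,e_x)\to (A,e_x)$ in $\mathsf{Split}(\mathbb{X})$ from Theorem \ref{characterization-by-iteration} and then apply Lemma \ref{lemma:iso-comp}, with your version merely making the splitting inverse $(x^D)^{k+1}$ and the resulting formula $(x^{k+1}+e^c_x)^{-1}=(x^D)^{k+1}+e^c_x$ explicit before the final (correct) computation. Your closing remark is also accurate -- the argument never uses that $e^c_x$ splits, so the formula holds for any Drazin endomorphism of an additive category; the complement-split hypothesis is inherited from the surrounding subsection rather than needed here.
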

\begin{proof} As shown in the proof of Theorem \ref{characterization-by-iteration}, $x^{k+1}: (A, e_x) \to (A, e_x)$ is an isomorphism in $\mathsf{Split}(\mathbb{X})$. In the same proof, we also showed that $x^D = v x^k = x^k v$, where $v: (A, e_x) \to (A, e_x)$ is the inverse of $x^{k+1}: (A, e_x) \to (A, e_x)$ in $\mathsf{Split}(\mathbb{X})$. However, by Lemma \ref{lemma:iso-comp}, we have that $x^{k+1} + e^c_x$ is an isomorphism and that $v = e_x (x^{k+1} + e^c_x ) e_x$. Therefore, we get that $x^D = x^k (x^{k+1} + e^c_x)^{-1} = (x^{k+1} + e^c_x)^{-1} x^k$ as desired. 
\end{proof}

%%%%%%%%%%%%%%%%%%%%%%%%%%%%%%%%%%%%%%%%%%%%%%%%%%%%%%%%%%%%%%%%%%%%%%%%%%%%%%%%%%%%%%%%%%%%%%%%%%%%%%%
\subsection{Fitting Decomposition} \label{ssec:fitting-decomposition} In Section \ref{ex:complex}, we reviewed how to compute the Drazin inverse of a matrix, we first decomposed that matrix as a block diagonal matrix, involving an invertible matrix and a nilpotent matrix, conjugated by another invertible matrix. This approach generalizes to finite-dimensional vector spaces thanks to \textbf{Fitting's Decomposition Theorem} \cite[Lemma 2.4]{leinstercounting}. Here, we generalize this decomposition to additive categories with \emph{finite biproducts}, and call it a \emph{Fitting decomposition}. Moreover, we show that having a Fitting decomposition is equivalent to being both Drazin split and Drazin complement-split, which we call being \emph{Drazin decomposable}. 

\begin{definition} In an additive category $\mathbb{X}$, $x: A\to A$ is \textbf{Drazin decomposable} if $x$ is both Drazin split and Drazin complement-split. 
\end{definition}

We now work in an additive category with finite biproducts: we denote the biproduct by $\oplus$. To help understand Fitting decompositions, it will be useful to recall the matrix representation for maps between biproducts. A map of type $F: A_1 \oplus \hdots \oplus A_n \to B_1 \oplus \hdots \oplus B_m$ is uniquely determined by a family of maps $f_{i,j}: A_i \to B_j$. As such, $F$ can be represented as an $n\times m$ matrix:
\[ F \colon = \begin{bmatrix} f_{1,1} & f_{1,2} & \hdots & f_{1,m} \\
f_{2,1} & f_{2,2} & \hdots & f_{2,m} \\
\vdots & \vdots & \ddots & \vdots \\
f_{n,1} & f_{n,2} & \hdots & f_{n,m}  
\end{bmatrix}
\]
Moreover, composition corresponds to matrix multiplication, and identities correspond to the identity matrix. See \cite[Sec 2.2.4]{HeunenVicary} for details of the matrix representation of biproducts. 

We now define a Fitting decomposition of an endomorphism by turning \cite[Thm 7.2.1]{campbell2009generalized} into a definition:

\begin{definition} \label{Fitting-decomposition} In an additive category $\mathbb{X}$ with finite biproducts, a \textbf{Fitting decomposition} of $x: A \to A$ is a triple $(p, \alpha, \eta)$ consisting of an isomorphism $p: A \to I \oplus K$, an isomorphism $\alpha: I \to I$, and a nilpotent endomorphism $\eta: K \to K$ such that the following equality holds: 
\[ x = p \begin{bmatrix} \alpha & 0 \\ 0 & \eta \end{bmatrix} p^{-1} \]
\end{definition}

\begin{theorem}\label{thm:iso-nil} In an additive category $\mathbb{X}$ with finite biproducts, $x: A \to A$ is Drazin decomposable if and only if $x$ has a Fitting decomposition. 
\end{theorem}
\begin{proof} For ($\Rightarrow$), suppose that $x$ is Drazin decomposable, where $e_x$ splits via $r: A \to I$ and $s: I \to A$, and $e^c_x$ splits via $r^c: A \to K$ and $s^c: K \to A$. We first show that $p = \begin{bmatrix}
        r & r^c
    \end{bmatrix}:  A \to I \oplus K$ is an isomorphism. So define $p^{-1}: I \oplus K \to A$ as follows: 
\begin{align*}
    p^{-1} = \begin{bmatrix}
        s \\ s^c
    \end{bmatrix} 
\end{align*}
Now using that $e_x = rs$ and $e_x^c = r^c s^c$, we first compute that: 
\[  pp^{-1} = \begin{bmatrix}
        r & r^c
    \end{bmatrix} \begin{bmatrix}
        s \\ s^c
    \end{bmatrix} = rs + r^c s^c = e_x + e_x^c = e_x + 1_A - e_x = 1_A \]
    On the other hand, since $ee^c = 0 = e^c e$, it follows that $s r^c=0$ and $s^c r=0$. So by using that $sr=1_I$ and $s^c r^c = 1_K$, we compute that: 
    \[  p^{-1} p =\begin{bmatrix}
        s \\ s^c
    \end{bmatrix} \begin{bmatrix}
        r & r^c
    \end{bmatrix} =  \begin{bmatrix}
        sr & s r^c \\ s^c r & s^c r^c
    \end{bmatrix} =  \begin{bmatrix}
        1_I & 0 \\ 0 & 1_K
    \end{bmatrix} = 1_{I \oplus K} \]
    So $p$ is indeed an isomorphism. Now define $\alpha: I \to I$ and $\eta: K \to K$ as $\alpha = s x r$ and $\eta = s^c n_x r^c$ respectively, where recall that $n_x$ is the nilpotent of the core-nilpotent decomposition of $x$. In the proof of Lemma \ref{lemma:alpha1} we have already shown that $\alpha$ is an isomorphism as well. On the other hand if $\mathsf{ind}(x) =k$, then in the proof of Theorem \ref{thm:cnd} we showed that $n_x$ was indeed nilpotent and that $n^{k+1}_x =0$. Moreover, recall that in the proof of Theorem \ref{thm:cnd} we also showed that $x^D = c^D_x$ and $n_x c^D_x =0$. So it follows that $e_x n_x =0$ and similarly we also have $n_x e_x =0$. Therefore, we get that $e^c_x n_x = n_x = n_x e^c_x$. Then using this and $e_x^c = r^c s^c$, we compute that: 
    \[ \eta^{k+1} =  s^c n_x r^c  s^c n_x r^c \hdots  s^c n_x r^c = s^c n_x e^c_x n_x \hdots e^c n_x r^c = s^c n^{k+1}_x r^c = s^c 0 r^c = 0 \]
    So $\eta$ is nilpotent as desired. Now recall that by Lemma \ref{lemma:alpha1}, it follows that $r \alpha s = x e_x = x x^D x$. In other words, $r \alpha s = c_x$. On the other hand, from $e^c_x n_x = n_x = n_x e^c_x$, it also follows that $r^c \eta s^c = n_x$. Therefore, by the core-nilpotent decomposition, we finally compute that 
    \[ p \begin{bmatrix} \alpha & 0 \\ 0 & \eta \end{bmatrix} p^{-1} = \begin{bmatrix}
        r & r^c
    \end{bmatrix} \begin{bmatrix} \alpha & 0 \\ 0 & \eta \end{bmatrix} \begin{bmatrix}
        s \\ s^c
    \end{bmatrix} =  \begin{bmatrix}
        r \alpha & r^c \eta
    \end{bmatrix}  \begin{bmatrix}
        s \\ s^c
    \end{bmatrix} = r \alpha s + r^c \eta s^c = c_x + n_x = x  \]
    and conclude that that $(p, \alpha, \eta)$ is a Fitting's decomposition of $x$ as desired.  

    For $(\Leftarrow)$, we first show that $x$ is Drazin by showing that $x^D$ satisfies the three Drazin inverse axioms.
     \begin{enumerate}[{\bf [D.1]}]
\item First note that for all $m \in \mathbb{N}$, we have that: 
\[ x^m = p \begin{bmatrix} \alpha^m & 0 \\ 0 & \eta^m \end{bmatrix} p^{-1}  \]
Now let $k$ be the smallest natural number such that $\eta^{k+1} =0$. Then we compute: 
\begin{gather*}
 x^{k+1+1} x^D =  p \begin{bmatrix} \alpha^{k+1+1} & 0 \\ 0 & \eta^{k+1+1} \end{bmatrix} p^{-1} p \begin{bmatrix} \alpha^{-1} & 0 \\ 0 & 0 \end{bmatrix} p^{-1} =  p \begin{bmatrix} \alpha^{k+1} \alpha & 0 \\ 0 & 0 \end{bmatrix} \begin{bmatrix} \alpha^{-1} & 0 \\ 0 & 0 \end{bmatrix} p^{-1} \\
=  p \begin{bmatrix} \alpha^{k+1} \alpha \alpha^{-1} & 0 \\ 0 & 0 \end{bmatrix} p^{-1} =  p \begin{bmatrix} \alpha^{k+1} & 0 \\ 0 & \eta^{k+1} \end{bmatrix} p^{-1} = x^{k+1}
\end{gather*}
\item We easily compute that: 
\begin{gather*} x^D x x^D =  p \begin{bmatrix} \alpha^{-1} & 0 \\ 0 & 0 \end{bmatrix} p^{-1} p \begin{bmatrix} \alpha & 0 \\ 0 & \eta \end{bmatrix} p^{-1}  p \begin{bmatrix} \alpha^{-1} & 0 \\ 0 & 0 \end{bmatrix} p^{-1} \\ = p \begin{bmatrix} \alpha^{-1} & 0 \\ 0 & 0 \end{bmatrix} \begin{bmatrix} \alpha & 0 \\ 0 & \eta \end{bmatrix}  \begin{bmatrix} \alpha^{-1} & 0 \\ 0 & 0 \end{bmatrix} p^{-1} = p \begin{bmatrix} \alpha^{-1} \alpha \alpha^{-1} & 0 \\ 0 & 0 \eta 0 \end{bmatrix} p^{-1} = p \begin{bmatrix} \alpha^{-1} & 0 \\ 0 & 0 \end{bmatrix} p^{-1} = x^D \end{gather*}
\item We easily compute that: 
\begin{gather*} x x^D =   p \begin{bmatrix} \alpha & 0 \\ 0 & \eta \end{bmatrix} p^{-1}  p \begin{bmatrix} \alpha^{-1} & 0 \\ 0 & 0 \end{bmatrix} p^{-1}  = p \begin{bmatrix} \alpha & 0 \\ 0 & \eta \end{bmatrix}  \begin{bmatrix} \alpha^{-1} & 0 \\ 0 & 0 \end{bmatrix} p^{-1}= p \begin{bmatrix} \alpha^{-1} \alpha  & 0 \\ 0 & \eta 0 \end{bmatrix} p^{-1} \\
 = p \begin{bmatrix}  \alpha \alpha^{-1} & 0 \\ 0 & 0 \eta  \end{bmatrix} p^{-1} =  p \begin{bmatrix} \alpha^{-1} & 0 \\ 0 & 0 \end{bmatrix}  \begin{bmatrix} \alpha & 0 \\ 0 & \eta \end{bmatrix} p^{-1} =    p \begin{bmatrix} \alpha^{-1} & 0 \\ 0 & 0 \end{bmatrix} p^{-1} p \begin{bmatrix} \alpha & 0 \\ 0 & \eta \end{bmatrix} p^{-1} = x^D x
\end{gather*}
\end{enumerate}
So $x$ is Drazin. Next, we compute its induced idempotent: 
\[ e_x = x x^D =  p \begin{bmatrix} 1_I & 0 \\ 0 & 0 \end{bmatrix} p^{-1} = \begin{bmatrix}
        r & r^c
    \end{bmatrix} \begin{bmatrix} 1_I & 0 \\ 0 & 0 \end{bmatrix} \begin{bmatrix}
        s \\ s^c
    \end{bmatrix} =  \begin{bmatrix}
        r & 0
    \end{bmatrix} \begin{bmatrix}
        s \\ s^c
    \end{bmatrix} = rs   \]
So $e_x =rs$. We compute its complement to be: 
\[ e^c_x = 1_A - e_x = p p^{-1} =   \begin{bmatrix}
        r & r^c
    \end{bmatrix} \begin{bmatrix}
        s \\ s^c
    \end{bmatrix} - e_x = rs + r^c s^c - e_x = e_x + r^c s^c - e_x = r^c s^c \]
 So $e^c_x = r^c s^c$. Now note that we also have that: 
  \[  \begin{bmatrix}
        1_I & 0 \\ 0 & 1_K
    \end{bmatrix} = 1_{I \oplus K}=  p^{-1} p =\begin{bmatrix}
        s \\ s^c
    \end{bmatrix} \begin{bmatrix}
        r & r^c
    \end{bmatrix} =  \begin{bmatrix}
        sr & s r^c \\ s^c r & s^c r^c
    \end{bmatrix}  \]
 In particular, this gives us that $sr = 1_I$ and $s^c r^c = 1_K$. So both $e_x$ and $e^c_x$ split, and we conclude that $x$ is Drazin decomposable as desired. 
\end{proof} 

\begin{corollary} In an additive category $\mathbb{X}$ with finite biproducts, if $x: A \to A$ has a Fitting decomposition $(p, \alpha, \eta)$, and is therefore Drazin, then the core and nilpotent-part of $x$ are determined by:
\begin{align*}
    c_x \colon =  p \begin{bmatrix} \alpha & 0 \\ 0 & 0 \end{bmatrix} p^{-1} &&  n_x \colon = p \begin{bmatrix} 0 & 0 \\ 0 & \eta \end{bmatrix} p^{-1}
\end{align*}
\end{corollary}
\begin{proof} We compute: 
\begin{gather*} c_x = x x^D x =  p \begin{bmatrix} \alpha & 0 \\ 0 & \eta \end{bmatrix} p^{-1} p \begin{bmatrix} \alpha^{-1} & 0 \\ 0 & 0 \end{bmatrix} p^{-1}  p \begin{bmatrix} \alpha & 0 \\ 0 & \eta \end{bmatrix} p^{-1} = p \begin{bmatrix} \alpha & 0 \\ 0 & \eta \end{bmatrix} \begin{bmatrix} \alpha^{-1} & 0 \\ 0 & 0 \end{bmatrix} \begin{bmatrix} \alpha & 0 \\ 0 & \eta \end{bmatrix} p^{-1}\\
=  p \begin{bmatrix} \alpha \alpha^{-1} \alpha & 0 \\ 0 & \eta 0 \eta \end{bmatrix} p^{-1} = p \begin{bmatrix} \alpha & 0 \\ 0 & 0\end{bmatrix} p^{-1} \end{gather*}
So the desired equality holds. By the definition of $n_x$, the other equality holds as well. 
\end{proof}

Theorem \ref{thm:iso-nil} also implies that if an additive category with finite biproducts is also idempotent complete, then an endomorphism is Drazin if and only if it has a Fitting decomposition. Therefore: 

\begin{corollary} An additive category $\mathbb{X}$ with finite biproducts is Drazin complete if and only if every endomorphism has a Fitting decomposition.
\end{corollary}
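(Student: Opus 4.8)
The plan is to deduce this directly from Theorem \ref{thm:iso-nil}, which already establishes the pointwise equivalence between an endomorphism having a Fitting decomposition and being Drazin decomposable. The only remaining work is to translate the global hypotheses on the category into the pointwise notions of Drazin split and Drazin complement-split. The key observation is that being Drazin decomposable means being \emph{both} Drazin split and Drazin complement-split, and that in an idempotent complete category every idempotent---in particular the induced idempotent $e_x$ and its complement $e^c_x$---splits automatically.

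For the forward direction, I would assume $\mathbb{X}$ is Drazin complete and invoke Lemma \ref{lemma:Drazin-complete} to conclude that $\mathbb{X}$ is both Drazin and idempotent complete. Then for any endomorphism $x \colon A \to A$, being Drazin furnishes the induced idempotent $e_x = x x^D$, and idempotent completeness ensures that both $e_x$ and its complement $e^c_x = 1_A - e_x$ split. Hence $x$ is simultaneously Drazin split and Drazin complement-split, that is, Drazin decomposable, and Theorem \ref{thm:iso-nil} then yields a Fitting decomposition of $x$. Since $x$ was arbitrary, every endomorphism has a Fitting decomposition.

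For the converse, I would assume every endomorphism has a Fitting decomposition. Theorem \ref{thm:iso-nil} immediately makes every endomorphism Drazin decomposable, and in particular Drazin split, which is exactly the defining condition of a Drazin complete category. Since every step is a direct application of an already-proven result, there is no genuine obstacle here; the only point requiring care is to notice that the definition of Drazin decomposable bundles together precisely the two splitting conditions, so that the global statement unpacks cleanly into the hypotheses of Theorem \ref{thm:iso-nil} on one side and Lemma \ref{lemma:Drazin-complete} on the other.
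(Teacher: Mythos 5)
Your proof is correct and follows exactly the route the paper intends: Lemma \ref{lemma:Drazin-complete} identifies Drazin completeness with being Drazin plus idempotent complete, idempotent completeness gives the splitting of both $e_x$ and $e^c_x$ so that every Drazin endomorphism is Drazin decomposable, and Theorem \ref{thm:iso-nil} converts this to the existence of Fitting decompositions, with the converse following because Drazin decomposable in particular means Drazin split. No gaps.
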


%%%%%%%%%%%%%%%%%%%%%%%%%%%%%%%%%%%%%%%%%%%%%%%%%%%%%%%%%%%%%%%%%%%%%%%%%%%%%%%%%%%%%%%%%%%%%%%%%%%%%%%

\subsection{Image-Kernel Decomposition} \label{ssec:image-kernel} In Section \ref{sec:modules}, we mentioned that for a ring $R$ and an $R$-module $M$, an $R$-linear endomorphism $f: M \to M$ was Drazin if and only if $f$ had a ``Fitting decomposition'' in the sense that $M = \mathsf{im}(f^k) \oplus \mathsf{ker}(f^k)$ for some $k \geq 1$ \cite[Lemma 2.1.(4)]{wang2017class}. We generalize this sort of decomposition in an Abelian category and show that it is indeed equivalent to being Drazin. In an Abelian category $\mathbb{X}$, denote the kernel and image of an endomorphism $x: A \to A$, as follows: 
\begin{align*}
\xymatrixcolsep{5pc}\xymatrix{ \mathsf{ker}(x) \ar@{>->}[r]^-{\kappa}\ar@/^3pc/[rrr]^-{0} & A \ar[rr]^-{x}  \ar@{->>}[dr]_-{\epsilon}  && A   \\
& & \mathsf{im}(x) \ar@{>->}[ur]_-{\iota} } 
\end{align*}
where recall that $\kappa$ and $\iota$ are monic, and $\epsilon$ is epic. Now define $\psi$ as the canonical map:
\[ \psi = \begin{bmatrix}
    \iota \\
    \kappa
\end{bmatrix}: \mathsf{im}(x) \oplus \mathsf{ker}(x) \to A  \]

\begin{definition} In an Abelian category $\mathbb{X}$, a map $x: A \to A$ has an {\bf image-kernel decomposition} in case the map $\psi$ as defined above is an isomorphism.
\end{definition}

\begin{theorem} \label{thm:image-kernel}
In an Abelian category $\mathbb{X}$, $x: A \to A$ is Drazin if and only if there is some $k \in \mathbb{N}$ such that $x^{k+1}$ has an image-kernel decomposition. 
\end{theorem}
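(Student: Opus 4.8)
The plan is to route the whole argument through the Fitting decomposition of Theorem \ref{thm:iso-nil}, exploiting that an Abelian category is idempotent complete, has finite biproducts, and has the property that a map which is both monic and epic is an isomorphism. Throughout write $y := x^{k+1}$, and recall the earlier lemma that $x$ is Drazin if and only if some power $x^{k+1}$ is Drazin (the iteration lemma); so for the $(\Leftarrow)$ direction it suffices to prove that $y$ itself is Drazin.

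For $(\Rightarrow)$, assume $x$ is Drazin, set $k = \mathsf{ind}(x)$ and $e := e_x = x x^D = x^D x$. The idea is to show that $x^{k+1}$ and the idempotent $e$ have literally the same image and the same kernel as subobjects of $A$. The relevant identities are $x^{k+1} = x^{k+1} e = e\,x^{k+1}$, which follow from Lemma \ref{lemma:Drazin-basic}.(\ref{lemma:Drazin-basic.1}) and \textbf{[D.3]}, together with $e = x^{k+1}(x^D)^{k+1} = (x^D)^{k+1} x^{k+1}$, which follow from $(x x^D)^{k+1} = e^{k+1} = e$. From $x^{k+1} = x^{k+1} e$ and $e = (x^D)^{k+1} x^{k+1}$ one reads off $\mathsf{im}(x^{k+1}) = \mathsf{im}(e)$, and from $x^{k+1} = e\,x^{k+1}$ and $e = x^{k+1}(x^D)^{k+1}$ one reads off $\mathsf{ker}(x^{k+1}) = \mathsf{ker}(e)$. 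Since $\mathbb{X}$ is Abelian, $e$ splits and so does its complement, giving a biproduct decomposition $A \cong \mathsf{im}(e) \oplus \mathsf{ker}(e)$ whose canonical comparison map is an isomorphism; transporting this map along the two subobject identifications shows exactly that $\psi = \begin{bmatrix}\iota\\\kappa\end{bmatrix}\colon \mathsf{im}(x^{k+1}) \oplus \mathsf{ker}(x^{k+1}) \to A$ is an isomorphism, i.e. $x^{k+1}$ has an image-kernel decomposition.

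For $(\Leftarrow)$, assume $\psi$ is an isomorphism for $y = x^{k+1}$, and use it to identify $A$ with the biproduct $\mathsf{im}(y) \oplus \mathsf{ker}(y)$. Writing $y = \epsilon \iota$ with $\epsilon$ epic and $\iota$ monic, I would compute the block matrix of $y$ in these coordinates. The kernel inclusion gives $\kappa y = 0$ and the projection onto $\mathsf{ker}(y)$ is annihilated by $\iota$, so three of the four blocks vanish and $y$ takes the form $\begin{bmatrix}\alpha & 0\\0 & 0\end{bmatrix}$ with $\alpha = \iota\epsilon\colon \mathsf{im}(y) \to \mathsf{im}(y)$. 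The crux is that $\alpha$ is invertible: the image of the block map is $\mathsf{im}(\alpha)$, which must be all of $\mathsf{im}(y)$, so $\alpha$ is epic; and the kernel of the block map is $\mathsf{ker}(\alpha) \oplus \mathsf{ker}(y)$, which must equal $\mathsf{ker}(y)$, forcing $\mathsf{ker}(\alpha) = 0$, so $\alpha$ is monic. In an Abelian category a monic epic is an isomorphism, so $\alpha$ is invertible. This exhibits $y = p\begin{bmatrix}\alpha & 0\\0 & 0\end{bmatrix}p^{-1}$ with $p = \psi^{-1}$ as a Fitting decomposition whose nilpotent part is $0$; hence $y$ is Drazin by Theorem \ref{thm:iso-nil}, and therefore $x$ is Drazin by the iteration lemma.

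The main obstacle is the step in $(\Leftarrow)$ establishing that the surviving block $\alpha$ is an isomorphism: this is precisely where the Abelian hypothesis is genuinely used, both in forming the image/kernel block decomposition and in deducing iso from monic-plus-epic. The remaining delicacy, in both directions, is bookkeeping — keeping track that the subobjects $\mathsf{im}$ and $\mathsf{ker}$ and the comparison map $\psi$ are identified along the correct canonical isomorphisms, so that "$x^{k+1}$ has the same image and kernel as $e$" really does transport the splitting isomorphism onto $\psi$ itself.
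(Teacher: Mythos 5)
Your proof is correct, and while the forward direction is essentially the paper's argument in different packaging, your converse takes a genuinely different route. For $(\Rightarrow)$ you and the paper rely on the same identities ($x^{k+1}e_x = x^{k+1} = e_x x^{k+1}$ and $e_x = x^{k+1}(x^D)^{k+1} = (x^D)^{k+1}x^{k+1}$); the paper constructs the components $\phi_1,\phi_2$ of $\psi^{-1}$ explicitly from the universal properties of image and kernel, whereas you observe that these identities force $\mathsf{im}(x^{k+1}) = \mathsf{im}(e_x)$ and $\mathsf{ker}(x^{k+1}) = \mathsf{ker}(e_x)$ as subobjects and transport the splitting $A \cong \mathsf{im}(e_x)\oplus\mathsf{ker}(e_x)$ --- same content, and your bookkeeping caveat is exactly the part the paper makes explicit. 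For $(\Leftarrow)$ the paper builds a Fitting decomposition of $x$ \emph{itself} over $\mathsf{im}(x^{k+1})\oplus\mathsf{ker}(x^{k+1})$, with blocks $\alpha = \iota x \phi_1$ and $\eta = \kappa x \phi_2$, and must prove by induction that $\eta^{k+1}=0$; you instead decompose $y = x^{k+1}$, whose lower block is literally zero, and then invoke the iteration lemma to pass from ``$x^{k+1}$ Drazin'' back to ``$x$ Drazin.'' Your version buys a cleaner argument --- the nilpotency verification vanishes, and the surviving block $\alpha = \iota\epsilon$ is easier to handle since $y$ already annihilates its own kernel --- at the cost of leaning on the iteration lemma (which the paper proves earlier via Proposition \ref{commutative-squares}, so this is legitimate). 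The one step I would tighten is your identification of the image and kernel of the block-diagonal map with $\mathsf{im}(\alpha)$ and $\mathsf{ker}(\alpha)\oplus\mathsf{ker}(y)$: this is true but is doing real work, and it is just as quick to argue directly that $\kappa\epsilon = 0$ (since $\kappa\epsilon\iota = \kappa y = 0$ and $\iota$ is monic) gives $\phi_1\alpha = \phi_1\iota\epsilon + \phi_2\kappa\epsilon = \epsilon$, so $\alpha$ is epic, and that $f\alpha = 0$ forces $f\iota y = 0$, hence $f\iota$ factors through $\kappa$, hence $f = f\iota\phi_1 = 0$ using $\kappa\phi_1 = 0$, so $\alpha$ is monic. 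Both routes then use the Abelian axiom that monic plus epic implies isomorphism.
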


\begin{proof} For ($\Rightarrow$), suppose that $x$ is Drazin with $\mathsf{ind}(x) = k$. Our objective is to show that $\psi: \mathsf{im}(x^{k+1}) \oplus \mathsf{ker}(x^{k+1}) \to A$ is an isomorphism. To construct its inverse, we need to first construct maps $A \to \mathsf{im}(x^{k+1})$ and $A \to \mathsf{ker}(x^{k+1})$. First note that by idempotency and \textbf{[D.3]}, we get that $e_x= e_x^{k+1} = (x^D)^{k+1} x^{k+1}$. So by using the universal property of the image, there is a monic map $\mathsf{im}(e_x) \to \mathsf{im}(x^{k+1})$, which then allows us to build a map $\phi_1: A \to \mathsf{im}(x^{k+1})$ such that $\phi_1 \iota = e_x$. On the other hand, recall that in the proof of Theorem \ref{characterization-by-iteration}, we showed that $e_x x^{k+1} = x^{k+1}$. Then it follows that $e^c_x x^{k+1} = 0$. So by the universal property of the kernel, let $\phi_2: A \to \mathsf{ker}(x^{k+1})$ be the unique map such that $\phi_2 \kappa^\circ_{x^{k+1}} = e^c_x$. Then define $\phi$ as:
\[ \phi = \begin{bmatrix}
    \phi_1 & 
    \phi_2
\end{bmatrix}  : A \to \mathsf{im}(x^{k+1}) \oplus \mathsf{ker}(x^{k+1})\]
So we first compute that: 
\[ \phi \psi = \begin{bmatrix}
    \phi_1 &
    \phi_2
\end{bmatrix} \begin{bmatrix}
    \iota \\
    \kappa
\end{bmatrix}  = \phi_1 \iota + \phi_2 \kappa = e_x + e^c_x = 1_A \]
To prove that $\phi \psi$ is the identity, we must work out what $\iota \phi_1$, $\iota \phi_2$, $\kappa \phi_1$, and $\kappa \phi_2$ are. First, recall again in the proof of Theorem \ref{characterization-by-iteration}, we showed that $x^{k+1} e_x =x^{k+1}$, which also implies that $x^{k+1} e^c_x = 0$. So we first compute that: 
\[ \epsilon \iota \phi_1 \iota = x^{k+1} e_x = x^{k+1} = \epsilon \iota \]
\[ \epsilon \iota \phi_2 \kappa = x^{k+1} e^c_x = 0 = \epsilon 0 \iota  \]
Since $\epsilon$ is epic and $\iota$ is monic, it follows that $\iota \phi_1 = 1_{\mathsf{im}(x^{k+1})}$ and $\iota \phi_2 =0$. Next, using idempotency and \textbf{[D.3]}, $e_x = x^{k+1} (x^D)^{k+1}$, then we have that $\kappa e_x =0$. So we compute: 
\[ \kappa \phi_1 \iota = \kappa e_x = 0 = 0 \iota  \]
\[ \kappa \phi_2 \kappa = \kappa e^c_x = \kappa (1_A -e_x) = \kappa - \kappa e_x = \kappa - 0 = \kappa   \]
Since $\iota$ and $\kappa$ are monic, it follows that $\kappa \phi_1 = 0$ and $\kappa \phi_2 = 1_{\mathsf{ker}(x^{k+1})}$. So finally we get that:   
\[ \psi \phi = \begin{bmatrix}
    \iota \\
    \kappa
\end{bmatrix}  \begin{bmatrix}
    \phi_1 &
    \phi_2
\end{bmatrix}  =  \begin{bmatrix}
  \iota \phi_1 &\iota \phi_2
\kappa \phi_1 & \kappa \phi_2
\end{bmatrix} =   \begin{bmatrix}
1_{\mathsf{im}(x^{k+1})} & 0\\ 
0 & 1_{\mathsf{ker}(x^{k+1})}
\end{bmatrix} = 1_{\mathsf{im}(x^{k+1}) \oplus \mathsf{ker}(x^{k+1})} \]
So we conclude that $\psi: \mathsf{im}(x^{k+1}) \oplus \mathsf{ker}(x^{k+1}) \to A$ is an isomorphism. 

For $(\Leftarrow)$, suppose that $\psi: \mathsf{im}(x^{k+1}) \oplus \mathsf{ker}(x^{k+1}) \to A$ is an isomorphism. To show that $x$ is Drazin, we will show that $\psi$ is part of a Fitting decomposition of $x$. So let $\phi_1: A \to \mathsf{im}(x^{k+1})$ and $\phi_2: A \to \mathsf{ker}(x^{k+1})$ be the components of $\psi^{-1}: A \to \mathsf{im}(x^{k+1}) \oplus \mathsf{ker}(x^{k+1})$. Then from $\psi \psi^{-1} = 1_{\mathsf{im}(x^{k+1}) \oplus \mathsf{ker}(x^{k+1})}$ and $\psi^{-1} \psi = 1_A$, we get the following equalities: 
\begin{gather*}
   \iota \phi_1 = 1_{\mathsf{im}(x^{k+1})} \qquad \iota \phi_2 =0 \qquad \kappa \phi_1 = 0 \qquad  \kappa \phi_2 = 1_{\mathsf{ker}(x^{k+1})} \qquad \phi_1 \iota + \phi_2 \kappa = 1_A
\end{gather*}
Now for the isomorphism part of our decomposition, define $\alpha: \mathsf{im}(x^{k+1}) \to \mathsf{im}(x^{k+1})$ to be the composite $\alpha \colon = \iota x \phi_1$. To show that $\alpha$ is an isomorphism, we will use the fact that in an Abelian category, a map is an isomorphism if and only if it is monic and epic. To show that $\alpha$ is monic and epic, we will need to first compute some useful identities. To start with, $\epsilon \iota = x^{k+1}$ and $\iota \phi_1 = 1_{\mathsf{im}(x^{k+1})}$ together imply that $x^{k+1} \phi_1 = \epsilon$. Moreover, from $\epsilon \iota = x^{k+1}$ and $\kappa x^{k+1} =0$, since $\iota$ is monic, it follows that $\kappa \epsilon = 0$. From this and $\phi_1 \iota + \phi_2 \kappa = 1_A$ we also get that $\phi_1 \iota \epsilon = \epsilon$. On the other hand, we also get: 
\[ x \epsilon = x x^{k+1} \phi_1 = x^{k+1} x \phi_1 = \epsilon \iota x \phi_1 = \epsilon \alpha\]
So, from $x \epsilon = \epsilon \alpha$, we have:
\[ \epsilon \alpha \iota = x \epsilon \iota = x x^{k+1} = x^{k+1} x = \epsilon \iota x \] 
Then since $\epsilon$ is epic, we get that $\alpha \iota = \iota x$, which also tells us that $\iota x^n = \alpha^n \iota$ for all $n \in \mathbb{N}$. Then from $\iota \phi_1 = 1_{\mathsf{im}(x^{k+1})}$, we get that $\alpha^n = \iota x^n \phi_1$. 

Now since we are in an Abelian category, to show that $\alpha$ is monic, it suffices to show that if $f \alpha =0$ then $f=0$. So suppose that $f \alpha =0$. Then we have that: 
\[ f \iota x^{k+1} = f \iota x x^k = f \alpha \iota x^k = 0 \iota x^k = 0 \]
Since $f \iota x^{k+1} =0$, by the universal property of the kernel, there exists a unique $h$ such that $f \iota = h \kappa$. Post-composing both sides by $\phi_2$, since $\kappa \phi_2 = 1_{\mathsf{ker}(x^{k+1})}$ and $\iota \phi_2 =0$, we get that $h=0$ and so $f \iota =0$. However, $\iota$ is monic, therefore $f=0$ as desired. So $\alpha$ is monic. Dually, since we are in an Abelian category, to show that $\alpha$ is epic, it suffices to show that if $\alpha g= 0$ then $g=0$. So suppose that $\alpha g=0$. Then 
\[ \epsilon g = \phi_1 \iota \epsilon g = \phi_1 \iota x^{k+1} \phi_1 g = \phi_1 \iota \alpha^{k+1} g = \phi_1 \iota \alpha^k \alpha g = \phi_1 \iota \alpha^k  0 = 0 \]
Since $\epsilon$ is epic, it follows that $g=0$, as desired. So $\alpha$ is epic. Therefore, we conclude that $\alpha$ is an isomorphism. 

For the nilpotent part of the decomposition, note that since $\kappa x^{k+1} =0$, we also have that $\kappa x x^{k+1} =0$. Therefore, by the universal property of the kernel, there exists a unique map $\eta: \mathsf{ker}(x^{k+1}) \to \mathsf{ker}(x^{k+1})$ such that $\eta \kappa = \kappa x$. However, since $\kappa \phi_2 = 1_{\mathsf{ker}(x^{k+1})}$, we get that $\eta = \kappa x \phi_2$. Moreover, we compute that: 
\[ \kappa x = \kappa x  \left(\phi_1 \iota + \phi_2 \kappa \right) = \kappa x \phi_1 \iota +  \kappa x \phi_2 \kappa = \eta \kappa \phi_1 \iota + \kappa x \phi_2 \kappa = 0 + \kappa x \phi_2 \kappa = \kappa x \phi_2 \kappa \]
Since $\kappa x = \kappa x \phi_2 \kappa$, we can prove by induction that $\eta^{m} = \kappa x^m \phi_2$. The base case $m=0$ follows from $\kappa \phi_2 = 1_{\mathsf{ker}(x^{k+1})}$. So suppose that for all $0 \leq j \leq n$, $\eta^{n} = \kappa x^n \phi_2$. 
\[ \eta^{n+1} = \eta \eta^n = \eta \kappa x^n \phi_2 = \kappa x \phi_2 \kappa x^n \phi_2 = \kappa x x^n \phi_2 = \kappa x^{n+1} \phi_2 \]
So we do have that $\eta^{m} = \kappa x^m \phi_2$ for all $m \in \mathbb{N}$. Now for $m=k+1$, we get that: 
\[ \eta^{k+1} = \kappa x^{k+1} \phi_2 = 0 \phi_2 = 0    \]
So $\eta$ is nilpotent as desired. 

Lastly, we finally compute that: 
\begin{gather*}
    \psi^{-1} \begin{bmatrix}
  \alpha & 0 \\ 
0 & \eta
\end{bmatrix} \psi = \begin{bmatrix}
    \phi_1 &
    \phi_2
\end{bmatrix} \begin{bmatrix}
  \alpha & 0 \\ 
0 & \eta
\end{bmatrix} \begin{bmatrix}
    \iota \\
    \kappa
\end{bmatrix} = \begin{bmatrix}
    \phi_1 \alpha &
    \phi_2 \eta
\end{bmatrix} \begin{bmatrix}
    \iota \\
    \kappa
\end{bmatrix} \\
= \phi_1 \alpha \iota + \phi_2 \eta \kappa = \phi_1 \iota x + \phi_2 \kappa x =  \left(\phi_1 \iota + \phi_2 \kappa \right) x = x
\end{gather*} 
So we conclude that $(\psi^{-1}, \alpha, \eta)$ is a Fitting's decomposition of $x$. So by Theorem \ref{thm:iso-nil}, we get that $x$ is Drazin. 
\end{proof}

%%%%%%%%%%%%%%%%%%%%%%%%%%%%%%%%%%%%%%%%%%%%%%%%%%%%%%%%%%%%%%%%%%%%%%%%%%%%%%%%

%%%%%%%%%%%%%%%%%%%%%%%%%%%%%%%%%%%%%%%%%%%%%%%%%%%%%%%%%%%%%%%%%%%%%%%%%%%%%%%%

\section{Drazin Inverses of Opposing Pairs of Maps} \label{drazin-equivalences}

%%%%%%%%%%%%%%%%%%%%%%%%%%%%%%%%%%%%%%%%%%%%%%%%%%%%%%%%%%%%%%%%%%%%%%%%%%%%%%%%

Arriving with categorical eyes to the subject of Drazin inverses it is natural to want to have a Drazin inverse of an arbitrary map. However, to have a Drazin inverse of a map $f: A \to B$, one really needs an \emph{opposing} map $g: B \to A$ to allow for the iteration which is at the heart of the notion of a Drazin inverse. The Drazin inverse of an opposing pair, $(f,g)$, is itself an opposing pair whose properties we develop in this section. Throughout this section we work in an arbitrary category $\mathbb{X}$.

%%%%%%%%%%%%%%%%%%%%%%%%%%%%%%%%%%%%%%%%%%%%%%%%%%%%%%%%%%%%%%%%%%%%%%%%%%%%%%%%%%%%%%%%%%%%%%%%%%%%%%%
\subsection{The Drazin inverse of an opposing pair} We denote a pair of maps of dual type $f: A \to B$ and $g: B \to A$ by $\xymatrixcolsep{1.5pc}\xymatrix{(f,g): A  \ar@<0.5ex>[r]  & B  \ar@<0.5ex>[l] }$ and refer to it as an \textbf{opposing pair}. 

\begin{definition} \label{def:opposing-pair-drazin-inverse} A \textbf{Drazin inverse} of $\xymatrixcolsep{1.5pc}\xymatrix{(f,g): A  \ar@<0.5ex>[r]  & B  \ar@<0.5ex>[l] }$ is an opposing pair of dual type $\xymatrixcolsep{1.5pc}\xymatrix{(f^{\frac{D}{g}},g^{\frac{D}{f}}): B  \ar@<0.5ex>[r]  & A  \ar@<0.5ex>[l] }$ satisfying the following properties:
\begin{enumerate}[{\bf [DV.1]}]
\item There is a $k \in \mathbb{N}$ such that $(fg)^k f f^{\frac{D}{g}} = (fg)^k$ and $(gf)^k g g^{\frac{D}{f}} = (gf)^k$. 
\item $f^{\frac{D}{g}} f f^{\frac{D}{g}} = f^{\frac{D}{g}}$ and $g^{\frac{D}{f}} g g^{\frac{D}{f}} = g^{\frac{D}{f}}$;
\item $ f f^{\frac{D}{g}} =  g^{\frac{D}{f}} g$ and $ f^{\frac{D}{g}} f =  g g^{\frac{D}{f}}$. 
\end{enumerate}
 The map $f^{\frac{D}{g}}: B \to A$ is called the Drazin inverse of $f$ over $g$, while the map $g^{\frac{D}{f}}: A \to B$ is called the Drazin inverse of $g$ over $f$. The \textbf{Drazin index} of $(f,g)$ is the smallest $k \in \mathbb{N}$ such that {\em \textbf{[DV.1]}} holds, which we denote by $\mathsf{ind}(f,g)=k$.
\end{definition}

There is a slight redundancy in the above definition in that, thanks to \textbf{[DV.3]}, we do not need to assume both equalities for \textbf{[DV.1]}. 

\begin{lemma}\label{lemma:DV1} In a category $\mathbb{X}$, if for $\xymatrixcolsep{1.5pc}\xymatrix{(f,g): A  \ar@<0.5ex>[r]  & B  \ar@<0.5ex>[l] }$ there is a $\xymatrixcolsep{1.5pc}\xymatrix{(f^{\frac{D}{g}},g^{\frac{D}{f}}): B  \ar@<0.5ex>[r]  & A  \ar@<0.5ex>[l] }$ which satisfies {\bf [DV.3]}, then the following are equivalent: 
\begin{enumerate}[(i)]
\item {\bf [DV.1]} holds;
\item There is a $p \in \mathbb{N}$ such that $(fg)^p f f^{\frac{D}{g}} = (fg)^p$;
\item There is a $q \in \mathbb{N}$ such that $(gf)^q g g^{\frac{D}{g}} = (gf)^q$. 
\end{enumerate}
\end{lemma}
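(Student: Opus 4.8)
The plan is to prove the two nontrivial implications (ii)$\,\Rightarrow\,$(iii) and (iii)$\,\Rightarrow\,$(ii) by direct manipulation, and then assemble the full equivalence with \textbf{[DV.1]} via a simple index-shifting observation. The two core implications are dual under the symmetry $f \leftrightarrow g$, $f^{\frac{D}{g}} \leftrightarrow g^{\frac{D}{f}}$: under this swap \textbf{[DV.3]} is invariant (its two equations are merely interchanged), while (ii) becomes (iii). So I would carry out (ii)$\,\Rightarrow\,$(iii) in full and invoke symmetry for the reverse direction.

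For (ii)$\,\Rightarrow\,$(iii), the key is to combine the interchange identities $g(fg)^n = (gf)^n g$ and $(gf)^n gf = (gf)^{n+1}$ (both immediate from associativity) with \textbf{[DV.3]}. Starting from $(fg)^p f f^{\frac{D}{g}} = (fg)^p$, I would left-multiply by $g$ and apply $g(fg)^p = (gf)^p g$ to both sides, obtaining $(gf)^{p+1} f^{\frac{D}{g}} = (gf)^p g$. Right-multiplying this by $f$ and collapsing $(gf)^p g f = (gf)^{p+1}$ on the right gives $(gf)^{p+1} f^{\frac{D}{g}} f = (gf)^{p+1}$. Finally, the second equation of \textbf{[DV.3]}, namely $f^{\frac{D}{g}} f = g g^{\frac{D}{f}}$, converts the left-hand side into $(gf)^{p+1} g g^{\frac{D}{f}}$, yielding exactly (iii) with $q = p+1$. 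The reverse implication (iii)$\,\Rightarrow\,$(ii) then follows verbatim after applying the symmetry above, producing (ii) with $p = q+1$.

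To reconcile these with \textbf{[DV.1]}, I would first record the index-shifting fact that each of the equations in (ii) and (iii) is stable under increasing the exponent: left-multiplying $(fg)^p f f^{\frac{D}{g}} = (fg)^p$ by $fg$ reproduces the same equation at $p+1$, and dually for (iii). Now \textbf{[DV.1]}$\,\Rightarrow\,$(ii) and \textbf{[DV.1]}$\,\Rightarrow\,$(iii) hold trivially by taking $p = q = k$, so only (ii)$\,\wedge\,$(iii)$\,\Rightarrow\,$\textbf{[DV.1]} remains. Given (ii) at index $p$, the computation above supplies (iii) at index $p+1$; by index-shifting, the (ii)-equation also holds at $p+1$, so both equations of \textbf{[DV.1]} hold at the common value $k = p+1$. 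Since (ii)$\,\Leftrightarrow\,$(iii), either hypothesis alone suffices.

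I do not expect a serious obstacle here: the genuine content is spotting the ``left-multiply by $g$, then right-multiply by $f$'' manoeuvre, after which every step is forced by associativity and \textbf{[DV.3]}. The only subtlety worth flagging is that the argument naturally outputs (iii) at index $p+1$ rather than $p$, so the final assembly must lean on the index-shifting remark to fold the two exponents into a single Drazin index $k$; this is exactly why \textbf{[DV.1]} is phrased with one shared $k$ for both equations.
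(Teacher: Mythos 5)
Your proof is correct and follows essentially the same route as the paper: the central move in both is to pass from $(fg)^p f f^{\frac{D}{g}} = (fg)^p$ to $(gf)^{p+1} g g^{\frac{D}{f}} = (gf)^{p+1}$ by absorbing a $g$ on the left and an $f$ on the right and trading $f^{\frac{D}{g}} f$ for $g g^{\frac{D}{f}}$ via \textbf{[DV.3]}, at the cost of one unit of index, with the remaining directions handled by symmetry and the trivial index-shift. The only organizational difference is that the paper closes the cycle $(i)\Rightarrow(ii)\Rightarrow(iii)\Rightarrow(i)$ directly, whereas you prove $(ii)\Leftrightarrow(iii)$ and then assemble \textbf{[DV.1]}; the content is the same.
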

\begin{proof} We prove that $(i) \Rightarrow (ii) \Rightarrow (iii) \Rightarrow (i)$. Clearly, we already have that $(i) \Rightarrow (ii)$. For $(ii) \Rightarrow (iii)$, suppose that there is a $p \in \mathbb{N}$ such that $(fg)^p f f^{\frac{D}{g}} = (fg)^p$. Then using \textbf{[DV.3]} and that $(gf)^{p+1} = g(fg)^p f$, we compute: 
\[ (gf)^{p+1} g g^{\frac{D}{g}} = g(fg)^p f g g^{\frac{D}{f}} = g(fg)^p f f^{\frac{D}{g}} f = g(fg)^p f  = (gf)^{p+1} \]
So $(iii)$ holds. For $(iii) \Rightarrow (i)$, suppose that there is a $q \in \mathbb{N}$ such that $(gf)^q g g^{\frac{D}{g}} = (gf)^q$. Now that we also have that $(gf)^{q+1} g g^{\frac{D}{g}} = (gf)^{q+1}$. On the other hand, using \textbf{[DV.3]} and that $(fg)^{q+1} = f(gf)^q g$, we compute: 
\[ (fg)^{q+1} f f^{\frac{D}{g}} = f(gf)^q g f f^{\frac{D}{g}} = f(gf)^q g g^{\frac{D}{f}} g = f (gf)^q g = (gf)^{q+1} \]
So \textbf{[DV.1]} holds. So we conclude that $(i) \Leftrightarrow (ii) \Leftrightarrow (iii)$ as desired. 
\end{proof}

Drazin inverses of opposing pairs share many of the same properties as Drazin inverses of endomorphisms, such as the fact that they are unique (if they exist) and also absolute. Before establishing these properties, we will show how Drazin inverses of opposing pairs are linked to Drazin inverses of endomorphisms. 

%%%%%%%%%%%%%%%%%%%%%%%%%%%%%%%%%%%%%%%%%%%%%%%%%%%%%%%%%%%%%%%%%%%%%%%%%%%%%%%%%%%%%%%%%%%%%%%%%%%%%%%
\subsection{Drazin Opposing Pairs} From an opposing pair $\xymatrixcolsep{1.5pc}\xymatrix{(f,g): A  \ar@<0.5ex>[r]  & B  \ar@<0.5ex>[l] }$, we get two endomorphisms $fg: A \to A$ and $gf: B \to B$. Our objective is to show that $(f,g)$ has a Drazin inverse if and only if the composites $fg$ and $gf$ both have Drazin inverses. To do so, we first show that, surprisingly, if one of these composites is Drazin, then so is the other one: this is sometimes known as the Cline's formula \cite{Cline1965}. 

\begin{lemma}\label{lemma:Drazin-fg-gf} For an opposing pair $\xymatrixcolsep{1.5pc}\xymatrix{(f,g): A  \ar@<0.5ex>[r]  & B  \ar@<0.5ex>[l] }$, $fg: A \to A$ is Drazin if and only if $gf: B \to B$ is Drazin. 
\end{lemma}
\begin{proof} For ($\Rightarrow$), Suppose that $fg$ is Drazin. We show that $(gf)^D \colon = g (fg)^D (fg)^D f$ satisfies the three axioms of a Drazin inverse. 
     \begin{enumerate}[{\bf [D.1]}]
\item Suppose that $\mathsf{ind}(fg)=k$. Then using Lemma \ref{lemma:Drazin-basic}.(ii) for $fg$, we compute: 
\begin{gather*}
    (gf)^{k+2} (gf)^D = g (fg)^{k+1} f (gf)^D = g (fg)^{k+1} f g (fg)^D (fg)^D f \\
    = g (fg)^{k+2} ((fg)^D)^2 f = g (fg)^k f = (gf)^{k+1}
    \end{gather*}
    \item Using \textbf{[D.2]} and \textbf{[D.3]} for $fg$, we compute that: 
\begin{gather*} (gf)^D gf (gf)^D = g (fg)^D (fg)^D f gf g (fg)^D (fg)^D f = \\
g (fg)^D f g (fg)^D  (fg)^D f g (fg)^D f = g (fg)^D (fg)^D f = (gf)^D     \end{gather*}
    \item Using \textbf{[D.3]} for $fg$, we compute that: 
  \begin{gather*} (gf)^D gf = g (fg)^D (fg)^D f g f = g (fg)^D f g (fg)^D  f  = g f g (fg)^D  (fg)^D  f = gf (gf)^D  \end{gather*}
\end{enumerate}
So we conclude that $gf$ is Drazin. 

For ($\Leftarrow$), suppose that $gf$ is Drazin. By similar calculations as above, we can show that $(fg)^D \colon = f (gf)^D (gf)^D g$ satisfies the three axioms of a Drazin inverse. So we conclude that $fg$ is Drazin. 
\end{proof}

This suggests the following definition: 

\begin{definition} A \textbf{Drazin opposing pair} is an opposing pair $\xymatrixcolsep{1.5pc}\xymatrix{(f,g): A  \ar@<0.5ex>[r]  & B  \ar@<0.5ex>[l] }$ such that either $fg$ or $gf$ is Drazin (so, by Lemma \ref{lemma:Drazin-fg-gf},  both are Drazin). 
\end{definition}

Below we will show that being a Drazin opposing pair is equivalent to having a Drazin inverse for the pair. In order to prove this, it will be useful to establish the following identities for Drazin opposing pairs: 

\begin{lemma}\label{lemma:Drazin-fg-gf-2} If $\xymatrixcolsep{1.5pc}\xymatrix{(f,g): A  \ar@<0.5ex>[r]  & B  \ar@<0.5ex>[l] }$ is a Drazin opposing pair, then we have that $(fg)^D f = f (gf)^D$ and $(gf)^D g = g (fg)^D$. 
\end{lemma}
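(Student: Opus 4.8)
The plan is to exploit the explicit Cline-type formulas already recorded in Lemma \ref{lemma:Drazin-fg-gf}. Since $(f,g)$ is a Drazin opposing pair, both $fg$ and $gf$ are Drazin, and by uniqueness of Drazin inverses (Proposition \ref{prop:unique}) the formulas from that lemma must compute the genuine Drazin inverses: $(gf)^D = g (fg)^D (fg)^D f$ and, symmetrically, $(fg)^D = f (gf)^D (gf)^D g$. These two expressions are the only inputs I expect to need.

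The one auxiliary fact I would isolate first is the absorption identity $e_{fg} (fg)^D = (fg)^D$, where $e_{fg} := fg (fg)^D$ is the induced idempotent (Lemma \ref{lemma:e_x}). This is immediate from Lemma \ref{lemma:Drazin-basic}.(\ref{lemma:Drazin-basic.2}) applied to $fg$ with $n=1$, which gives $fg \, ((fg)^D)^2 = (fg)^D$, i.e. $e_{fg}(fg)^D = (fg)^D$; the analogous identity $e_{gf}(gf)^D = (gf)^D$ holds for $gf$ by the same reasoning.

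With this in hand the first identity follows by a one-line substitution: starting from the formula for $(gf)^D$,
\[
f (gf)^D = f \, g (fg)^D (fg)^D f = (fg)(fg)^D (fg)^D f = e_{fg} (fg)^D f = (fg)^D f,
\]
where the last step uses the absorption identity. The second identity, $(gf)^D g = g (fg)^D$, is obtained by the mirror-image computation
\[
g (fg)^D = g \, f (gf)^D (gf)^D g = (gf)(gf)^D (gf)^D g = e_{gf} (gf)^D g = (gf)^D g.
\]
I do not anticipate a genuine obstacle here, as everything reduces to regrouping composites and a single absorption step; the only point requiring a word of care is the appeal to uniqueness to guarantee that the Cline formulas of Lemma \ref{lemma:Drazin-fg-gf} really do deliver $(gf)^D$ and $(fg)^D$ on the nose, rather than merely some Drazin inverse.
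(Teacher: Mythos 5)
Your proof is correct and follows essentially the same route as the paper: both arguments substitute one of the Cline formulas from Lemma \ref{lemma:Drazin-fg-gf} and simplify using basic Drazin identities (the paper expands $(fg)^D = f(gf)^D(gf)^D g$ inside $(fg)^D f$ and applies \textbf{[D.2]} and \textbf{[D.3]} for $gf$, whereas you expand $(gf)^D = g(fg)^D(fg)^D f$ inside $f(gf)^D$ and apply the absorption identity $fg((fg)^D)^2 = (fg)^D$ from Lemma \ref{lemma:Drazin-basic}, which amounts to the same two axioms). Your remark about uniqueness guaranteeing that the Cline formulas give \emph{the} Drazin inverses is a fair point of care, though it is already built into the statement of Lemma \ref{lemma:Drazin-fg-gf} together with Proposition \ref{prop:unique}.
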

\begin{proof} Suppose that $(f,g)$ is a Drazin opposing pair. Then by Lemma \ref{lemma:Drazin-fg-gf} we have that $(gf)^D = g (fg)^D (fg)^D f$ and $(fg)^D = f (gf)^D (gf)^D g$. Then using \textbf{[D.2]} and \textbf{[D.3]} for $gf$, we compute that:
\[ (fg)^D f = f (gf)^D (gf)^D g f = f (gf)^D g f (gf)^D = f (gf)^D \]
So $(fg)^D f = f (gf)^D$. Similarly, we can also show that $(gf)^D g = g (fg)^D$. 
\end{proof}

\begin{theorem}\label{Drazin-equiv-inverse} $\xymatrixcolsep{1.5pc}\xymatrix{(f,g): A  \ar@<0.5ex>[r]  & B  \ar@<0.5ex>[l] }$ has a Drazin inverse if and only if $(f,g)$ is a Drazin opposing pair.
\end{theorem}
\begin{proof} For ($\Rightarrow$), suppose that $(f,g)$ has a Drazin inverse $(f^{\frac{D}{g}},g^{\frac{D}{f}})$ with $\mathsf{ind}(f,g)=k$. We first show that $(fg)^D \colon = g^{\frac{D}{f}} f^{\frac{D}{g}}$ satisfies the three axioms of a Drazin inverse. 
     \begin{enumerate}[{\bf [D.1]}]
\item Note that $(fg)^{k+1}=f (gf)^k g$ and also that $f (gf)^{k}= (fg)^{k} f$. Then using \textbf{[DV.1]}, we compute that: 
\begin{gather*}
    (fg)^{k+1} (fg)^D \!=\! f (gf)^k g (fg)^D \!=\! f (gf)^{k} g g^{\frac{D}{f}} f^{\frac{D}{g}} = f (gf)^{k}  f^{\frac{D}{g}} = (fg)^{k} f f^{\frac{D}{g}} \!=\! (fg)^{k}
    \end{gather*}
    \item Using \textbf{[DV.2]} and \textbf{[DV.3]}, we compute that: 
    \begin{gather*}
        (fg)^D fg (fg)^D \!=\! g^{\frac{D}{f}} f^{\frac{D}{g}} fg g^{\frac{D}{f}} f^{\frac{D}{g}} = g^{\frac{D}{f}} g g^{\frac{D}{f}} g g^{\frac{D}{f}} f^{\frac{D}{g}}  = g^{\frac{D}{f}} g g^{\frac{D}{f}} f^{\frac{D}{g}} = g^{\frac{D}{f}} f^{\frac{D}{g}} = (fg)^D 
    \end{gather*}
    \item Using \textbf{[D.3]} for $fg$, we compute that: 
  \begin{gather*}
        (fg)^D fg  \!=\! g^{\frac{D}{f}} f^{\frac{D}{g}} fg = g^{\frac{D}{f}} g g^{\frac{D}{f}} g = g^{\frac{D}{f}} g = f f^{\frac{D}{g}} = f f^{\frac{D}{g}} f f^{\frac{D}{g}} = f g g^{\frac{D}{f}} f^{\frac{D}{g}} = fg (fg)^D 
    \end{gather*}
\end{enumerate}
So we conclude that $fg$ is Drazin with $\mathsf{ind}(fg) \leq \mathsf{ind}(f,g)$. Then by Lemma \ref{lemma:Drazin-fg-gf}, we also get that $gf$ is Drazin, and it is easy to check that $\mathsf{ind}(gf) \leq \mathsf{ind}(f,g)$ as well. Now using the formula from Lemma \ref{lemma:Drazin-fg-gf}, we can also check using \textbf{[DV.2]} and \textbf{[DV.3]} that: 
\begin{gather*}
    (gf)^D = g (fg)^D (fg)^D f = g g^{\frac{D}{f}} f^{\frac{D}{g}} g^{\frac{D}{f}} g g^{\frac{D}{f}}= f^{\frac{D}{g}} g^{\frac{D}{f}}
\end{gather*}
So we conclude that $(f,g)$ is a Drazin opposing pair.  

For ($\Leftarrow$), Suppose that $(f,g)$ is a Drazin opposing pair. Note by Lemma \ref{lemma:Drazin-fg-gf-2} that setting $f^{\frac{D}{g}} \colon = g (fg)^D = (gf)^D g$ and $g^{\frac{D}{f}} \colon = f (gf)^D = (fg)^D f$ is well-defined. So now we must show that $(f^{\frac{D}{g}},g^{\frac{D}{f}})$ satisfies the three axioms of a Drazin inverse. We start with the third one: 
 \begin{enumerate}[{\bf [DV.1]}]
 \setcounter{enumi}{2}
\item Using \textbf{[D.3]} for $fg$, we compute that: 
\begin{gather*}
    f f^{\frac{D}{g}} = f g (fg)^D = (fg)^D fg = g^{\frac{D}{f}} g
\end{gather*}
So $ f f^{\frac{D}{g}} =  g^{\frac{D}{f}} g$, and similarly we can show that $f^{\frac{D}{g}} f =  g g^{\frac{D}{f}}$. 
\end{enumerate}
         \begin{enumerate}[{\bf [DV.1]}]
\item Let $k = \mathsf{max}(\mathsf{ind}(fg), \mathsf{ind}(gf))$. Then by Lemma \ref{lemma:Drazin-basic}.(\ref{lemma:Drazin-basic.1}) for $fg$, we compute that: 
\begin{gather*}
    (fg)^k f f^{\frac{D}{g}} = (fg)^k f g (fg)^D = (fg)^{k+1} (fg)^D = (fg)^k 
\end{gather*}
As such, since \textbf{[DV.3]} holds, by Lemma \ref{lemma:DV1} we then get that \textbf{[DV.1]} holds. 
\item Using \textbf{[D.2]} for $gf$, we compute that: 
\begin{gather*}
  f^{\frac{D}{g}}  f f^{\frac{D}{g}} =  (gf)^D g f (gf)^D g = (gf)^D g = f^{\frac{D}{g}}
\end{gather*}
So $ f^{\frac{D}{g}}  f f^{\frac{D}{g}} = f^{\frac{D}{g}}$, and similarly we can show that $g^{\frac{D}{f}}  g g^{\frac{D}{f}}  =g$ as well. 
\end{enumerate}
So we conclude that $(f,g)$ has a Drazin inverse and $\mathsf{ind}(f,g) \!\leq\! \mathsf{max}(\mathsf{ind}(fg), \mathsf{ind}(gf))$. \end{proof}

Therefore, from now on we will simply say that an opposing pair $\xymatrixcolsep{1.5pc}\xymatrix{(f,g): A  \ar@<0.5ex>[r]  & B  \ar@<0.5ex>[l] }$ is \textbf{Drazin} if $(f,g)$ is a Drazin opposing pair or equivalently if $(f,g)$ has a Drazin inverse.

%%%%%%%%%%%%%%%%%%%%%%%%%%%%%%%%%%%%%%%%%%%%%%%%%%%%%%%%%%%%%%%%%%%%%%%%%%%%%%%%%%%%%%%%%%%%%%%%%%%%%%%
\subsection{Properties of Drazin Opposing Pairs} Now that we have related Drazin inverses of opposing pairs to usual Drazin inverses of endomorphisms, we can use this to access properties of Drazin opposing pairs which are analogues for those of Drazin endomorphisms. We begin with the all-important property of uniqueness: 

\begin{proposition}\label{Drazin-opposing-unique} If $\xymatrixcolsep{1.5pc}\xymatrix{(f,g): A  \ar@<0.5ex>[r]  & B  \ar@<0.5ex>[l] }$ has a Drazin inverse, then it is unique. 
\end{proposition}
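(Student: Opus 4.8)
The plan is to reduce uniqueness of the Drazin inverse of an opposing pair to the already-established uniqueness of Drazin inverses of endomorphisms (Proposition \ref{prop:unique}). The key observation is Theorem \ref{Drazin-equiv-inverse}, which shows that whenever $(f,g)$ has a Drazin inverse $(g^{\frac{D}{f}},f^{\frac{D}{g}})$, the endomorphism $fg: A \to A$ is Drazin with $(fg)^D = g^{\frac{D}{f}} f^{\frac{D}{g}}$, and likewise $gf: B \to B$ is Drazin with $(gf)^D = f^{\frac{D}{g}} g^{\frac{D}{f}}$. So the composites $fg$ and $gf$ each have a \emph{unique} Drazin inverse, regardless of which Drazin inverse of the pair we started from. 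The strategy is then to express the components $g^{\frac{D}{f}}$ and $f^{\frac{D}{g}}$ of any Drazin inverse of $(f,g)$ purely in terms of the data $f$, $g$, $(fg)^D$, and $(gf)^D$, which are forced.

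First I would suppose that $(f,g)$ admits two Drazin inverses, say $(g_1,f_1)$ and $(g_2,f_2)$ (writing $g_i$ for the candidate $g^{\frac{D}{f}}$ and $f_i$ for the candidate $f^{\frac{D}{g}}$). By the forward direction of Theorem \ref{Drazin-equiv-inverse}, both give $(fg)^D = g_1 f_1 = g_2 f_2$ and $(gf)^D = f_1 g_1 = f_2 g_2$; since these are genuine Drazin inverses of the fixed endomorphisms $fg$ and $gf$, uniqueness (Proposition \ref{prop:unique}) is already built in here, so $g_1 f_1 = g_2 f_2$ and $f_1 g_1 = f_2 g_2$. The crucial second step is to recover each component canonically: the backward direction of Theorem \ref{Drazin-equiv-inverse} shows that \emph{any} Drazin inverse of $(f,g)$ must satisfy $f_i = g (fg)^D = (gf)^D g$ and $g_i = f (gf)^D = (fg)^D f$. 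I would verify this recovery identity directly from the axioms \textbf{[DV.2]} and \textbf{[DV.3]}: for instance, using \textbf{[DV.3]} we have $f_i f = g g_i$, whence $f_i = f_i f f_i = g g_i f_i = g (fg)^D$, and symmetrically $g_i = (fg)^D f$.

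Once each component is pinned to an expression involving only $f$, $g$, and the (unique) endomorphism Drazin inverses, the conclusion is immediate: $f_1 = g (fg)^D = f_2$ and $g_1 = (fg)^D f = g_2$, so the two Drazin inverses of $(f,g)$ coincide. I expect the main obstacle to be the component-recovery step, namely proving cleanly that every Drazin inverse of $(f,g)$ necessarily has its components given by $f^{\frac{D}{g}} = g(fg)^D$ and $g^{\frac{D}{f}} = (fg)^D f$ using only the opposing-pair axioms \textbf{[DV.1]}--\textbf{[DV.3]}. This requires the right interleaving of \textbf{[DV.2]} (the pseudo-inverse identities $f^{\frac{D}{g}} f f^{\frac{D}{g}} = f^{\frac{D}{g}}$ and its partner) with \textbf{[DV.3]} (the commutation $f f^{\frac{D}{g}} = g^{\frac{D}{f}} g$ and $f^{\frac{D}{g}} f = g g^{\frac{D}{f}}$), together with the fact that $g^{\frac{D}{f}} f^{\frac{D}{g}}$ already equals the forced $(fg)^D$. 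Everything else is a short substitution, so modulo that identity the uniqueness proof is essentially a one-line corollary of Theorem \ref{Drazin-equiv-inverse} and Proposition \ref{prop:unique}.
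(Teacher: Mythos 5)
Your proposal is correct and follows essentially the same route as the paper: both reduce to the uniqueness of the Drazin inverses of the endomorphisms $fg$ and $gf$ via Theorem \ref{Drazin-equiv-inverse} and Proposition \ref{prop:unique}, and then pin down each component by a short \textbf{[DV.2]}/\textbf{[DV.3]} computation (the paper's chain $h = hgh = hkf = pqf = pgp = p$ is exactly your recovery identity $g^{\frac{D}{f}} = (fg)^D f$ applied to both candidates). Your explicit verification $f_i = f_i f f_i = g g_i f_i = g(fg)^D$ correctly closes the only potential gap, namely that the recovery formulas hold for \emph{every} Drazin inverse of the pair and not just the one constructed in the backward direction of Theorem \ref{Drazin-equiv-inverse}.
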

\begin{proof} Suppose that $(f,g)$ has two Drazin inverses $(h,k)$ and $(p,q)$. By Theorem \ref{Drazin-equiv-inverse}, $(f,g)$ is also Drazin, so $fg$ and $gf$ are both Drazin, and we may use $(h,k)$ and $(p,q)$ to build their Drazin inverses. Since Drazin inverses are unique, we must have that $qp = (fg)^D = kh$ and $pq = (gf)^D = hk$. Now using \textbf{[DV.2]} and \textbf{[DV.3]}, we compute: 
\begin{gather*}
k = k g k = k h f = qp f = q g q = q
\end{gather*}
So $k=q$, and similarly we can also show that $h=p$. So we conclude that Drazin inverses of opposing pairs are unique. 
\end{proof}

This allows us to talk of \emph{the} Drazin inverse of  $\xymatrixcolsep{1.5pc}\xymatrix{(f,g): A  \ar@<0.5ex>[r]  & B  \ar@<0.5ex>[l] }$
and to write it as: $\xymatrixcolsep{1.5pc}\xymatrix{(f,g)^D \colon = (f^{\frac{D}{g}},g^{\frac{D}{f}}): B  \ar@<0.5ex>[r]  & A  \ar@<0.5ex>[l] }$. Moreover, this implies that the formulas of the Drazin inverses in the proof of Theorem \ref{Drazin-equiv-inverse} are canonical, and the Drazin index of the opposing pair is equal to the max of the Drazin indexes of the composites. 

\begin{corollary}\label{cor:Drazinpairformulas} If $\xymatrixcolsep{1.5pc}\xymatrix{(f,g): A  \ar@<0.5ex>[r]  & B  \ar@<0.5ex>[l] }$ is Drazin then:
\begin{enumerate}[(i)]
\item $(fg)^D = g^{\frac{D}{f}} f^{\frac{D}{g}}$ and $(gf)^D = f^{\frac{D}{g}}g^{\frac{D}{f}}$;
\item $f^{\frac{D}{g}} = g (fg)^D = (gf)^D g$ and $g^{\frac{D}{f}} = f (gf)^D = (fg)^D f$
\end{enumerate} 
Moreover, $\mathsf{ind}(f,g) = \mathsf{max}(\mathsf{ind}(fg), \mathsf{ind}(gf))$. 
\end{corollary}

It is also straightforward to see that if an opposing pair is Drazin, then its symmetric pair will also be Drazin: 

\begin{lemma}\label{lemma:sym} $\xymatrixcolsep{1.5pc}\xymatrix{(f,g): A  \ar@<0.5ex>[r]  & B  \ar@<0.5ex>[l] }$ is Drazin if and only if $\xymatrixcolsep{1.5pc}\xymatrix{(g,f): B  \ar@<0.5ex>[r]  & A  \ar@<0.5ex>[l] }$ is Drazin. 
\end{lemma}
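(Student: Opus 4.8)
The plan is to reduce the claim to the characterization of Drazin opposing pairs already in hand, namely that being Drazin is a condition purely on the two round-trip composites. Recall that, by the definition of a Drazin opposing pair combined with Lemma \ref{lemma:Drazin-fg-gf}, the pair $(f,g)$ consisting of $f: A \to B$ and $g: B \to A$ is Drazin precisely when both composites $fg: A \to A$ and $gf: B \to B$ are Drazin endomorphisms (one being Drazin forces the other, via Cline's formula). So I would begin by unfolding the definition of ``is Drazin'' for an opposing pair into exactly this statement about $fg$ and $gf$.

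The key observation is then that the symmetric pair $(g,f)$, consisting of $g: B \to A$ and $f: A \to B$, has the very same two composites attached to it: its forward-then-back composite is $gf: B \to B$ and its back-then-forward composite is $fg: A \to A$. These are literally the same endomorphisms that govern whether $(f,g)$ is Drazin. Hence the defining condition ``$fg$ and $gf$ are both Drazin'' is identical for the two pairs, and the equivalence $(f,g)$ Drazin $\iff$ $(g,f)$ Drazin follows immediately, with no calculation.

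If a more explicit formulation is desired, I would record that the Drazin inverse of the symmetric pair is obtained simply by swapping the two components of $(f,g)^D$: writing $(f,g)^D = (g^{\frac{D}{f}}, f^{\frac{D}{g}})$, the pair $(f^{\frac{D}{g}}, g^{\frac{D}{f}})$ has $f^{\frac{D}{g}}: B \to A$ and $g^{\frac{D}{f}}: A \to B$, which are exactly the types required of a Drazin inverse of $(g,f)$ by Definition \ref{def:opposing-pair-drazin-inverse}; and the axioms \textbf{[DV.1]}--\textbf{[DV.3]} for $(g,f)$ are precisely those for $(f,g)$ with the two equalities in each clause interchanged. The only point to watch -- more bookkeeping than obstacle -- is to confirm that the orientations line up, i.e.\ that $g^{\frac{D}{f}}$ and $f^{\frac{D}{g}}$ assemble into an opposing pair in the order demanded by the definition and that the composites coincide on the nose. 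There is no analytic content here; the lemma is a direct consequence of the symmetry already built into the notion of a Drazin opposing pair, and so I expect the proof to be essentially a single sentence invoking Theorem \ref{Drazin-equiv-inverse} and Lemma \ref{lemma:Drazin-fg-gf}.
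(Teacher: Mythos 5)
Your proposal is correct and matches the paper's proof, which likewise observes that a Drazin opposing pair is defined purely by the condition that $fg$ (equivalently $gf$) is Drazin, a condition manifestly symmetric in $f$ and $g$. The extra remarks about swapping the components of $(f,g)^D$ are a harmless elaboration the paper does not bother with.
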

\begin{proof} By definition, $(f,g)$ is Drazin if and only if $fg$ is Drazin  or $gf$ is Drazin if and only if $(g,f)$ is Drazin. 
\end{proof}

We can also recover the Drazin inverses of an endomorphism from the Drazin inverse of the opposing pair consisting of the endomorphism paired with the identity.

\begin{lemma} $x: A \to A$ is Drazin if and only if $\xymatrixcolsep{1.5pc}\xymatrix{(x,1_A): A  \ar@<0.5ex>[r]  & A  \ar@<0.5ex>[l] }$ is Drazin, or equivalently if $\xymatrixcolsep{1.5pc}\xymatrix{(1_A,x): A  \ar@<0.5ex>[r]  & A  \ar@<0.5ex>[l] }$ is Drazin. 
\end{lemma}
\begin{proof} For ($\Rightarrow$), Suppose that $x$ is Drazin. Then trivially $x 1_A = x = 1_A x$ is Drazin. So $(x,1_A)$ (or equivalently $(1_A, x)$) is Drazin. Applying Corollary \ref{cor:Drazinpairformulas}, we get that $ x^{\frac{D}{1_A}} = 1_A (1_A x)^D = x^D$ and $1^{\frac{D}{x}} = x (1_A x)^D = x x^D$. For ($\Leftarrow$), suppose that $(x,1_A)$ is Drazin. So $x 1_A= x = 1_A x$ is Drazin. Applying Corollary \ref{cor:Drazinpairformulas}, we compute that: $x^D = (1_A x)^D = x^{\frac{D}{1_A}} 1^{\frac{D}{x}}$ and $x^D = (x1_A)^D = 1^{\frac{D}{x}} x^{\frac{D}{1_A}}$. 
\end{proof}

Thus, we have yet another equivalent characterization of when a category is Drazin. 

\begin{corollary} A category is Drazin if and only if every opposing pair in it is Drazin. 
\end{corollary}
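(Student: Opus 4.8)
The plan is to reduce both directions to two results already in hand: Theorem~\ref{Drazin-equiv-inverse}, which establishes that an opposing pair $(f,g)$ is Drazin precisely when the composite endomorphism $fg$ (equivalently $gf$) is Drazin, and the immediately preceding lemma, which says that an endomorphism $x: A \to A$ is Drazin precisely when the opposing pair $(x, 1_A)$ is Drazin. Each direction of the corollary is then a one-line application of one of these equivalences.

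For the forward implication, I would assume $\mathbb{X}$ is Drazin and take an arbitrary opposing pair $(f,g)$ with $f: A \to B$ and $g: B \to A$. The composite $fg: A \to A$ is then an endomorphism of $A$, so it has a Drazin inverse by the hypothesis that $\mathbb{X}$ is Drazin. By the definition of a Drazin opposing pair this already makes $(f,g)$ a Drazin opposing pair, and Theorem~\ref{Drazin-equiv-inverse} confirms that $(f,g)$ accordingly has a Drazin inverse, i.e.\ is Drazin. As $(f,g)$ was arbitrary, every opposing pair in $\mathbb{X}$ is Drazin.

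For the converse, I would assume every opposing pair is Drazin and take an arbitrary endomorphism $x: A \to A$. Forming the opposing pair $(x, 1_A)$ with both maps of type $A \to A$, the hypothesis gives that $(x, 1_A)$ is Drazin, and the preceding lemma (the one relating $x$ being Drazin to $(x,1_A)$ being Drazin) then yields that $x$ itself is Drazin. Since $x$ was arbitrary, $\mathbb{X}$ is Drazin.

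I do not anticipate a genuine obstacle here: both directions are immediate reductions to the previously proved equivalences, so the work has effectively already been done in Theorem~\ref{Drazin-equiv-inverse} and the preceding lemma. The only point requiring a little care is the bookkeeping about which test object witnesses each direction — the composite $fg$ serves as the witnessing endomorphism going forward, while the pair $(x,1_A)$ serves as the witnessing opposing pair coming back.
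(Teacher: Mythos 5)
Your proposal is correct and matches the paper's own argument: the forward direction observes that the composites $fg$ and $gf$ are Drazin endomorphisms so the pair is Drazin, and the converse applies the hypothesis to the pair $(x,1_A)$ and invokes the preceding lemma. No gaps.
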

\begin{proof}  If $\mathbb{X}$ is Drazin, then for an opposing pair $\xymatrixcolsep{1.5pc}\xymatrix{(f,g): A  \ar@<0.5ex>[r]  & B  \ar@<0.5ex>[l] }$, $fg$ and $gf$ are both Drazin, so $(f,g)$ is Drazin. Conversely, if every opposing pair is Drazin, then by Lemma \ref{Drazin-equiv-inverse}, every endomorphism must be Drazin. 
\end{proof}

Drazin inverses of opposing pairs are also absolute: 

\begin{proposition}\label{Drazin-opposing-absolute} Let $\mathsf{F}: \mathbb{X} \to \mathbb{Y}$ be a functor and let $\xymatrixcolsep{1.5pc}\xymatrix{(f,g): A  \ar@<0.5ex>[r]  & B  \ar@<0.5ex>[l] }$ be Drazin in $\mathbb{X}$, then $\xymatrixcolsep{1.5pc}\xymatrix{(\mathsf{F}(f),\mathsf{F}(g)): \mathsf{F}(A)  \ar@<0.5ex>[r]  & \mathsf{F}(B)  \ar@<0.5ex>[l] }$ is Drazin.
\end{proposition}
\begin{proof} Suppose that $(f,g)$ is Drazin, which in particular means that $fg$ and $gf$ are Drazin. By Proposition \ref{Drazin-absolute}, Drazin inverses are absolute, so $\mathsf{F}(fg)= \mathsf{F}(f) \mathsf{F}(g)$ and $\mathsf{F}(gf)= \mathsf{F}(g) \mathsf{F}(f)$ are also Drazin. As such, we get that $(\mathsf{F}(f),\mathsf{F}(g))$ is Drazin. Moreover, applying Corollary \ref{cor:Drazinpairformulas}, we compute that: 
\begin{gather*}
    \mathsf{F}(f)^{\frac{D}{\mathsf{F}(g)}} = \mathsf{F}(g) \mathsf{F}(fg)^D =  \mathsf{F}(g) \mathsf{F}( (fg)^D ) = \mathsf{F}( g (fg)^D) = \mathsf{F}(\mathsf{F}(f^{\frac{D}{g}}))
\end{gather*}
So $\mathsf{F}(f)^{\frac{D}{\mathsf{F}(g)}} = \mathsf{F}(f^{\frac{D}{g}})$, and similarly we can also compute that $\mathsf{F}(g)^{\frac{D}{\mathsf{F}(f)}} = \mathsf{F}(g^{\frac{D}{f}})$. 
\end{proof}

Much as before, the Drazin inverse of an opposing pair commutes with everything with which the opposing pair commutes with: 

\begin{lemma} Suppose that $\xymatrixcolsep{1.5pc}\xymatrix{(f,g): A  \ar@<0.5ex>[r]  & B  \ar@<0.5ex>[l] }$ and $\xymatrixcolsep{1.5pc}\xymatrix{(v,w): A^\prime  \ar@<0.5ex>[r]  & B^\prime  \ar@<0.5ex>[l] }$ are Drazin. Then if the diagram on the left commutes, then the diagram on the right commutes: 
 \[\begin{array}[c]{c} 
  \xymatrixcolsep{5pc}\xymatrix{A \ar[d]_-f \ar[r]^-a & A' \ar[d]^-v \\ B \ar[d]_-g \ar[r]^-b	 & B' \ar[d]^-w  \\ A \ar[r]_-a & A' }   
\end{array}  \Rightarrow \begin{array}[c]{c}    \xymatrixcolsep{5pc}\xymatrix{A \ar[d]_-{g^{\frac{D}{f}}} \ar[r]^-a & A' \ar[d]^-{w^{\frac{D}{v}}} \\ B \ar[d]_-{f^{\frac{D}{g}}} \ar[r]^-b	 & B' \ar[d]^-{v^{\frac{D}{w}}}  \\ A \ar[r]_-a & A' } 
 \end{array}   \]
\end{lemma}

\begin{proof} If $(f,g)$ and $(v,w)$ are Drazin, we have that $fg$ and $vw$ are Drazin. So by Lemma \ref{Drazin-commuting}, the outer square of the diagram on the left implies that $(fg)^D a = a (vw)^D$. Now by using this and Corollary \ref{cor:Drazinpairformulas}, we compute that:  
\[ g^{\frac{D}{f}} b = (fg)^D f b = (fg)^Da v = a (vw)^D v = a w^{\frac{D}{v}}\]
So $g^{\frac{D}{f}} b = a w^{\frac{D}{v}}$, and similarly we can also show that $f^{\frac{D}{g}} a = b v^{\frac{D}{w}}$. So the diagram on the right commutes as desired. 
\end{proof}

%%%%%%%%%%%%%%%%%%%%%%%%%%%%%%%%%%%%%%%%%%%%%%%%%%%%%%%%%%%%%%%%%%%%%%%%%%%%%%%%%%%%%%%%%%%%%%%%%%%%%%%
\subsection{Opposing Pairs of Isomorphisms} We now show that an opposing pair has a Drazin index of zero precisely when it is a pair of opposing isomorphisms. The Drazin inverse of an opposing pair of isomorphisms is the opposing pair of their inverses. 

\begin{lemma} $\xymatrixcolsep{1.5pc}\xymatrix{(f,g): A  \ar@<0.5ex>[r]  & B  \ar@<0.5ex>[l] }$ is Drazin with $\mathsf{ind}(f,g) =0$ if and only if $f$ and $g$ are isomorphisms. 
\end{lemma}

\begin{proof} For $(\Rightarrow)$, Suppose that $(f,g)$ is Drazin with $\mathsf{ind}(f,g) =0$. Then \textbf{[DV.1]} tells us that $f f^{\frac{D}{g}} = 1_A$ and $g g^{\frac{D}{f}} = 1_B$. Now \textbf{[DV.3]} gives us that $f^{\frac{D}{g}} f = g g^{\frac{D}{f}} = 1_B$ and $g^{\frac{D}{f}} g= f f^{\frac{D}{g}} = 1_A$. So $f$ and $g$ are isomorphisms with $f^{-1} = f^{\frac{D}{g}}$ and $g^{\frac{D}{f}}= g^{-1}$. 

For $(\Leftarrow)$, Suppose that $f$ and $g$ are isomorphisms. Then the composites $fg$ and $gf$ are also isomorphisms. By Lemma \ref{lem:Drazin-0}, we get that $fg$ and $gf$ are Drazin with $\mathsf{ind}(fg) = 0$ and $\mathsf{ind}(gf) = 0$. As such, $(f,g)$ is Drazin. Applying Corollary \ref{cor:Drazinpairformulas} and Lemma \ref{lem:Drazin-0}, we compute that: 
\[ f^{\frac{D}{g}} = g (fg)^D = g (fg)^{-1} = g g^{-1} f^{-1} = f^{-1} \]
    So $f^{\frac{D}{g}} =  f^{-1}$, and similarly we also get that $g^{\frac{D}{f}} =  g^{-1}$. Moreover, we also have that $\mathsf{ind}(f,g) =0$. 
\end{proof}

%%%%%%%%%%%%%%%%%%%%%%%%%%%%%%%%%%%%%%%%%%%%%%%%%%%%%%%%%%%%%%%%%%%%%%%%%%%%%%%%%%%%%%%%%%%%%%%%%%%%%%%

\subsection{Drazin Opposing Pairs and Idempotents} In Section \ref{drazin-idempotents} we saw how to equivalently characterize having a Drazin inverse in terms of isomorphisms in the idempotent completion. The objective of this section is to do the same for the Drazin inverse of an opposing pair. We first recall from Lemma \ref{lemma:e_x} that every Drazin endomorphism $x$ induces an idempotent $e_x = xx^D = x^D x$. Then for a Drazin opposing pair $(f,g)$, we get two idempotents $e_{fg} = fg(fg)^D$ and $e_{gf} = gf(gf)^D = (gf)^D gf$. Moreover, we can also use the Drazin inverse to get isomorphisms between these two idempotents. 

\begin{lemma}\label{lemma:e_gf+e_fg-iso} If $\xymatrixcolsep{1.5pc}\xymatrix{(f,g): A  \ar@<0.5ex>[r]  & B  \ar@<0.5ex>[l] }$ is Drazin, then: 
\begin{enumerate}[(i)]
\item\label{e_fg+e_gf} $e_{fg} = f f^{\frac{D}{g}} = g^{\frac{D}{f}}g$ and $e_{gf} = f^{\frac{D}{g}} f = g g^{\frac{D}{f}}$ are induced idempotents;
\item $f f^{\frac{D}{g}} f: (A, e_{fg}) \to (B, e_{gf})$ is an isomorphism in $\mathsf{Split}(\mathbb{X})$ whose inverse is given by $f^{\frac{D}{g}}: (B, e_{gf}) \to (A, e_{fg})$;
\item $g g^{\frac{D}{f}} g: (B, e_{gf}) \to (A, e_{fg})$ is an isomorphism in $\mathsf{Split}(\mathbb{X})$ whose inverse is given by $g^{\frac{D}{f}}: (A, e_{gf}) \to  (B, e_{fg}) $. 
\end{enumerate}
\end{lemma}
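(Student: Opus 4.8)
The plan is to reduce the entire lemma to the intertwining machinery of Lemma \ref{lemma:intertwined-iso}, using the explicit formulas for $f^{\frac{D}{g}}$ and $g^{\frac{D}{f}}$ recorded in Theorem \ref{Drazin-equiv-inverse} together with the three axioms \textbf{[DV.1]}--\textbf{[DV.3]}.

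First I would establish part (i). Starting from the definition $e_{fg} = fg(fg)^D$ and the formula $(fg)^D = g^{\frac{D}{f}} f^{\frac{D}{g}}$ from Theorem \ref{Drazin-equiv-inverse}, I would substitute and collapse the resulting expression $f (g g^{\frac{D}{f}}) f^{\frac{D}{g}}$ using \textbf{[DV.3]} (to rewrite $g g^{\frac{D}{f}} = f^{\frac{D}{g}} f$) and then \textbf{[DV.2]} (to absorb the repeated factor $f^{\frac{D}{g}} f f^{\frac{D}{g}} = f^{\frac{D}{g}}$), arriving at $e_{fg} = f f^{\frac{D}{g}}$; the second equality $f f^{\frac{D}{g}} = g^{\frac{D}{f}} g$ is then simply \textbf{[DV.3]}. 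The computation for $e_{gf} = (gf)^D gf = f^{\frac{D}{g}} f = g g^{\frac{D}{f}}$ is entirely symmetric, starting from $(gf)^D = f^{\frac{D}{g}} g^{\frac{D}{f}}$.

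With part (i) in hand, the key observation for (ii) is that $f$ is an $(e_{fg}, e_{gf})$-intertwined map whose $(e_{fg}, e_{gf})$-intertwined inverse is exactly $f^{\frac{D}{g}}$. To see this I would verify the four required identities: the intertwining $e_{fg} f = f f^{\frac{D}{g}} f = f e_{gf}$ is immediate from (i); the two conditions $f f^{\frac{D}{g}} = e_{fg}$ and $f^{\frac{D}{g}} f = e_{gf}$ are precisely the content of (i); and $e_{gf} f^{\frac{D}{g}} e_{fg} = f^{\frac{D}{g}}$ follows from \textbf{[DV.2]} applied twice. Then Lemma \ref{lemma:intertwined-iso}.(\ref{lemma:intertwined-iso2}) yields that $f e_{gf} = f f^{\frac{D}{g}} f$ is an isomorphism $(A, e_{fg}) \to (B, e_{gf})$ in $\mathsf{Split}(\mathbb{X})$ with inverse $f^{\frac{D}{g}}$, which is exactly (ii). Part (iii) is the mirror image: one checks that $g$ is $(e_{gf}, e_{fg})$-intertwined with intertwined inverse $g^{\frac{D}{f}}$ and applies the same lemma; equivalently, one may invoke the symmetry Lemma \ref{lemma:sym} to swap the roles of $f$ and $g$ and deduce (iii) from (ii).

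I expect the only genuine obstacle to be bookkeeping: keeping straight which induced idempotent lives on $A$ versus $B$ and which direction each intertwined inverse points, since $fg$ and $gf$ sit on different objects and the axioms \textbf{[DV.2]}, \textbf{[DV.3]} each come as a symmetric pair. Once the oriented identities of part (i) are pinned down, every subsequent verification is a one-line application of \textbf{[DV.2]}, \textbf{[DV.3]}, and Lemma \ref{lemma:intertwined-iso}, with no conceptually hard step remaining.
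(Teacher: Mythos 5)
Your proposal is correct, and its computations are essentially those of the paper; the one genuine difference is how you finish parts (ii) and (iii). The paper verifies directly that $f f^{\frac{D}{g}} f$ and $f^{\frac{D}{g}}$ are maps in $\mathsf{Split}(\mathbb{X})$ and that their two composites equal $e_{fg}$ and $e_{gf}$, whereas you route the conclusion through the intertwining machinery: having checked that $f$ is $(e_{fg},e_{gf})$-intertwined with intertwined inverse $f^{\frac{D}{g}}$, you invoke Lemma \ref{lemma:intertwined-iso}.(\ref{lemma:intertwined-iso2}) to produce the isomorphism $f e_{gf} = f f^{\frac{D}{g}} f$ with inverse $f^{\frac{D}{g}}$. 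The verifications you need (the intertwining $e_{fg} f = f e_{gf}$, the absorption $e_{gf} f^{\frac{D}{g}} e_{fg} = f^{\frac{D}{g}}$, and the two composite identities) are a strict subset of the paper's direct checks, all one-line consequences of part (i) and \textbf{[DV.2]}, so your route is marginally more economical and mirrors how the paper treats the endomorphism case in Corollary \ref{cor:iso-split}. For part (i) you expand $e_{fg} = fg(fg)^D$ via $(fg)^D = g^{\frac{D}{f}} f^{\frac{D}{g}}$ and then apply \textbf{[DV.3]} and \textbf{[DV.2]}, where the paper instead reads the result off the alternative formulas $f^{\frac{D}{g}} = g(fg)^D = (gf)^D g$ and $g^{\frac{D}{f}} = (fg)^D f$; both are legitimate since Theorem \ref{Drazin-equiv-inverse} together with uniqueness of the Drazin inverse makes all of these formulas available simultaneously. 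As a small bonus, your derivation of (iii) via the intertwining lemma (or via Lemma \ref{lemma:sym}) also produces the correct typing $g^{\frac{D}{f}}: (A, e_{fg}) \to (B, e_{gf})$ for the inverse, whereas the subscripts in the published statement of (iii) appear to be transposed.
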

\begin{proof}
\begin{enumerate}[(i)]
\item Recall from Corollary \ref{cor:Drazinpairformulas} that $f^{\frac{D}{g}} = g (fg)^D = (gf)^D g$ and $g^{\frac{D}{f}} = f (gf)^D = (fg)^D f$. Then combining these with the definition of the induced idempotents, we immediately get that $e_{fg} = f f^{\frac{D}{g}} = g^{\frac{D}{f}}g$ and $e_{gf} = f^{\frac{D}{g}} f = g g^{\frac{D}{f}}$ as desired. 
\item We must first show that $f f^{\frac{D}{g}} f$ and $f^{\frac{D}{g}}$ are maps in the idempotent completion. First by using (\ref{e_fg+e_gf}) and \textbf{[DV.2]} we compute that: 
\[ e_{gf} f^{\frac{D}{g}} e_{fg} = f^{\frac{D}{g}} f   f^{\frac{D}{g}} f f^{\frac{D}{g}} =  f^{\frac{D}{g}} f f^{\frac{D}{g}} = f^{\frac{D}{g}} \]
So $e_{gf} f^{\frac{D}{g}} e_{fg} = f^{\frac{D}{g}}$. On the other hand,
\[ e_{fg} f f^{\frac{D}{g}} f e_{gf} =  f f^{\frac{D}{g}}  f f^{\frac{D}{g}} f  f^{\frac{D}{g}} f =  f f^{\frac{D}{g}} f  f^{\frac{D}{g}} f =  f f^{\frac{D}{g}} f   \]
So $e_{fg} f f^{\frac{D}{g}} f e_{gf} = f f^{\frac{D}{g}} f$ as well. As such, we get  $f f^{\frac{D}{g}} f: (A, e_{fg}) \to (B, e_{gf})$ and $f^{\frac{D}{g}}: (B, e_{gf}) \to (A, e_{fg})$  are indeed maps in  $\mathsf{Split}(\mathbb{X})$. Now we must show that they are inverses of each other in $\mathsf{Split}(\mathbb{X})$. However, using (\ref{e_fg+e_gf}) again, we easily see that: 
\begin{align*}
  f f^{\frac{D}{g}} f  f^{\frac{D}{g}} = e_{fg} e_{fg} = e_{fg} =1_{(A, e_{fg})} &&   f^{\frac{D}{g}} f f^{\frac{D}{g}} f = e_{gf} e_{gf} = e_{gf} = 1_{(B, e_{gf})}
\end{align*}
So we conclude that $f f^{\frac{D}{g}} f: (A, e_{fg}) \to (B, e_{gf})$ is an isomorphism in $\mathsf{Split}(\mathbb{X})$ with inverse $f^{\frac{D}{g}}: (B, e_{gf}) \to (A, e_{fg})$, as desired. 
\item This is shown by  similar argument. 
\end{enumerate}
\end{proof}

We now prove the analogue of Theorem \ref{characterization-by-iteration} for Drazin inverses of opposing pairs:

\begin{theorem} In a category $\mathbb{X}$, $\xymatrixcolsep{1.5pc}\xymatrix{(f,g): A  \ar@<0.5ex>[r]  & B  \ar@<0.5ex>[l] }$ is Drazin if and only if there are idempotents $e_A: A \to A$ and $e_B: B \to B$, and a $k \in \mathbb{N}$, such that both of the maps $(fg)^kf: (A,e_A) \to (B,e_B)$ and $(gf)^kg: (B,e_B) \to (A,e_A)$ are isomorphisms\footnote{It is important to note that this statement does not say that $(fg)^kf$ and $(gf)^kg$ are inverses in $\mathsf{Split}(\mathbb{X})$} in $\mathsf{Split}(\mathbb{X})$. 
\end{theorem}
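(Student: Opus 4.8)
The plan is to reduce everything to the endomorphism version, Theorem \ref{characterization-by-iteration}, using the dictionary between opposing-pair Drazin inverses and the Drazin inverses of $fg$ and $gf$ set up in Theorem \ref{Drazin-equiv-inverse} and Lemma \ref{lemma:e_gf+e_fg-iso}. The two algebraic facts that drive the argument are that the cross-composites recover odd powers,
\[ (fg)^k f \cdot (gf)^k g = (fg)^{2k+1}, \qquad (gf)^k g \cdot (fg)^k f = (gf)^{2k+1}, \]
and that, read inside $\mathsf{Split}(\mathbb{X})$, the map $(fg)^k f$ factors as an iterate of $fg$ followed by (essentially) $f$.

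For the backward direction I would take the supplied idempotents $e_A,e_B$ and $k$ and simply compose the two given isomorphisms. Since a composite of isomorphisms in $\mathsf{Split}(\mathbb{X})$ is again an isomorphism, $(fg)^{2k+1} = (fg)^k f \cdot (gf)^k g \colon (A,e_A) \to (A,e_A)$ is an isomorphism in $\mathsf{Split}(\mathbb{X})$. Theorem \ref{characterization-by-iteration} (applied with power $2k+1$) then yields that $fg$ is Drazin, whence $(f,g)$ is a Drazin opposing pair by Lemma \ref{lemma:Drazin-fg-gf} and Theorem \ref{Drazin-equiv-inverse}. This direction is short and is really the conceptual heart of the statement: it is the observation that the two opposing maps interleave to produce honest powers, exactly as in Munn's power theorem.

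For the forward direction, assume $(f,g)$ is Drazin, so $fg$ and $gf$ are Drazin by Theorem \ref{Drazin-equiv-inverse}; take $e_A = e_{fg}$, $e_B = e_{gf}$, and $k = \mathsf{max}(\mathsf{ind}(fg),\mathsf{ind}(gf))$. I would exhibit $(fg)^k f \colon (A,e_{fg}) \to (B,e_{gf})$ as a composite of two maps already known to be isomorphisms in $\mathsf{Split}(\mathbb{X})$: the $k$-fold iterate of $fg\,e_{fg} \colon (A,e_{fg}) \to (A,e_{fg})$, an isomorphism by Corollary \ref{cor:iso-split}, followed by $e_{fg}f = f f^{\frac{D}{g}} f \colon (A,e_{fg}) \to (B,e_{gf})$, an isomorphism by Lemma \ref{lemma:e_gf+e_fg-iso}. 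Using $e_{fg}\,fg = fg\,e_{fg}$ one checks $(fg\,e_{fg})^k = (fg)^k e_{fg}$, and since $k \geq \mathsf{ind}(fg)$ Lemma \ref{lemma:Drazin-basic}.(\ref{lemma:Drazin-basic.1}) gives $(fg)^k e_{fg} = (fg)^k$, so the composite is precisely $(fg)^k f$. The map $(gf)^k g$ is handled identically after swapping the roles of $f$ and $g$ (using $\mathsf{ind}(gf) \leq k$). Alternatively, one could bypass the factorization and verify directly that $((gf)^D)^{k+1} g = g((fg)^D)^{k+1}$ is a two-sided inverse in $\mathsf{Split}(\mathbb{X})$, the needed identities being $(fg)^{k+1}((fg)^D)^{k+1} = e_{fg}$ and $g(fg)^D f = e_{gf}$ coming from Lemma \ref{lemma:Drazin-basic} and Lemma \ref{lemma:Drazin-fg-gf-2}.

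The main obstacle is not conceptual but the index bookkeeping in the forward direction: one must ensure the single chosen $k$ is simultaneously large enough that $e_{fg}(fg)^k = (fg)^k$ and $e_{gf}(gf)^k = (gf)^k$, so that both $(fg)^k f$ and $(gf)^k g$ genuinely live in $\mathsf{Split}(\mathbb{X})$ with the asserted source and target idempotents. Taking $k = \mathsf{max}(\mathsf{ind}(fg),\mathsf{ind}(gf))$ dispatches this, after which every remaining equality is a direct application of the basic Drazin identities already recorded. One point worth flagging in the final write-up is the caveat in the footnote of the statement: the two isomorphisms $(fg)^k f$ and $(gf)^k g$ are \emph{not} mutual inverses in $\mathsf{Split}(\mathbb{X})$ — their composite is $(fg)^{2k+1}$, not an identity — so the claim is that each is separately invertible, which is exactly what the factorization argument delivers.
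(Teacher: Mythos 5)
Your proposal is correct, and for one of the two directions it is genuinely slicker than the paper's argument. In the forward direction you and the paper do essentially the same thing: fix $e_A = e_{fg}$, $e_B = e_{gf}$, $k = \mathsf{max}(\mathsf{ind}(fg),\mathsf{ind}(gf))$, and exhibit $(fg)^kf$ as a composite of isomorphisms in $\mathsf{Split}(\mathbb{X})$; the paper composes $(fg)^{k+1}\colon (A,e_{fg}) \to (A,e_{fg})$ with the isomorphism $g^{\frac{D}{f}}$ from Lemma \ref{lemma:e_gf+e_fg-iso} and simplifies $(fg)^{k+1}(fg)^Df$ to $(fg)^kf$, whereas you compose $(fg\,e_{fg})^k$ with $ff^{\frac{D}{g}}f$ and use $(fg)^ke_{fg}=(fg)^k$; these are interchangeable, and your index bookkeeping with the maximum is exactly what the paper does. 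The real divergence is in the backward direction. The paper takes the two hypothesised isomorphisms with explicit inverses $u$ and $v$, proves $(fg)^{k+1}$ is a map on $(A,e_A)$ in the splitting, and then verifies by a chain of identities that $u(gf)^kv$ is a two-sided inverse of $(fg)^{k+1}$ before invoking Theorem \ref{characterization-by-iteration} with exponent $k+1$. You instead simply compose the two given isomorphisms, observe that the composite is $(fg)^kf\cdot(gf)^kg = (fg)^{2k+1}$, and invoke Theorem \ref{characterization-by-iteration} with exponent $2k+1$. Since that theorem only requires \emph{some} positive power to be an isomorphism in the splitting, the larger exponent costs nothing, and you avoid all of the paper's explicit inverse computations. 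Your closing remark about the footnote is also on point: the composite being $(fg)^{2k+1}$ rather than an identity is precisely why the two maps are separately invertible without being mutually inverse.
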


\begin{proof} For ($\Rightarrow$), suppose that $(f,g)$ is Drazin. So $fg$ and $gf$ are Drazin. As our idempotents, we take the induced ones $e_{fg}: A \to A$ and $e_{gf}: B \to B$. Now we first note that by Lemma \ref{lemma:Drazin-basic}.(\ref{lemma:Drazin-basic.1}) and using the same arguments as in the first half of the proof of Theorem \ref{characterization-by-iteration}, we have that for a $x: A \to A$ which is Drazin, for all $\mathsf{ind}(x) \leq j$ that $x^{j+1}: (A,e_x) \to (A,e_x)$ is an isomorphism in $\mathsf{Split}(\mathbb{X})$. So let $k = \mathsf{max}\left( \mathsf{ind}(fg), \mathsf{ind}(gf) \right)$. Then we get that $(fg)^{k+1}: (A,e_{fg}) \to (A,e_{fg})$ and $(gf)^{k+1}: (A,e_{gf}) \to (A,e_{gf})$ are isomorphisms. Now by Lemma \ref{lemma:e_gf+e_fg-iso}, we also have that $f^{\frac{D}{g}}: (B, e_{gf}) \to (A, e_{fg})$ and $g^{\frac{D}{f}}: (A, e_{gf}) \to  (B, e_{fg})$ are isomorphisms. So the composites, $(fg)^{k+1} g^{\frac{D}{f}}: (A, e_{gf}) \to  (B, e_{fg})$ and $(gf)^{k+1} f^{\frac{D}{g}}: (A,e_{gf}) \to (A,e_{gf})$ are also isomorphisms. However, applying Corollary \ref{cor:Drazinpairformulas} and Lemma \ref{lemma:Drazin-basic}.(\ref{lemma:Drazin-basic.1}), we get that: 
  \[ (fg)^{k+1} g^{\frac{D}{f}} = (fg)^{k+1} (fg)^D f = (fg)^k f   \]
  So $(fg)^{k+1} g^{\frac{D}{f}} = (fg)^k f$, and similarly we can also show that $(gf)^{k+1} f^{\frac{D}{g}} =  (gf)^kg$. So we conclude that $(fg)^kf: (A,e_A) \to (B,e_B)$ and $(gf)^kg: (B,e_B) \to (A,e_A)$ are isomorphisms in $\mathsf{Split}(\mathbb{X})$ as desired. 

  For ($\Leftarrow$), suppose for some $e_A: A \to A$ and $e_B: B \to B$, and a $k \in \mathbb{N}$, that both  $(fg)^kf: (A,e_A) \to (B,e_B)$ and $(gf)^kg: (B,e_B) \to (A,e_A)$ are isomorphisms in $\mathsf{Split}(\mathbb{X})$, with respective inverses $v: (B, e_B) \to (A,e_A)$ and $u: (A,e_A) \to (B,e_B)$. Explicitly this means we have the following identities: 
    \begin{align*}
        (fg)^kf e_B = (fg)^kf = e_A (fg)^kf && e_B (gf)^kg = (gf)^kg = (gf)^kg e_A \\
        v e_A = v = e_B v && e_A u = u = u e_B \\ 
        (fg)^kf v = e_A = u (gf)^kg && v (fg)^kf  = e_B = (gf)^kg u
    \end{align*}
We will now prove that $fg$ is Drazin. To do so, we will first show that $(fg)^{k+1}$ is an isomorphism in the idempotent splitting. Using the above identity, we first compute: 
\[ e_A (fg)^{k+1} = e_A (fg)^k fg = (fg)^k fg = (fg)^{k+1} \]
\[ (fg)^{k+1} e_A = f (gf)^k g e_A =  f (gf)^k g = (fg)^{k+1} \]
So we get that $(fg)^{k+1}: (A, e_A) \to (A,e_A)$ is indeed a map in $\mathsf{Split}(\mathbb{X})$. On the other hand, since $u: (A,e_A) \to (B,e_B)$ and $v: (B, e_B) \to (A,e_A)$ are maps in $\mathsf{Split}(\mathbb{X})$, it follows that $u (gf)^k v: (A, e_A) \to (A, e_A)$ is also a map $\mathsf{Split}(\mathbb{X})$. To show that this map is the inverse of $(fg)^{k+1}$, we first need to do some calculations:
\begin{gather*}
 (gf)^k v = (gf)^k e_B v = (gf)^k (gf)^k g u v = (gf)^k g (fg)^k u v = e_B  (gf)^k g (fg)^k u v \\
 = e_B (gf)^k (gf)^k g u v = e_B (gf)^k e_B v = e_B (gf)^k v
\end{gather*}
So $(gf)^k v = e_B (gf)^k v$, and similarly we can also show that $u (gf)^k =u (gf)^k e_A$. With these identities in hand, we can observe that: 
\begin{gather*}
   (fg)^{k+1} u (gf)^k v = f (gf)^k g u (gf)^k v = f e_B (gf)^k v = f(gf)^k v = (fg)^k f v = e_A = 1_{(A,e_A)} 
\end{gather*}
\begin{gather*}
      u (gf)^k v  (fg)^{k+1} =  u  (gf)^k v (fg)^k f g = u (gf)^k e_A g = u (gf)^k g = e_A = 1_{(A,e_A)} 
\end{gather*}
So we get that $(fg)^{k+1}: (A, e_A) \to (A,e_A)$ is an isomorphism in $\mathsf{Split}(\mathbb{X})$. Then by Theorem \ref{characterization-by-iteration}, $fg$ is Drazin. Therefore we conclude that $(f,g)$ is Drazin.
\end{proof}

%%%%%%%%%%%%%%%%%%%%%%%%%%%%%%%%%%%%%%%%%%%%%%%%%%%%%%%%%%%%%%%%%%%%%%%%%%%%%%%%%%%%%%%%%%%%%%%%%%%%%%%
\subsection{Drazin Opposing Pairs and Binary Idempotents} Recall from Section \ref{sec:binary-idempotent} that a binary idempotent is, using the terminology from this section, an opposing pair $\xymatrixcolsep{1.5pc}\xymatrix{(f,g): A  \ar@<0.5ex>[r]  & B  \ar@<0.5ex>[l] }$ such that $fgf=f$ and $gfg= g$, and this implies that $fg$ and $gf$ are idempotents. Here we explain how binary idempotents are linked to Drazin opposing pairs. Moreover, binary idempotents will play a key role in the rest of the sections. 

We first observe that similar to an idempotent being its own Drazin inverse, a binary idempotent regarded as an opposing pair is its own Drazin inverse. 

\begin{lemma} $\xymatrixcolsep{1.5pc}\xymatrix{(f,g): A  \ar@<0.5ex>[r]  & B  \ar@<0.5ex>[l] }$ is a binary idempotent if and only if $(g,f)$ is its Drazin inverse, so $(f,g)^D = (g,f)$ (so that $f^{\frac{D}{g}} = g$ and $g^{\frac{D}{f}} = f$).
\end{lemma}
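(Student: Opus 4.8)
The plan is to verify directly that the candidate pair given by $f^{\frac{D}{g}} = g$ and $g^{\frac{D}{f}} = f$ satisfies the three axioms \textbf{[DV.1]}, \textbf{[DV.2]}, \textbf{[DV.3]} for a Drazin inverse of an opposing pair, and then to read off the converse from \textbf{[DV.2]} alone. Everything reduces to the two defining equations $fgf = f$ and $gfg = g$ together with the fact, already recorded in the excerpt, that these force $fg$ and $gf$ to be idempotents.

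For the forward direction, suppose $(f,g)$ is a binary idempotent. First I would check \textbf{[DV.2]}: substituting $f^{\frac{D}{g}} = g$ turns $f^{\frac{D}{g}} f f^{\frac{D}{g}}$ into $gfg = g = f^{\frac{D}{g}}$, and substituting $g^{\frac{D}{f}} = f$ turns $g^{\frac{D}{f}} g g^{\frac{D}{f}}$ into $fgf = f = g^{\frac{D}{f}}$, so \textbf{[DV.2]} is literally the pair of binary idempotent equations. Next, \textbf{[DV.3]} reduces to the trivial identities $f f^{\frac{D}{g}} = fg = g^{\frac{D}{f}} g$ and $f^{\frac{D}{g}} f = gf = g g^{\frac{D}{f}}$. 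Finally, for \textbf{[DV.1]} I would take $k = 1$: using $fgf = f$ we get $(fg)^1 f f^{\frac{D}{g}} = fgfg = fg = (fg)^1$, and using $gfg = g$ we get $(gf)^1 g g^{\frac{D}{f}} = gfgf = gf = (gf)^1$. Thus $(f,g)$ is its own Drazin inverse, and by uniqueness (Proposition \ref{Drazin-opposing-unique}) we conclude $(f,g)^D = (f,g)$.

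For the converse, suppose $(f,g)^D = (f,g)$, that is, $f^{\frac{D}{g}} = g$ and $g^{\frac{D}{f}} = f$ satisfy the Drazin axioms. Then \textbf{[DV.2]} reads precisely $gfg = g$ and $fgf = f$, which are exactly the defining equations of a binary idempotent, so $(f,g)$ is a binary idempotent.

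The main point here is not any genuine obstacle but the observation that, for the self-inverse pair, \textbf{[DV.2]} coincides with the binary idempotent conditions; this makes both directions essentially immediate. The only care required is choosing $k = 1$ rather than $k = 0$ in \textbf{[DV.1]}, since $(fg)^0 f f^{\frac{D}{g}} = fg$ would have to equal $(fg)^0 = 1_A$, which fails unless $fg$ is already the identity.
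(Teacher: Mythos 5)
Your proof is correct, and the converse direction is identical to the paper's. The forward direction, however, takes a genuinely different (if modest) route. The paper does not verify the axioms \textbf{[DV.1]}--\textbf{[DV.3]} directly: it first observes that $fg$ and $gf$ are idempotents, hence Drazin and their own Drazin inverses by Lemma \ref{lemma:e-drazin}, so that $(f,g)$ is a Drazin opposing pair; it then reads off $f^{\frac{D}{g}} = g(fg)^D = g(fg) = gfg = g$ (and symmetrically $g^{\frac{D}{f}} = f$) from the explicit formulas of Theorem \ref{Drazin-equiv-inverse}, with uniqueness built into that machinery. You instead check the three axioms from scratch for the candidate pair $(f^{\frac{D}{g}}, g^{\frac{D}{f}}) = (g,f)$ and then invoke Proposition \ref{Drazin-opposing-unique}. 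Your verification is sound: \textbf{[DV.2]} is literally the pair of binary-idempotent equations, \textbf{[DV.3]} is trivial, and your choice of $k=1$ in \textbf{[DV.1]} is exactly right (and your remark that $k=0$ would force $fg = 1_A$ is a correct and worthwhile observation). What each approach buys: yours is self-contained and makes transparent that \textbf{[DV.2]} \emph{is} the binary-idempotent condition, which is the conceptual heart of the equivalence; the paper's is shorter and reinforces the general principle that existence and the explicit formula for the Drazin inverse of an opposing pair are both controlled by the Drazin inverse of the composite $fg$.
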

\begin{proof} For $(\Rightarrow)$, Since $(f,g)$ is a binary idempotent, $fg$ and $gf$ are idempotents, and so by Lemma \ref{lemma:e-drazin}, are Drazin and their own Drazin inverses. Therefore $(f,g)$ is Drazin. Applying Corollary \ref{cor:Drazinpairformulas}, we compute:
\[ f^{\frac{D}{g}} = g(fg)^D = gfg = g \]
So $f^{\frac{D}{g}} = g$, and similarly, we can also show that $g^{\frac{D}{f}} = f$. 

For $(\Leftarrow)$, if $(f,g)$ is Drazin with itself as a Drazin inverse, then {\bf [DV.2]} gives us that 
$gfg = g$ and $fgf =f$.  Thus, $(f,g)$ is a binary idempotent.
\end{proof}

On the other hand, from any Drazin opposing pair we get \emph{two} binary idempotents. 

\begin{proposition} \label{prop:iterated-equivs} If $\xymatrixcolsep{1.5pc}\xymatrix{(f,g): A  \ar@<0.5ex>[r]  & B  \ar@<0.5ex>[l] }$ is Drazin then: 
\begin{enumerate}[(i)]
\item\label{prop:iterated-equivs.1} $(f,g)^D \colon = ( f^{\frac{D}{g}},g^{\frac{D}{f}})$ is Drazin with the Drazin inverse $(f,g)^{DD}\colon =(f f^{\frac{D}{g}}f, g g^{\frac{D}{f}}g)$ and $\mathsf{ind}((f,g)^D) \leq 1$; 
\item\label{prop:iterated-equivs.2} $(f,g)^{DD}\colon =(f f^{\frac{D}{g}}f, g g^{\frac{D}{f}}g)$ is Drazin with the Drazin inverse $(f,g)^D$; 
\item \label{prop:binary-idem.3} $(f f^{\frac{D}{g}}f,f^{\frac{D}{g}})$ and 
$(g g^{\frac{D}{f}}g, g^{\frac{D}{f}})$ are binary idempotents.
\end{enumerate}
\end{proposition}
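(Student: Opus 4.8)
The plan is to reduce all three claims to the corresponding facts about the endomorphism composites $fg$ and $gf$, using Theorem \ref{Drazin-equiv-inverse} to translate between opposing pairs and their composites and using Lemma \ref{lem:Drazin-inverse1} (the endomorphism version of this very statement) as the engine. The single observation that makes everything work is that the composites of the two pairs in question are exactly the iterated Drazin inverses of $fg$ and $gf$. Indeed, by Theorem \ref{Drazin-equiv-inverse}(i) the composites of $(f,g)^D = (g^{\frac{D}{f}}, f^{\frac{D}{g}})$ are $g^{\frac{D}{f}} f^{\frac{D}{g}} = (fg)^D$ and $f^{\frac{D}{g}} g^{\frac{D}{f}} = (gf)^D$, while a short computation using $f^{\frac{D}{g}} = g(fg)^D$ and $g^{\frac{D}{f}} = (fg)^D f$ shows that the composites of $(f,g)^{DD} = (ff^{\frac{D}{g}}f, gg^{\frac{D}{f}}g)$ telescope (via $y^D y y^D = y^D$) to the cores $(fg)^{DD}$ and $(gf)^{DD}$.

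For part (\ref{prop:iterated-equivs.1}): since $(f,g)$ is Drazin, $fg$ and $gf$ are Drazin, so Lemma \ref{lem:Drazin-inverse1}(\ref{repeated-Drazin-inverse.i}) gives that $(fg)^D$ and $(gf)^D$ are Drazin with index at most $1$. As these are the composites of $(f,g)^D$, Theorem \ref{Drazin-equiv-inverse} shows $(f,g)^D$ is Drazin, and $\mathsf{ind}((f,g)^D) \leq 1$ then follows from Lemma \ref{lemma:opposing-ind1}. To identify its inverse I apply the formulas of Theorem \ref{Drazin-equiv-inverse}(ii) to the pair $(g^{\frac{D}{f}}, f^{\frac{D}{g}})$, obtaining components $((fg)^D)^D\,g^{\frac{D}{f}}$ and $((gf)^D)^D\,f^{\frac{D}{g}}$, then simplify using $x^{DD} = x x^D x$, Lemma \ref{lemma:Drazin-fg-gf-2}, \textbf{[D.3]}, and the idempotency of $e_{fg}$ and $e_{gf}$ to recover exactly $ff^{\frac{D}{g}}f$ and $gg^{\frac{D}{f}}g$. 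For part (\ref{prop:iterated-equivs.2}): the composites of $(f,g)^{DD}$ are $(fg)^{DD}$ and $(gf)^{DD}$, which are Drazin by Lemma \ref{lem:Drazin-inverse1}(\ref{Drazin-inverse-inverse-inverse}), so $(f,g)^{DD}$ is Drazin; applying the inverse formulas of Theorem \ref{Drazin-equiv-inverse}(ii) once more and now using $x^{DDD} = x^D$ together with \textbf{[D.2]} collapses the two components back to $g^{\frac{D}{f}}$ and $f^{\frac{D}{g}}$, so the Drazin inverse of $(f,g)^{DD}$ is $(f,g)^D$. Note that uniqueness (Proposition \ref{Drazin-opposing-unique}) alone does not suffice here, since the pair-inverse operation is not an involution; the identification genuinely needs $x^{DDD}=x^D$.

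For part (\ref{prop:binary-idem.3}) I verify directly that each pair is a binary idempotent, i.e. $aba = a$ and $bab = b$. Writing $a = ff^{\frac{D}{g}}f$ and $b = f^{\frac{D}{g}}$, repeated use of \textbf{[DV.2]} in the form $f^{\frac{D}{g}} f f^{\frac{D}{g}} = f^{\frac{D}{g}}$ collapses both $aba$ and $bab$ at once (for instance $ab = ff^{\frac{D}{g}}ff^{\frac{D}{g}} = ff^{\frac{D}{g}}$, whence $bab = f^{\frac{D}{g}}ff^{\frac{D}{g}} = b$ and $aba = ff^{\frac{D}{g}}ff^{\frac{D}{g}}f = a$); the pair $(gg^{\frac{D}{f}}g, g^{\frac{D}{f}})$ is handled symmetrically via $g^{\frac{D}{f}} g g^{\frac{D}{f}} = g^{\frac{D}{f}}$.

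The hard part is not conceptual but bookkeeping: keeping the two presentations of each mixed inverse ($f^{\frac{D}{g}} = g(fg)^D = (gf)^D g$ and $g^{\frac{D}{f}} = f(gf)^D = (fg)^D f$) straight so that the composites telescope to the intended cores, and confirming that the canonical inverse expressions produced by Theorem \ref{Drazin-equiv-inverse} simplify to the \emph{exact} forms $ff^{\frac{D}{g}}f$ and $gg^{\frac{D}{f}}g$ rather than to equal-but-differently-written endomorphisms. Beyond the reduction to Lemma \ref{lem:Drazin-inverse1}, no new idea is required.
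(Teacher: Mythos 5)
Your proof is correct and follows essentially the same route as the paper: reduce everything to the endomorphism composites via Theorem \ref{Drazin-equiv-inverse}, invoke Lemma \ref{lem:Drazin-inverse1} and Lemma \ref{lemma:opposing-ind1} for Drazin-ness and the index bound, simplify the canonical inverse formulas to $ff^{\frac{D}{g}}f$ and $gg^{\frac{D}{f}}g$, and check (iii) directly from \textbf{[DV.2]}. The only (immaterial) divergence is in part (ii), where the paper re-applies part (i) to the pair $(f,g)^D$ and collapses with \textbf{[DV.2]}, while you work at the level of composites using $x^{DDD}=x^D$; both are valid instances of the same reduction.
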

\begin{proof} (i) Recall from Corollary \ref{cor:Drazinpairformulas} we have that $g^{\frac{D}{f}} f^{\frac{D}{g}} = (gf)^D$ and $f^{\frac{D}{g}} g^{\frac{D}{f}} = (fg)^D$. However by Lemma \ref{lem:Drazin-inverse1}.(\ref{Drazin-inverse-inverse}), a Drazin inverse is always Drazin, so $g^{\frac{D}{f}} f^{\frac{D}{g}}$ and $f^{\frac{D}{g}} g^{\frac{D}{f}}$ are Drazin. As such, $( f^{\frac{D}{g}},g^{\frac{D}{f}})$ is Drazin. Now applying the formulas from Lemma \ref{lem:Drazin-inverse1}.(\ref{Drazin-inverse-inverse}) and Corollary \ref{cor:Drazinpairformulas}, as well as using \textbf{[DV.2]} and \textbf{[DV.3]}, we compute:  
\begin{gather*}
    g^{\frac{D}{f}} ( f^{\frac{D}{g}} g^{\frac{D}{f}})^D = g^{\frac{D}{f}} ((gf)^D)^D =  g^{\frac{D}{f}} gf (gf)^D gf = g^{\frac{D}{f}} gf f^{\frac{D}{g}} g^{\frac{D}{f}} gf \\
    = f f^{\frac{D}{g}} f f^{\frac{D}{g}} f f^{\frac{D}{g}} f = f f^{\frac{D}{g}} f f^{\frac{D}{g}} f = f f^{\frac{D}{g}} f f^{\frac{D}{g}} f = f f^{\frac{D}{g}} f
\end{gather*}
So the Drazin inverse of $f^{\frac{D}{g}}$ over $g^{\frac{D}{f}}$ is $f f^{\frac{D}{g}} f$. Similarly, we can also compute that the Drazin inverse of $g^{\frac{D}{f}}$ over $f^{\frac{D}{g}}$ is $g g^{\frac{D}{f}}g$. \\

\noindent (ii) By (\ref{prop:iterated-equivs.1}), we know that $(f f^{\frac{D}{g}}f, g g^{\frac{D}{f}}g)$ is a Drazin inverse. So by (\ref{prop:iterated-equivs.1}) again, we get that $(f f^{\frac{D}{g}}f, g g^{\frac{D}{f}}g)$ has a Drazin inverse, which by \textbf{[DV.2]} is precisely $( f^{\frac{D}{g}},g^{\frac{D}{f}})$. \\

\noindent (iii) Using {\bf [DV.2]}, we compute that: 
\begin{gather*}
    (ff^{\frac{D}{g}}f) f^{\frac{D}{g}} (ff^{\frac{D}{g}}f) = f(f^{\frac{D}{g}}f f^{\frac{D}{g}}) f (f^{\frac{D}{g}}ff^{\frac{D}{g}})f = f(f^{\frac{D}{g}} f f^{\frac{D}{g}}) f = f f^{\frac{D}{g}}f \end{gather*}
    \begin{gather*}
    f^{\frac{D}{g}} (f f^{\frac{D}{g}} f) f^{\frac{D}{g}} = (f^{\frac{D}{g}} f f^{\frac{D}{g}}) f f^{\frac{D}{g}} = f^{\frac{D}{g}} f f^{\frac{D}{g}} = f^{\frac{D}{g}}
\end{gather*}
So $(ff^{\frac{D}{g}}f,f^{\frac{D}{g}})$ is a binary idempotent. Similarly we can also show that $(g g^{\frac{D}{f}}g, g^{\frac{D}{f}})$ is a binary idempotent.
\end{proof}

Using binary idempotents, we can also characterize when an opposing pair is the Drazin inverse of another opposing pair. To do so, it will be first useful to observe the following: 

\begin{lemma}\label{lemma:a-drazin-inverse} $\xymatrixcolsep{1.5pc}\xymatrix{(f,g): A  \ar@<0.5ex>[r]  & B  \ar@<0.5ex>[l] }$ is a Drazin inverse if and only if there is an opposing pair of dual type $\xymatrixcolsep{1.5pc}\xymatrix{(f^{\frac{D}{g}},g^{\frac{D}{f}}): B  \ar@<0.5ex>[r]  & A  \ar@<0.5ex>[l] }$ which satisfy {\bf [DV.1$^\prime$]} $f f^{\frac{D}{g}} f = f$ and $g g^{\frac{D}{f}} g = g$, {\bf [DV.2]} and {\bf [DV.3]}.
\end{lemma} 
\begin{proof}For ($\Rightarrow$), suppose that $(f,g)$ is the Drazin inverse of $(h,k)$, so $(f,g) = (h^{\frac{D}{k}}, k^{\frac{D}{h}})$. Then by Proposition \ref{prop:iterated-equivs}.(\ref{prop:iterated-equivs.1}), we get that $(f,g)$ is Drazin with Drazin inverse $( f^{\frac{D}{g}},g^{\frac{D}{f}})= ( h h^{\frac{D}{k}} h ,  k k^{\frac{D}{h}} k ) = (h f h, k g k)$. This means that {\bf [DV.2]} and {\bf [DV.3]} hold. For {\bf [DV.1$^\prime$]}, since $f= h^{\frac{D}{k}}$ and $f^{\frac{D}{g}} = h h^{\frac{D}{k}} h$, using {\bf [DV.2]} twice we compute that: 
\[ f f^{\frac{D}{g}} f =  h^{\frac{D}{k}} h h^{\frac{D}{k}} h h^{\frac{D}{k}} = h^{\frac{D}{k}} h h^{\frac{D}{k}} =  h^{\frac{D}{k}} = f \]
So $f f^{\frac{D}{g}} f = f$, and similarly we can also show that $g g^{\frac{D}{f}} g = g$. 

For ($\Leftarrow$), we show that $(f^{\frac{D}{g}},g^{\frac{D}{f}})$ is the Drazin inverse of $(f,g)$. By assumption, {\bf [DV.2]} and {\bf [DV.3]} hold, so it remains to show {\bf [DV.1]}. Using {\bf [DV.1$^\prime$]} and {\bf [DV.3]}, we show that {\bf [DV.1]} holds for $k=1$:
\begin{align*}
   fg ff^{\frac{D}{g}} = fg g^{\frac{D}{f}}g = fg && gf g g^{\frac{D}{f}} = gf f^{\frac{D}{g}}f= gf
\end{align*}
So $(f,g)$ is Drazin. Now by {\bf [DV.1$^\prime$]} we have that $(f,g) = (f f^{\frac{D}{g}}f, g g^{\frac{D}{f}}g)$. Then Proposition \ref{prop:iterated-equivs}.(\ref{prop:iterated-equivs.1}) tells us that $(f,g) = (f f^{\frac{D}{g}}f, g g^{\frac{D}{f}}g)$ is the Drazin inverse of $( f^{\frac{D}{g}},g^{\frac{D}{f}})$. 
\end{proof}

\begin{proposition}\label{prop:a-drazin-inverse} $\xymatrixcolsep{1.5pc}\xymatrix{(f,g): A  \ar@<0.5ex>[r]  & B  \ar@<0.5ex>[l] }$ is a Drazin inverse if and only if $(f,g)$ is Drazin and both
$(f,f^{\frac{D}{g}})$ and $(g^{\frac{D}{f}},g)$ are binary idempotents.
\end{proposition}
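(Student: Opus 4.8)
The plan is to reduce everything to the characterization already obtained in Lemma \ref{lemma:a-drazin-inverse}, which says that $(f,g)$ is a Drazin inverse precisely when it is Drazin, satisfies $\mathsf{ind}(f,g)\le 1$, and obeys $f f^{\frac{D}{g}} f = f$ and $g g^{\frac{D}{f}} g = g$. The first observation is that, once $(f,g)$ is known to be Drazin, the two binary-idempotent hypotheses of the present proposition encode exactly these last two equations. Indeed, unfolding the definition, $(f,f^{\frac{D}{g}})$ being a binary idempotent means $f f^{\frac{D}{g}} f = f$ together with $f^{\frac{D}{g}} f f^{\frac{D}{g}} = f^{\frac{D}{g}}$, and the latter is just \textbf{[DV.2]}, hence automatic; likewise $(g^{\frac{D}{f}},g)$ being a binary idempotent reduces to $g g^{\frac{D}{f}} g = g$. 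Thus the only gap between the two statements is the index bound $\mathsf{ind}(f,g)\le 1$, and the real work is to see that it is forced.

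For the forward direction I would argue as follows. If $(f,g)$ is a Drazin inverse then it is in particular Drazin (a Drazin inverse is always Drazin by Proposition \ref{prop:iterated-equivs}.(\ref{prop:iterated-equivs.1})), and Lemma \ref{lemma:a-drazin-inverse} supplies $f f^{\frac{D}{g}} f = f$ and $g g^{\frac{D}{f}} g = g$. Proposition \ref{prop:iterated-equivs}.(\ref{prop:binary-idem.3}) tells us that $(f f^{\frac{D}{g}} f, f^{\frac{D}{g}})$ and $(g g^{\frac{D}{f}} g, g^{\frac{D}{f}})$ are binary idempotents; substituting the two equalities just obtained turns these into $(f,f^{\frac{D}{g}})$ and $(g,g^{\frac{D}{f}})$. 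Since the defining equations $pqp=p$, $qpq=q$ of a binary idempotent are symmetric in $p$ and $q$, the pair $(g^{\frac{D}{f}},g)$ is a binary idempotent as well, which is what is required.

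The backward direction is where the index computation lives. Assume $(f,g)$ is Drazin and that $(f,f^{\frac{D}{g}})$ and $(g^{\frac{D}{f}},g)$ are binary idempotents; as noted this gives $f f^{\frac{D}{g}} f = f$ and $g g^{\frac{D}{f}} g = g$. Using $e_{fg} = f f^{\frac{D}{g}}$ from Lemma \ref{lemma:e_gf+e_fg-iso}.(\ref{e_fg+e_gf}), the first equality reads $e_{fg} f = f$; right-multiplying by $g$ yields $e_{fg}(fg) = fg$. Since $e_{fg} = (fg)(fg)^D = (fg)^D(fg)$ commutes with $fg$ by \textbf{[D.3]}, we obtain $(fg)\, f f^{\frac{D}{g}} = (fg)e_{fg} = e_{fg}(fg) = fg$, which is the first half of \textbf{[DV.1]} for $(f,g)$ at $k=1$; the second half follows symmetrically from $g g^{\frac{D}{f}} g = g$. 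Hence $\mathsf{ind}(f,g)\le 1$, all four hypotheses of Lemma \ref{lemma:a-drazin-inverse}.(ii) now hold, and it delivers that $(f,g)$ is the Drazin inverse of $(g^{\frac{D}{f}}, f^{\frac{D}{g}})$.

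I expect the only nontrivial point to be that penultimate step: recognizing that $f f^{\frac{D}{g}} f = f$ and $g g^{\frac{D}{f}} g = g$, which a priori look weaker than any index constraint, already pin the Drazin index down to at most one. Everything else is bookkeeping that translates between the binary-idempotent language and the \textbf{[DV.*]} axioms via the formulas $f^{\frac{D}{g}} = g(fg)^D$ and $g^{\frac{D}{f}} = f(gf)^D$ of Theorem \ref{Drazin-equiv-inverse}.
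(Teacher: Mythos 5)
Your proof is correct and follows essentially the same route as the paper: the forward direction via Proposition \ref{prop:iterated-equivs}, and the backward direction by reducing to Lemma \ref{lemma:a-drazin-inverse}, with the index bound $\mathsf{ind}(f,g)\leq 1$ correctly identified as the only nontrivial step. The paper gets that bound by the one-line computation $(fg)ff^{\frac{D}{g}} = (fg)g^{\frac{D}{f}}g = f(gg^{\frac{D}{f}}g) = fg$ using \textbf{[DV.3]}, whereas you route it through the induced idempotent $e_{fg}$ and \textbf{[D.3]}; the two computations are interchangeable.
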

\begin{proof} For ($\Rightarrow$), this is simply Lemma \ref{prop:iterated-equivs}. For ($\Leftarrow$), since by assumption $(f,g)$ is Drazin (so {\bf [DV.2]} and {\bf [DV.3]} hold), by Lemma \ref{lemma:a-drazin-inverse} we need only show that {\bf [DV.1$^\prime$]} holds. However this is immediate since $(f,f^{\frac{D}{g}})$ and $(g^{\frac{D}{f}},g)$ are binary idempotents. So by Lemma \ref{lemma:a-drazin-inverse} we get that $(f,g)$ is a Drazin inverse. 
\end{proof}

\begin{corollary}\label{cor:a-drazin-inv-is-drazin} If $\xymatrixcolsep{1.5pc}\xymatrix{(f,g): A  \ar@<0.5ex>[r]  & B  \ar@<0.5ex>[l] }$ is a Drazin inverse if and only if $(f,g)$ is Drazin, ${\sf ind}((f,g)) \leq 1$,and satisfies {\bf [DV.1$^\prime$]} $f f^{\frac{D}{g}} f = f$ and $g g^{\frac{D}{f}} g = g$. 
\end{corollary}
\begin{proof} For ($\Rightarrow$), by Proposition \ref{prop:iterated-equivs}.(\ref{prop:iterated-equivs.1}), we get that $(f,g)$ is Drazin with ${\sf ind}((f,g)) \leq 1$. While that $(f,g)$ and its Drazin inverse satisfy {\bf [DV.1$^\prime$]} follows Proposition \ref{prop:a-drazin-inverse} that $(f,f^{\frac{D}{g}})$ and $(g^{\frac{D}{f}},g)$ are binary idempotents. For ($\Leftarrow$), since by assumption $(f,g)$ is Drazin (so {\bf [DV.2]} and {\bf [DV.3]} hold), and also that {\bf [DV.1$^\prime$]} holds, by Lemma \ref{lemma:a-drazin-inverse} we get that $(f,g)$ is a Drazin inverse. 
\end{proof}

%%%%%%%%%%%%%%%%%%%%%%%%%%%%%%%%%%%%%%%%%%%%%%%%%%%%%%%%%%%%%%%%%%%%%%%%%%%%%%%%%%%%%%%%%%%%%%%%%%%%%%%
\subsection{Being Drazin in a Dagger Category}\label{sec:dagger} In this section, we consider Drazin inverses in \emph{dagger} categories. In particular, in the next section we will relate Drazin inverses to another sort of generalized inverse that arises in dagger categories, called the \emph{Moore-Penrose inverse}. 

Recall that a \textbf{dagger category} is a category $\mathbb{X}$ equipped with a contravariant functor $\dagger: \mathbb{X}^{op} \to \mathbb{X}$ which is the identity on objects and involutive. Explicitly, this means that for every map $f: A \to B$, there is a map of dual type $f^\dagger: B \to A$ called the \textbf{adjoint} of $f$ such that $(fg)^\dagger = g^\dagger f^\dagger$, $1_A^\dagger = 1_A$, and $f^{\dagger \dagger} = f$.  

For a more in-depth introduction to dagger categories, we refer the reader to \cite{HeunenVicary}.  There are plenty of examples of dagger categories: in particular, the category of opposing maps itself is the cofree dagger category of a category \cite[Definition 3.1.16]{heunen2009categorical}. Thus we have already secretly been working with dagger categories! 

Since the adjoint of an endomorphism is an endomorphism, it makes sense to ask whether the adjoint of a Drazin endomorphism is Drazin. It turns out that the Drazin inverse of the adjoint is the adjoint of the Drazin inverse:

\begin{lemma}\label{lemma:Drazin-dagger} In a dagger category $\mathbb{X}$, if $x: A \to A$ is Drazin then $x^\dagger: A \to A$ is also Drazin where $(x^\dagger)^D = (x^D)^\dagger$. 
\end{lemma}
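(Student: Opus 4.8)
The plan is to verify directly that $(x^D)^\dagger$ satisfies the three Drazin axioms \textbf{[D.1]}, \textbf{[D.2]}, and \textbf{[D.3]} for $x^\dagger$, exploiting the fact that the dagger is a contravariant involutive identity-on-objects functor. The key observation that makes everything work is that $\dagger$ reverses the order of composition, so $(fg)^\dagger = g^\dagger f^\dagger$, and in particular $(x^n)^\dagger = (x^\dagger)^n$ for all $n \in \mathbb{N}$ (since $(x^n)^\dagger = (x \cdots x)^\dagger = x^\dagger \cdots x^\dagger = (x^\dagger)^n$, using $1_A^\dagger = 1_A$ for the base case). This identity is what lets the power appearing in \textbf{[D.1]} transfer cleanly to the adjoint.

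First I would record the commutation fact $(x^n)^\dagger = (x^\dagger)^n$. Then for \textbf{[D.1]}, starting from $x^{k+1} x^D = x^k$ where $k = \mathsf{ind}(x)$, I apply the dagger to both sides: the left-hand side becomes $(x^{k+1} x^D)^\dagger = (x^D)^\dagger (x^{k+1})^\dagger = (x^D)^\dagger (x^\dagger)^{k+1}$, while the right-hand side becomes $(x^k)^\dagger = (x^\dagger)^k$. This gives $(x^D)^\dagger (x^\dagger)^{k+1} = (x^\dagger)^k$, which is a \emph{left}-handed version of \textbf{[D.1]}; to conclude the standard right-handed \textbf{[D.1]} I then invoke \textbf{[D.3]} for $x^\dagger$ (proved below) to swap the factors, exactly as Lemma \ref{lemma:Drazin-basic}.(\ref{lemma:Drazin-basic.1}) freely passes between $x^{n+1}x^D$ and $x^D x^{n+1}$. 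For \textbf{[D.2]}, applying the dagger to $x^D x x^D = x^D$ yields $(x^D)^\dagger x^\dagger (x^D)^\dagger = (x^D)^\dagger$ directly, since the reversal of the three-fold product $(x^D x x^D)^\dagger = (x^D)^\dagger x^\dagger (x^D)^\dagger$ lands in precisely the symmetric form required. For \textbf{[D.3]}, applying the dagger to $x^D x = x x^D$ gives $(x^D x)^\dagger = x^\dagger (x^D)^\dagger$ on one side and $(x x^D)^\dagger = (x^D)^\dagger x^\dagger$ on the other, so $x^\dagger (x^D)^\dagger = (x^D)^\dagger x^\dagger$, which is \textbf{[D.3]} for $x^\dagger$.

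Because \textbf{[D.3]} for $x^\dagger$ follows independently of the other two axioms, I would present it first and then use it to finish \textbf{[D.1]}; this removes any circularity. Concretely, the order is: (1) establish $(x^n)^\dagger = (x^\dagger)^n$; (2) prove \textbf{[D.3]}; (3) prove \textbf{[D.2]}; (4) prove \textbf{[D.1]} using the dagger computation together with \textbf{[D.3]}. Uniqueness (Proposition \ref{prop:unique}) then guarantees $(x^\dagger)^D = (x^D)^\dagger$.

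I do not expect any genuine obstacle here, as every step is a mechanical transport across the involution. The only point requiring a little care is the handedness mismatch in \textbf{[D.1]}: the dagger naturally produces the mirror-image identity $(x^D)^\dagger (x^\dagger)^{k+1} = (x^\dagger)^k$, and one must explicitly note that commutativity \textbf{[D.3]} (or equivalently Lemma \ref{lemma:Drazin-basic}) reconciles this with the convention $x^{k+1} x^D = x^k$ used in \textbf{[D.1]}. Everything else is routine application of $(fg)^\dagger = g^\dagger f^\dagger$ and $f^{\dagger\dagger} = f$.
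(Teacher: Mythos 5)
Your proof is correct, but it takes a different route from the paper. The paper disposes of this lemma in two sentences: it observes that $x$ Drazin in $\mathbb{X}$ implies $x$ Drazin in $\mathbb{X}^{op}$ (with the same $x^D$), and then applies the dagger functor $\dagger\colon \mathbb{X}^{op} \to \mathbb{X}$ together with Proposition \ref{Drazin-absolute}, which says Drazin inverses are preserved by \emph{all} functors. You instead verify the three axioms for $(x^D)^\dagger$ by hand, transporting each identity across the involution. Both arguments are sound, and interestingly they hinge on the same subtlety: the paper's implicit claim that being Drazin is self-dual requires exactly the handedness reconciliation you flag for \textbf{[D.1]} --- in $\mathbb{X}^{op}$ the first axiom reads $x^D x^{k+1} = x^k$, which is squared with the stated convention via \textbf{[D.3]} (or Lemma \ref{lemma:Drazin-basic}.(\ref{lemma:Drazin-basic.1})), just as in your step (4). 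What the paper's approach buys is brevity and a demonstration that absoluteness does real work; what yours buys is a self-contained computation that makes the order-reversal issue explicit rather than hiding it inside the phrase ``clearly Drazin in $\mathbb{X}^{op}$''. Your ordering of the axioms (\textbf{[D.3]} first, then \textbf{[D.2]}, then \textbf{[D.1]}) correctly avoids any circularity, and the uniqueness appeal to Proposition \ref{prop:unique} at the end is the right way to justify writing $(x^\dagger)^D = (x^D)^\dagger$.
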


\begin{proof} Clearly, if $x$ is Drazin in $\mathbb{X}$ then $x$ is Drazin in $\mathbb{X}^{op}$ with $x^D$ as its Drazin inverse. Since Drazin inverses are absolute by Proposition \ref{Drazin-absolute}, it follows that $x^\dagger$ is Drazin and $(x^\dagger)^D = (x^D)^\dagger$ as desired. 
\end{proof}

In a dagger category, for any arbitrary map $f: A \to B$, we get the opposing pair $\xymatrixcolsep{1.5pc}\xymatrix{(f,f^\dagger): A  \ar@<0.5ex>[r]  & B  \ar@<0.5ex>[l] }$, which we call an \textbf{adjoint opposing pair}. When an adjoint opposing pair $(f, f^\dagger)$ is Drazin, it turns out that the Drazin inverse of the map over its adjoint, that is $f^{\frac{D}{f^\dagger}}$, is the adjoint of the Drazin inverse of the adjoint over the map: 

\begin{lemma} \label{Lemma:Drazin-inverse-dagger-2}In a dagger category $\mathbb{X}$, if $\xymatrixcolsep{1.5pc}\xymatrix{(f,f^\dagger): A  \ar@<0.5ex>[r]  & B  \ar@<0.5ex>[l] }$ has a Drazin inverse then:
\begin{enumerate}[(i)]
\item \label{Drazin-inverse-dagger-2} $f^{\frac{D}{f^\dagger}}= ((f^\dagger)^{\frac{D}{f}})^\dagger$ and so $(f,f^\dagger)^D$ is an adjoint pair;
\item \label{Drazin-inverse-dagger-3} The induced idempotents are \textbf{$\dagger$-idempotents}, that is, $e_{ff^\dagger}^\dagger = e_{ff^\dagger}$ and $e_{f^\dagger f}^\dagger = e_{f^\dagger f}$. 
\end{enumerate}
\end{lemma}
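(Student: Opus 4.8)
The plan is to establish both parts by exploiting the absoluteness of Drazin inverses together with the uniqueness result from Proposition \ref{Drazin-opposing-unique}. The key observation is that the dagger functor $\dagger: \mathbb{X}^{op} \to \mathbb{X}$ turns a Drazin inverse of an opposing pair in $\mathbb{X}$ into a Drazin inverse of the "reversed" pair, and since $(f,f^\dagger)$ is symmetric under dagger, the uniqueness of Drazin inverses will force the symmetry relations claimed in (\ref{Drazin-inverse-dagger-2}) and (\ref{Drazin-inverse-dagger-3}).

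For part (\ref{Drazin-inverse-dagger-2}), first I would observe that applying the dagger to the opposing pair $(f,f^\dagger)$ gives, upon using $f^{\dagger\dagger}=f$, the same adjoint opposing pair $(f, f^\dagger)$ but viewed in $\mathbb{X}^{op}$. More precisely, by Lemma \ref{lemma:sym} the pair $(f^\dagger, f)$ is Drazin whenever $(f,f^\dagger)$ is, and by Proposition \ref{Drazin-opposing-absolute} the dagger functor sends the Drazin inverse $(f,f^\dagger)^D = ((f^\dagger)^{\frac{D}{f}}, f^{\frac{D}{f^\dagger}})$ to the Drazin inverse of $(f^\dagger, f)^{op}$ in $\mathbb{X}$. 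Chasing through which maps these are, I would take adjoints of the three defining axioms \textbf{[DV.1]}, \textbf{[DV.2]}, \textbf{[DV.3]} for $(f,f^\dagger)^D$ and check that the pair $\left( (f^{\frac{D}{f^\dagger}})^\dagger, ((f^\dagger)^{\frac{D}{f}})^\dagger \right)$ satisfies these same axioms for $(f, f^\dagger)$. Since Drazin inverses of opposing pairs are unique by Proposition \ref{Drazin-opposing-unique}, this pair must coincide with $((f^\dagger)^{\frac{D}{f}}, f^{\frac{D}{f^\dagger}})$, yielding both $f^{\frac{D}{f^\dagger}} = ((f^\dagger)^{\frac{D}{f}})^\dagger$ and the dual identity, which is exactly the assertion that $(f,f^\dagger)^D$ is again an adjoint pair.

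For part (\ref{Drazin-inverse-dagger-3}), I would use the formulas from Theorem \ref{Drazin-equiv-inverse}, namely $(ff^\dagger)^D = (f^\dagger)^{\frac{D}{f}} f^{\frac{D}{f^\dagger}}$, to express the induced idempotent $e_{ff^\dagger} = ff^\dagger (ff^\dagger)^D$ in terms of the Drazin inverses of the pair. Then I would take the adjoint of this expression and apply part (\ref{Drazin-inverse-dagger-2}), together with the fact that $(ff^\dagger)^\dagger = ff^\dagger$ and Lemma \ref{lemma:Drazin-dagger} giving $((ff^\dagger)^D)^\dagger = ((ff^\dagger)^\dagger)^D = (ff^\dagger)^D$, to conclude that $e_{ff^\dagger}^\dagger = e_{ff^\dagger}$. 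The same computation with $f$ and $f^\dagger$ interchanged handles $e_{f^\dagger f}$.

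The main obstacle I anticipate is bookkeeping the variance carefully in part (\ref{Drazin-inverse-dagger-2}): the dagger is contravariant, so when I transport the Drazin-inverse data of $(f,f^\dagger)$ across $\dagger$, I must track precisely which of the two opposing maps becomes the "$f$-part" and which becomes the "adjoint part" of the transported pair, and ensure the axioms \textbf{[DV.1]}--\textbf{[DV.3]} match up after taking adjoints (recalling that $\dagger$ reverses composition order). Once the correct identification is made, the uniqueness statement does the real work and the remaining verifications are routine adjoint manipulations. Part (\ref{Drazin-inverse-dagger-3}) should then follow almost immediately from part (\ref{Drazin-inverse-dagger-2}) and Lemma \ref{lemma:Drazin-dagger}.
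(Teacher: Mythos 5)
Your argument is correct in substance but reaches part (\ref{Drazin-inverse-dagger-2}) by a different route than the paper. You verify that the daggered-and-swapped pair $\bigl((f^{\frac{D}{f^\dagger}})^\dagger, ((f^\dagger)^{\frac{D}{f}})^\dagger\bigr)$ satisfies \textbf{[DV.1]}--\textbf{[DV.3]} for $(f,f^\dagger)$ and then invoke uniqueness (Proposition \ref{Drazin-opposing-unique}); the paper instead gets the identity in one line by plugging the explicit formula $(f^\dagger)^{\frac{D}{f}} = f(f^\dagger f)^D$ from Theorem \ref{Drazin-equiv-inverse} into the dagger and using Lemma \ref{lemma:Drazin-dagger} together with $(f^\dagger f)^\dagger = f^\dagger f$. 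Your approach is more conceptual (it is the ``absoluteness plus uniqueness'' pattern and would survive even without the explicit formulas), at the cost of re-verifying three axioms; the paper's is shorter because Theorem \ref{Drazin-equiv-inverse} has already done that work. For part (\ref{Drazin-inverse-dagger-3}) your computation $e_{ff^\dagger}^\dagger = \bigl(ff^\dagger(ff^\dagger)^D\bigr)^\dagger = (ff^\dagger)^D ff^\dagger = e_{ff^\dagger}$ is essentially the paper's argument (the paper writes $e_{ff^\dagger}$ in the manifestly self-adjoint form $(f^{\frac{D}{f^\dagger}})^\dagger f^{\frac{D}{f^\dagger}}$, but both are immediate once (\ref{Drazin-inverse-dagger-2}) and Lemma \ref{lemma:Drazin-dagger} are in hand); note you do not even need part (\ref{Drazin-inverse-dagger-2}) for this, only \textbf{[D.3]} and self-adjointness of $(ff^\dagger)^D$.

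One step in your plan deserves more than ``routine adjoint manipulation'': taking the adjoint of \textbf{[DV.1]}, say of $(ff^\dagger)^k f f^{\frac{D}{f^\dagger}} = (ff^\dagger)^k$, yields $(f^{\frac{D}{f^\dagger}})^\dagger f^\dagger (ff^\dagger)^k = (ff^\dagger)^k$, i.e.\ the candidate inverse now multiplies $(ff^\dagger)^k$ on the \emph{left}, whereas \textbf{[DV.1]} for the new pair requires it on the right. To close this you need that $f f^{\frac{D}{f^\dagger}}$ commutes with $ff^\dagger$ --- which does follow from \textbf{[DV.3]} alone, since $f f^{\frac{D}{f^\dagger}} \cdot ff^\dagger = f(f^{\frac{D}{f^\dagger}} f)f^\dagger = f f^\dagger (f^\dagger)^{\frac{D}{f}} f^\dagger = ff^\dagger \cdot f f^{\frac{D}{f^\dagger}}$ --- so the dagger of that relation commutes with $(ff^\dagger)^k$ and the left-sided identity converts to the right-sided one. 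With that supplied, the uniqueness argument goes through exactly as you describe.
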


\begin{proof} (i) If $(f,f^\dagger)$ has a Drazin inverse, by Theorem \ref{Drazin-equiv-inverse} $(f,f^\dagger)$ is Drazin, so $ff^\dagger$ and $f^\dagger f$ are Drazin. Then using Corollary \ref{cor:Drazinpairformulas} and Lemma \ref{lemma:Drazin-dagger}, we compute:  
\[ ((f^\dagger)^{\frac{D}{f}})^\dagger = ( f(f^\dagger f)^D )^\dagger = ((f^\dagger f)^D)^\dagger f^\dagger = ((f^\dagger f)^\dagger)^D f^\dagger = (f^\dagger f)^D f^\dagger = f^{\frac{D}{f^\dagger}} \]
So $f^{\frac{D}{f^\dagger}}= ((f^\dagger)^{\frac{D}{f}})^\dagger$ as desired. \\

\noindent (ii) By Lemma \ref{lemma:e_gf+e_fg-iso}.(\ref{e_fg+e_gf}), we know that $e_{ff^\dagger}= (f^\dagger)^{\frac{D}{f}} f^{\frac{D}{f^\dagger}}$ and $e_{f^\dagger f} =  f^{\frac{D}{f^\dagger}} (f^\dagger)^{\frac{D}{f}}$. However by (\ref{Drazin-inverse-dagger-2}), we then get that $e_{ff^\dagger}= \left(f^{\frac{D}{f^\dagger}} \right)^\dagger f^{\frac{D}{f^\dagger}}$ and $e_{f^\dagger f} =  f^{\frac{D}{f^\dagger}} \left(f^{\frac{D}{f^\dagger}} \right)^\dagger $ -- which implies that $e_{ff^\dagger}$ and $e_{f^\dagger f}$ are $\dagger$-idempotents. 
\end{proof}

We can also precisely say when an adjoint opposing pair is the Drazin inverse of another adjoint opposing pair. 

\begin{lemma}\label{lemma:a-drazin-inverse-dagger} In a dagger category $\mathbb{X}$, $\xymatrixcolsep{1.5pc}\xymatrix{(f,f^\dagger): A  \ar@<0.5ex>[r]  & B  \ar@<0.5ex>[l] }$ is a Drazin inverse if and only if $(f,f^\dagger)$ is Drazin with $\mathsf{ind}(f,f^\dagger) \leq 1$ and $f f^{\frac{D}{f^\dagger}} f = f$. 
\end{lemma}
\begin{proof} For ($\Rightarrow$), this is immediate from Lemma \ref{lemma:a-drazin-inverse}. For ($\Leftarrow$), if $f f^{\frac{D}{f^\dagger}} f = f$, then Lemma \ref{Lemma:Drazin-inverse-dagger-2}.(\ref{Drazin-inverse-dagger-2}) tell us that the dagger of this identity is precisely $f^\dagger (f^{\frac{D}{f^\dagger}})^\dagger f^\dagger = f^\dagger$. Then by applying Lemma \ref{lemma:a-drazin-inverse}, we get that $(f,f^\dagger)$ is a Drazin inverse. \end{proof}

%%%%%%%%%%%%%%%%%%%%%%%%%%%%%%%%%%%%%%%%%%%%%%%%%%%%%%%%%%%%%%%%%%%%%%%%%%%%%%%%%%%%%%%%%%%%%%%%%%%%%%%
\subsection{The Moore-Penrose Inverse}
The remainder of this section is dedicated to discussing the relationship between Drazin opposing pairs and Moore-Penrose inverses. For a more in-depth introduction to Moore-Penrose inverses and examples, we invite the reader to see \cite{campbell2009generalized,EPTCS384.10}.

In a dagger category $\mathbb{X}$, a \textbf{Moore-Penrose inverse} \cite[Def 2.3]{EPTCS384.10} of a map $f: A \to B$ is a map of dual type $f^\circ: B \to A$ such that: 
 \[ \textbf{[MP.1]}~ f f^\circ f = f; ~~~~~\textbf{[MP.2]}~ f^\circ f f^\circ = f^\circ; ~~~~~\textbf{[MP.3]}~ (ff^\circ)^\dagger = ff^\circ;  ~~~~~\textbf{[MP.4]}~ (f^\circ f)^\dagger = f^\circ f. \]   
 
Like Drazin inverses, Moore-Penrose inverses are unique \cite[Lemma 2.4]{EPTCS384.10}. Moreover, if $f$ has a Moore-Penrose inverse, then so does $f^\dagger$, where $f^{\dagger \circ} = f^{\circ \dagger}$ \cite[Lemma 2.5.(ii)]{EPTCS384.10}. We also observe that having a Moore-Penrose inverse gives a binary idempotent:

\begin{lemma} \label{lem:binary-idm-of-MP}
    In a dagger category $\mathbb{X}$, if $f: A \to B$ has a Moore-Penrose inverse if and only if $(f,f^\circ)$ is a binary idempotent 
    whose induced idempotents are $\dagger$-idempotents.
\end{lemma}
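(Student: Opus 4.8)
The plan is to verify the two defining conditions of a binary idempotent directly from the four Moore-Penrose axioms \textbf{[MP.1]}--\textbf{[MP.4]}, and then read off the $\dagger$-idempotent property from \textbf{[MP.3]} and \textbf{[MP.4]}. Recall that $(f,f^\circ)$ being a binary idempotent means $f f^\circ f = f$ and $f^\circ f f^\circ = f^\circ$. But these are \emph{exactly} \textbf{[MP.1]} and \textbf{[MP.2]}, so there is genuinely nothing to compute for the first half: the binary idempotent axioms are a subset of the Moore-Penrose axioms.

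For the second half, I would recall from Section \ref{sec:binary-idempotent} that for a binary idempotent $(f,f^\circ)$, the composites $f f^\circ: A \to A$ and $f^\circ f: B \to B$ are idempotents. The claim is that these are moreover $\dagger$-idempotents, meaning $(ff^\circ)^\dagger = ff^\circ$ and $(f^\circ f)^\dagger = f^\circ f$. But this is precisely the content of axioms \textbf{[MP.3]} and \textbf{[MP.4]}. So again this is immediate by inspection rather than by calculation.

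The one subtlety worth spelling out is matching the terminology: the ``induced idempotents'' of the binary idempotent $(f,f^\circ)$ are by definition $ff^\circ$ and $f^\circ f$, and a ``$\dagger$-idempotent'' is an idempotent equal to its own adjoint (as defined in Lemma \ref{Lemma:Drazin-inverse-dagger-2}.(\ref{Drazin-inverse-dagger-3})). Thus the only real step is to observe that \textbf{[MP.3]} and \textbf{[MP.4]} are verbatim the statements that $ff^\circ$ and $f^\circ f$ are self-adjoint. I anticipate no genuine obstacle here; the lemma is essentially a repackaging of the Moore-Penrose axioms into the language of binary idempotents, and the proof is a two-line observation. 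The proof would read: Since $f^\circ$ is a Moore-Penrose inverse, \textbf{[MP.1]} gives $f f^\circ f = f$ and \textbf{[MP.2]} gives $f^\circ f f^\circ = f^\circ$, so $(f,f^\circ)$ is a binary idempotent. Its induced idempotents are $ff^\circ$ and $f^\circ f$, which by \textbf{[MP.3]} and \textbf{[MP.4]} satisfy $(ff^\circ)^\dagger = ff^\circ$ and $(f^\circ f)^\dagger = f^\circ f$, hence are $\dagger$-idempotents.
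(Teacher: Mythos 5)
Your proof is correct and is essentially identical to the paper's own argument: \textbf{[MP.1]} and \textbf{[MP.2]} give the binary idempotent conditions verbatim, and \textbf{[MP.3]} and \textbf{[MP.4]} say exactly that the induced idempotents $ff^\circ$ and $f^\circ f$ are $\dagger$-idempotents. Nothing further is needed.
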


\begin{proof}
    {\bf [MP.1]} and {\bf [MP.2]} ensure that it is a binary idempotent while {\bf [MP.3]} and {\bf [MP.4]} ensure that the induced idempotents are $\dagger$-idempotents.
\end{proof}

Having a Moore-Penrose inverse can also be characterized in terms of Drazin inverses. Specifically a map has a Moore-Penrose inverse if and only if its induced adjoint opposing pair \emph{is} a Drazin inverse. 

\begin{theorem}\label{thm:MP=Drazin} In a dagger category $\mathbb{X}$, $f: A \to B$ has a Moore-Penrose inverse if and only if $\xymatrixcolsep{1.5pc}\xymatrix{(f,f^\dagger): A  \ar@<0.5ex>[r]  & B  \ar@<0.5ex>[l] }$ is a Drazin inverse.
\end{theorem}

Note that \emph{being} a Drazin inverse is a considerably stronger property than \emph{having} a Drazin inverse.

\begin{proof} For $(\Rightarrow)$, we show that $(f^{\frac{D}{f^\dagger}}, (f^\dagger)^{\frac{D}{f}}) \colon = (f^\circ, f^{\circ \dagger})$ satisfies the requirements from Lemma \ref{lemma:a-drazin-inverse}. 
\begin{description}
\item[{\bf [DV.1$^\prime$]}] Using \textbf{[MP.1]}, we have: 
\begin{gather*}
 f^\dagger (f^\dagger)^{\frac{D}{f}} f^\dagger  = f^\dagger f^{\circ \dagger}  f^\dagger = ( f f^\circ f)^\dagger = f^\dagger \\
    f f^{\frac{D}{f^\dagger}} f = f f^\circ f = f 
\end{gather*}
\item[{\bf [DV.2]}] Using \textbf{[MP.2]}, we have: 
\begin{gather*}
 f^{\frac{D}{f^\dagger}} f f^{\frac{D}{f^\dagger}} = f^\circ f f^\circ = f^\circ = f^{\frac{D}{f^\dagger}} \\ (f^\dagger)^{\frac{D}{f}} f^\dagger (f^\dagger)^{\frac{D}{f}} = f^{\circ \dagger}  f^\dagger f^{\circ \dagger} = (f^\circ f f^\circ)^\dagger =   f^{\circ \dagger} =  (f^\dagger)^{\frac{D}{f}}
\end{gather*}
\item[{\bf [DV.3]}] Using \textbf{[MP.3]} and \textbf{[MP.4]}, we have: 
\begin{gather*}
    f f^{\frac{D}{f^\dagger}} = f f^\circ = (f f^\circ)^\dagger =  f^{\circ \dagger} f^\dagger =  (f^\dagger)^{\frac{D}{f}} f^\dagger \\ 
    f^{\frac{D}{f^\dagger}} f =  f^\circ f = ( f^\circ f )^\dagger =  f^\dagger f^{\circ \dagger} =  f^\dagger(f^\dagger)^{\frac{D}{f}} 
\end{gather*}
\end{description}
So by Lemma \ref{lemma:a-drazin-inverse}, $(f,f^\dagger)$ is a Drazin inverse. In fact, $(f,f^\dagger)$ is Drazin with Drazin inverse $(f^{\circ \dagger}, f^\circ)$ and $\mathsf{ind}(f,g) \leq 1$. 

For $(\Leftarrow)$, by Corollary \ref{cor:a-drazin-inv-is-drazin}, $(f,f^\dagger)$ is Drazin. So we will show that $f^\circ = f^{\frac{D}{f^\dagger}}$ satisfies the four axioms of a Moore-Penrose inverse. However note that {\bf [MP.1]} is precisely {\bf [DV.1$^\prime$]}, while \textbf{[MP.2]} is precisely \textbf{[DV.3]}. Now for \textbf{[MP.3]}, using Lemma \ref{Lemma:Drazin-inverse-dagger-2} and \textbf{[DV.3]}, we compute: 
\begin{gather*}
    (f f^\circ)^\dagger = (f^\circ)^\dagger f^\dagger = (f^{\frac{D}{f^\dagger}})^\dagger f^\dagger = (f^\dagger)^{\frac{D}{f}} f^\dagger = f f^{\frac{D}{f^\dagger}} = f f^\circ 
\end{gather*}
While \textbf{[MP.4]} is shown using similar calculations. So we conclude that $f$ has a Moore-Penrose inverse.
\end{proof}

In general, while not every map in a dagger category has a Moore-Penrose inverse, for an adjoint opposing pair which is Drazin, the Drazin inverse of the map over its adjoint always has a Moore-Penrose inverse. 

\begin{lemma}\label{lemma:Drazin-MP} In a dagger category $\mathbb{X}$, if $\xymatrixcolsep{1.5pc}\xymatrix{(f,f^\dagger): A  \ar@<0.5ex>[r]  & B  \ar@<0.5ex>[l] }$ is Drazin then $f^{\frac{D}{f^\dagger}}$ has a Moore-Penrose inverse, where $\left(f^{\frac{D}{f^\dagger}} \right)^\circ = f f^{\frac{D}{f^\dagger}} f$. 
\end{lemma}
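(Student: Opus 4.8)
The plan is to reduce the statement to the characterization of Moore--Penrose inverses via Drazin inverses established in Theorem \ref{thm:MP=Drazin}. Writing $p := f^{\frac{D}{f^\dagger}}$ for brevity, I must produce a Moore--Penrose inverse of $p$, and Theorem \ref{thm:MP=Drazin}.(ii) tells us it suffices to show that the adjoint opposing pair $(p, p^\dagger)$ is Drazin with $\mathsf{ind}(p, p^\dagger) \leq 1$ and $p\, p^{\frac{D}{p^\dagger}}\, p = p$; the Moore--Penrose inverse is then $p^\circ = p^{\frac{D}{p^\dagger}}$, and it will remain to identify this with $f p f = f f^{\frac{D}{f^\dagger}} f$.

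First I would observe that $(p, p^\dagger)$ is itself a Drazin inverse. Since $(f, f^\dagger)$ is Drazin, so is its symmetric pair $(f^\dagger, f)$ by Lemma \ref{lemma:sym}. Unwinding the definition of the Drazin inverse of an opposing pair and using $(f^\dagger)^{\frac{D}{f}} = (f^{\frac{D}{f^\dagger}})^\dagger = p^\dagger$ from Lemma \ref{Lemma:Drazin-inverse-dagger-2}.(\ref{Drazin-inverse-dagger-2}), we get $(f^\dagger, f)^D = (f^{\frac{D}{f^\dagger}}, (f^\dagger)^{\frac{D}{f}}) = (p, p^\dagger)$. Thus $(p, p^\dagger)$ is exactly the Drazin inverse of $(f^\dagger, f)$.

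Then I would apply Proposition \ref{prop:iterated-equivs} to the Drazin pair $(f^\dagger, f)$. Part (\ref{prop:iterated-equivs.1}) yields that $(p, p^\dagger) = (f^\dagger, f)^D$ is Drazin with $\mathsf{ind}(p, p^\dagger) \leq 1$, and computes its Drazin inverse as $(f^\dagger, f)^{DD} = (f^\dagger (f^\dagger)^{\frac{D}{f}} f^\dagger,\ f f^{\frac{D}{f^\dagger}} f) = ((f p f)^\dagger,\ f p f)$; reading off the second component gives $p^{\frac{D}{p^\dagger}} = f p f = f f^{\frac{D}{f^\dagger}} f$. Part (\ref{prop:binary-idem.3}), applied to the same pair, says $(f f^{\frac{D}{f^\dagger}} f,\ f^{\frac{D}{f^\dagger}}) = (f p f, p)$ is a binary idempotent, so $p (f p f) p = p$, which is precisely $p\, p^{\frac{D}{p^\dagger}}\, p = p$. (Equivalently, this last identity is immediate from Lemma \ref{lemma:a-drazin-inverse} since $(p, p^\dagger)$ is a Drazin inverse.) With all three hypotheses of Theorem \ref{thm:MP=Drazin}.(ii) verified, $p$ has a Moore--Penrose inverse equal to $p^{\frac{D}{p^\dagger}} = f f^{\frac{D}{f^\dagger}} f$, as claimed.

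The only real obstacle is bookkeeping: keeping straight the roles of $f$ versus $f^\dagger$ under the opposing-pair notation $(-)^{\frac{D}{-}}$, and making sure that the symmetric-pair identity $(f^\dagger, f)^D = (f^{\frac{D}{f^\dagger}}, (f^\dagger)^{\frac{D}{f}})$ is combined correctly with the dagger relation of Lemma \ref{Lemma:Drazin-inverse-dagger-2}.(\ref{Drazin-inverse-dagger-2}) so that the second component of $(f^\dagger, f)^{DD}$ comes out as $f p f$ rather than its adjoint. No genuinely new computation is needed beyond the cited results.
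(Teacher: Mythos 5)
Your proposal is correct and takes essentially the same route as the paper: both arguments hinge on recognizing, via Lemma \ref{lemma:sym} and Lemma \ref{Lemma:Drazin-inverse-dagger-2}, that $(f^{\frac{D}{f^\dagger}}, (f^{\frac{D}{f^\dagger}})^\dagger)$ is the Drazin inverse of the adjoint opposing pair $(f^\dagger, f)$, and then feeding this into the Moore--Penrose/Drazin correspondence. The only difference is that you verify the hypotheses of Theorem \ref{thm:MP=Drazin}.(ii) directly via Proposition \ref{prop:iterated-equivs}, whereas the paper cites the packaged Proposition \ref{prop:MP-Drazin}.(ii), whose own proof performs exactly that verification.
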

\begin{proof} If $(f,f^\dagger)$ is Drazin, by Lemma \ref{Lemma:Drazin-inverse-dagger-2}.(\ref{Drazin-inverse-dagger-2}), we have that $(f^{\frac{D}{f^\dagger}},(f^{\frac{D}{f^\dagger}})^\dagger)$ is its Drazin inverse. In other words, using Lemma \ref{lemma:sym}, we get that $((f^{\frac{D}{f^\dagger}})^\dagger, f^{\frac{D}{f^\dagger}})$ is the Drazin inverse of the adjoint opposing pair $(f^\dagger, f)$. Then by Theorem \ref{thm:MP=Drazin}, $f^{\frac{D}{f^\dagger}}$ has a Moore-Penrose inverse given by $\left(f^{\frac{D}{f^\dagger}} \right)^\circ = f f^{\frac{D}{f^\dagger}} f$.
\end{proof}

Now a \textbf{Moore-Penrose dagger category} \cite[Def 2.3]{EPTCS384.10} is a dagger category such that every map has a Moore-Penrose inverse. Then by Theorem \ref{thm:MP=Drazin}, we obtain a novel equivalent description of when a dagger category is Moore-Penrose. 

\begin{corollary}\label{cor:MPcat} A dagger category $\mathbb{X}$ is Moore-Penrose if and only if every adjoint opposing pair is a Drazin inverse. 
\end{corollary}

\subsection{When Moore-Penrose is Drazin} We conclude with the natural question of when the Drazin inverse and Moore-Penrose inverse coincide. For complex square matrices, this happens precisely when the matrix is an EP matrix \cite[Thm 7.3.4]{campbell2009generalized}, which means that it commutes with its Moore-Penrose inverse \cite[Def 4.3.1]{campbell2009generalized}. Here we show that the same holds true in an arbitrary dagger category. 

\begin{definition} In a dagger category $\mathbb{X}$, a map $f: A \to B$ is said to be \textbf{EP} if it has a Moore-Penrose inverse and it commutes with it, that is, $ff^\circ =f^\circ f$. 
\end{definition}

\begin{definition} In a dagger category $\mathbb{X}$, an endomorphism $x: A \to A$ is said to be \textbf{Moore-Penrose-Drazin} if $x$ is Drazin such that its Drazin inverse is also its Moore-Penrose inverse, $x^D = x^\circ$. 
\end{definition}

\begin{theorem}\label{lemma:EP} In a dagger category $\mathbb{X}$, for an endomorphism $x: A \to A$, the following are equivalent: 
\begin{enumerate}[(i)]
\item $x$ is Moore-Penrose-Drazin;
\item $x$ is Drazin with $\mathsf{ind}(x) \leq 1$ and the induced idempotent $e_x$ is a $\dagger$-idempotent; 
\item $x$ is EP. 
\end{enumerate}
\end{theorem}
\begin{proof} For $(i) \Rightarrow (ii)$, by definition $x$ is Drazin. Using \textbf{[DV.3]} and \textbf{[MP.1]}, we compute:  
\[ x^{2} x^D = x x x^D = x x^D x = x x^\circ x = x  \]
So $\mathsf{ind}(x) \leq 1$. Now using \textbf{[MP.3]}, we compute that: 
\[ e_x^\dagger = (xx^D)^\dagger = (x x^\circ)^\dagger = x x^\circ  = e_x \]
   So $e_x^\dagger$ is a $\dagger$-idempotent.  

For $(ii) \Rightarrow (iii)$, we show that $x^D$ satisfies the four Moore-Penrose inverse axioms. However, note that since $\mathsf{ind}(x) \leq 1$, $x^D$ is a group inverse of $x$. Then \textbf{[MP.1]} and \textbf{[MP.2]} are precisely the same as \textbf{[G.1]} and \textbf{[G.2]} respectively. Now since $e_x^\dagger$ is a $\dagger$-idempotent, meaning $e_x^\dagger = e_x$, which gives us precisely that $(xx^D)^\dagger = x x^D$ and $(x^D x)^\dagger = x^D x$, which are \textbf{[MP.3]} and \textbf{[MP.4]}. So $x^\circ = x^D$ is a Moore-Penrose inverse of $x$, and by \textbf{[D.3]}, we also have that $x x^\circ = x^\circ x$.

For $(iii) \Rightarrow (i)$, we show that setting $x^D = x^\circ$ satisfies the three group inverse axioms. However, note that \textbf{[G.1]} and \textbf{[G.2]} are the same as \textbf{[MP.1]} and \textbf{[MP.2]}. While \textbf{[G.3]} is precisely the extra assumption $x x^\circ = x^\circ x$. So $x^\circ$ is a group inverse of $x$, which implies that $x$ is Drazin, whose Drazin inverse is its Moore-Penrose inverse. Thus, $x$ is Moore-Penrose-Drazin as desired. 
\end{proof}

In particular, for any map with a Moore-Penrose inverse, we always get two Moore-Penrose-Drazin endomorphisms. 

\begin{lemma} In a dagger category $\mathbb{X}$, if $f: A \to B$ has a Moore-Penrose inverse, then $ff^\dagger$ and $f^\dagger f$ are Moore-Penrose-Drazin. 
\end{lemma}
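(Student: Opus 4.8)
The plan is to invoke the equivalence just established in Lemma \ref{lemma:EP}, so that it suffices to produce a Moore-Penrose inverse for each of $ff^\dagger$ and $f^\dagger f$ and to verify that it commutes with the map. First I would recall that since $f$ has a Moore-Penrose inverse $f^\circ$, the adjoint $f^\dagger$ also has one, namely $f^{\dagger\circ}=f^{\circ\dagger}$ \cite[Lemma 2.5.(ii)]{EPTCS384.10}. The natural candidate for the Moore-Penrose inverse of $ff^\dagger$ is then $f^{\circ\dagger}f^\circ$, and for $f^\dagger f$ it is $f^\circ f^{\circ\dagger}$; these are exactly the formulas that Theorem \ref{thm:MP=Drazin} already identifies as $(ff^\dagger)^D$ and $(f^\dagger f)^D$. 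So the cleanest route is: use Theorem \ref{thm:MP=Drazin} to conclude that $(f,f^\dagger)$ is Drazin with $\mathsf{ind}(f,f^\dagger)\leq 1$, hence by Theorem \ref{Drazin-equiv-inverse} that $ff^\dagger$ and $f^\dagger f$ are Drazin with $\mathsf{ind}\leq 1$, and then check that their induced idempotents are $\dagger$-idempotents.

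Concretely, I would verify condition (ii) of Lemma \ref{lemma:EP} for $x=ff^\dagger$ (and dually for $f^\dagger f$). Drazinness with index at most one is immediate from the above. For the remaining condition, the induced idempotent of $ff^\dagger$ is $e_{ff^\dagger}$, and Lemma \ref{Lemma:Drazin-inverse-dagger-2}.(\ref{Drazin-inverse-dagger-3}) already tells us precisely that $e_{ff^\dagger}$ and $e_{f^\dagger f}$ are $\dagger$-idempotents whenever $(f,f^\dagger)$ is Drazin. But here one must be careful: the induced idempotent relevant to $x=ff^\dagger$ as an \emph{endomorphism} is $e_x=x x^D=(ff^\dagger)(ff^\dagger)^D$, and I need to confirm this agrees with the idempotent $e_{ff^\dagger}$ appearing in Lemma \ref{Lemma:Drazin-inverse-dagger-2}. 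This identification is exactly the defining notation $e_{fg}=fg(fg)^D$ from the opposing-pair discussion, so the two coincide and the $\dagger$-idempotency transfers directly.

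With both pieces in hand, Lemma \ref{lemma:EP}.((ii)$\Rightarrow$(i)) gives that $ff^\dagger$ and $f^\dagger f$ are Moore-Penrose-Drazin, which is the claim. I expect the main (and really only) obstacle to be bookkeeping rather than genuine difficulty: one must make sure the hypotheses of Theorem \ref{thm:MP=Drazin} are met, in particular that $(f,f^\dagger)$ is Drazin with $\mathsf{ind}(f,f^\dagger)\leq 1$ and $f f^{\frac{D}{f^\dagger}} f = f$, all of which follow from $f$ having a Moore-Penrose inverse via the forward direction of that theorem. There is also a subtlety worth flagging explicitly, namely that the endomorphism-level Drazin index of $ff^\dagger$ is bounded by the opposing-pair index through Theorem \ref{Drazin-equiv-inverse}, so $\mathsf{ind}(ff^\dagger)\leq \mathsf{ind}(f,f^\dagger)\leq 1$, which is what condition (ii) of Lemma \ref{lemma:EP} requires. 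Once these indices and idempotents are lined up, no further computation is needed.
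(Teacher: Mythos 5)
Your proposal is correct and follows essentially the same route as the paper's own proof: invoke Theorem \ref{thm:MP=Drazin} to get that $(f,f^\dagger)$ is Drazin with index at most one, deduce the corresponding bounds for $ff^\dagger$ and $f^\dagger f$ via Theorem \ref{Drazin-equiv-inverse}, obtain the $\dagger$-idempotency of the induced idempotents from Lemma \ref{Lemma:Drazin-inverse-dagger-2}, and conclude by Lemma \ref{lemma:EP}. Your extra check that the endomorphism-level idempotent $e_x = xx^D$ coincides with the opposing-pair idempotent $e_{ff^\dagger}$ is a detail the paper leaves implicit, and it is correctly resolved.
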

\begin{proof} By Theorem \ref{thm:MP=Drazin} and Corollary \ref{cor:a-drazin-inv-is-drazin}, $(f,f^\dagger)$ is Drazin with $\mathsf{ind}(f,f^\dagger) \leq 1$, which tells us that $ff^\dagger$ and $f^\dagger f$ are Drazin with $\mathsf{ind}(ff^\dagger) \leq 1$ and $\mathsf{ind}(f^\dagger f) \leq 1$ (where the index inequalities comes from the proof of Theorem \ref{Drazin-equiv-inverse}). While Lemma \ref{Lemma:Drazin-inverse-dagger-2}.(\ref{Drazin-inverse-dagger-3}) tells us that $e_{ff^\dagger}$ and $e_{ff^\dagger}$ are $\dagger$-idempotents. So by Lemma \ref{lemma:EP}, we conclude that $ff^\dagger$ and $f^\dagger f$ are Moore-Penrose-Drazin. 
\end{proof}

% %%%%%%%%%%%%%%%%%%%%%%%%%%%%%%%%%%%%%%%%%%%%%%%%%%%%%%%%%%%%%%%%%%%%%%%%%%%%%%%%%%%%%%%%

%%%%%%%%%%%%%%%%%%%%%%%%%%%%%%%%%%%%%%%%%%%%%%%%%%%%%%%%%%%%%%%%%%%%%%%%%%%%%%%%%%%%%%%%

\bibliographystyle{plain}      % mathematics and physical sciences
\bibliography{drazin}   % name your BibTeX data base
\end{document}